\newcommand{\R}{\mathbb{R}}
\newcommand{\C}{\mathbb{C}}
\newcommand{\D}{\mathbb{D}}
\newcommand{\N}{\mathbb{N}}
\newcommand{\Z}{\mathbb{Z}}
\newcommand{\Q}{\mathbb{Q}}
\newcommand{\A}{\mathbb{A}}
\renewcommand{\P}{\mathbb{P}}
\newcommand{\nA}{\mathbb{A}}
\newcommand{\nC}{\mathbb{C}}
\newcommand{\nL}{\mathbb{L}}
\newcommand{\nN}{\mathbb{N}}
\newcommand{\nR}{\mathbb{R}}
\newcommand{\nU}{\mathbb{U}}
\newcommand{\cT}{\mathcal{T}}
\newcommand{\cK}{\mathcal{K}}
\newcommand{\cL}{\mathcal{L}}
\newcommand{\cB}{\mathcal{B}}
\newcommand{\cW}{\mathcal{W}}
\newcommand{\CritB}{\mathcal{C}}		
\newcommand{\CritT}{\mathcal{T}}		
\newcommand{\norm}[1]{\left\|#1\right\|}
\newcommand{\mc}{\mathcal}
\newcommand{\eps}{\varepsilon}
\newcommand{\ord}{\mathrm{ord}}
\newcommand{\loc}{\mathrm{loc}}
\newcommand{\diam}{\mathrm{diam}}
\newcommand{\abs}[1]{{\left|{#1}\right|}}
\newcommand{\on}[1]{\operatorname{#1}}
\renewcommand{\mod}{\on{mod}}
\newcommand{\id}{\on{id}}
\newcommand{\set}[1]{{\left\{#1\right\}}}
\providecommand{\fps}[1]{[[{#1}]]}
\providecommand{\laurent}[1]{\left(\!\left({#1}\right)\!\right)}
\providecommand{\puiseux}[1]{\langle\!\langle{#1}\rangle\!\rangle}
\newcommand{\hz}{\hat{z}}
\newcommand{\hw}{\hat{w}}
\newcommand{\Puis}{k\puiseux{z}}	
\newcommand{\xg}{\zeta_g}
\DeclareMathOperator{\an}{an}	
\DeclareMathOperator{\na}{na}
\DeclareMathOperator{\spf}{Spf}	
\DeclareMathOperator{\capa}{cap}	
\DeclareMathOperator{\jac}{Jac}	
\DeclareMathOperator{\supp}{Supp}
\newcommand{\lcm}{\on{lcm}}
\newcommand{\fr}{\partial}
\newcommand{\rest}[1]{ \arrowvert_{#1}}
\newcommand{\unsur}[1]{\frac{1}{#1}}
\newcommand{\lrpar}[1]{\left(#1\right)}
\newcommand{\bra}[1]{\left\langle #1\right\rangle}
\newcommand{\inv}{^{-1}}
\newtheorem{introthm}{Theorem}
\newtheorem{thm}{Theorem}[section]
\newtheorem{lem}[thm]{Lemma}
\newtheorem{cor}[thm]{Corollary}
\newtheorem{prop}[thm]{Proposition}
\newtheorem{quest}[thm]{Question}
\theoremstyle{definition}
\newtheorem{rmk}[thm]{Remark}
\theoremstyle{definition}
\title[Polynomial skew products ]{Polynomial skew products with small relative degree}
\author{Romain Dujardin}
\address{Sorbonne Universit\'e, Laboratoire de probabilit\'es, statistique et mod\'elisation, UMR 8001,  4 place Jussieu, 75005 Paris, France}
\email{\href{romain.dujardin@sorbonne-universite.fr}{romain.dujardin@sorbonne-universite.fr}}
\author{Charles Favre}
\address{CNRS - Centre de Math\'ematiques Laurent Schwartz, 
	\'Ecole Polytechnique, 
	91128 Palaiseau Cedex, France}
\email{\href{charles.favre@polytechnique.edu}{charles.favre@polytechnique.edu}}
\author{Matteo Ruggiero}
\address{Université Paris Cité, Sorbonne Université, CNRS, Institut de Mathématiques de Jussieu-Paris Rive Gauche, F-75013 Paris, France}
\email{\href{mailto:matteo.ruggiero@imj-prg.fr}{matteo.ruggiero@imj-prg.fr}}
\date{\today}
\begin{document}

\begin{abstract}
We investigate the local dynamics of a proper superattracting holomorphic germ $f$ in $(\C^2,0)$
possessing  a totally invariant line $L$ such that $f^*L = d L$ with $d\ge 2$, and such that 
$f|_L$ has a superattracting fixed point at $0$ of order $2 \le c < d$. 
We prove that any such map is formally conjugated to a skew product
of the form $(z^d, P(z,w))$, where $P \in \nC\fps{z}[w]$ is polynomial in $w$ of degree $c$, hence it  
 induces a natural  dynamics on the Berkovich affine
line over $\C\laurent{z}$. Such  
 non-Archimedean  skew products were  recently studied by Birkett and Nie-Zhao. 

On the non-Archimedean side, we focus on the restriction  of the dynamics on the Berkovich
open unit ball  (which naturally contains all irreducible analytic germs at the origin). 
We exhibit an invariant     compact set $\cK$ outside of which all points tend
to $L$, and which supports a natural ergodic invariant measure.

By a careful analysis of local intersection numbers, we prove that
the growth of multiplicity of iterated curves   is controlled by the recurrence properties of the critical set. 
In particular,  when no critical branch of $f$ belongs to $\cK$,
any point in $\cK$ corresponds to  a curve of uniformly bounded multiplicity at $0$.
 
We then  return to the complex picture and show the existence of
an invariant  pluripolar positive closed  $(1,1)$-current $T$, outside of which all orbits 
converge  to $0$ at super-exponential speed $c$. 
Under the same assumption on the  critical branches  as above, 
we prove  that $T$ admits a geometric representation as 
  an average of currents of integration over the curves in $\cK$,
with respect to the natural invariant measure. 
In particular,  $T$ is uniformly laminar outside the origin.
\end{abstract} 

 \maketitle
 
\vspace{-1em}
\setcounter{tocdepth}{1}
\tableofcontents

\newpage

\section*{Introduction}

\subsection{A class of superattracting germs}
In this article we investigate  the local dynamics of a class of 
superattracting points in $\C^2$. 
 It has been  known for a long time  that, contrary to the
one-dimensional  case, the local dynamics at superattracting points in higher dimension  
gives rise to subtle and varied  phenomena (see, e.g., \cite{hubbard-papadopol:supfixpnt, favre:rigidgerms, favre-jonsson:eigenval, gignac:conjarnold, gignac-ruggiero:attractionrates,zbMATH06256927,zbMATH07068928,zbMATH06236906}). 

Our  setting is 
the  following: assume that $f$ is a germ of holomorphic map at 
$0\in \C^2$ with $f(0) = 0$, and   that there are integers $2\leq c<d$ satisfying: 
(i) the line $\set{z=0}$  is totally invariant and as divisors (or currents) we have  
$f^*\set{z=0}= d\set{z=0}$; and 
(ii) the restriction of $f$ to $\set{z=0}$ admits a superattracting point 
of order $c$. Then, in  local holomorphic coordinates, $f$ can be expressed as
\begin{equation}\label{eq:f_intro}
f(z,w) = \big(z^d, w^c+ zh(z,w)\big), \text{ with } h(0,0)=0.
\end{equation}
This class of mappings arises from our attempts at dealing with  the  missing case of 
the dynamical Manin-Mumford conjecture for regular polynomial mappings
  in~\cite{dujardin-favre-ruggiero:DMM}. 

When $p = (z,w)$ is close to 0, $f^n(p)$ converges super-exponentially fast to 0, and there are two distinct regimes: the majority of orbits converge tangentially to $\set{z=0}$, eventually landing in a region of the form $\set{\abs{z}< C\abs{w}^c}$.  For such orbits, the 
 dynamics is dominated by that of $f\rest{\set{z=0}}$ and the 
asymptotic contraction rate 
$$
c_\infty(p) 
= \lim_{n\to\infty} \sqrt[n]{\big|\log\abs{f^n(p)}\big|}
$$ 
is equal to $c$. 
There is however a thin subset $\cW$ of points  converging faster to the origin, with 
asymptotic rate $d$. The set $\cW$ may thus be viewed as a kind of ``strong (super-)stable manifold'' of the 
origin. This is a closed pluripolar set with a rich and intriguing structure. More precisely, the 
  following result (which is already partly contained in~\cite{favre-jonsson:eigenval}) 
  is proven in Section~\ref{sec:complex}.

\begin{introthm}\label{thm:complex_intro}
Let $f$ be a germ of holomorphic mapping in $\C^2$ of the form~\eqref{eq:f_intro} with $2\leq c<d$. Then  for every $p = (z,w)$ close to the origin, the asymptotic contraction rate exists and equals $c$ or $d$. 

The sequence  $\lrpar{\unsur{c^n}\log\abs{f^n(z,w)}}_{n\geq 0}$ converges to a plurisubharmonic function $g $ with values in $\R\cup\set{-\infty}$,
 such that $e^{g }$ is continuous,  
$g \circ f  = cg $, and $g $ is pluriharmonic in $\set{g >-\infty}$. 

The closed  pluripolar set $\cW= \set{g  = -\infty} = \supp({\mathrm{d}}{\mathrm{d}}^cg )$ is precisely the locus where $c_\infty = d$. It is not a subvariety unless $f$ is conjugated to a product map.  
\end{introthm}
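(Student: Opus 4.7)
The strategy is to build $g$ as a Böttcher-type limit of the normalized iterates $c^{-n}\log|f^n|$, deduce the dichotomy for $c_\infty$ from two distinct dynamical regimes, and reduce the rigidity claim to the existence and extension of a Böttcher coordinate across $\cW$. First I would iterate to write $f^n(z,w) = (z^{d^n}, w_n(z,w))$ with $w_{n+1} = w_n^c + z^{d^n} h(z^{d^n}, w_n)$, on a small $f$-invariant bidisc $D$ with $|h| \le M$. Using the max norm so that $|f^n| = \max(|z|^{d^n}, |w_n|)$, each $g_n := c^{-n}\log|f^n|$ is plurisubharmonic with $g_n \le 0$. I would distinguish the open ``regular'' set $\Omega := \{p \in D : |w_n(p)|^c > 2M|z|^{d^n} \text{ for all large } n\}$. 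On $\Omega$, $w_{n+1} = w_n^c(1+\varepsilon_n)$ with $|\varepsilon_n| \le M|z|^{d^n}/|w_n|^c \le 1/2$, so the telescoping series $\sum_n c^{-(n+1)}\log(1+\varepsilon_n)$ converges absolutely and $c^{-n}\log|w_n|$ converges locally uniformly to a continuous pluriharmonic function $g$. Extending $g$ by $-\infty$ on $\cW := D \setminus \Omega$, compactness of locally bounded plurisubharmonic sequences together with $g_{n+1} = c^{-1}(g_n \circ f)$ yields $g_n \to g$ in $L^1_\loc$, with $g$ plurisubharmonic on $D$ and $g \circ f = cg$; pluriharmonicity of $g$ on $\Omega$ gives $\supp dd^c g \subseteq \cW$, with equality since a pluriharmonic function is never $-\infty$.

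Next I would establish the continuity of $e^g$ and the contraction rate dichotomy. Continuity of $e^g$ on $\Omega$ is built in; near a point of $\cW$, it comes from upper semicontinuity of $g$ combined with effective bounds on $g_n$ that persist across times where $|w_n|^c \le 2M|z|^{d^n}$. For the rate, if $p \notin \cW$ then $|f^n(p)|^{1/c^n} \to e^{g(p)} > 0$, forcing $c_\infty(p) = c$. If $p \in \cW$, the bound $|f^n| \ge |z|^{d^n}$ gives $c_\infty(p) \le d$; conversely, setting $u_n := \log|w_n|$, the recurrence $|w_{n_k}|^c \le 2M|z|^{d^{n_k}}$ at an infinite sequence $n_k \to \infty$ is strongly constrained: iterating the regime-A relation $u_{n+1} = cu_n + O(1)$ backward from $n_{k+1}$ to $n_k + 1$ forces the sharp lower bound $|u_{n_k+1}| \gtrsim d^{n_k}(d/c)^{L_k}|\log|z||$ with $L_k := n_{k+1} - n_k$. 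Propagating this forward through the regime-A gap then yields $|\log|f^n(p)|| \ge (|\log|z||/c)\, d^n + O(1)$ uniformly, hence $c_\infty(p) = d$.

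Finally, for the rigidity statement, suppose $\cW$ is an analytic subvariety. Being closed, pluripolar, and totally $f$-invariant, it is a union of invariant curves through $0$; a leading-order analysis of the invariance equation rules out tangency to $\{z=0\}$, so $\cW$ is locally a graph $\{w = \phi(z)\}$ with $\phi(0)=0$. Straightening via $(z,w) \mapsto (z, w - \phi(z))$, I may assume $\cW = \{w=0\}$; invariance of $\{w=0\}$ then forces $Q(z,0) = 0$, i.e., $h(z,w) = w\tilde h(z,w)$. The Böttcher coordinate $\varphi(z,w) := \lim_n w_n^{1/c^n}$, with branch normalized so that $\varphi(0,w) = w$ (using $w_n(0,w) = w^{c^n}$), extends holomorphically to $D$ with $\varphi = 0$ on $\cW$, $\partial_w\varphi(0,0) = 1$, and $\varphi \circ f = \varphi^c$; using $(z, \varphi)$ as coordinates then conjugates $f$ to $(z^d, w^c)$. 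The main obstacle will be the rate-$d$ lower bound on $\cW$: the gaps $L_k$ between consecutive regime-B events can be arbitrarily large, and the tuning estimate above must be leveraged uniformly in order to propagate the decay $|\log|f^n|| \gtrsim d^n$ throughout long gaps.
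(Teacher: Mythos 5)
Your construction of $g$, the $L^1_{\mathrm{loc}}$ convergence via compactness of locally bounded psh sequences, the invariance $g\circ f=cg$, and pluriharmonicity on $\Omega$ all follow the same lines as the paper's Theorem \ref{thm:complex}, and the Böttcher-type telescoping estimate in the regular region is exactly right. However, there are three places where the proposal either papers over a genuine difficulty or invents one that is not there.

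First, the key step you flag as the ``main obstacle'' --- propagating a rate-$d$ lower bound for $\log|f^n(p)|$ through long regime-A gaps on $\cW$ --- is an artefact of your definition. The paper first proves that $\Omega_0=\{|z|<|w|^c\}$ is forward-invariant (this is a two-line estimate using the normalizations $r<1/2$, $\|h\|<1/2$). Once this is known, a point of $\cW$ is exactly a point whose full orbit stays out of $\Omega_0$, so $|w_n|\le|z_n|^{1/c}=|z|^{d^n/c}$ holds at \emph{every} time $n$, and $c_\infty(p)=d$ follows immediately by sandwiching $\log|\log|f^n(p)||$ between $n\log d+O(1)$ on both sides. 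There are no gaps $L_k$ to propagate through; your elaborate backward-iteration estimate $|u_{n_k+1}|\gtrsim d^{n_k}(d/c)^{L_k}|\log|z||$ is unneeded. Until you prove forward invariance of a suitable regular region, your $\cW$ (``regime-B infinitely often'') is not obviously closed, not obviously equal to $\{g=-\infty\}$, and the rate estimate is incomplete.

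Second, the continuity of $e^g$ up to $\cW$ and the genuine $L^1_{\mathrm{loc}}$ convergence of $g_n$ (not merely of a subsequence) are not actually established; your sentence about ``upper semicontinuity of $g$ combined with effective bounds on $g_n$'' is exactly where the real work happens. The paper's argument introduces the exhaustion $\Omega_N=\bigcup_{n\le N}f^{-n}(\Omega_0)$, shows $g\le C_1(d/c)^N\log|p|+C_2$ on $\Omega_N\setminus\Omega_{N-1}$ (so that $g\to-\infty$ at $\cW$), and then upgrades to locally uniform convergence of $|w_n|^{1/c^n}\to e^{g}$ on all of $\Omega$ by a maximum-principle argument on the vertical slices $\{z=s\}$, using that $\Omega_N^\complement\cap\{z=s\}$ is polynomially convex. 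Without something of this kind you only get $L^1_{\mathrm{loc}}$ convergence along a subsequence and the claimed continuity of $e^g$ remains unproved.

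Third, the rigidity argument is the most serious gap. You assume the invariant subvariety is irreducible and transverse to $\{z=0\}$ by an unstated ``leading-order analysis''; neither claim is obvious. The paper obtains irreducibility and smoothness/transversality from the non-Archimedean dichotomy: total invariance of $\cW$ forces the finite set of branch-valuations to be totally $f_\diamond$-invariant, which by the equidistribution theorem forces $\cK$ to be a singleton, and Theorem \ref{thm:invariantcantor} then gives a smooth curve transverse to $\{z=0\}$ with $f(z,w)=(z^d,(w+\phi(z))^c-\phi(z^d))$. Your Böttcher-coordinate argument then needs $h(z,w)=w\tilde h(z,w)$ with a specific vanishing, and even granting that, it conjugates $f$ to $(z^d,w^c)$ only after a B\"ottcher change of variable in $w$ whose convergence you would need to justify (the analogue of Lemma \ref{lem:zd}, applied with the roles of $z$ and $w$ reversed). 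As written, the chain of reductions has several unproved links.
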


Our main purpose in this paper is to demonstrate  
  that the structure of the set $\cW$ and of the current $\supp({\mathrm{d}}{\mathrm{d}}^cg )$
 are encoded by a natural  dynamical system whose phase space is of non-Archimedean nature. 
This space is defined as a suitable completion of  the space of germs of irreducible curves 
  in $(\C^2, 0)$, and corresponds to the valuative tree, which was extensively studied in~\cite{favre-jonsson:valtree,favre-jonsson:eigenval}. 
  We found it more convenient to view this space as a ball in the Berkovich analytification of the affine line over the field of formal Laurent series $\C\laurent{z}$, which makes the connection with other relevant works more transparent 
 (these two points of view are compared in \S~\ref{sec:compare} below). 
 
 \subsection{Polynomial skew products over a non-Archimedean field}
To define our non-Archime\-dean model we work with mappings of the form 
 \begin{equation}\label{eq:cd_intro}
f(z,w) = \biggl(z^d, w^c + \sum_{j=0}^{c-1} h_j(z) w^j\biggr)
\end{equation}
where $h_j \in \C\fps{z}$ satisfy $h_j(0)=0$, 
 and 
as in~\eqref{eq:f_intro}, we assume that $2\leq c<d$. Any map $f$ as in~\eqref{eq:cd_intro} falls within the class of \emph{polynomial skew products}. 
It induces a self-map $f_\diamond$ 
of the field of (formal) Puiseux series $\C\puiseux{z}$ and of its completion $\nL$ for the $t$-adic norm, defined by the formula 
\begin{equation}\label{eq:fdiamond_intro}
f_\diamond\colon \phi(z)\longmapsto \lrpar{\phi \big({z^{1/d}}\big)}^c+ \sum_{j=0}^{c-1} h_j\big({z^{1/d}}\big)\, \lrpar{\phi\big({z^{1/d}}\big)}^j.
\end{equation}
Recall that a   Puiseux series (resp. an element of $\nL$) is a series of the form $\sum_{n=0}^\infty {a_n} z^{\beta_n}$ 
where  $(a_n)\in \C^\N$ and  $(\beta_n)$ is an increasing sequence of rational numbers with a fixed 
denominator (resp. an increasing sequence of rational numbers with $\beta_n\to \infty$). 

The map $f_\diamond$ admits a  natural continuous extension to the Berkovich analytification of 
the affine line  $\A^{1, \an}_{\nL}$ over  $\nL$, enabling us to exploit analytic and potential theoretic
tools to analyze its dynamics. It is crucial to   consider at the same time 
the induced dynamics of $f_\diamond$
on $\A^{1, \an}_{\C\laurent{z}}$. Indeed the latter space is more directly related  to geometry, 
as the classical points lying in the open unit ball correspond to irreducible formal germs in $(\C^2,0)$ 
different from $\{z=0\}$ 
(while classical points in $\A^{1, \an}_{\nL}$ should be thought of as 
parameterizations of these curves). There is a canonical surjective map $\pi\colon \A^{1, \an}_{\nL}\to\A^{1, \an}_{\C\laurent{z}}$, and 
we say that a point (in either space) is of type 1 when it corresponds to a point in $\nL$.

To make the discussion clearer, we focus on the dynamics of $f_\diamond$ on 
$\A^{1, \an}_{\C\laurent{z}}$. 
 We first observe (Theorem~\ref{thm:basic_dynamical}) that the interesting dynamics takes place in the (Berkovich) unit ball $B^{\an}(0,1)$: 
 under iteration,  a point $x\in B^{\an}(0, 1)$ either converges  to the Gauss point $\xg$
 or it remains in 
 $B^{\an}(0,\rho)$ for some $\rho<1$. We define
 \begin{equation}
\cK := \{x \in B^{\an}(0,1)\subset \A^{1,\an}_{\C\laurent{z}}, \, f^n_\diamond(x) \text{ does not converge to }\xg \}~.
\end{equation}
The ball $B^{\an}(0,1)$ is a $\R$-tree with one (non-compact) end, and the compact set $\cK$
consists of the set of points that do not reach a neighborhood of this end under iteration.  
It thus plays the role of the filled-in Julia set in our context. It will also 
turn out to be the non-Archimedean counterpart of $\cW$. In Section~\ref{sec:cantor} 
we prove the following:
 \begin{introthm}\label{thm:2B}
 Let $f$ be a map of the form~\eqref{eq:cd_intro} with $2\leq c<d$. Then 
 $\cK$   is either a Cantor set of type 1 points, or it is reduced to a single point corresponding to a smooth formal curve transverse to $\set{z=0}$, in which case $f$ is conjugated to 
 a product map. 
 
 There exists a canonical $f_\diamond$-invariant ergodic probability measure $\mu_{\na}$, with  $\mathrm{Supp}(\mu_{\na}) = \cK$ and  such that 
 for every $x\in B^{\an}(0,1)$, the sequence 
 $\frac{d^n}{c^n}( f^{n}_\diamond )^*\delta_x$ converges to $\mu_{\na}$. 
 \end{introthm}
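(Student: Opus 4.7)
The plan is to reduce the problem to the classical theory of polynomial dynamics on the Berkovich affine line over a complete algebraically closed non-Archimedean field. Upstairs on $\A^{1,\an}_\nL$, the map $f_\diamond$ becomes a polynomial of degree $c\ge 2$ in the fiber variable (once the substitution $z\mapsto z^{1/d}$ is absorbed into the base field $\nL$), so I will apply the standard equidistribution and structural theorems for polynomial Berkovich dynamics (in the spirit of Baker--Rumely and Favre--Rivera-Letelier), and then descend via the Galois-equivariant projection $\pi$ to $\A^{1,\an}_{\C\laurent{z}}$.

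\textbf{Existence, support, and normalization.} The basic dynamical theorem identifies $\xg$ as the unique totally invariant point of $f_\diamond$ inside $B^{\an}(0,1)$; it is in particular the only exceptional point for $f_\diamond$ viewed as a polynomial on $\A^{1,\an}_\nL$. The equidistribution theorem then furnishes a canonical $f_\diamond$-invariant probability measure $\mu_\nL$ satisfying $f_\diamond^*\mu_\nL = c\,\mu_\nL$ and $c^{-n}(f_\diamond^n)^*\delta_y \to \mu_\nL$ for every $y\neq\xg$, with $\supp\mu_\nL$ equal to the Berkovich Julia set of $f_\diamond$. The latter is the topological boundary of the basin of attraction of $\xg$, and it coincides with $\cK$ once one verifies (by a direct disk-expansion estimate on the tree) that $\cK$ has empty interior in $B^{\an}(0,1)$. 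Pushing forward along $\pi$ produces $\mu_{\na}$. The normalization factor $d^n/c^n$ arises from the following accounting: after $n$ iterations of the substitution $z\mapsto z^{1/d}$, the Galois fiber of $\pi$ over a downstairs point has size $d^n$, so the upstairs preimage set of cardinality $c^n$ collapses to a downstairs measure of total mass $c^n/d^n$, and multiplying by $d^n/c^n$ restores a probability in the limit. Ergodicity of $\mu_{\na}$ is then inherited from the standard mixing of $\mu_\nL$ via $\on{Gal}(\nL/\C\laurent{z})$-equivariant descent.

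\textbf{Cantor versus product structure and the main obstacle.} For the structural dichotomy, I invoke the classical classification of filled Julia sets of Berkovich polynomials of degree $\ge 2$: $\cK$ is either connected or totally disconnected, and the connected case holds exactly when every critical point of $f_\diamond$ lies in $\cK$. In the connected case, using the explicit form $f(z,w)=(z^d,w^c+zh(z,w))$, I would show that $\cK$ then reduces to a single totally invariant formal smooth curve $w=\psi(z)$ transverse to $\set{z=0}$, and flattening this invariant curve by a holomorphic change of coordinates conjugates $f$ to the product $(z^d,w^c)$. In the totally disconnected case, perfectness of $\cK$ (every point is accumulated by its own preimages, by the equidistribution above) gives the Cantor conclusion. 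The main obstacle is the type-1 claim: a non-type-1 point of $\cK$ would correspond either to a closed Berkovich disk of positive radius (types 2 and 3) or to a decreasing nested intersection of such disks (type 4). Using the local-degree formula for $f_\diamond$ on the Berkovich tree together with the ramified substitution $z\mapsto z^{1/d^n}$, one argues that such disks are strictly expanded under iteration and eventually cover a neighborhood of $\xg$, contradicting $x\in\cK$. This disk-radius expansion estimate is the technically most delicate step, since it must simultaneously account for the local contraction at the critical points of the polynomial part and the ramification arising from the skew-product structure.
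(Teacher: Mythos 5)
There is a genuine gap, and it sits at the very first step of your strategy. On $\A^{1,\an}_\nL$ the map is $f_\diamond=\hat f\circ\tau$, where $\tau$ is the semilinear substitution $\phi(z)\mapsto\phi(z^{1/d})$; it is \emph{not} a polynomial map over $\nL$, and the equidistribution theory of Favre--Rivera-Letelier / Baker--Rumely does not apply to it. The twisted generalization (Nie--Zhao) covers only the large relative degree case $\lambda\deg>1$, whereas here $\lambda\deg\hat f=c/d<1$. Several of your intermediate claims are false as a result: the pullback operator satisfies $f_\diamond^*\mu=\frac{c}{d}\mu$ in mass (the $1/d$ exponent in the definition of $f_\diamond$ enters already over $\nL$), so the correct normalization $\frac{d^n}{c^n}(f_\diamond^n)^*\delta_x$ is needed upstairs as well and is not produced by any Galois-fiber accounting; $\cK$ is not the Julia set of $f_\diamond$ — it lies in the \emph{Fatou} set in the sense of Birkett and Nie--Zhao, so "support of the equilibrium measure = Julia set = boundary of the basin of $\xg$" identifies the wrong object; and the dichotomy "connected iff every critical point lies in $\cK$" fails here: for $f(z,w)=(z^5,w^3-3zw^2)$ the critical branch $c_0=0$ is fixed, hence in $\cK$, and $\cK$ is nevertheless a Cantor set. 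Finally, the step you yourself flag as the most delicate — that every point of $\cK$ has diameter zero — is exactly where the substantive work lies, and it is left unproved.

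For comparison, the paper builds everything from scratch. An explicit invariant annulus $U=\{\rho_0<|x|<1\}$ attracted to $\xg$ gives $\cK=\bigcap_n f_\diamond^{-n}(\overline B(0,\rho_0))$, a nested intersection of finitely many disjoint closed balls, whence compactness and the Cantor-or-singleton dichotomy (the singleton case is tied to a totally invariant smooth branch by a direct computation on $f^*(w-\phi)$, and ruling out "$\cK$ a point but $\cK_\nL$ a Cantor set" requires the multiplicity bounds of Theorem~\ref{thm:estim-mult}). That $\cK$ consists of type~1 points follows from the Jacobian formula $-\log\diam(f_\diamond(x))=\frac1d(-\log\diam(x)-\log|\jac_w(f)(x)|)$ combined with a logarithmic distortion estimate for $\jac_w(f)$ along segments, giving that $|\log(\diam f_\diamond^n(x')/\diam f_\diamond^n(x))|\le\frac{c^n}{d^n}|\log(\diam x'/\diam x)|\to0$, so a positive-diameter point cannot stay in $\cK$. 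The measure is then obtained potential-theoretically as $\mu_{\na}=\Delta g_{\na}$ with $g_{\na}=\lim\frac{d^n}{c^n}\log|f_\diamond^n(\cdot)|$, equidistribution follows from the pointwise convergence of the normalized potentials, and mixing from the sup-norm contraction $\sup|(f_\diamond)_*h|\le\frac cd\sup|h|$. If you want to salvage your outline, you must replace the appeal to classical polynomial equidistribution by this Green-function construction and supply the diameter-contraction estimate you deferred.
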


See below Section~\ref{sec:berkovich} for basics on Berkovich theory. 
 Note that in Sections~\ref{sec:berkovich} to~\ref{sec:examples}, we 
 work in the slightly more general context of the field $k\laurent{z}$, with $k$ an arbitrary  algebraically closed field $k$ of characteristic 0. 
 
 \subsection{History and related results}
 Polynomial and rational dynamics on Berkovich spaces in dimension 1 
 is now a well-developed subject; a valuable overview is provided in the books of Baker-Rumely~\cite{baker-rumely}  and Benedetto~\cite{benedetto:dyn1NA}. Mappings of the form $f_\diamond$ are not directly covered by these references. Their dynamical properties were recently investigated by  Birkett \cite{birkett:skewproductsberkproj} and independently
by  Nie and Zhao \cite{nie-zhao:Berkdyntwistratmaps}.

In \cite{nie-zhao:Berkdyntwistratmaps}, the authors consider  \emph{twisted rational maps}, on $\P^{1,\an}_{K}$, where $K$ is an algebraically closed field of characteristic $0$, that is non-trivially valued. Such maps  are of the form $R_\tau=R \circ \tau$, where $R\in K(w)$ is a rational map, and $\tau$ is a continuous automorphism of $K$ satisfying the property $\abs{\tau(\cdot)} = \abs{\cdot}^{\lambda}$ for some $\lambda \in \nR_{>0}$. The degree of $R$ is called the \emph{relative degree}
 of $R_\tau$ (this terminology is from~\cite{birkett:skewproductsberkproj}).
Under the  assumption that $R_\tau$ has 
\emph{large relative degree} $\lambda \deg (R)  > 1$, they obtain 
equidistribution results (in the spirit of  Favre and Rivera-Letelier \cite{favre-riveraletelier}) for backward orbits. When moreover $R$ is a polynomial, the authors describe the action of $R_\tau$ on the Julia set, as a one-sided shift on $\deg(R)$ symbols.

For mappings of the form~\eqref{eq:fdiamond_intro}, if we let $\tau$ be the automorphism of $\nL$
given by $\tau(z)=z^{1/d}$,   we have $f_\diamond = \hat{f} \circ \tau$, where
$\hat{f}$ is the polynomial
\[
\hat f(w)=w^c + \sum_{j=0}^{c-1} h_j(\tau(z)) w^j~.
\]
It follows that in our situation 
the relative degree equals $\deg \big(\hat f\big) = c$ and $\lambda = 1/d$
 so that we are in the opposite case $\lambda \deg \big(\hat f\big) = c/d < 1$
of \emph{small relative degree}.

In the language of Birkett \cite{birkett:skewproductsberkproj}, the value $\lambda_\tau=1/d$ is  
 the \emph{scale factor}, which places us in the \emph{superattracting} case of that paper. 
Under this  condition, it is shown in  \cite[Corollary 3.14]{birkett:skewproductsberkproj}   that $f_\diamond$ is a contraction on the hyperbolic space, given  
 the Berkovich projective line minus the rigid points,  endowed with the hyperbolic distance. 
 This property was   investigated in the context of valuation spaces in \cite{gignac-ruggiero:locdynnoninvnormsurfsing}.
 
 It is worth mentioning  that our set $\cK$ lies in the Fatou set of $f_\diamond$ both in the sense of~\cite[Definition~5.1]{birkett:skewproductsberkproj} and~\cite[Definition~3.1]{nie-zhao:Berkdyntwistratmaps}. 
 However the set of iterates does not form a normal family near any point of $\cK$ 
 in the sense of~\cite{zbMATH06051033}. This underlines some of the  subtleties that may
 arise in the Fatou-Julia
 theory of general skew products. 

 \subsection{Curves of bounded multiplicity in $\cK$} 
 Recall that a type 1 point $x\in \A^{1,\an}_{\C\laurent{z}}$ is defined by a series $\phi$ in $\nL$. 
 When $\phi$ is a Puiseux series,  the point  $x$ corresponds to an irreducible formal germ $C(x)$, and
  to comply with the terminology of  non-Archimedean geometry, we say that it is \emph{rigid}. 
 We define the  \emph{multiplicity} $m(x)$  to be   the order of tangency of $C(x)$  with 
 $\set{z=0}$ (this terminology is consistent with Berkovich theory but it is also a little unfortunate since $m(x)$ is not the multiplicity of $C(x)$ at 0). 
When $x$ is not defined by a Puiseux series, by definition it is 
 not rigid, and we set $m(x)=\infty$.  
 The geometric interpretation of non-rigid points
is less clear (see~\cite[Proposition~6.9]{favre-jonsson:valanalplanarplurisubharfunct} 
 for an attempt using currents).  

%
%
 
In sections~\ref{sec:upper} and~\ref{sec:infinite} we show   that 
 the multiplicity of points in $\cK$ is controlled by the recurrence properties of the critical set. 
 More precisely, let $\mathcal C$ be the (finite) set consisting of all rigid points associated with the
 irreducible components of the critical locus of $f$ different from $\set{z=0}$. 
 We also introduce the restricted critical set $\mathcal C^+\subset \mathcal C$
corresponding to critical branches $c$ such that $1+J(c)$ 
 does not divide $d$, where $J(c) = \ord_c\jac_w(f)$ is the  multiplicity of $c$ 
 as a critical branch.

 Our second main theorem is  the following. Here $\omega(x)$ is the $\omega$-limit set, i.e.  
 the cluster set of the set of iterates of $x$.
 
 \begin{introthm}\label{thm:multiplicity_intro}
Let $f$ be a map of the form~\eqref{eq:cd_intro} with $2\leq c<d$. Then:
\begin{enumerate}[(i)]
\item any point in $\cK$ such that $\omega(x)\cap \mathcal C^+=\emptyset$ has finite multiplicity;
\item if  $\mathcal C^+$ is non-recurrent in the sense that $\omega(c)\cap \mathcal C^+ = \emptyset$ 
for any $c\in \mathcal C^+$, then the multiplicity is uniformly bounded on $\cK$;
\item if $\cK$ is not reduced to a point and there is a periodic  critical point in $\mathcal C^+$, 
  then $\cK$ contains non-rigid points and rigid points with arbitrary large 
multiplicity. 
\end{enumerate}
 \end{introthm}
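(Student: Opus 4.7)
The plan centers on a multiplicity transformation formula for $f_\diamond$. For a rigid point $x \in B^{\an}(0,1)$ corresponding to the irreducible formal curve $C(x)$, the pushforward relation $f_*(C(x)) = \deg(f\rest{C(x)}) \cdot C(f_\diamond x)$ together with $f^*L = dL$ and the projection formula yields
\begin{equation*}
m(f_\diamond x) = \frac{d \, m(x)}{\delta(x)}, \qquad \delta(x) := \deg(f\rest{C(x)}) \in \N_{>0}.
\end{equation*}
A careful analysis via local intersection numbers at critical branches should express $\delta(x)$ as a product of local contributions, each of the form $1+J(c)$ for a critical branch $c$ that the parameterization of $C(x)$ encounters, combined with contributions from the ramification of $z \mapsto z^d$. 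The arithmetic role of $\mathcal{C}^+$ is then transparent: a critical branch $c$ with $1+J(c) \mid d$ is absorbed by the $d$-fold ramification without forcing the Puiseux denominator of iterates to grow, whereas a branch $c \in \mathcal{C}^+$ forces a nontrivial arithmetic defect that prevents the stabilization of $m$.

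Iterating the formula gives
\begin{equation*}
m(f_\diamond^n x) = \frac{d^n \, m(x)}{\prod_{k=0}^{n-1} \delta(f_\diamond^k x)}.
\end{equation*}
For part (i), I would argue by contrapositive: if $m(x) = \infty$, then non-rigidity implies $x$ is the limit of rigid points with unbounded multiplicities, and by the transformation formula the orbit of $x$ must cluster on $\mathcal{C}^+$, i.e., $\omega(x) \cap \mathcal{C}^+ \ne \emptyset$. Equivalently, if $\omega(x) \cap \mathcal{C}^+ = \emptyset$, the orbit eventually avoids $\mathcal{C}^+$ and hence, from some time $N$ on, $\delta(f_\diamond^k x) = d$ (no arithmetic defect), which bounds $m(x)$ in terms of $m(f_\diamond^N x)$ and the first $N$ factors, yielding $m(x) < \infty$. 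For part (ii), the non-recurrence hypothesis, finiteness of $\mathcal{C}^+$, and compactness of $\mathcal{K}$ in the tree $B^{\an}(0,1)$ allow the choices of $N$ and of a neighborhood of $\mathcal{C}^+$ to be uniform in $x$, giving a uniform multiplicity bound.

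For (iii), let $c \in \mathcal{C}^+$ be periodic of period $p$. Periodicity forces $\prod_{k<p}\delta(f_\diamond^k c) = d^p$, a balance which is delicate in view of $1 + J(c) \nmid d$. I would construct rigid points $x_n \in \mathcal{K}$ whose orbits shadow the periodic orbit of $c$ for at least $n$ passages before drifting away; at each passage the multiplicity acquires a nontrivial factor that fails to match $d$ exactly, so that $m(x_n) \to \infty$. The existence of such shadowing orbits uses the Cantor structure of $\mathcal{K}$ and the equidistribution of preimages toward $\mu_{\na}$ (Theorem~\ref{thm:2B}), which places preimages of $c$ densely in $\mathcal{K}$. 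Extracting an accumulation point $x_\infty \in \mathcal{K}$ of the sequence $(x_n)$, the unbounded denominators along the $x_n$ imply, via a Cantor-style argument, that $x_\infty$ corresponds to a genuine element of $\nL \setminus k\puiseux{z}$, hence a non-rigid point of $\mathcal{K}$.

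The main obstacle is the precise computation of $\delta(x)$ as a product of local intersection-theoretic contributions at critical branches, which requires tracking how the parameterization of $C(x)$ interacts with the components of the critical locus and establishing the arithmetic dichotomy between $\mathcal{C}^+$ and $\mathcal{C} \setminus \mathcal{C}^+$. Subsidiary obstacles include: making the uniformity in (ii) compatible with the tree structure on $B^{\an}(0,1)$, so that $N$ and the neighborhood of $\mathcal{C}^+$ depend only on $f$; and, in (iii), rigorously ensuring that the limit $x_\infty$ is non-rigid rather than merely rigid with very large multiplicity, which requires that the sequence of Puiseux denominators of the $x_n$ actually diverges in a strong sense, not only along a subsequence.
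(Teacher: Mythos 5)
Your transformation formula $m(f_\diamond x)=d\,m(x)/\delta(x)$ with $\delta(x)=\deg(f\rest{C(x)})$ is correct for \emph{rigid} points (it is exactly the relation $dm_i=e_im$ appearing in the paper's proof of Theorem~\ref{thm:basic-finite}), but it cannot carry the argument, for two reasons. First, the formula presupposes that $x$ is rigid, whereas the whole content of parts (i)--(ii) is to \emph{prove} rigidity: for a point of infinite multiplicity there is no curve $C(x)$ and no degree $\delta(x)$, so the identity $m(x)=m(f_\diamond^Nx)\prod_{k<N}\delta(f_\diamond^kx)/d^N$ only transfers finiteness of $m(f_\diamond^Nx)$ to finiteness of $m(x)$ --- which is circular, since $f_\diamond^Nx$ is just as unknown as $x$. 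Second, the assertion that $\delta(f_\diamond^kx)=d$ once the orbit avoids $\mathcal C^+$ is essentially a restatement of the conclusion (by your own formula, $\delta=d$ iff $m$ is preserved), and the proposed product expansion of $\delta(x)$ over critical branches with factors $1+J(c)$ is not established and is not what the paper proves. The paper's actual engine operates one level up in the tree: it introduces the \emph{generic multiplicity} $b(x)=\lcm\{m(x),q(A(x))\}$ of type~2 points, uses the Jacobian formula to show that the thinness $A$ transforms affinely with slope $(J+1)/d$ on segments of constant critical slope $J$ (Corollary~\ref{cor:jacval}), and deduces the divisibility estimate of Proposition~\ref{prop:123jac}, $b(x)\mid\lcm\{b(x_0),\nu b(f_\diamond(x)),\nu b(f_\diamond(x_0))\}$ with $\nu=q(d/(J+1))$. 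Applying this along the approximating balls $\overline B_n(x)$ shows that the sequence $b(\hat x_n)$ divides a fixed integer when the orbit avoids $\mathcal C^+$ (so $\nu=1$), hence is stationary, which is what forces $x$ to be rigid of bounded multiplicity. Your scheme has no substitute for this step.

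For part (iii) your shadowing idea is in the right spirit (the paper does take preimages of the periodic critical point inside balls attached to its approximating sequence), but the mechanism producing large multiplicity is the recursion $A(\hat c_{n+1})=A(\hat c_n)+\tfrac{d}{J+1}(A(\hat c_n)-A(\hat c_{n-1}))$, whose non-integer ratio $d/(J+1)$ forces the denominators $q(A(\hat c_n))$, hence the generic multiplicities $b(\hat c_n)$, to blow up; Theorem~\ref{thm:gen-mult} then guarantees that all rigid points in all but one of the open balls attached to $\hat c_{n}$ have multiplicity at least $b(\hat c_{n})$. You correctly flag that an accumulation point of rigid points with unbounded multiplicity need not be non-rigid (multiplicity is only lower semicontinuous), but you do not resolve it; the paper does so by constructing a \emph{nested} sequence of open balls $B'_\ell$ meeting $\cK$ with $b(B'_\ell)\ge 2b(B'_{\ell-1})$, whose intersection therefore contains a point of $\cK$ and cannot contain any rigid point. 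Without the generic-multiplicity machinery, both the growth statement and the non-rigidity statement remain unproven.
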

 
 In~\cite{trucco:algJuliaBerko}, Trucco studies
 the dynamics of a polynomial mapping $P$ with coefficients in the field of 
 complex Puiseux series. His Theorem~F
 asserts  that the Julia set $J_P$ contains only Puiseux series 
 if and only if no critical point in  $J_P$  is recurrent, and in that case the multiplicity is uniformly bounded.  
Theorem~\ref{thm:multiplicity_intro} is thus 
a  generalization of his results to skew-products of small relative degree. Note that it
does not say anything in the presence of a non-periodic recurrent critical point in $\mathcal C^+$. 
The very  existence of such   examples is a non-trivial fact and remains open. 
We present explicit examples illustrating each case of the theorem in Section~\ref{sec:examples}.

The proof of  Theorem~\ref{thm:multiplicity_intro} relies on the notion of
\emph{generic multiplicity} for type 2 points of $B^{\an}(0,1)$
which we discuss in  \S~\ref{ssec:generic-mult}. The key step is to bound from above
the generic multiplicity at a point $x$ in terms of the generic multiplicity along its orbit. 
This delicate point is handled in Proposition~\ref{prop:123jac}.

 \subsection{Back to germs in $\C^2$} 
 Building on the previous results 
 we come back to the problem of describing the structure of the super-stable set $\cW$ 
 for a germ of the form~\eqref{eq:f_intro}.
 After a formal conjugacy, we can bring $f$ to the form~\eqref{eq:cd_intro} with $2\leq c<d$,  
 so that Theorem~\ref{thm:complex_intro} applies. We work under the assumption  that 
no critical branch corresponds to a point in $\cK$, or equivalently is contained  in $\cW$ as a subset of $\C^2$ (see Proposition~\ref{prop:anal-na}). Then 
by the second conclusion of 
Theorem~\ref{thm:multiplicity_intro}, any point $x\in \cK$ is associated with a germ of (a priori) formal curve $C(x)$, 
and the  multiplicity $m(x)$ is uniformly bounded by some integer $m$.  

Recall from Theorem~\ref{thm:2B}, that $\cK$ supports a canonical ergodic measure $\mu_{\na}$. 
  Our main result is the following:

\begin{introthm}\label{thm:structure_intro}
 Assume that $f$ is a holomorphic map of the form~\eqref{eq:f_intro} with $2\leq c<d$, and that
no critical branch belongs to $\cK$. 

Then there exist an integer $m\in\N^*$ and a constant $r_0>0$, such that for any $x\in \cK$ there is a Puiseux series
 $\phi_x\in\C\puiseux{z}$ which is convergent  in the disk $\D(0,r_0)$, and such that 
 $C(x)$ is parameterized by $t\mapsto (t^{m},\phi_x(t^{m}))$.

Moreover, we have the equality of currents
\begin{equation}\label{eq:trez}
 {\mathrm{d}}{\mathrm{d}}^c g = \int_\cK \frac{[C(x)]}{m(x)} \, {\mathrm{d}}\mu_{\na}(x)~,
\end{equation}
and ${\mathrm{d}}{\mathrm{d}}^c g$ is a uniformly laminar current outside the origin. 
\end{introthm}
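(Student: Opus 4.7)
By Theorem~C(ii), the multiplicity $x\mapsto m(x)$ is bounded on $\cK$ by some $m_0$, so taking $m=m_0!$ provides a common denominator and the formal parameterization $t\mapsto(t^m,\phi_x(t^m))$ of $C(x)$ is well defined for every $x\in\cK$. The substantive issue is to upgrade each $\phi_x$ to an analytic series on a disk $\D(0,r_0)$ of radius independent of $x$. I would fix a rigid periodic point $x_0 \in\cK$ (available by density of periodic points for $f_\diamond$), whose attached germ $C(x_0)$ is algebraic, hence convergent on some $\D(0,r_0)$ contained in the polydisk of definition of $f$. The preimages $(f_\diamond^n)^{-1}(x_0)$ correspond to the irreducible branches of $f^{-n}(C(x_0))$, each of which remains analytic on a disk of comparable size because, by hypothesis, no critical branch lies in $\cK$ and backward iteration therefore avoids ramification near $\cK$. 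Since these preimages are dense in $\cK$ by the equidistribution part of Theorem~B, a normal-family argument on holomorphic graphs over $\D(0,r_0)$ promotes every $x\in\cK$ to a convergent Puiseux series $\phi_x$ with uniform radius $r_0$.

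\textbf{Step 2 (the invariant geometric current).} Define
\[
T:=\int_\cK \frac{[C(x)]}{m(x)}\,d\mu_{\na}(x),
\]
a positive closed $(1,1)$-current, well defined because by Step~1 the currents $[C(x)]$ depend continuously on $x\in\cK$ in the topology of currents of integration over uniformly parameterized Puiseux disks. To show $f^*T = c\,T$, I would apply the pull-back formula $f^*[C(y)] = \sum_{f_\diamond(x)=y} e(x)[C(x)]$, with the ramification coefficients $e(x)$ read off from the skew-product form~\eqref{eq:cd_intro}, and combine it with the transformation rule for $\mu_{\na}$ under $f_\diamond$ coming from Theorem~B. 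After a bookkeeping calculation tracking the weights $1/m(x)$, the normalization $d/c$ carried by $f_\diamond$ cancels against the ramification contribution, producing the desired eigenvalue $c$.

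\textbf{Step 3 (identification with $dd^cg$ and laminarity).} By Theorem~A, $dd^cg$ is a positive closed pluripolar current supported on $\cW$ with $f^*(dd^cg) = c\,dd^cg$; by Step~2, so is $T$. To identify them I would build a plurisubharmonic potential $G_T$ of $T$ by integrating local defining equations of the $C(x)$, show that $G_T \circ f - c\,G_T$ is pluriharmonic (using the dynamical identity from Step~2 and the continuity provided by Step~1), and then appeal to the defining property of $g$ in Theorem~A together with uniqueness of the solution to the functional equation $G\circ f = cG$ modulo pluriharmonic corrections. Once~\eqref{eq:trez} is established, uniform laminarity away from the origin follows because the disks $C(x)\setminus\{0\}$ are mutually disjoint (distinct irreducible branches meet only at $0$) and are indexed by the Cantor set $\cK$ with transverse measure $d\mu_{\na}/m(x)$, which is exactly the defining presentation of a uniformly laminar current. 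The critical obstacle throughout is Step~1: the uniform radius $r_0$ depends crucially on the non-presence of critical branches in $\cK$, without which the backward-iterated graphs could lose analyticity in the limit and the normal-family argument would collapse.
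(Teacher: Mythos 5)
Your proposal correctly identifies the structural difficulty (the uniform radius $r_0$) and the role of the critical hypothesis, but the key mechanism is missing from Step~1 and there are further gaps downstream. Here is why the argument as written does not go through.

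The central gap is in Step~1. You assert that because backward iteration avoids ramification near $\cK$, the preimages of an algebraic germ ``remain analytic on a disk of comparable size,'' and you then invoke a normal-family argument. But absence of ramification does not by itself control the radius of convergence of an inverse branch: the domain on which a preimage stays a graph can shrink geometrically under iteration, and no normal-family argument can prevent this without a quantitative contraction estimate. The paper's actual mechanism is a graph transform $\cL_e$ acting on a fixed Banach ball $\mathcal H_r$ of holomorphic graphs, together with a proof (Proposition~\ref{prop:12}) that each $\cL_e$ maps $\mathcal H_r$ into itself and is $\tfrac12$-Lipschitz. This is what produces the uniform $r_0$: the fixed point of the one-sided shift composed with these contractions. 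Without identifying this contraction, Step~1 is an assertion, not a proof. A second, related omission: you never perform the base change $\beta_k(z,w)=(z^k,w)$ that reduces to the multiplicity-one case. This is not cosmetic. Points $x\in\cK$ with $m(x)>1$ correspond to curves that are \emph{not} graphs over $z$; they only become graphs after a sequence of blow-ups, and the graph transform lives on the blown-up model $X$, where the lift $\widehat F$ of $f$ near the marked points $p_j$ takes the monomial form~\eqref{eq:free_blowup2} that drives the contraction. Your claim that density of periodic points provides a convergent seed is also unsupported by the material in the paper (only density of \emph{preimages} of a fixed point is established, via Theorem~\ref{thm:ergo-meas}), though this particular choice of seed is not the essential issue.

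There are two further problems. In Step~3 you propose to identify $T$ with $\mathrm{d}\mathrm{d}^c g$ by a uniqueness argument for the functional equation $G\circ f=cG$ modulo pluriharmonic terms; the paper never establishes such a uniqueness statement, and it is not obvious. The paper instead proves the identification directly: it shows via Corollary~\ref{cor:Pk} that $\frac1{c^n}(f^n)^*T_0 \to \mathrm{d}\mathrm{d}^c g$ for an explicit initial current $T_0$, and then uses the convergence of the graph transforms to pass the pull-backs to the integral formula~\eqref{eq:trez}. Finally, your laminarity argument asserts that the disks $C(x)\setminus\{0\}$ are mutually disjoint because ``distinct irreducible branches meet only at $0$'' — this is false in general; two distinct irreducible germs at $0$ can certainly intersect away from the origin. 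The disjointness in the paper is established by lifting to the model $X$ (where distinct itineraries produce curves separated after finitely many iterations, hence disjoint), and then — after the base change is pushed back down by $\beta_k$ — a separate argument in \S~\ref{subs:laminarity}, using the fact that $T(f)$ is a limit of pull-backs of smooth curves and a transversality lemma from~\cite{bls}, is needed to exclude intersections away from $0$. Your proposal skips all of this.
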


The proof of this result is given in Section~\ref{sec:structure}, and requires several steps. 
We first make a suitable base change, that is, we take the pull-back by some branched cover $\beta(z,w)=(z^k,w)$
 in order  to reduce to the case $m=1$. 
Then we construct a suitable geometric model by exhibiting a sequence of point blow-ups over the origin $\kappa\colon X\to (\C^2,0)$, 
and a finite set of    points $p_i\in \kappa^{-1}(0)$
such that a germ of curve $C$ belongs to $\cK$ if and only if  the strict transform of  $f^n(C)$ intersects 
$\kappa^{-1}(0)$ at one of the points $p_i$ for every $n\geq 0$. 

At the non-Archimedean level, the set of points $p_i$ corresponds to 
a covering of $\cK$ by open balls. 
More precisely, for any $i$, 
the set of all germs of irreducible curves $C$ whose strict transform contains $p_i$
is an open ball $B_i$. 
Furthermore, the open cover $\set{B_i}$ can be chosen to form 
 a Markov partition for $f_\diamond$.
Introduce the oriented graph $\Gamma$ whose vertices are 
balls $B_i$ and an (oriented) edge joins   $B_i$ to $B_j$ if and only if 
$f_\diamond(B_i) \supset B_j$. 
We prove that  the subshift of finite type induced by 
$\Gamma$ is canonically conjugated to the dynamics of $f_\diamond$ on $\cK$. 

Back to the Archimedean picture, let $\widehat F \colon X\dashrightarrow X$ be the lift of $f$. 
It is a meromorphic map such that  all points $p_i$ are indeterminate.
In the oriented graph $\Gamma$ an edge joins $B_i$ to $B_j$ if and only if $\widehat F(p_i) \ni p_j$. 
Our analysis then proceeds as follows: 
pulling-back curves by $\widehat F$ gives rise to a natural \emph{graph transform} $\cL_{e}$
for every edge $e= (p_ip_j)$ of $\Gamma$. The superattracting   nature  of $f$ implies that 
$\cL_e$ sends a curve passing through $p_j$ of a definite size $r$ 
to a curve  through $p_i$ of the same size. 
Analyzing  this graph transform in local coordinates shows that it is a contraction 
on a suitable function space. 
Since any point $x\in \cK$ is determined by its itinerary
$(v_j)_{j\geq 0}$ in the cover $\set{B_i}$ and, at the same time, the sequence 
 $\cL_{e_0} \circ \cdots \circ  \cL_{e_n}(C)$ converges, where $e_j = (v_jv_{j+1})$ and $C$ is any initial data, we conclude that $C(x)$ coincides with (the projection under $\kappa$ of)
 its  limiting curve. 
 The control of the size $r$ guarantees the convergence of the Puiseux parameterizations  over a fixed neighborhood of the origin. 
Finally,  the representation~\eqref{eq:trez}   follows from the equidistribution of pull-backs both at the Archimedean and non-Archimedean levels. 

To  complete the proof, we push   this picture down  by the branched cover $\beta$. A few non-trivial arguments are required to justify  the uniform laminarity of the image current  and the formula~\eqref{eq:trez}. 
Note that the appearance of the multiplicities in~\eqref{eq:trez} is a feature of our coordinate dependent approach. 

\subsection{Related works} Theorem~\ref{thm:structure_intro} implies that $\cW$ is a 
bouquet of analytic curves,  and  a lamination with Cantor transversals outside the origin. 
Such a structure first appeared in the work of Yamagishi~\cite{zbMATH01582425} (see 
also~\cite{zbMATH05717809}) who exhibited a simple mechanism for a meromorphic map in $(\C^2,0)$  
to have  an invariant Cantor bouquet of analytic curves at an indeterminacy point. 
Gignac~\cite{gignac:conjarnold} studied in detail 
the map $(z^4, w^2-z^4)$ (which is of the form~\eqref{eq:cd_intro}) 
to build a counter-example to a question of Arnol'd. 
For this example, he proved that  all curves in $\cK$ are smooth and intersect $\set{z=0}$ transversally.  
In all these works though, there was no explicit connection to non-Archimedean dynamics.
Note also that   global variants of these constructions for polynomial mappings of $\C^2$ 
were studied in \cite{dinh-dujardin-sibony, vigny}. 

In a different direction, Kiwi~\cite[Theorem~1.6]{zbMATH05145705}   proved that the 
locus of non-renormalizable complex cubic polynomials with disconnected Julia set
is naturally foliated  by analytic curves,    corresponding to 
non-renormalizable  cubic polynomials defined over $\nL$. It could be interesting to expand on his arguments
and see whether the bifurcation current   defined in~\cite{zbMATH05375512} can be described similarly 
 as in~\eqref{eq:trez},
where the measure $\mu_{\na}$ would be replaced by a suitable bifurcation measure in the non-Archimedean 
cubic parameter space.

Finally, we  note that although the laminarity of currents is a pervasive property in holomorphic dynamics, see e.g.~\cite{bls, bedford-jonsson, dujardin:birat,dinh:laminaire,dujardin:ICM},   a non-Archimedean
interpretation of the set of plaques defining the current is not generally possible. 
The complex structure needs to degenerate to exhibit a limiting
non-Archimedean behaviour, a property which is guaranteed by the fact that we are  working locally at a superattracting fixed point. 
In the global situation which motivated this work, 
where  $f$ is the germ of a regular polynomial mapping at some superattracting point on the line at infinity, the laminar structure of the current $ {\mathrm{d}}{\mathrm{d}}^cg$ is reminiscent from  that of the Green current obtained in~\cite{bedford-jonsson}, with the dynamics of $f_\diamond$ playing the role of that of  $f\rest{L_\infty}$. 

%
%
 
 \subsection{Plan of the paper} In Section~\ref{sec:berkovich}, we introduce the necessary tools from Berkovich theory, in particular over the fields $k\laurent{z}$ and $\nL$, where $k$ is  
  an arbitrary  algebraically closed field $k$ of characteristic 0. The non-Archimedean dynamics of 
  the skew-product $f_\diamond$ acting on the Berkovich unit ball is studied in Sections~\ref{sec:dynamics_psp} to~\ref{sec:examples}. After some generalities on the topological properties of $f_\diamond$, 
  the structure of the set $\cK$ and the measure $\mu_{\na}$ is analyzed in Section~\ref{sec:cantor}, and the conclusions of Theorem~\ref{thm:2B} are contained 
Theorems~\ref{thm:invariantcantor}, \ref{thm:green}, and \ref{thm:ergo-meas}. 
 In Sections~\ref{sec:upper} and~\ref{sec:infinite}, we prove Theorem~\ref{thm:multiplicity_intro}
 as a combination of Theorems~\ref{thm:estim-mult} and~\ref{thm:infinity-mult}. In Section~\ref{sec:examples}, we study the formal conjugacy of maps of the form~\eqref{eq:f_intro} to 
 maps of the form~\eqref{eq:cd_intro} (Theorem~\ref{thm:formal}) and illustrate Theorem~\ref{thm:multiplicity_intro} by a few examples.  In the last part of the paper (Sections~\ref{sec:complex} and~\ref{sec:structure}), we turn to complex dynamics. Theorem~\ref{thm:complex_intro} follows from Theorem~\ref{thm:complex}, and Theorem~\ref{thm:structure_intro} corresponds to   Theorem~\ref{thm:structure}.
  
\section{Dynamics on the Berkovich affine line}\label{sec:berkovich}

In this section we gather the necessary material from Berkovich theory, in particular over
  the field of Laurent series. The  self maps  induced by 
  our skew products will be introduced in the 
  next section.  

\subsection{The Berkovich affine line} (See  \cite{benedetto:dyn1NA,baker-rumely,jonsson:berkovich} for a more detailed presentation.) 
Let  $(K,|\cdot|)$ be a complete non-Archimedean field. 
The Berkovich affine line $\A^{1,\an}_K$ over $K$ is 
by definition the set of all multiplicative seminorms on $K[w]$
whose restriction to $K$ coincides with $|\cdot|$, endowed with the weakest 
topology making all evaluation maps continuous. 

Given a point $x\in \A^{1,\an}_K$ and a polynomial $P\in K[w]$ we write
$|P(x)|\in\R_+$ for the value of $P$ at $x$, that is 
$x$ is a seminorm $\abs{\cdot}_x$ and $\abs{P(x)} = \abs{P}_x$. 
Note that when $P=c$ is a constant, 
then $|c(x)|=|c|$. The space  $\A^{1,\an}_K$ is locally compact. 

\subsubsection{}
Suppose that $K$ is algebraically closed. 
We denote by 
 \[\overline B(a,r)=\set{b\in K, \ \abs{b-a}\leq r}\] (resp. 
$B(a,r)=\set{b\in K, \ \abs{b-a}< r}$) the closed (resp. open) ball of center $a$ and radius $r$ in $K$. 
 Then Berkovich classified points
 $x\in \A^{1,\an}_K$ into four types: 
\begin{enumerate}[label=Type \arabic*:]
\item there exists $a\in K$ such that $|P(x)|= |P(a)|$;
\item there exists $a\in K$, and $r\in |K^*|$ such that $|P(x)|= \sup_{|b-a|\le r}|P(b)|$;
\item there exists $a\in K$, and $r\in \R_{>0}\setminus|K^*|$ such that $|P(x)|= \sup_{|b-a|\le r}|P(b)|$;
\item there exists a decreasing sequence of balls 
$(B_n)$ 
such that $\bigcap_n B_n=\emptyset$ and $|P(x)|= \inf_n \sup_{b\in B_n} |P(b)|$.
\end{enumerate}
We denote by $\zeta(a,r)$ the point (of type $1$, $2$ or $3$) associated to the 
ball of center $a\in K$ and radius $r\in \R_{\geq 0}$, and by $\xg =\zeta(0,1)$ the Gauss point.
The norm $\abs{\,\cdot\,}$ being non-Archimedean, two (closed) balls  
$\overline{B}(a,r)$ and $\overline{B}(b,r)$ of same radius $r \geq 0$ are either disjoint or equal.
It follows that  $\zeta(a,r)$ and $\zeta(b,r)$ are distinct when $r<\abs{b-a}$, and coincide when $r\geq \abs{b-a}$.
This phenomenon hints at the $\nR$-tree structure of $\A^{1,\an}_K$ (see~\cite[Chapter 3]{favre-jonsson:valtree}), with branching points at points of type $2$.
More precisely, all points in $\A^{1,\an}_K$ are associated with nested collections of balls in $\A^1(K)$, i.e., filters for the poset of closed balls in $\A^1(K)$, ordered by inclusion, that are closed under the intersection of decreasing sequences (when it gives a ball).
The inclusion of nested collections of balls defines a partial order on $\A^{1,\an}_K$, 
which gives to the latter the structure of a $\nR$-tree (see \cite[Section 3]{jonsson:berkovich} for further details).
More precisely, we say that $x \le y \in  \A^{1,\an}_{K}$ iff $|P(x)| \le |P(y)|$ for all $P\in K[w]$. 
This order relation endows $ \A^{1,\an}_{K}$ with a tree structure, whose 
minimal points consist of rigid and type 4 points. 

We define the \emph{diameter} of a point $x = \zeta(a,r)$ by $\diam(x) = r$ and observe that it admits a unique extension as a upper semi-continuous and nondecreasing function on $ \A^{1,\an}_{K}$. 
It follows that $\diam(x) = 0$ if and only if $x$ is of type 1, and  if $x_n$ decreases to $x$, then 
$\diam(x_n)\to \diam(x)$. 

\subsubsection{} When $K$ is not algebraically closed,  let 
$\widehat{K}$ be the completion of the algebraic closure of $K$. 
The absolute Galois group $G  = \mathrm{Gal}( K^{\mathrm{alg}}/K)$  
  (where $K^{\mathrm{alg}}$ is an algebraic closure of $K$)
acts on $\widehat K$ by  isometries so it preserves the radii of balls and 
acts naturally on $\A^{1,\an}_{\widehat{K}}$. 
There exists a canonical $G$-equivariant 
restriction map $\pi\colon\A^{1,\an}_{\widehat{K}}\to\A^{1,\an}_K$, which identifies  
$\A^{1,\an}_K$  with the quotient $\A^{1,\an}_{\widehat{K}}/G$. 

The action of $G$ preserves the type of points so that we may speak of 
the type of a point in $\A^{1,\an}_{K}$.  Rigid points in $\A^{1,\an}_{K}$ are type 1 points 
defined over a finite extension of $K$
(in general the extension $\widehat{K}/K$ is infinite).

We define the \emph{capacity} of a point $x\in \A^{1,\an}_{K}$ by  
\begin{equation}\label{eq:capa}
\log \capa(x)= \inf \left\{\frac{\log |P(x)|}{\deg P}, \, P\in K[w] \text{ monic non-constant}\right\}\text{.}
\end{equation}
When $K$ is algebraically closed, the infimum above can be taken over polynomials of degree $1$ and 
in this case we deduce that $\capa(x)=  \diam(x)$ for every $x\in \A^{1,\an}_{K}$. 

For arbitrary $K$, we only have $\capa(\zeta(a,r)) \leq r$. 
In particular the capacity is zero for a rigid point. The  computation of the capacity is 
detailed below in the case of formal Laurent series.

As it is customary, we use the following  notation: 
any $x \in \A^{1,\an}_{K}$ is a semi-norm $\abs{\cdot}_x$
on $K[w]$, and we denote $\abs{P(x)} = \abs{P}_x$. Abusing slighlty, we write 
$\abs{x}$ for  $\abs{w}_x$. 

%
%
%

\subsection{The field of formal Laurent series and  its extensions}\label{subs:formal_laurent}
Let $k$ be any algebraically closed field of characteristic $0$. 
We endow the field $k\laurent{z}$ of formal Laurent series 
with the $z$-adic norm normalized by $|z|= e^{-1}$
so that 
\[
\abs{\phi}=e^{-\ord_z(\phi)}\text{, where }  \ord_z\left(\sum_{n\in \Z} a_n z^n\right) = \min \{n \in \Z , \, a_n \neq0\}\text{.}
\]
It is a complete metrized field with value group $e^\Z$, and ring of integers
$$
k\laurent{z}^\circ = \{ x \in k\laurent{z}, \, |x| \le 1\} = k\fps{z}\text{.}
$$ 

The algebraic closure of $k\laurent{z}$  is the field of Puiseux series
\[
k\puiseux{z}
=
\bigcup_{q\in \N^*} 
k\big(\!\big(z^{1/q}\big)\!\big)
\]
 This field is not complete, and its completion
 \[
 \nL = \left\{ \sum_{n\in\N} a_n z^{\beta_n}, \, a_n \in k, \text{ s.t. }
 (\beta_n)\in \Q^\N, \  \forall n\ 
 \beta_n < \beta_{n+1}, \, \lim_{n\to\infty} \beta_n = \infty
 \right\}
 \]
is algebraically closed. 

The absolute Galois group of $k\laurent{z}$ is isomorphic to the projective limit  $\widehat{\nU}$
of the groups of $m$-th root of unity $\nU_m$ under the morphisms $\nU_{nm} \to \nU_m$ defined by 
$z \mapsto z^n$. An element $u \in \widehat{\nU}$
is a collection of $n$-th root $u_n\in\nU_n$ such that 
$u_{nm}^n=u_m$ for all $n,m$.
The action of  an element $u \in \widehat{\nU}$ on $z^\beta$
is given as follows: write $\beta= p/q$ with $p$ and $q$ coprime, 
and set $u\cdot z^\beta =u_q^p z^\beta$.

\subsection{The Berkovich affine line over $k\laurent{z}$}\label{sec:berko-rep}
Let us describe in more detail the structure of  $\A^{1,\an}_{k\laurent{z}}$,
insisting  on its relation with $\A^{1,\an}_{\nL}$. This material is mostly taken from~\cite[Chapter 4]{favre-jonsson:valtree} (albeit with a different language).

\subsubsection{Representation by Puiseux series}
Pick any element $\phi\in\nL$, and any $r\in [0,+\infty)$. 
Then we define the point $\zeta(\phi,r)\in \A^{1,\an}_{\nL}$
by setting 
\begin{equation}\label{eq:def-zeta}
\abs{P\big(\zeta(\phi,r)\big)}:= \prod_i\max\left\{e^{-\ord_z P_i(\phi(z))},r\right\}\text{,}
\end{equation}
where $P = \prod_i P_i \in \nL[w]$ and $\deg(P_i)=1$ (here, we are implicitly applying the Gauss 
lemma on the multiplicativity of the norm $ \abs{P}_{B(\phi,r)}$). 
Note that $\zeta(\phi,r)$ is of type 1 if $r=0$;
of type 2 if $r\in e^\Q$; and of type 3 if $r\notin e^\Q$. 
Moreover $\zeta(\phi,r)=\zeta(\phi',r')$ iff
$r=r'$ and $|\phi-\phi'|\le r$. 
Conversely any point in $\A^{1,\an}_{\nL}$
which is not of type 4 is of the form $\zeta(\phi,r)$ for some
$\phi\in\nL$ and some $r\ge0$. 
Observe that we have
\[
\capa(\zeta(\phi,r))=
\diam(\zeta(\phi,r))= r\in\R_{\geq 0}\text{.}
\]
A point  $x$ of type $4$ can be obtained through series of the form
$\phi= \sum_{n\in\N} a_n z^{\beta_n}$ where  $a_n\in k^*$,
$\beta_n<\beta_{n+1}\in\Q$, and $\beta= \lim_{n\to\infty}\beta_n <\infty$, 
by the analogue 
of the formula~\eqref{eq:def-zeta}
\[
\abs{P\big(\zeta(\phi,r)\big)}:=  e^{-\ord_z P(\phi(z))} .
\] 
Note that   $(\beta_n)$ has  unbounded denominators in this case, so that $P(\phi)$ is never 0. In fact we have $\diam(x)=\beta$. 

Pick now any point $x\in \A^{1,\an}_{k\laurent{z}}$, and suppose that it is the image of a point of type 1, 2 or 3 under $\pi\colon \A^{1,\an}_{\nL}\to \A^{1,\an}_{k\laurent{z}}$.
Then $x$  is represented by a  point $\zeta(\phi,r) \in \A^{1,\an}_{\nL}$, for some  $\phi \in  {\nL}$ and $r\geq 0$.
The representative is  unique  up to the action of the absolute Galois group $G$. When $\phi \in k\laurent{z^{1/q}}$, the $G$-orbit of $\zeta(\phi,r)$ is the same as its orbit under  the action of $\nU_q$.
If moreover $\ord_z(\phi)> 0$, i.e., $\phi\in k[[z^{1/q}]]$ and $\phi(0)=0$, 
then $\phi$ is the Puiseux parametrization of a formal curve $C$ in $\A^2$ that intersects the line $\{z=0\}$ with multiplicity dividing  by $q$ 
and all other representatives give other Puiseux parametrizations of $C$.
In plain words, a rigid point $x$ is attached to a curve $C$ while its preimages under $\pi$ 
corresponds to its Puiseux parameterizations. We will say more about the correspondence between 
germs of curves and points in  $ \A^{1,\an}_{k\laurent{z}}$ in \S~\ref{subs:model}. 

\subsubsection{Multiplicity and capacity  in the closed unit ball $\A^{1,\an}_{k\laurent{z}}$}\label{sssec:multicapa}

Pick any point $x$ in the Berkovich closed unit ball $\overline{B}^{\an}(0,1)=\{|x|\le1\}\subset\A^{1,\an}_{k\laurent{z}}$. 
We call the cardinality of $\pi^{-1}(x)\in \A^{1,\an}_{\nL}$
the \emph{multiplicity} of $x$ and denote it by $m(x)$. 
It is finite iff $x$ is either of type $2$ or $3$, or rigid. 
Later on, we will define another related quantity, the 
 \emph{generic multiplicity} $b(x)$. 
 
Suppose $x\in \overline{B}^{\an}(0,1)$ is a rigid point, so that it is represented by a Puiseux series
$\phi(z)=\sum_{n \geq 0} a_n z^{n/q}\in k\fps{z^{1/q}}$. 
Then, we have 
\begin{equation}\label{eq:def-mult}
m(x)= \lcm \left\{ \frac{q}{\gcd\{n, q\}},\,  a_n\neq 0\right\}
\end{equation}
It can also be interpreted geometrically as follows.
Denote by $C$ the formal curve parameterized by $t\mapsto (t^q, \phi(t^q))$. 
Then $m(x)= (C\cdot\{z=0\})$ where the latter is the local intersection product
between $C$ and the curve $z=0$ viewed in the formal scheme $\spf k\fps{z,w}$ (beware that it is not 
the multiplicity of $C$ at $(0,0)$).


Pick any $x\in \A^{1,\an}_{k\laurent{z}}$, and choose any $y\in \A^{1,\an}_{\nL}$
such that $\pi(y)=x$. Then we set $\diam(x)= \diam(y)$. This is independent
on the choice of $y$ since the diameter function is Galois invariant. 
 The capacity of the point $x$ in $\A^{1,\an}_{k\laurent{z}}$ is defined as before  by
\[
\log \capa(x)= \inf \left\{ \frac{\log \abs{P(x)}}{\deg P}, \, P\in k\laurent{z}\![w] \text{ non-constant}\right\}\text{.}
\]
Notice that the   condition that  $P$ is monic is 
 superfluous in this setting, since the absolute value is trivial on $k$ (i.e., $\abs{k^*}=1$).
It is customary  to  define
\begin{equation}
\alpha(x)= -\log \capa(x)
\end{equation}
for any point $x\in \A^{1,\an}_{k\laurent{z}}$.
Note that $\alpha(\xg)=0$, and $\alpha$ is increasing on each segment $[\xg,x]$ 
(resp. decreasing for the order $\leq$). 

\begin{prop}\label{rmk:computeskewness}  
Let $x \in \overline{B}^{\an}(0,1)\subset\A^{1,\an}_{k\laurent{z}}$ be a point of type $2$ or $3$. 
Pick any  $\phi \in \Puis$ such that 
$\zeta(\phi,0)< x$. Let $P_{\mathrm{min}} \in k\laurent{z}\![w]$ be the minimal polynomial of $\phi$. Then we have
\[
\alpha(x)=\frac{-\log \abs{P_{\mathrm{min}}(x)}}{\deg_w P_{\mathrm{min}}}\text{.} 
\]
\end{prop}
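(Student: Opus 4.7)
First, the inequality $\alpha(x) \ge -\log|P_{\mathrm{min}}(x)|/\deg_w P_{\mathrm{min}}$ is immediate from the definition of the capacity, by taking $P = P_{\mathrm{min}}$ in the infimum.

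For the converse, the plan is to reduce to the case where $P$ is monic irreducible: since $|k^*|=1$, any non-constant $P\in k\laurent{z}[w]$ factors up to a unit as a product of monic irreducibles, and $\log|P(x)|/\deg P$ is then a weighted average over the irreducible factors. So it will suffice to establish $|Q|_y^{1/e} \ge |P_{\mathrm{min}}|_y^{1/d}$ for every monic irreducible $Q \in k\laurent{z}[w]$ of degree $e$, where $y = \zeta(\phi,r) \in \A^{1,\an}_{\nL}$ is a preimage of $x$ under $\pi$ (so that $|Q(x)| = |Q|_y$ by Galois invariance). Writing $Q = \prod_{j=1}^e(w-\psi_j)$ and $P_{\mathrm{min}} = \prod_{i=1}^d(w-\phi_i)$ in $\nL[w]$ with $\phi_1=\phi$, formula~\eqref{eq:def-zeta} gives $\log|Q|_y = \sum_j\log\max(|\phi-\psi_j|,r)$ and $\log|P_{\mathrm{min}}|_y = F(\phi)$, where $F(\xi) := \sum_i \log\max(|\xi-\phi_i|,r)$ for $\xi\in\nL$.

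The argument would then hinge on two facts. First, a \emph{Galois identity}: for any root $\psi$ of $Q$,
$$
\frac{1}{e}\log|Q|_y = \frac{1}{d}F(\psi).
$$
This should follow from a direct orbit-stabilizer calculation in a finite Galois extension $M = k\laurent{z^{1/N}}$ containing both $\phi$ and $\psi$, using the isometry $|\phi - \sigma\psi| = |\sigma^{-1}\phi - \psi|$ valid for every $\sigma\in\Gal(M/k\laurent{z})$. Second, a \emph{minimization property}: $F(\xi) \ge F(\phi)$ for every $\xi \in \nL$. To prove this I would set $v := |\xi - \phi|$: the case $v \le r$ is trivial since $\zeta(\xi,r) = \zeta(\phi,r)$; for $v > r$, partitioning the indices $i$ according to whether $|\phi - \phi_i|$ is at most $r$, lies in $(r,v)$, or exceeds $v$, and applying the ultrametric inequality to compute $|\xi - \phi_i|$ in each case, one arrives at
$$
F(\xi) - F(\phi) = k\log(v/r) + \sum_{i\,:\,r < |\phi-\phi_i| < v}\log\bigl(v/|\phi-\phi_i|\bigr) \ge 0,
$$
where $k = \#\{i : |\phi - \phi_i| \le r\}$; the degenerate case $|\phi-\phi_i|=v$ can be handled separately using the Galois invariance of $F$.

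Putting the two together yields $\frac{1}{e}\log|Q|_y = \frac{1}{d}F(\psi) \ge \frac{1}{d}F(\phi) = \frac{1}{d}\log|P_{\mathrm{min}}|_y$, which is the required inequality. The main obstacle is the clean formulation of the Galois identity, which ultimately rests on the isometric action of $\widehat{\nU} = \Gal(\nL/k\laurent{z})$ on $\nL$; the minimization property then reduces to a short ultrametric calculation.
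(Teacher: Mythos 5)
Your argument is correct, and it takes a genuinely different route from the paper. The paper's own proof is a two-line reduction (write $\phi \in k\fps{z^{1/q}}$ with $q=m(\phi)$, so that $P_{\min}=\prod_{u^q=1}(w-u\cdot\phi)$ is monic of degree $q$ in $k\fps{z}[w]$) followed by a citation of \cite[Proposition~3.25]{favre-jonsson:valtree}; that is, it imports the computation of the relative skewness from the valuative-tree formalism via the dictionary of \S\ref{sec:compare}. You instead give a self-contained ultrametric proof inside $\A^{1,\an}_{\nL}$. Both of your ingredients are sound: the ``Galois identity'' is the resultant reciprocity $\sum_{\sigma\in G}\log\max(|\phi-\sigma\psi|,r)=\tfrac{N}{e}\log|Q|_y=\tfrac{N}{d}F(\psi)$ in a finite Galois extension of degree $N$, obtained by counting each root $N/e$ (resp.\ $N/d$) times and using that $G$ acts by isometries; and the reduction to monic irreducible factors is the right one (for monic $P$ the factorization carries no extra unit, so $\log|P(x)|/\deg P$ is a genuine convex combination of the $\log|Q_i(x)|/\deg Q_i$, consistently with~\eqref{eq:capa}). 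What the paper's route buys is brevity and coherence with the rest of its valuation-theoretic toolbox; what yours buys is a proof readable without opening \cite{favre-jonsson:valtree}, which moreover makes transparent why only the irreducible factor through $\phi$ matters.

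The one thin spot is the degenerate case $|\phi-\phi_i|=v$ in the minimization lemma, which you flag but do not resolve. It cannot be waved away: if $\phi_i$ lies in $B(\xi,r)$ one gets a genuinely negative contribution $\log(r/v)$. The clean fix --- which in fact absorbs your whole case analysis --- is to compare $F(\xi)$ with $F(\phi_{i_0})$, where $\phi_{i_0}$ is a root of $P_{\min}$ nearest to $\xi$: since $|\xi-\phi_{i_0}|\le|\xi-\phi_i|$ for all $i$, the ultrametric inequality gives $|\phi_{i_0}-\phi_i|\le\max\bigl(|\phi_{i_0}-\xi|,|\xi-\phi_i|\bigr)=|\xi-\phi_i|$ term by term, hence $F(\phi_{i_0})\le F(\xi)$; and $F(\phi_{i_0})=F(\phi)$ because the roots of $P_{\min}$ form a single orbit under the isometric Galois action. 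With that patch the proof is complete.
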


\begin{proof}
We may suppose $\phi \in k\fps{z^{1/q}}$ with $q= m(\phi)$. 
Then the minimal polynomial is equal to $P_{\min}(z,w) = \prod_{u^q=1} (w-u\cdot \phi)$, 
is monic of degree $q$, and belongs to $k[[z]][w]$.

The statement then follows from~\cite[Proposition~3.25]{favre-jonsson:valtree}. 
The latter reference is written in the context of valuations instead of norm. 
We refer to \S\ref{sec:compare} below for a quick discussion on how to compare these
point of views. 
\end{proof}

\begin{prop}\label{prop:approxseq}
Suppose $x\in\overline{B}^{\an}(0,1)\subset\A^{1,\an}_{k\laurent{z}}$. 
Then there exists a  decreasing sequence of  type 2 points
$x_0= \xg$, $ x_{n+1}<x_n$, $\lim_n x_n=x$
such that: 
\begin{enumerate}[(i)]
\item
the multiplicity is constant equal to $m_n$ on the  interval  $I_n=[x_{n+1},x_{n})$;
\item
$- \log \frac{\diam(y)}{\diam(y')}= m_n (\alpha(y) - \alpha(y'))$
for all $y,y'\in \overline{I_n}$.
\end{enumerate}
Moreover $m_1=1$, and $m_n$ divides $m_{n+1}$ for all $n$. 

This sequence is finite if and only if $x$ has finite multiplicity. 
\end{prop}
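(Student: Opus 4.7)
The plan is to lift $x$ to $\tilde x \in \A^{1,\an}_{\nL}$, represent $\tilde x$ by a (pseudo-)Puiseux series, and identify the type~2 points along $[\xg, x]$ where the multiplicity function jumps. Write $\tilde x = \zeta(\phi, r)$ with $\phi = \sum_n a_n z^{\beta_n}$, $a_n \in k^*$, $\beta_n \in \Q$ strictly increasing, and $r = \diam(\tilde x) \ge 0$; in the type~4 case, $\phi$ has unbounded denominators and $\beta_n \to \beta = -\log r < \infty$. The segment $[\xg, \tilde x]$ is then parameterized by $s \in [r,1] \mapsto \zeta(\phi, s)$, and projects onto $[\xg, x]$ via $\pi$.

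The key observation is that for $y_s = \pi(\zeta(\phi,s))$, the multiplicity $m(y_s)$ equals the size of the Galois orbit of $\zeta(\phi,s)$, and this orbit depends only on the partial sum $\phi_{\le s} := \sum_{e^{-\beta_n} > s} a_n z^{\beta_n}$. Using the explicit description of the $\widehat\nU$-action on Puiseux monomials recalled in~\S\ref{subs:formal_laurent}, one checks that $m(y_s) = Q(s)$, where $Q(s)$ is the least common multiple of the denominators in lowest form of the exponents appearing in $\phi_{\le s}$ (no accidental coincidences arise because $\phi_{\le s}$ is characterized as lying in the smallest ring $k\fps{z^{1/Q(s)}}$). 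I would then set $x_0 = \xg$ and let $x_1 > x_2 > \cdots$ be the points $y_{s_j}$ corresponding to the jumps of $s \mapsto Q(s)$: these occur precisely when some $\beta_j$ introduces a new prime factor into the lcm, i.e., at $s_j = e^{-\beta_j}$. Each $x_n$ is of type~2 since $s_n \in e^\Q$; the multiplicity is constant equal to $m_n := Q(s_n)$ on each interval between consecutive $x_n$'s; and the successive $m_n$ form a divisibility chain starting at~$1$ by monotonicity of the lcm.

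For property~(ii), on an interval of constant multiplicity $m_n$ I would pick a Puiseux truncation $\psi_n$ of $\phi$ whose minimal polynomial $P_{\min,\psi_n}$ over $k\laurent{z}$ has degree $m_n$, so that $\zeta(\psi_n,0) < y$ for every $y$ in the interval. Applying Proposition~\ref{rmk:computeskewness}, for such $y$ with lift $\zeta(\psi_n, s)$ one has
\[
\alpha(y) = -\frac{1}{m_n}\log\abs{P_{\min,\psi_n}(y)} = -\frac{1}{m_n}\log\Bigl(s \cdot \prod_{u\neq\id} \abs{u\cdot\psi_n - \psi_n}\Bigr),
\]
with the product running over nontrivial Galois conjugates; the simplification uses that $\abs{u\cdot\psi_n-\psi_n} > s$ throughout the interval, which is exactly the condition that keeps the multiplicity equal to $m_n$ (if this inequality failed at some point, distinct Galois conjugates would merge and the orbit would shrink). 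Since the remaining product is a constant $C_n$ on the interval and $\diam(y) = s$, this gives $-\log(\diam(y)/\diam(y')) = m_n(\alpha(y) - \alpha(y'))$, which extends by continuity to the closure. The finiteness dichotomy is then immediate: the sequence $(x_n)$ stops iff only finitely many $\beta_n$ enlarge the lcm, iff $\phi$ is defined over a finite extension of $k\laurent{z}$, iff $m(x) < \infty$. The main technical subtlety is the identification $m(y_s) = Q(s)$, which relies on the projective-limit structure of $\widehat\nU$ and on working with denominators in lowest form.
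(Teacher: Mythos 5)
Your proposal matches the paper's own (sketched) proof: both lift $x$ to a (pseudo-)Puiseux series $\phi=\sum a_n z^{\beta_n}$ over $\nL$, identify the $x_n$ as the type~2 points along $[\xg,x]$ where the lcm of the denominators of the truncated exponents jumps, and derive property~(ii) by evaluating the minimal polynomial of the corresponding truncation via Proposition~\ref{rmk:computeskewness}, exactly as the paper does. One small imprecision: you say the jumps occur "when some $\beta_j$ introduces a new prime factor into the lcm," but the lcm can also increase by a new power of a prime already present; your first characterization---the jump set of $s\mapsto Q(s)$, equivalently those $j$ with $q_j\nmid\lcm\{q_l:l<j\}$---is the correct one and is what the paper uses.
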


\begin{proof}
We only give a sketch of proof, referring to~\cite[Chapter 5]{favre-jonsson:valtree} for more details. 

Suppose $x=\pi(\zeta(\phi,r))$ with 
$\displaystyle \phi =\sum_{n\in\N} a_n z^{\beta_n}\in\nL^\circ$ with $a_n\in k^*$, $0\le \beta_n<\beta_{n+1}\in\Q$
and $r\in[0,1]$. 

Write $\beta_n= p_n/q_n$ with $p_n,q_n$ coprime integers. 
We define by induction a sequence of integers $n_j$
such that $n_0=0$, and $n_j$ is the smallest integer such that $q_j$ is not
a multiple of $\lcm\{q_l, l\le n_j-1\}$. Then 
we set $m_j= \lcm\{q_l, l\le n_j\}$ so that $m_j$ is a multiple of $m_{j-1}$. 
One can check that if $\beta_{n_{j-1}}\le -\log r < \beta_{n_{j}}$, then
\[
m\big(\pi(\zeta(\phi,r))\big)= 
m\Big(\pi\Big(\zeta\Big(\sum_{n< n_j} a_n z^{\beta_n},r\Big)\Big)\Big)
=
m_{j-1}
\]
which implies (1). 
Moreover the supremum of $\log|P|/\deg(P)$
on that segment is attained for the minimal polynomial of 
$  \sum_{n< n_j} a_n z^{\beta_n}$. This implies (2).
\end{proof}

Since the multiplicity is increasing on $[\xg,x]$, we obtain:

\begin{cor}\label{cor:approxseq}
The function $-\log \diam|_{[x,\xg]}$ is a convex function of $\alpha$, or equivalently, $\alpha$ is a concave function of $-\log \diam|_{[x,\xg]}$. 
In particular
\[ -\log \diam \ge \alpha\]
and if $m=m(x) <\infty$, we have 
$ -\log \diam \le m(x) \alpha$ on $[x,\xg]$.
\end{cor}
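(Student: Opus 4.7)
The plan is to deduce everything directly from Proposition~\ref{prop:approxseq}, by exploiting the fact that $-\log\diam$ is piecewise affine as a function of $\alpha$ along the segment $[\xg,x]$, with slopes given by the sequence $(m_n)$.

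First I would parametrize the segment $[\xg,x]$ by $\alpha$: since the capacity is strictly decreasing from $\xg$ (where $\alpha(\xg)=0$ and $\diam(\xg)=1$) down to $x$, the function $\alpha$ gives a well-defined monotone coordinate on the segment. Now apply Proposition~\ref{prop:approxseq}: on each subinterval $I_n=[x_{n+1},x_n)$ the identity
\[
-\log\frac{\diam(y)}{\diam(y')} = m_n \bigl(\alpha(y)-\alpha(y')\bigr)
\]
says exactly that $-\log\diam$ is an affine function of $\alpha$ with slope $m_n$ on $I_n$. Since $m_n$ divides $m_{n+1}$ and the $m_n$ are positive integers, the sequence $(m_n)$ is nondecreasing; hence the piecewise affine function $\alpha \mapsto -\log\diam$ has nondecreasing slopes along $[\xg,x]$, which is the very definition of convexity. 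This proves the first assertion; the equivalent statement that $\alpha$ is a concave function of $-\log\diam$ follows by inverting the monotone reparametrization.

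Next I would derive the lower bound $-\log\diam\geq\alpha$. Since $m_1=1$, the first segment $I_1$ issuing from $\xg$ carries slope $1$, so that $-\log\diam(y)=\alpha(y)$ for $y\in I_1$. By the convexity just established and the fact that subsequent slopes $m_n\geq m_1=1$, the graph of $-\log\diam$ lies above the line through $\xg$ of slope $1$, yielding $-\log\diam(y)\geq \alpha(y)$ for all $y\in[\xg,x]$.

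For the upper bound, assume $m(x)=m<\infty$. By Proposition~\ref{prop:approxseq} the sequence $(x_n)$ is then finite, and the multiplicity along the whole segment $[\xg,x]$ is bounded by $m$; equivalently, every slope $m_n$ appearing in the piecewise affine description satisfies $m_n\leq m$. Since convex piecewise affine functions vanishing at the origin with slopes bounded by $m$ are dominated by the linear function $m\alpha$, we conclude $-\log\diam(y)\leq m\,\alpha(y)$ for all $y\in[\xg,x]$, which completes the proof.

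There is no real obstacle here: the whole statement is essentially a reformulation of the piecewise affine structure provided by Proposition~\ref{prop:approxseq}, once one notices that the monotonicity $m_n\mid m_{n+1}$ together with $m_1=1$ forces the slopes to lie between $1$ and $m(x)$.
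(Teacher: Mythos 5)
Your proof is correct and follows exactly the route the paper intends — the paper itself gives no explicit argument, just the remark ``Since the multiplicity is increasing on $[\xg,x]$,'' and your write-out of that remark (piecewise affine in $\alpha$ with slopes $m_n$, the slopes nondecreasing since $m_n\mid m_{n+1}$ with $m_n\in\N^*$, first slope equal to $1$ near $\xg$, all slopes bounded by $m(x)$ when finite) is precisely the intended content. The only cosmetic point: the interval issuing from $\xg=x_0$ in the notation of Proposition~\ref{prop:approxseq} is $I_0=[x_1,x_0)$ rather than $I_1$, but since $m_0\mid m_1=1$ forces $m_0=1$ as well, the slope there is $1$ either way and nothing in the argument is affected.
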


The second item of Proposition~\ref{prop:approxseq}  implies that
the  restriction of $\alpha$ on any segment $[y,\xg]$, $y>x$
 is continuous, and its concavity implies that it is continuous at $x$ as well.  From this 
  we get a canonical parametrization $[0,\alpha(x)] \to [\xg,x]$, 
$t\mapsto x(t)$ such that $\alpha(x(t))=t$. 

We shall also use the following 
\begin{lem}\label{lem:crucial-skew}
For any $x \le x' \in \overline{B}^{\an}(0,1) \subset \A^{1,\an}_{k\laurent{z}}$, 
for any monic $P\in k\fps{z}[w]$ we have
\[
\frac{\alpha(x')}{\alpha(x)}
\le1\le 
\frac{-\log|P(x)|}{-\log|P(x')|}
\le
\frac{\alpha(x)}{\alpha(x')}
\]
\end{lem}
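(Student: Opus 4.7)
The left inequality $\alpha(x')/\alpha(x) \le 1$ follows directly from the fact that $\alpha$ is non-increasing with respect to the order $\le$: if $x \le x'$ then $|Q(x)| \le |Q(x')|$ for every monic non-constant $Q \in k\laurent{z}[w]$, hence $-\log|Q(x)|/\deg Q \ge -\log|Q(x')|/\deg Q$, and taking the supremum yields $\alpha(x)\ge \alpha(x')$. The middle inequality reduces to $|P(x)| \le |P(x')|$ combined with $|P(\xg)|=1$: since $P$ is monic with coefficients in $k\fps{z}$, the Gauss norm at $\xg$ equals $1$, so $-\log|P|\ge 0$ throughout $\overline B^{\an}(0,1)$.

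The substantial content is the right inequality, which I would rewrite as
\[
\frac{-\log|P(x)|}{\alpha(x)}\ \le\ \frac{-\log|P(x')|}{\alpha(x')}.
\]
The plan is to show that along the segment $[\xg,x]$ (which contains $x'$), parametrized by $\alpha \in [0,\alpha(x)]$ via the canonical parametrization following Proposition~\ref{prop:approxseq}, the function $F(\alpha):=-\log|P(\cdot)|$ is concave with $F(0)=0$. Once this is established, the elementary fact that concave functions vanishing at $0$ have non-increasing ratio $F(\alpha)/\alpha$ immediately yields the inequality.

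To analyze $F$, I would lift to $\A^{1,\an}_{\nL}$ by choosing a representative $\phi \in \nL$ of a Puiseux series below $x$. Writing $s = -\log\diam$ and $d_i = -\log|\phi-\alpha_i|$ for the roots $\alpha_i \in \nL$ of $P$, the formula~\eqref{eq:def-zeta} gives
\[
F(s) = \sum_i \min(d_i, s),
\]
which is piecewise linear and non-decreasing in $s$, with right slope $\nu(s):=|\{i : d_i > s\}|$. By Proposition~\ref{prop:approxseq}, $\alpha$ is also piecewise linear in $s$ with slope $1/m(s)$. Thus the slope of $F$ as a function of $\alpha$ equals $m(s)\nu(s)$, and concavity of $F$ in $\alpha$ is equivalent to the monotonicity claim that $m(s)\nu(s)$ is non-increasing in $s$.

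The heart of the matter, and the main obstacle, is proving this monotonicity. The plan is to fix $r_1 > r_2 \ge 0$ and consider
\[
U_{r_j} := \bigcup_{\sigma \in \widehat{\nU}} \sigma\cdot \overline{B}(\phi, r_j),
\]
which is a disjoint union of $m(r_j)$ closed balls. Clearly $U_{r_2}\subset U_{r_1}$. Because $P \in k\laurent{z}[w]$, its multiset of roots is $\widehat{\nU}$-invariant, and Galois-equivariance forces each of the $m(r_j)$ translates making up $U_{r_j}$ to contain the same number $\nu(r_j)$ of roots of $P$. Counting roots of $P$ in $U_{r_j}$ then gives $|U_{r_j}\cap\operatorname{Roots}(P)|=m(r_j)\nu(r_j)$, and the inclusion $U_{r_2}\subset U_{r_1}$ yields $m(r_2)\nu(r_2) \le m(r_1)\nu(r_1)$, which is precisely the required monotonicity in $s=-\log r$. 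This completes the proof.
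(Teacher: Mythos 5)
Your proposal is correct, and its skeleton coincides with the paper's: reduce everything to the concavity of $-\log|P|$ as a function of $\alpha$ along $[\xg,x]$, note that this function vanishes at $\xg$ because $P$ is monic with coefficients in $k\fps{z}$, and conclude by the chord inequality for concave functions vanishing at the origin. The difference lies in how the concavity is obtained. The paper simply invokes \cite[Prop.~3.25]{favre-jonsson:valtree} (after factoring $P$ into irreducible components), whereas you prove it from scratch: writing $-\log|P|=\sum_i\min(d_i,s)$ in the variable $s=-\log\diam$, using Proposition~\ref{prop:approxseq} to convert slopes in $s$ into slopes in $\alpha$ via the multiplicity, and then showing that $m(s)\nu(s)$ is non-increasing by identifying it with the number of roots of $P$ (counted with multiplicity) in the Galois-saturated set $U_r=\bigcup_\sigma\sigma\cdot\overline{B}(\phi,r)$, which shrinks as $r$ decreases. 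This Galois-orbit root count is a clean, self-contained substitute for the cited result, and it makes transparent why the statement needs $P$ to be defined over $k\laurent{z}$. Two minor points to tidy up: (i) at the finitely many breakpoints you conflate the counts $\#\{d_i>s\}$ and $\#\{d_i\ge s\}$ (open versus closed ball); since these agree off the breakpoints and the function is continuous and piecewise affine, concavity still follows, but it is worth saying so; (ii) when $x$ is of type 4 (or a non-rigid type 1 point) there is no Puiseux series below $x$, so you should either take $\phi\in\nL$ from a representation $x=\pi(\zeta(\phi,r))$ when one exists, or approximate $x$ by type 2 points on $[\xg,x]$ and pass to the limit using the continuity of $\alpha$ and $-\log|P|$ on the segment — a gap the paper's own proof shares.
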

%

\begin{proof} 
The  two  inequalities on the left follow  from the fact that both $\alpha$ and  $-\log|P|$
are non-increasing (with respect to   the  order relation). Let us fix any $\phi\in \Puis$. 
For any $t\in[1,+\infty)$, denote by $x(t)$ the unique point in $[\xg,\phi]$ such that
$\alpha(x(t))= t$. Suppose that $x(t_0)= x'$ and $x(t_1)=x$ for some $t_0< t_1$. 
It follows from~\cite[Prop 3.25]{favre-jonsson:valtree} and a decomposition into irreducible components that  $t\mapsto -\log|P(x(t))|$ is concave. Since $P$ is monic
and its coefficients are formal power series in  $z$, it follows that 
$-\log|P(\xg)|=0$. 
It follows that 
$-\log|P(x(t))| 
\le -\frac{t}{t_0}\,\log |P(x(t_0))|$
which implies the claim.
\end{proof}

\subsubsection{Relation with valuation spaces}\label{sec:compare}
In the literature~\cite{favre-jonsson:eigenval,gignac-ruggiero:attractionrates,gignac-ruggiero:locdynnoninvnormsurfsing}, the local analysis of super-attracting analytic dynamical 
maps in dimension $\ge2$ was mostly formulated in the language of valuations. 
For the convenience of the reader, we indicate how to translate notions introduced above in the context of 
the Berkovich affine line to valuation spaces, referring to \cite[Chapters 3 \& 4]{favre-jonsson:valtree} for further details.

Any seminorm $\abs{\,\cdot\,}_x \in \A^{1,\an}_K$ defines a semivaluation $\nu_x(\cdot) = -\log \abs{\,\cdot\,}_x$ over the ring $K[w]$.
When $K=\nL$, we get a space of semivaluations over $\nL[w]$ which extend the $z$-adic valuation, defined by 
\[
\ord_z\left(\sum_{n \in \nN} a_n z^{\beta_n}\right) = \min\{\beta_n, a_n \neq 0\}\text{.}
\]
on $\nL^*$ and by $\ord_z(0)=+\infty$.
Any such valuation $\nu_x$ extends uniquely to a semivaluation, again denoted by $\nu_x$, over $\nL\fps{w}$ (and all semivaluations over $\nL\fps{w}$ come from the ones in $\nL[w]$).
Valuations associated to series $\phi \in  {\nL} $ such that $\ord_z(\phi)>0$ 
 are called \emph{centered}, and the corresponding subset 
 is denoted by $\widehat{\mc{V}}_z^*$. They correspond to the Berkovich open unit ball $B^{\an}(0,1)=\{|x|<1\}$ in $\A^{1,\an}_\nL$. By adding the non-centered valuation $\ord_z$, corresponding to the Gauss point $\xg$, we get the valuation space $\widehat{\mc{V}}_z$.

The absolute Galois group $G$ of   $k\laurent{z}$ acts on $\widehat{\mc{V}}_z^*$, and its quotient $\mc{V}_z^*=\widehat{\mc{V}}_z^*/G$ can be identified as the set of centered semivaluations over $k\laurent{z}[w]$ extending the $z$-adic valuation on $k\laurent{z}[w]$, or equivalently with the set of centered semivaluations over $k\fps{z,w}$ that extend the trivial vauation on $k$, and normalized by the condition $\nu(z)=1$.
These valuations correspond to the Berkovich open unit ball $B^{\an}(0,1)$ in $\A^{1,\an}_{k\laurent{z}}$. Again, the whole (normalized) valuation space is given by $\mc{V}_z = \mc{V}_z^* \sqcup\{\ord_z\}$. This space is also known as the \emph{non-Archimedean link} of $(\nA^2_k,0)$, see \cite{fantini:normalizedBerkovich}.

The value $1-\log \capa(x)=1-\log \diam(x)$ of $x=\zeta(\phi,r) \in B(0,1) \subset \A^{1,\an}_{\nL}$ is often denoted by $A_z(\nu_x)$, and known as the \emph{relative thinness}, or \emph{relative log-discrepancy} of the valuation $\nu_x \in \mc{V}_z$.

For the capacity $\capa(x)$ of $x \in B^{\an}(0,1) \subset \A^{1,\an}_{k\laurent{z}}$, we have
$$
\alpha_z(\nu_x):= - \log \capa(x) = \sup\left\{\frac{\nu_x(P)}{\deg P} \right\}\text{.}
$$
Here $\deg P$ is the degree with respect to the $w$ coordinate, which coincides with the intersection multiplicity of $(P=0)$ with the line $(z=0)$; this quantity is known as the \emph{relative skewness}, or as the \emph{relative Izumi constant} of $\nu_x \in \mc{V}_z$.
This quantity is often used to parametrize valuations, in the sense that for any $\phi \in \widehat{\nL}$, and for any $t > 0$, there exists a unique semivaluation $\nu_x \in \mc{V}_z$ with $x=\pi\big(\zeta(\phi,r)\big)$ and such that $\alpha_z(\nu_x) = t$. This semivaluation is often denoted by $\nu_{\phi,t}$.
For convenience in this paper  we use the same notation $A(\cdot)$ and $\alpha(\cdot)$ for the corresponding quantities. 

When $x \in B^{\an}(0,1) \subset \A^{1,\an}_{k\laurent{z}}$, the sequence $(x_n)_n$ given by Proposition~\ref{prop:approxseq} corresponds to 
 the approximating sequence in the valuation setting, see \cite[Section 3.5]{favre-jonsson:valtree}.

\section{Dynamics of   polynomial skew products with small relative degree}\label{sec:dynamics_psp}

\subsection{Polynomial skew products and their actions on the Berkovich line}

In this section, we fix once for all an algebraically closed field $k$ of characteristic $0$
and consider a     map of the form
\begin{equation*}\tag{\ref{eq:cd_intro}}\label{eq:cd}
f(z,w) = \biggl(z^d, w^c + \sum_{j=0}^{c-1} h_j(z) w^j\biggr)
\end{equation*}
where $d,c \geq 2$ and $h_j \in k\fps{z}$ satisfy $h_j(0)=0$.
We will often work in the \emph{small relative degree} regime, that is: $c < d$. 

Such a map naturally induces a self-map  $f_\diamond\colon \A^{1,\an}_{k\laurent{z}} \to \A^{1,\an}_{k\laurent{z}}$ as follows. 
Observe first that thanks to the non-Archimedean property, 
for any point $x\in  \A^{1,\an}_{k\laurent{z}}$, the assignment 
$P\mapsto |(P \circ f)(x)|^{1/d}$ defines a multiplicative semi-norm on 
$k\laurent{z}[w]$ (where $P$ is viewed as a function of $(z,w)$). In addition if $P$ has degree 0, 
$|(P \circ f)(x)| = |z|^d$, so we conclude that $P\mapsto |(P \circ f)(x)|^{1/d}$ belongs to $\A^{1,\an}_{k\laurent{z}}$. Thus we may define a map 
$f_\diamond: \A^{1,\an}_{k\laurent{z}}\to \A^{1,\an}_{k\laurent{z}}$ by setting 
\begin{equation}\label{eq:defi}
|P(f_\diamond(x))| := |(P \circ f)(x)|^{1/d} \text{, for any } P\in k\laurent{z}[w].
\end{equation}

It is also convenient to   work over $\nL$ since the latter  
 is algebraically closed and complete. For this, it is enough to observe that~\eqref{eq:defi} 
also defines a 
map $f_\diamond\colon \A^{1,\an}_{\nL} \to \A^{1,\an}_{\nL}$
such that $f_\diamond \circ \pi = \pi \circ f_\diamond$ where $\pi\colon \A^{1,\an}_{\nL} \to \A^{1,\an}_{k\laurent{z}}$
denotes the canonical projection.

\begin{lem}\label{lem:diamond}
For any $\phi\in\nL$, we have 
\begin{equation} \label{eq:comp}
f_\diamond(\phi) 
= 
\lrpar{\phi \big({z^{1/d}}\big)}^c+ \sum_{j=0}^{c-1} h_j\big({z^{1/d}}\big)\, \lrpar{\phi\big({z^{1/d}}\big)}^j.
\end{equation}
\end{lem}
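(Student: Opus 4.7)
The plan is to verify that $f_\diamond(\phi)$, as a seminorm on $\nL[w]$, coincides with the type-1 point associated to
\[
\psi := \lrpar{\phi(z^{1/d})}^c + \sum_{j=0}^{c-1} h_j(z^{1/d})\, \lrpar{\phi(z^{1/d})}^j \in \nL.
\]
Note first that this $\psi$ is well-defined: if $\phi = \sum_n a_n z^{\beta_n}$ with $\beta_n \to \infty$, then $\phi(z^{1/d})$ has exponents $\beta_n/d$ which still tend to infinity, so it lies in $\nL$; hence $\psi$ is a polynomial expression in elements of $\nL$, which is $\nL$ itself. Next I would single out the $k$-algebra endomorphism $\sigma_d\colon \nL \to \nL$ defined by $z \mapsto z^d$, and observe the elementary but crucial fact that $|\sigma_d(a)| = |a|^d$ for all $a \in \nL$, which is immediate from $\ord_z(\sigma_d(a)) = d\cdot\ord_z(a)$.

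The heart of the computation is the identity $\sigma_d(\psi) = f_2(z,\phi(z))$, where $f_2(z,w) = w^c + \sum_j h_j(z) w^j$ denotes the second component of $f$. Indeed, applying $\sigma_d$ to $\psi$ amounts to replacing $z^{1/d}$ by $z$ throughout, yielding exactly $\phi^c + \sum_j h_j(z)\,\phi^j$.

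Now I would fix an arbitrary $P(w) = \sum_i a_i w^i \in \nL[w]$ and compute both sides of the defining equation \eqref{eq:defi}. Viewing $P$ as a function of $(z,w)$, the composition $P \circ f$ is obtained by applying $\sigma_d$ to the coefficients and substituting $w \mapsto f_2(z,w)$, so
\[
(P \circ f)(\phi) = \sum_i \sigma_d(a_i)\, f_2(z,\phi)^i = \sum_i \sigma_d(a_i)\, \sigma_d(\psi)^i = \sigma_d\!\left(\sum_i a_i \psi^i\right) = \sigma_d\bigl(P(\psi)\bigr).
\]
Taking the $z$-adic norm and using $|\sigma_d(\cdot)| = |\cdot|^d$ gives $|(P\circ f)(\phi)| = |P(\psi)|^d$, and extracting the $d$-th root yields $|(P\circ f)(\phi)|^{1/d} = |P(\psi)|$. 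By the definition \eqref{eq:defi} of $f_\diamond$ and the fact that $\psi$ determines the type-1 seminorm $P \mapsto |P(\psi)|$, we conclude $f_\diamond(\phi) = \psi$.

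There is really no obstacle beyond bookkeeping: the only point requiring care is the interpretation of how $f$ acts on a polynomial $P \in \nL[w]$ (namely, through $\sigma_d$ on coefficients together with substitution of $w$), and the corresponding distinction between the Berkovich seminorm $|P(\cdot)|$ and the underlying $z$-adic norm on $\nL$ used to evaluate it. Once this is set up, the identity $\sigma_d(\psi) = f_2(z,\phi)$ and the scaling $|\sigma_d(\cdot)| = |\cdot|^d$ do all the work.
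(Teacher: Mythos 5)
Your proof is correct and follows essentially the same route as the paper's: the key observation in both is that $P\circ f$ evaluated at $\phi$ equals the $z\mapsto z^d$ substitution applied to $P(\psi)$, and that this substitution raises $z$-adic norms to the $d$-th power, so that taking $d$-th roots in the definition of $f_\diamond$ recovers $|P(\psi)|$. The only cosmetic difference is that the paper reduces to linear polynomials $P = w-\psi$ (using that $\nL$ is algebraically closed, so a type-1 seminorm is determined by its values on degree-one factors), whereas you carry out the computation for arbitrary $P\in\nL[w]$ via the endomorphism $\sigma_d$; this is a perfectly valid and if anything slightly more self-contained bookkeeping of the same calculation.
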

\begin{proof}
Indeed, since $\nL$ is algebraically closed $f_\diamond(\phi)$ is determined by its value on
linear polynomials $w- \psi(z)$ with $\psi\in \nL$, and for such a polynomial $P$ we have 
\begin{align*}
|P(f_\diamond(\phi))| = |(P \circ f)(\phi)|^{1/d}
&= 
\bigg|
\phi^c(z)+ \sum_{j=0}^{c-1} h_j(z) \phi^j(z)-\psi(z^d)\bigg|^{1/d}_{\nL}
\\
&= 
\bigg|
\phi^c\big({z^{1/d}}\big)+ \sum_{j=0}^{c-1} h_j(z) \phi^j\big({z^{1/d}}\big)-\psi(z)\bigg|_{\nL},
\end{align*}
thereby proving the claim. 
\end{proof}

\subsection{Structure of the map $f_\diamond$}\label{subs:structure}

\begin{thm}\label{thm:basic-finite}
Let $f$ be a polynomial map of the form~\eqref{eq:cd}. 
The map $f_\diamond\colon \A^{1,\an}_{k\laurent{z}} \to \A^{1,\an}_{k\laurent{z}}$  (resp. $\A^{1,\an}_{\nL}\to \A^{1,\an}_{\nL}$)
is open, continuous
and proper. It preserves the type of points, the set of rigid points, and the preimage of any point is finite
of cardinality at most $cd$ (resp. at most $c$ in $\A^{1,\an}_{\nL}$). 
\end{thm}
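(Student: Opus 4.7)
The plan is to derive every conclusion from a single factorization that mirrors the algebraic decomposition $f = \beta\circ g$, where $\beta(z,w) = (z^d, w)$ is the ramified $d$-fold cover of the base and $g(z,w) = \bigl(z,\, w^c + \sum_{j=0}^{c-1} h_j(z) w^j\bigr)$ is a polynomial map of degree $c$ in $w$. The identity $P\circ f = (P\circ\beta)\circ g$, combined with the $1/d$-normalization in~\eqref{eq:defi}, yields a corresponding factorization $f_\diamond = \beta_\diamond \circ g_\diamond$, where $g_\diamond$ is defined by $|P|_{g_\diamond(x)} = |P\circ g|_x$ (no fractional exponent, since $g$ preserves $z$) and $\beta_\diamond$ by $|P|_{\beta_\diamond(x)} = |P\circ\beta|_x^{1/d}$.

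The first factor $g_\diamond$ is a standard Berkovich polynomial map of degree $c$ associated to the second component of $g$, viewed as an element of $k\laurent{z}[w]$. All claimed properties for it---continuity, openness, properness, preservation of types, preservation of rigid points, and fibers of cardinality at most $c$---are classical, and I would quote them from~\cite{baker-rumely,benedetto:dyn1NA}. The second factor $\beta_\diamond$ reflects the degree-$d$ extension $k\laurent{z^d}\hookrightarrow k\laurent{z}$, and I would analyze it intrinsically: a short calculation from~\eqref{eq:defi} shows that on a type 1 point represented by a Puiseux series $\phi$, $\beta_\diamond$ acts by the substitution $\phi\mapsto \phi(z^{1/d})$ (up to Galois orbits), so preimages of a point correspond to the $d$ choices of a $d$-th root of $z$ modulo Galois. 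This makes $\beta_\diamond$ a continuous, open and proper finite $d$-fold cover that preserves types and sends Puiseux series to Puiseux series. Composing fiber bounds then gives $|f_\diamond^{-1}(y)|\le cd$ together with all the remaining topological properties.

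Over $\A^{1,\an}_\nL$ the picture simplifies because $\nL$ contains all fractional powers of $z$: the substitution $z\mapsto z^d$ extends to a field automorphism $\tau$ of $\nL$, with inverse $\sigma\colon \phi\mapsto \phi(z^{1/d})$. Hence $\beta_\diamond$ is induced by the automorphism $\sigma$ and is a homeomorphism of $\A^{1,\an}_\nL$, which sharpens the fiber bound of $f_\diamond = \sigma^*\circ g_\diamond$ to $c$. Preservation of rigid points is immediate from Lemma~\ref{lem:diamond}: if $\phi\in k\fps{z^{1/q}}$, then $f_\diamond(\phi)\in k\fps{z^{1/(dq)}}$ is again a Puiseux series, and this descends to $\A^{1,\an}_{k\laurent{z}}$ via $\pi$.

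The main delicate step will be the intrinsic analysis of $\beta_\diamond$ over $k\laurent{z}$. The clean factorization $\sigma^*\circ g_\diamond$ transparent over $\nL$ does not descend along $\pi\colon \A^{1,\an}_\nL\to\A^{1,\an}_{k\laurent{z}}$, because $\sigma$ fails to commute with the absolute Galois group $\widehat{\nU}$; one therefore cannot simply push down properties by averaging over Galois orbits. Instead $\beta_\diamond$ must be studied directly over $k\laurent{z}$, and verifying its openness and the exact cardinality of its fibers requires careful tracking of Puiseux expansions and Galois orbits.
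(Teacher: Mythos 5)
Your proposal is correct, and for the key quantitative claim over $k\laurent{z}$ it follows a genuinely different route from the paper. Both arguments factor $f_\diamond$ through the base change $(z,w)\mapsto(z^d,w)$ and a degree-$c$ polynomial, but in opposite orders: the paper writes $f_\diamond=\hat f\circ\tau$ with the base change applied \emph{first}, which forces the polynomial factor $\hat f(w)=w^c+\sum_j h_j(z^{1/d})w^j$ to have coefficients in $\nL$, so that decomposition lives only over $\A^{1,\an}_{\nL}$ (where $\tau$ is a homeomorphism, giving the bound $c$). For the bound $cd$ over $k\laurent{z}$ the paper then switches to a separate local intersection-theoretic computation at rigid points, $(f^*C\cdot f^*\{z=0\})_0=cdm$, yielding the identity $\sum_I r_ie_i=cd$ (equation~\eqref{eq:mult}), followed by density of rigid points and openness. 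You instead put the polynomial factor first, $f_\diamond=\beta_\diamond\circ g_\diamond$ with $g_\diamond$ defined over $k\laurent{z}$ itself, so the whole factorization descends and the $cd$ bound is simply the product of the fiber bounds $c$ (for $g_\diamond$) and $d$ (for $\beta_\diamond$). This is cleaner conceptually but shifts the burden onto the fiber analysis of $\beta_\diamond$ over $k\laurent{z}$, which you rightly flag as the delicate point: $\beta_\diamond$ coincides with the paper's $\tau$, which by Remark~\ref{rmk:tau} is \emph{not} a homeomorphism on $\A^{1,\an}_{k\laurent{z}}$ (e.g.\ $\tau_2$ identifies $w=\pm z$ with the single curve $w^2=z$). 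The slickest way to close your gap is to observe that $\beta_\diamond$ is the restriction map attached to the degree-$d$ Galois extension $k\laurent{z}/k\laurent{z^d}$, whose fibers are $\mu_d$-orbits and hence have cardinality at most $d$, and whose openness and properness follow from $\beta_\diamond\circ\pi=\pi\circ\tau$ with $\tau$ a homeomorphism of $\A^{1,\an}_{\nL}$ and $\pi$ open and proper. Two further small points to make explicit: the classical results you quote for $g_\diamond$ are usually stated over algebraically closed fields, so the bound $\le c$ over $k\laurent{z}$ needs the routine Galois-equivariant descent $|g_\diamond^{-1}(y)|\le|\tilde g_\diamond^{-1}(\tilde y)|$ for a lift $\tilde y$; and note that the factorization itself \emph{does} descend to $k\laurent{z}$ (both factors are defined there) --- what is lost is only the bijectivity of $\beta_\diamond$, not the decomposition. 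Your approach trades the paper's sharper multiplicity identity $\sum r_ie_i=cd$ for a softer but more structural product bound; both are complete proofs of the stated theorem.
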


 To prove the theorem, we introduce the map 
$\tau = \tau_d\colon  \A^{1,\an}_{\nL} \to  \A^{1,\an}_{\nL}$ 
defined by  
\begin{equation}\label{eq:tau}
\tau\colon x\longmapsto \lrpar{P\mapsto \abs{P(\tau(x))} := \abs{P(z^d,w)(x)}^{1/d} = \abs{P(z^d,w)}_x^{1/d}}~.
\end{equation}
for any $x\in \A^{1,\an}_{\nL}$ and any $P\in \nL[w]$, 
(where we 
 interpret $P$ as a series in both variables $z$ and $w$).
 In other words, with notation as above, if we put $h(z,w) = (z^d,w)$, then $\tau  = h_\diamond$. 
 
%
%

\begin{lem}\label{lem:tau}
The map $\tau$ defines  a homeomorphism on $\A^{1,\an}_{\nL}$ 
 which preserves the types of points. 
\end{lem}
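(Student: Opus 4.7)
The plan is to realize $\tau$ as the map induced on the Berkovich line by a continuous, but non-isometric, field automorphism of $\nL$, and then deduce the properties from the automorphism nature.

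First, I would observe that the assignment $\sigma\colon z\mapsto z^d$ extends to a field automorphism of $\nL$: acting term by term on a series $\sum a_n z^{\beta_n}\in\nL$ it produces $\sum a_n z^{d\beta_n}\in\nL$, and surjectivity is seen via $z\mapsto z^{1/d}$, which is the inverse $\sigma^{-1}$. From the definition of the norm on $\nL$ we obtain the key scaling relation $\abs{\sigma(a)}=\abs{a}^d$ for all $a\in\nL$. I would then extend $\sigma$ coefficient-wise to an automorphism (still denoted $\sigma$) of $\nL[w]$ fixing $w$; by construction $\sigma(P)(z,w)=P(z^d,w)$, which is exactly the expression appearing in \eqref{eq:tau}.

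Next, I would check that $\tau$ is the \emph{pullback} induced by $\sigma$: for $x\in\A^{1,\an}_\nL$, the functional
\[
P\longmapsto \abs{\sigma(P)(x)}^{1/d}
\]
is a multiplicative seminorm on $\nL[w]$ (multiplicativity from that of $\abs{\,\cdot\,}_x$ and of $\sigma$), and its restriction to $\nL$ equals $\abs{\,\cdot\,}$ thanks to $\abs{\sigma(a)}^{1/d}=\abs{a}$. So $\tau(x)\in\A^{1,\an}_\nL$, and the formula defines an explicit inverse
\[
\tau^{-1}\colon x\longmapsto \bigl(P\mapsto \abs{\sigma^{-1}(P)(x)}^{d}\bigr);
\]
a direct computation $\tau\circ\tau^{-1}=\id=\tau^{-1}\circ\tau$ follows from $\sigma\circ\sigma^{-1}=\id$ and the compatibility of the two exponents. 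Continuity of $\tau$ and $\tau^{-1}$ is immediate from their definitions through evaluations of polynomials, since the topology on $\A^{1,\an}_\nL$ is generated by such evaluations. Hence $\tau$ is a homeomorphism.

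For preservation of types, I would argue by explicit computation. For a type 1 point $\phi\in\nL$ and a degree-$1$ test polynomial $w-\psi(z)$, the definition yields
\[
\abs{(w-\psi)(\tau(\phi))}
=\abs{\phi(z)-\psi(z^d)}_{\nL}^{1/d}
=\abs{\phi(z^{1/d})-\psi(z)}_{\nL},
\]
so $\tau(\phi)$ is the type 1 point $\phi(z^{1/d})\in\nL$. The same change of variable gives
\[
\tau\bigl(\zeta(\phi,r)\bigr)=\zeta\bigl(\phi(z^{1/d}),\,r^{1/d}\bigr)
\]
for all $\phi\in\nL$ and $r\ge0$. Since $r\in e^\nQ$ iff $r^{1/d}\in e^\nQ$, types 2 and 3 are preserved. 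Finally, a type 4 point is characterized as the limit of a strictly decreasing sequence $\zeta(\phi_n,r_n)$ whose underlying nested balls have empty intersection; applying the above formula componentwise, $\tau$ sends such a sequence to an analogous one, so type 4 is also preserved. No step here presents a real obstacle — the substance of the lemma is the realization of $\tau$ as the transport by the automorphism $\sigma$, after which everything reduces to routine checks.
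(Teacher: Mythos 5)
Your proof is correct and follows essentially the same route as the paper: exhibit the explicit inverse $P\mapsto \abs{P(z^{1/d},w)(x)}^d$, compute $\tau(\phi)=\phi(z^{1/d})$ on type 1 points, deduce $\tau(B(\phi,r))=B(\phi(z^{1/d}),r^{1/d})$, and read off preservation of types 2, 3 and 4. Your packaging of $\tau$ as the pullback by the norm-scaling field automorphism $z\mapsto z^d$ is a slightly more explicit framing of what the paper records in Remark~\ref{rmk:tau}, but the underlying checks are identical.
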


\begin{proof}
It is immediate that $\tau\inv $ is given by 
\[ \abs{P(\tau^{-1}(x))} = \abs{P\lrpar{z^{1/d},w}(x)}^{d}\text{ for any } P\in \nL[w],\]
and the continuity of $\tau$ and $\tau\inv$ follows immediately from the definitions.  
Lemma~\ref{lem:diamond} applied to $h(z,w)  = (z^d,w)$ gives
$\tau(\phi) = \phi({z^{1/d}})$ for any $\phi\in\nL$. It follows that type 1 points, and Puiseux series
are preserved. Since $|\tau(\phi)- \tau(\psi)| = |\phi-\psi|^{1/d}$, 
and $\tau$ is a homeomorphism, we also have $\tau(B(\phi,r))= 
B(\tau(\phi), r^{1/d})$ which implies that $\tau$   preserves
 type 2 and 3 points, hence also type 4 points. 
\end{proof}

\begin{rmk} ~ \label{rmk:tau}
\begin{enumerate}
\item 
Since $\tau$ restricts to $\nL$ as the field automorphism 
$ \phi(z)\mapsto   \phi(z^{1/d})$, the notation  is consistent with that of~\cite{nie-zhao:Berkdyntwistratmaps}. 
\item 
The definition of $\tau$ in \eqref{eq:tau} makes sense on $\A^{1,\an}_{k\laurent{z}}$ but it does not define  a homeomorphism there.  For instance the Puiseux series $\pm z^{1/2}$ parameterize the same curve $w^2 = z$ but their preimages under $\tau_2$ correspond to the two distinct curves $w= \pm z$. 
\end{enumerate}
\end{rmk}

\begin{lem}\label{lem:f_rond_tau}
In  $\A^{1,\an}_{\nL}$ we have the decomposition $f_\diamond = \hat{f} \circ \tau$
where $\hat{f}$ is the polynomial map 
$$\hat{f}(w) = w^c +\sum_{j=0}^{c-1}h_j\big({z^{1/d}}\big) w^j.$$ 
\end{lem}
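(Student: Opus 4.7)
The plan is to reduce the identity to its action on the dense subset of type~$1$ points. Both $f_\diamond$ and $\tau$ are continuous self-maps of $\A^{1,\an}_{\nL}$ (the former by the defining formula~\eqref{eq:defi}, the latter by Lemma~\ref{lem:tau}), and the polynomial $\hat f$ induces a continuous self-map there as well. Since the type~$1$ points form a dense subset of $\A^{1,\an}_{\nL}$, it suffices to check the equality $f_\diamond(\phi) = \hat f(\tau(\phi))$ for every $\phi \in \nL$.

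For such $\phi$, Lemma~\ref{lem:tau} yields $\tau(\phi) = \phi\bigl(z^{1/d}\bigr)$. Substituting this into $\hat f(w) = w^c + \sum_{j=0}^{c-1} h_j\bigl(z^{1/d}\bigr)\, w^j$ produces
\[
\hat f(\tau(\phi)) = \phi\bigl(z^{1/d}\bigr)^c + \sum_{j=0}^{c-1} h_j\bigl(z^{1/d}\bigr)\, \phi\bigl(z^{1/d}\bigr)^j,
\]
which is exactly the formula for $f_\diamond(\phi)$ provided by Lemma~\ref{lem:diamond}.

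If one prefers to avoid the density argument, the identity can be checked directly on arbitrary seminorms: for any $P \in \nL[w]$, a routine computation using the definition of $\tau$ in~\eqref{eq:tau} shows that the polynomial $(P \circ \hat f)(z^d, w)$ obtained by substituting $z \mapsto z^d$ in the coefficients of $P \circ \hat f$ coincides with $P \circ f$. Therefore
\[
|P(\hat f(\tau(x)))| = |(P \circ \hat f)(z^d, w)|_x^{1/d} = |(P \circ f)(x)|^{1/d} = |P(f_\diamond(x))|,
\]
which yields the claim for every $x \in \A^{1,\an}_{\nL}$.

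No serious obstacle is to be expected here; the only point requiring a little care is the bookkeeping of the variables $z$ and $w$, since $P \in \nL[w]$ must be parsed as a polynomial in $w$ whose coefficients themselves encode generalized power series in $z$.
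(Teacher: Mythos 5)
Both versions of your argument are correct. The one you sketch at the end (checking $|P(\hat f(\tau(x)))| = |(P\circ f)(x)|^{1/d} = |P(f_\diamond(x))|$ directly on all seminorms, via the coefficient bookkeeping $(P\circ\hat f)(z^d,w) = P\circ f$) is essentially the paper's proof verbatim.

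Your primary argument by density is a genuinely different, if slightly heavier, route: it combines Lemma~\ref{lem:diamond}, the formula $\tau(\phi)=\phi(z^{1/d})$ from the proof of Lemma~\ref{lem:tau}, and then density of type~1 points plus continuity of both sides. One subtlety here deserves emphasis, and you handle it correctly: the continuity of $f_\diamond$ must \emph{not} be taken from Theorem~\ref{thm:basic-finite}, since the paper's proof of that theorem uses the very decomposition $f_\diamond = \hat f\circ\tau$ being established here. You correctly observe instead that continuity of $f_\diamond$ already follows from~\eqref{eq:defi}, because for every $P\in\nL[w]$ the polynomial $P\circ f$ again lies in $\nL[w]$, so $x\mapsto|(P\circ f)(x)|^{1/d}$ is continuous in the weak topology. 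With that noted there is no circularity. The net effect is that your density route trades the paper's one-line uniform computation for a check on type~1 points, at the cost of invoking Lemma~\ref{lem:diamond} and the density-plus-continuity machinery; the paper's direct computation is more economical, but your argument is equally sound and arguably illuminates why the factorization holds on classical points first.
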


%
\begin{proof}
Indeed, a direct computation gives 
\begin{align*}
\left|P\left(\hat{f}(\tau(x))\right)\right| 
&= 
\bigg|
P\biggl(z, w^c +\sum_{j=0}^{c-1}h_j\big({z^{1/d}}\big) w^j\biggr)(\tau(x))
\bigg|
\\
&=
\bigg|
P\biggl({z^d, w^c +\sum_{j=0}^{c-1}h_j ({z } )  w^j}\biggr)(x)
\bigg|^{1/d}
=
\abs{P(f_\diamond(x))}~,
\end{align*}
and we are done. 
\end{proof}

\begin{proof}[Proof of Theorem~\ref{thm:basic-finite}]
From the basic properties of the action of a polynomial map on the Berkovich affine line
(see, e.g.,~\cite{benedetto:dyn1NA}), we infer that 
$\hat{f}$ is a open, continuous, proper, $c$-to-one map on $\A^{1,\an}_{\nL}$, and that it preserves the type
of points.
By composition, we conclude that $f_\diamond$ is a open, proper, continuous, $c$-to-one map
on  $\A^{1,\an}_{\nL}$ that preserves the type of points and rigid points.

\medskip

The continuity of $f_\diamond$ on $\A^{1,\an}_{k\laurent{z}}$ follows 
directly  from formula~\eqref{eq:defi} and the definitions. 
Since $\pi\colon \A^{1,\an}_{\nL}\to \A^{1,\an}_{k\laurent{z}}$ is proper (resp. open), the properness (resp. openness) 
of $f_\diamond$ on $\A^{1,\an}_{k\laurent{z}}$ follows from
the one of $f_\diamond$ on $\A^{1,\an}_{\nL}$.

To see that $f_\diamond$ is (at most) $cd$-to-one on $\A^{1,\an}_{k\laurent{z}}$, it is sufficient to prove
that any rigid point has at most $cd$ preimages, since $f_\diamond$ is open and the set of rigid
points is dense. 

So let $x\in \A^{1,\an}_{k\laurent{z}}$ be a rigid point. We   only treat the case  where $|x|\le 1$.
When $|x|>1$, one needs to work in the coordinates $w^{-1}$. 
Geometrically, $x$ is given by a formal germ of irreducible curve
we denote by $C$. It is determined by some Puiseux series $\phi(z)= \sum_{i\ge 0} a_i z^{i/m}$
where $m= \gcd\{ i, a_i\neq0\}$ is the intersection number $C\cdot (z=0)$. A parameterization
of $C$ is given by $t\mapsto (t^m,\phi(t^m))$. 
An equation for $C$ is given by 
\[P(z,w)=\prod_{i=1}^m (w- \zeta^i \cdot \phi)\]
where $\zeta$ is a primitive $m$-th root of unity and $\zeta\cdot\phi$ is defined as in \S~\ref{subs:formal_laurent}.

Any preimage $y_i$ of $x$ is also a rigid point hence it is determined by a Puiseux series
$\psi_i$ of multiplicity $m_i$, and an equation for the associated branch $D_i$
is denoted by $P_i$. 
Since $f_\diamond(y_i)=x$, $P_i$ divides $P\circ f$. 
We let $r_i = \ord_{P_i}(P\circ f)\in\N^*$.
Also, $f(t^{m_i},\psi(t^{m_i}))= (t^{dm_i},  \phi(t^{dm_i}))$, and
$dm_i= e_i m$ for some $e_i\in\N^*$.   

Let $\{y_i\}_{i\in I}$ be  the set of preimages of $x$ in $\A^{1,\an}_{k\laurent{z}}$. 
We   compute the local intersection product 
\[
(f^*C \cdot f^*\set{z=0})_0
=
 \sum r_i D_i \cdot  d \{z=0\}
=\sum dr_i m_i
\]
As the left hand side is equal to $cd (C\cdot [z=0])_0= cdm$, we obtain
\begin{equation}\label{eq:mult}
\sum_I r_i e_i = cd~, 
\end{equation}
therefore $\#I\leq cd$ and the proof is complete. 
\end{proof}

\subsection{Basic dynamical properties of $f_\diamond$}
A closed (resp. open) ball in the Berkovich affine line over $\nL$
is defined as $\overline{B}^{\an}(\phi,r):= \{x,\, |(w-\phi(z))(x)|\le r\}$ (resp. $B^{\an}(\phi,r):=\{x,\, |(w-\phi(z))(x)|<r\}$) for some 
$\phi\in \nL$ and $r\ge 0$. Observe that $\overline{B}^{\an}(\phi,r)\cap \nL = \overline{B}(\phi,r)$, and 
$B^{\an}(\phi,r)\cap \nL = B(\phi,r)$,

A ball in $\A^{1,\an}_{k\laurent{z}}$ is the image under the canonical projection 
$\pi\colon\A^{1,\an}_{\nL}\to \A^{1,\an}_{k\laurent{z}}$ of a ball in $\A^{1,\an}_{\nL}$.
We abuse notation and identify $\overline{B}^{\an}(\phi,r)$ and  ${B}^{\an}(\phi,r)$ with 
their images in $\A^{1,\an}_{k\laurent{z}}$. Observe that the preimage of a ball under $\pi$ is a union of balls.

\begin{thm} \label{thm:basic_dynamical}
Let $f$ be a polynomial map of the form~\eqref{eq:cd}, with $2 \leq c < d$. 
The map $f_\diamond\colon \A^{1,\an}_{\nL}\to \A^{1,\an}_{\nL}$
  (resp. $\A^{1,\an}_{k\laurent{z}}\to  \A^{1,\an}_{k\laurent{z}}$) has the 
following properties.
\begin{enumerate} [(i)]
\item The Gauss point $\xg$ is totally invariant, and 
 $f_\diamond \colon B^{\an}(0,1) \to B^{\an}(0,1)$ is proper and order preserving.
\item
The ball $B^{\an}_\infty =\{|x|>1\}$ is totally invariant, and $f^n(x) \to \xg$ for any $x\in B^{\an}_\infty$.
\item
The open unit ball is totally invariant. 
\item
For any $a\in k^*$, the map $f_\diamond$ maps $B^{\an}(a,1)$ onto $B^{\an}(a^c,1)$
and
$f_\diamond\colon B^{\an}(a,1) \to B^{\an}(a^c,1)$
is proper. In addition, over $\nL$, it is a homeomorphism. 
\begin{itemize}
\item 
When $a \in k^*$ is not a root of unity, then $f^n_\diamond(x)\to \xg$
for any point $x\in B^{\an}(a,1)$.
\item 
When $a \in k^*$ is a root of unity, 
then $B^{\an}(a,1)$ is eventually mapped to a 
periodic ball $B^{\an}$ which contains a unique periodic cycle lying in $k\fps{z}$.
Any point in $B^{\an}(a,1)$ is either eventually mapped to this cycle, or 
converges under iteration to $\xg$.
\end{itemize}
\end{enumerate}
\end{thm}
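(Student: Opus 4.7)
The plan is to deduce everything from the decomposition $f_\diamond = \hat f \circ \tau$ of Lemma~\ref{lem:f_rond_tau} on $\A^{1,\an}_{\nL}$, and to descend the resulting statements to $\A^{1,\an}_{k\laurent{z}}$ via the canonical projection $\pi$. By Lemma~\ref{lem:tau}, $\tau$ is a type-preserving homeomorphism fixing $\xg$; and $\hat f$ is monic in $w$ with the remaining coefficients in the maximal ideal of $\nL^\circ$, hence a polynomial of \emph{good reduction} in the standard sense. Both factors send balls to balls, hence preserve the partial order rooted at $\xg$, and both preserve the closed unit ball.

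Parts (i), (ii), (iii) reduce to the direct norm estimate
\[
|f_\diamond(x)|^d = \biggl|w^c + \sum_{j=0}^{c-1} h_j(z)\,w^j\biggr|_x,
\]
obtained by applying the definition of $f_\diamond$ to $P(w)=w$. If $|x|>1$ then $w^c$ dominates (each $|h_j|<1$), so $|f_\diamond(x)| = |x|^{c/d}$; forward invariance of $\{|x|>1\}$ follows, and the relation $|f^n_\diamond(x)| = |x|^{(c/d)^n}\to 1$ combined with order preservation shows that the iterates collapse along the nested segments $[\xg, f^n_\diamond(x)]$ to $\xg$. If $|x|<1$, every summand has norm $<1$, so $|f_\diamond(x)|<1$ and $B^{\an}(0,1)$ is forward invariant. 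That $\xg$ has itself as its unique preimage is the standard good-reduction statement $\hat f^{-1}(\xg) = \{\xg\}$. Total invariance of $B^{\an}(0,1)$ in (iii) then follows by combining this with the statement of (iv) for points of norm $1$ different from $\xg$; properness on $B^{\an}(0,1)$ is inherited from Theorem~\ref{thm:basic-finite}.

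The heart of the theorem is (iv). The tangent directions at $\xg$ inside the closed unit ball are indexed by the residue field $k$; since $\tau(\zeta(a,r)) = \zeta(a,r^{1/d})$ stays in the same direction for $a\in k^*$, $\tau$ fixes every $B^{\an}(a,1)$, and the reduction of $\hat f$ modulo the maximal ideal of $\nL^\circ$ is $\bar w \mapsto \bar w^c$ (each $h_j(z^{1/d})$ vanishes at $z=0$); this yields the inclusion $f_\diamond(B^{\an}(a,1)) \subseteq B^{\an}(a^c,1)$. To upgrade this to equality and to the homeomorphism property over $\nL$, I would carry out a Newton-polygon analysis of $\hat f(a+u) - (a^c + v)$ in the variable $u$: for $a\in k^*$ the constant term has norm $<1$, the linear coefficient $\hat f'(a) \equiv ca^{c-1}\pmod{\nL^{\circ\circ}}$ has norm exactly $1$ (using $\mathrm{char}(k)=0$ so that $ca^{c-1}\in k^*$), the intermediate coefficients have norm $\leq 1$, and the leading coefficient is $1$. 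The polygon yields a unique root with $|u|<1$, giving a bijection on classical points, which upgrades by continuity and properness to a Berkovich homeomorphism; descent via $\pi$ then gives the proper surjection on $\A^{1,\an}_{k\laurent{z}}$.

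The final dichotomy is governed by the orbit of $\bar a$ under $\bar w \mapsto \bar w^c$. If $a \in k^*$ is not a root of unity then $(a^{c^n})$ is infinite, so $f^n_\diamond(x)$ lies in infinitely many distinct sub-balls $B^{\an}(a^{c^n},1)$; every neighborhood of $\xg$ in $\overline{B}^{\an}(0,1)$ has complement contained in a finite union of sub-balls $\overline{B}^{\an}(a_i,r_i)$ with $r_i<1$, so the orbit eventually enters every such neighborhood and converges to $\xg$. If $a$ is a root of unity, $B^{\an}(a,1)$ is eventually periodic of some period $p$; on the periodic ball I would construct the fixed point of $g=f_\diamond^p$ in $k\fps{z}$ by solving the functional equation $g_{(2)}(z,\phi(z))=\phi(z^{d^p})$ order by order in $z$ starting from $\phi(0)=a$, the inductive step amounting to inversion by the nonzero scalar $c^p \in k^*$. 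The main obstacle I anticipate is the final basin statement---that every point of the periodic ball is either eventually attracted to this rigid cycle or converges to $\xg$. I expect this to follow from the contraction of $f_\diamond$ in the hyperbolic distance (the small relative degree $c/d<1$ case of \cite{birkett:skewproductsberkproj}), which forces the diameter of any non-classical iterate to strictly decrease and hence pushes such points either into every neighborhood of the cycle or out of any proper sub-ball of $B^{\an}(a,1)$ and toward $\xg$.
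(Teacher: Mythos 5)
Your treatment of (i)--(iii) and of the homeomorphism assertion in (iv) is essentially the paper's argument, with one genuine variation: where the paper conjugates $g=T_2\circ f\circ T_1$ into the form $(z^d,cw(1+\ldots))$ and factors $g=\gamma_1\circ\gamma_2$ with $\gamma_2$ an analytic isomorphism of the Berkovich unit ball and $\gamma_1=\tau$ a homeomorphism, you propose a Newton-polygon count for $\hat f(a+u)-(a^c+v)$. Both are Hensel-type arguments and both are sound; the paper's route has the small advantage that the factorization through $\gamma_2$ is reused almost verbatim in the final step of (iv), whereas the Newton polygon would have to be redeployed.

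The gap is in the final basin statement, and it is more than the ``anticipated obstacle'' you flag. First, a sign error: Birkett-style hyperbolic contraction toward the fixed point $\xg$ forces $-\log\diam$ to \emph{decrease}, i.e.\ the diameter of a non-classical iterate \emph{increases} toward $1$, not decreases toward $0$. Second, and more seriously, the hyperbolic metric lives on $\A^{1,\an}_\nL$ minus its type $1$ points, so the argument says nothing at all about rigid points $\phi\in B^{\an}(a,1)$---and these are precisely where the dichotomy has content, since the cycle and any potential exceptional behavior are type $1$. What the paper does instead, after conjugating so the fixed point is $0$ and $g(z,w)=(z^d,cw(1+O(z,w)))$, is the one-line computation
\[
g_\diamond\bigl(az^\alpha+o(z^\alpha)\bigr)=ac\,z^{\alpha/d}+o(z^{\alpha/d})\qquad(a\in k^*,\ \alpha>0),
\]
which shows $\abs{g_\diamond^n(\phi)}=e^{-\alpha/d^n}\to 1$ for every nonzero rigid $\phi$, and, by order-preservation, handles every non-rigid $x>\phi$ as well. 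This single estimate dispatches all cases uniformly; if you want to keep the hyperbolic-contraction viewpoint you would still need a separate argument of this kind for rigid points, so you gain nothing by the detour.
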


\begin{proof} 
From~\eqref{eq:comp}, we infer
\begin{equation}\label{eq:norm}
|f_\diamond(\phi)|=
\bigg|
\phi^c\big({z^{1/d}}\big)+ \sum_{j=0}^{c-1} h_j\big({z^{1/d}}\big)\, \phi^j\big({z^{1/d}}\big)
\bigg|
= 
|\phi|^{c/d}
\end{equation}
for all $\phi\in\nL$ such that $|\phi|>1$ hence for all $|\phi| \ge1$ by continuity. 
Conversely, when $\abs{\phi}<1$  we have 
$|f_\diamond(\phi)|\le \max\{|\phi|^{1/d}, |h_0|\}<1$,  and likewise 
if $\abs{\phi}\leq 1$ then $\abs{f_\diamond(\phi)}\leq 1$. This
 implies that $B^{\an}(0,1)$ is invariant and  $B^{\an}_\infty = \{|x|>1\}$
is totally invariant. Since by assumption   $c/d<1$, it also follows   that  
$|f_\diamond^n(\phi)| = |\phi|^{(c/d)^n} \to 1$ for all 
$\phi\in B^{\an}_\infty$, which proves (ii) in $\A^{1,\an}_{\nL}$. 
Since $\xg$ is the unique boundary point of $B_\infty$ and $f_\diamond$ is open, 
we infer that $\xg$ is totally invariant too.
As $\pi\inv(\xg) = \xg$, the Gauss point is also totally invariant and (ii) holds 
 in $\A^{1,\an}_{k\laurent{z}}$.

The properness of   $f_\diamond \colon B^{\an}(0,1) \to B^{\an}(0,1)$   follows from the fact that 
it extends continuously to the closure $B^{\an}(0,1)\cup\{\xg\}$ of the open unit ball 
and that $f_\diamond(\xg)=\xg$. 
Suppose that $x\le x' \in B^{\an}(0,1)$ and pick any $P\in k\laurent{z}[w]$ (or in $\nL[w]$). 
Since $z$ belongs to the base field $k\laurent{z}$, 
we have $|z(x)|=|z(x')|$,  so we may multiply $P$ by a suitable power of $z$
and assume that $P\in k\fps{z} [w]$. 
In that case
\[
|P(f_\diamond (x))|= |(P\circ f) (x)|^{1/d} \le |(P\circ f) (x')|^{1/d} = |P(f_\diamond (x))|
\]
hence 
$f_\diamond (x)\le f_\diamond (x')$, and assertion~(i) is established (both over $\nL$ and $ k\laurent{z}$).

Suppose now that $\phi \in B(a,1)\subset \nL$ for some $a\in k$.
Then, writing $a(z) = \sum a_n z^{\beta_n}$, we get 
 $\beta_0=0$, $a_0=a$, and since $h_j(0)=0$ we get
\[
f_\diamond(\phi)=
\phi^c\big({z^{1/d}}\big)+ \sum_{j=0}^{c-1} h_j\big({z^{1/d}}\big)\, \phi^j\big({z^{1/d}}\big)
=
a^c + \sum_{n\ge1} b_j z^{\beta'_j}
\]
where $b_j\in k$ and $\beta'_j\in \Q^*_+$. 
This proves that $f_\diamond(B^{\an}(a,1)) \subset B^{\an}(a^c,1)$ for all $a\in k$, which, together with the total invariance of $\set{\abs{x}> 1}$,  implies that $B^{\an}(0,1)$ is totally invariant. 

We suppose now that $a\in k^*$. 
Since $\xg$ is the unique boundary point of $B^{\an}(a,1)$, $f_\diamond(\xg)=\xg$,
 and $f_\diamond$ is continuous
we get that  $f_\diamond\colon B^{\an}(a,1) \to B^{\an}(a^c,1)$ is proper.  
 We claim that $f_\diamond\colon B^{\an}(a,1) \to B^{\an}(a^c,1)$ is a homeomorphism when working 
 in $\A^{1, \an}_{\nL}$.
The easiest way to see this is to pre- and post-compose $f$ by translations
$T_1(w)= w+a$, and $T_2(w)= w- a^c$ so that we are reduced to analyzing $g_\diamond$
on the open unit ball, where 
$g$ is defined by 
\[
g(z,w) = T_2 \circ f \circ T_1 (z,w)  = f(z, w+a) - (0, a^c) = \biggl(z^d , cw + \psi(z)+ \sum_{j=2}^{c}g_{j}(z) w^j\biggr)
 \]
for some $\psi, g_{j} \in k\fps{z}$ with $\psi(0)=0$. 
Observe that the translation by $\psi$
is a homeomorphism so that we can assume that $\psi =0$. 
One can then write $g$ as a composition
$g = \gamma_1 \circ \gamma_2$ where $\gamma_1(z,w)=(z^d,w)$ and
$\gamma_2(z,w)=(z, cw( 1+ \sum_{j=1}^{c-1} c^{-1} g_{j}(z) w^j))$.
Observe that $\gamma_2$ induces  an analytic isomorphism on the Berkovich unit ball (see, e.g.,~\cite[Proposition~3.22]{benedetto:dyn1NA} or \S~\ref{subs:update} below),
and $(\gamma_1)_\diamond=\tau$ 
is a homeomorphism on $\A^{1,\an}_{\nL}$ preserving the unit ball. 
We conclude that
$g_\diamond= (\gamma_1)_\diamond \circ  \gamma_2$ is a homeomorphism of the open unit ball and 
the first assertion of (iv) follows over $\nL$.   
Over  $k\laurent{z}$, this still shows that 
$f_\diamond\colon B^{\an}(a,1) \to B^{\an}(a^c,1)$ is proper and  surjective.

If $a\in k^*$ is not a root of unity, the open balls $B^{\an}(f_\diamond ^n (a),1)$ are disjoint and the 
properties of the Berkovich topology imply that for any $x\in B^{\an}(a,1)$, $f_\diamond^n(x)\to \xg$.  

Finally, suppose that $a$ is a root of unity, and let  $n_0$ be such that $a^{c^{n_0}}$ is periodic under $w\mapsto w^c$, of period $\ell$. Let $B^{\an}= B^{\an}(a^{c^{n_0}},1)$. Then $f^{n_0}(B^{\an}(a,1))=B^{\an}$, and $B^{\an}$ is periodic of period $\ell$. 
Replacing $f$ by a suitable iterate, we may assume that $B^{\an}= B^{\an}(b,1)$ and $\ell=1$. 
Observe that all iterates of $f$ bear the same form~\eqref{eq:cd} with $c,d$ replaced by $c^n$ and $d^n$. 
We then conjugate by a translation $T(w):= w+b$ so that  $g:= T^{-1} \circ f \circ T$ has the form
\[
g(z,w)=(z^d, c w+ O(z,w^2))
\]
In this situation,~\cite[Theorem~2.1]{dujardin-favre-ruggiero:DMM} applies, and we infer the existence of a formal power series
$\phi$ vanishing at zero fixed and by $g$. 
Translating again by this formal power series, we may assume that $\phi=0$, which means  that 
$g(z,w)=(z^d, c w( 1+ O(z,w)))$. 
A direct computation shows that 
\[
g_\diamond (a z^\alpha + o(z^\alpha))
= 
ac z^{\alpha/d}  +  o(z^{\alpha/d})
\] 
for any $a\in k^*$ and $\alpha>0$, and since $g_\diamond$ is order preserving, this  
   implies $g_\diamond^n(x) \to \xg$ for all $x\in B^{\an}\setminus \{0\}$. 
Thus, the proof of (iv) is complete, and we are done. 
\end{proof}


\section{The  invariant  Cantor set $\cK$ and    measure $\mu_{\na}$}\label{sec:cantor}
We work under the same assumptions as in the previous section with a map
of the form
\begin{equation*}\tag{\ref{eq:cd}}
f(z,w)= \biggl(z^d, w^c + \sum_{j=0}^{c-1} h_j(z) w^j\biggr)~,
\end{equation*}
with $d>c\ge2$, and $h_j(0)=0$, together with its associated map $f_\diamond$.  
We   focus our attention to the restriction of $f_\diamond$ to
the open unit ball $B^{\an}(0,1) \subset \A^{1,\an}_{k\laurent{z}}$.  

\subsection{The Jacobian formula (I)}
The Jacobian of $f$ is equal to 
\[
\jac(f) = d z^{d-1} \jac_w(f)\]
where
\[
\jac_w(f) = cw^{c-1} + \sum_{j=1}^{c-1} j h_j(z) w^{j-1}=c \prod_{j=1}^{c-1} \big(w-c_j(z)\big)~,
\]
where the Puiseux series $c_j \in \Puis$ are parametrizations of the critical branches of $f$ (other than the branch $(z=0)$). We let
$\CritB=\{c_1, \ldots, c_{c-1}\}$ be the set of critical branches.

The following Jacobian formula is crucial for our investigation. 
\begin{lem}\label{lem:jac}
For any $x\in B^{\an}(0,1)$ we have
\begin{equation}\label{eq:jac}
- \log \diam(f_\diamond(x)) = \frac1{d} \big(
- \log \diam(x) - \log |\jac_w(f)(x)|\big)~.
\end{equation}
\end{lem}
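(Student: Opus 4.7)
The plan is to exploit the decomposition $f_\diamond=\hat f\circ\tau$ provided by Lemma~\ref{lem:f_rond_tau}, work first in $\A^{1,\an}_{\nL}$ where the base field is algebraically closed, and transfer the result to $\A^{1,\an}_{k\laurent{z}}$ at the end using $f_\diamond\circ\pi=\pi\circ f_\diamond$ together with the Galois-invariance of the quantities involved.

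The map $\tau$ is easy to deal with: by Lemma~\ref{lem:tau} it sends $\zeta(\phi,r)$ to $\zeta(\phi(z^{1/d}),r^{1/d})$, so $\diam(\tau(y))=\diam(y)^{1/d}$ on type $2$ and $3$ points (and by upper semi-continuity everywhere). The bulk of the argument is then the following standard "polynomial image" formula, which I would state and prove as an independent observation: for any nonconstant polynomial $Q\in\nL[w]$ and any point $y=\zeta(\phi,r)$ of type $2$ or $3$,
\[
\diam\bigl(Q(y)\bigr)=|Q'(y)|\cdot\diam(y).
\]
Expanding $Q(\phi+h)=Q(\phi)+\sum_{k\ge 1}\tfrac{Q^{(k)}(\phi)}{k!}\,h^k$, the image of $\overline B(\phi,r)$ under $Q$ is the closed ball centered at $Q(\phi)$ of radius $R=\max_{k\ge 1}|Q^{(k)}(\phi)/k!|\,r^k$, so $\diam(Q(y))=R$; on the other hand, $|Q'(y)|\cdot r=\max_{k\ge 1}|k\cdot Q^{(k)}(\phi)/k!|\,r^k$. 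Since the residue characteristic is zero (the norm is trivial on $\Q\subset k$), $|k|=1$ for every $k\in\N^*$, and the two expressions coincide.

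Applying this to $Q=\hat f$ at $y=\tau(x)$, and using the definition~\eqref{eq:tau} of $\tau$ together with the identity
\[
\hat f'(z^d,w)=cw^{c-1}+\sum_{j} j\,h_j(z)\,w^{j-1}=\jac_w(f)(z,w),
\]
I obtain
\[
|\hat f'(\tau(x))|=|\hat f'(z^d,w)(x)|^{1/d}=|\jac_w(f)(x)|^{1/d},
\]
which combined with $\diam(\tau(x))=\diam(x)^{1/d}$ gives
\[
\diam(f_\diamond(x))=|\jac_w(f)(x)|^{1/d}\cdot\diam(x)^{1/d},
\]
and taking $-\log$ yields~\eqref{eq:jac} on type $2$ and $3$ points of $\A^{1,\an}_{\nL}$.

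The extension to type $1$ and $4$ points follows by continuity, using the decreasing sequences of type $2$ points provided by Proposition~\ref{prop:approxseq}; the transfer to $\A^{1,\an}_{k\laurent{z}}$ uses that $\diam$ is $G$-invariant by construction and that $\jac_w(f)\in k\fps{z}[w]$, so both sides of~\eqref{eq:jac} descend through $\pi$. The only genuinely delicate point is the polynomial image formula $\diam(Q(y))=|Q'(y)|\cdot\diam(y)$: it is standard, but the characteristic zero hypothesis enters in an essential way—in positive residue characteristic a wild-ramification correction factor would appear, and the Jacobian formula would have to be modified accordingly.
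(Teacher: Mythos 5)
Your proof is correct and takes essentially the same route as the paper's: both work over $\nL$, use the decomposition $f_\diamond = \hat f \circ \tau$, and combine $\diam(\tau(x)) = \diam(x)^{1/d}$ with the expansion formula $\diam(\hat f(y)) = |\hat f'(y)|\cdot\diam(y)$. The only difference is that you supply a self-contained proof of the latter formula (correctly invoking residue characteristic $0$), whereas the paper cites it from Favre--Rivera-Letelier.
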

\begin{proof}
Since the diameter function is Galois equivariant, we can work in $\A^{1,\an}_\nL$. 
We use the decomposition $f_\diamond = \hat{f} \circ \tau$ as in the proof of Theorem~\ref{thm:basic-finite}.  
Since $\tau(B(\phi,r))= B(\tau(\phi), r^{1/d})$, we have $\diam(\tau(x))^d= \diam(x)$. 
The map $\hat{f}= w^c+\sum_j h_j(z^{1/d}) w^j$ is a polynomial function on the Berkovich space, 
and $\nL$ has residual characteristic $0$ so that 
\[
\diam(\hat{f}(x))= \diam(x) \times |\hat{f}'(x)|
\]
by~\cite[Proposition~3.4]{favre-riveralelier:expansion}, where $\hat f'$ is the derivative with respect to $w$. 
It follows that
\[
\diam(f_\diamond(x))
=
\diam(\hat{f}(\tau(x)))
=
\diam(\tau(x)) \times |\hat{f}'(\tau(x))|
=
\diam(x)^{1/d} \times |\jac_w(f)(x)|^{1/d},
\]
thereby proving~\eqref{eq:jac}.
\end{proof}


\subsection{The invariant Cantor set} \label{ssec:invariantCantor}

The next theorem describes the topological dynamics of $f_\diamond$ in $B^{\an}(0,1)$ both in  $\A^{1,\an}_{k\laurent{z}}$
 and  $\A^{1,\an}_\nL$. Let us introduce the sets 
\[
\cK := \{x \in B^{\an}(0,1)\subset \A^{1,\an}_{k\laurent{z}}, \, f^n_\diamond(x) \text{ does not converge to }\xg \} 
\]
and 
\[
\cK_\nL := \{x \in B^{\an}(0,1)\subset \A^{1,\an}_\nL, \, f^n_\diamond(x) \text{ does not converge to }\xg \}. 
\]

\begin{thm}\label{thm:invariantcantor}
Let $f$ be a polynomial map of the form~\eqref{eq:cd}, with $2 \leq c < d$. Then 
$\cK_\nL = \pi^{-1}(\cK)$, and
both sets are compact totally invariant subsets included in $\overline{B}^{\an}(0,\rho_0)$
for some $\rho_0<1$. The set $\cK_\nL$ (resp. $\cK$) 
consists of type 1 points (resp. type 1 points such that $\alpha=\infty$).

 We have the following dichotomy: 
either $\cK$ and $\cK_\nL$ are both  Cantor sets, or they are both 
 reduced to a single point,  that corresponds to 
 a totally invariant smooth (formal)  curve
transverse to $(z=0)$. In the latter case, $f$ is of the form $f(z,w)=(z^d, (w+\phi(z))^c-\phi(z^d))$ for some 
$\phi\in k\fps{z}$.
\end{thm}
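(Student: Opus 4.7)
The plan is to proceed in three main blocks.

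\textbf{Topological basics.} The equality $\cK_\nL=\pi^{-1}(\cK)$ is immediate from the equivariance $\pi\circ f_\diamond=f_\diamond\circ\pi$ together with $\pi^{-1}(\xg)=\{\xg\}$. For the uniform bound, I would observe that the term $\phi(z^{1/d})^c$ in~\eqref{eq:comp} dominates whenever $|\phi|$ is close enough to $1$, so that the identity $|f_\diamond(\phi)|=|\phi|^{c/d}$ of~\eqref{eq:norm} holds for all type 1 points in an annular region $\rho_0<|x|<1$ for some $\rho_0<1$; by continuity of $f_\diamond$ and density of type 1 points, it extends to all points in that region. Since $c/d<1$, iterates satisfy $|f_\diamond^n(x)|=|x|^{(c/d)^n}\to 1$, and because $\xg$ is the only accumulation point of $B^{\an}(0,1)$ on the level $\{|\cdot|=1\}$, we conclude $f_\diamond^n(x)\to\xg$. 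This gives $\cK\subset\overline{B}^{\an}(0,\rho_0)$; compactness then follows from $\cK=\bigcap_n f_\diamond^{-n}(\overline{B}^{\an}(0,\rho_0))$, and total invariance is immediate from the definition of $\cK$.

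\textbf{Type 1 structure and dichotomy.} To show that $\cK_\nL$ consists of type 1 points, I would invoke \cite[Corollary~3.14]{birkett:skewproductsberkproj}: $f_\diamond$ is a strict contraction on the Berkovich hyperbolic subspace (type 2, 3, and 4 points, with the hyperbolic metric). Strict contraction forces uniqueness of hyperbolic fixed points, so $\xg$ is the only one, and iterating the contraction yields $f_\diamond^n(x)\to\xg$ for every type 2, 3, or 4 point $x\in B^{\an}(0,1)$. Since $\pi$ preserves point types, $\cK=\pi(\cK_\nL)$ is also contained in the type 1 locus. For the Cantor/singleton dichotomy, observe that type 1 points form a totally disconnected subspace, as they are parametrized by $\nL$ with its ultrametric topology (modulo the compact Galois quotient); hence $\cK_\nL$ and $\cK$ are compact and totally disconnected. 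Using the $c$-to-$1$ property (Theorem~\ref{thm:basic-finite}) together with total invariance, if $\cK_\nL$ contains at least two points then a backward-orbit density argument in the spirit of the classical non-Archimedean Julia theory of \cite{baker-rumely} yields that the union of preimages of any point accumulates densely in $\cK_\nL$; this gives perfectness and hence the Cantor structure, which transfers to $\cK$ via $\pi$. The perfectness step is the main obstacle. The strengthening that $\alpha(x)=\infty$ throughout $\cK$ is not a direct consequence of the type 1 property over $k\laurent{z}$; I would defer it to the finer generic-multiplicity analysis developed in Sections~\ref{sec:upper}--\ref{sec:infinite}.

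\textbf{Singleton characterization.} Assume $\cK=\{x_0\}$. The dichotomy applied to $\cK_\nL$ forces it to also be a singleton, so $\pi^{-1}(x_0)$ reduces to one point and $m(x_0)=1$: the associated curve $C$ has the form $\{w=\psi(z)\}$ with $\psi\in k\fps{z}$, $\psi(0)=0$, smooth and transverse to $\{z=0\}$. Conjugating $f$ by the translation $(z,w)\mapsto(z,w-\psi(z))$ sends $C$ to $\{w=0\}$ and brings $f$ to a map $g=(z^d,G(z,w))$ with $G(z,0)=0$, i.e.\ $\{w=0\}$ is totally invariant. Total ramification of $\zeta(0,0)$ under $g_\diamond$, which follows from the $c$-to-$1$ property (Theorem~\ref{thm:basic-finite}) together with $\cK_\nL$ being a singleton, forces the polynomial factor $\hat g(w)=w^c+\sum_{j=1}^{c-1}\tilde h_j(z^{1/d})w^j$ in the decomposition of Lemma~\ref{lem:f_rond_tau} to have $w=0$ as its unique root, so all $\tilde h_j\equiv 0$ and $g(z,w)=(z^d,w^c)$. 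Undoing the conjugation yields the stated form $f(z,w)=(z^d,(w+\phi(z))^c-\phi(z^d))$ for some $\phi\in k\fps{z}$.
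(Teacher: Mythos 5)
Your first block coincides with the paper's argument (the paper takes $\rho_0=\max_j|h_j|^{1/(c-j)}$ and writes $\cK_\nL=\bigcap_n(B^{\an}(0,1)\setminus f_\diamond^{-n}(U))$ with $U=\{\rho_0<|x|<1\}$). Your route to the type~1 statement is genuinely different: the paper never invokes Birkett here, but works from the Jacobian formula of Lemma~\ref{lem:jac} together with the estimate $\bigl|\log\tfrac{|\jac_w(f)(x')|}{|\jac_w(f)(x)|}\bigr|\le(c-1)\log\tfrac{\diam(x')}{\diam(x)}$, which forces $\diam(f^n_\diamond(x))$ to become asymptotic to $\diam(f^n_\diamond(x'))\to1$ for $x'\in[x,\xg)\cap U$. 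Your contraction argument can be made to work, but only because the Lipschitz constant of $f_\diamond$ for the hyperbolic metric is $c/d<1$ (the factor $1/d$ coming from $\tau$ times the factor $c$ from $\hat f$); a merely \emph{strict} contraction does not force $f^n_\diamond(x)\to\xg$ on the non--locally-compact hyperbolic space, so you must quote or reprove the uniform constant — note this is exactly where $c<d$ enters. Your total-ramification derivation of the normal form in the singleton case is cleaner than the paper's coefficient-by-coefficient computation.

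There remain three genuine gaps. (1) The assertion $\alpha=\infty$ on $\cK$ is part of the statement and cannot be delegated to Sections~\ref{sec:upper}--\ref{sec:infinite}: those sections control multiplicities, but $\cK$ may contain non-rigid type~1 points (Theorem~\ref{thm:infinity-mult}), and for those $\alpha=\infty$ is precisely what has to be proved. The paper does it here, reading the two-sided bound $\tfrac{\alpha(x')}{\alpha(x)}\le\tfrac{\alpha(f^n_\diamond(x'))}{\alpha(f^n_\diamond(x))}\le\tfrac{\alpha(x)}{\alpha(x')}$ (from Lemma~\ref{lem:crucial-skew}) as an equicontinuity property making the basin of $\xg$ open and closed in each segment $[x,\xg]$. (2) Perfectness: the appeal to classical non-Archimedean Julia theory is misplaced, since $\cK$ lies in the Fatou set in the sense of Birkett and Nie--Zhao; a backward-orbit density argument would have to rely on the paper's own equidistribution (Theorem~\ref{thm:ergo-meas}), which is proved \emph{after} this theorem. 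The paper's argument is elementary: $f_\diamond^{-n}(\overline{B}^{\an}(0,\rho_0))$ is a finite union of disjoint closed balls each mapped \emph{onto} $\overline{B}^{\an}(0,\rho_0)$ by $f^n_\diamond$, so if some level has at least two balls, every ball $\overline{B}_n(\phi)$ contains at least two points of $\cK$; you need this nested structure anyway to formulate the dichotomy. (3) Most seriously, when $\cK=\{x_0\}$ the dichotomy over $\nL$ does \emph{not} force $\cK_\nL$ to be a singleton: a priori $x_0$ could be a non-rigid type~1 point with $m(x_0)=\infty$, in which case $\cK_\nL=\pi^{-1}(x_0)$ is an infinite Galois orbit, hence a Cantor set. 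Excluding this is the one step the paper explicitly postpones until after Theorem~\ref{thm:estim-mult}: the critical branches are Puiseux series escaping to $\xg$, so Corollary~\ref{cor:mult} bounds the multiplicity on $\cK$, contradicting $m(x_0)=\infty$. Your singleton analysis silently assumes $m(x_0)=1$ and thus skips the only delicate point of that case.
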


\begin{rmk}
Recall that a type $1$ point $x\in\A^{1,\an}_\nL$ is characterized by the condition \[-\log \diam (x)= \infty.\] 
If $\phi$ is a Puiseux series, then $\alpha(\phi)=\infty$ as well (see, e.g., Corollary~\ref{cor:approxseq}). 
When $\phi\in\nL\setminus \Puis$, letting  $(x_n)$ be the sequence of points given by Proposition~\ref{prop:approxseq}, the  condition $\alpha=\infty$ is equivalent to 
\[
\sum_{n\ge0} \frac1{m_n} \log \frac{\diam(x_n)} {\diam(x_{n+1})}=\infty~.
\]
\end{rmk}

\begin{proof}
We first work in $ \A^{1,\an}_\nL$.
Set $\rho_0 :=\max_{0\le j \le c-1} |h_j|^{1/(c-j)}<1$. 
Then, if  $\phi\in \nL$ is 
such that $\rho_0 <|\phi|<1$,  we get   
$|\phi|^c > \max_{0\le j \le c-1} \{|h_j| |\phi|^j\}$,  so that 
\begin{equation}\label{eq:r0}
|f_\diamond(\phi)|=
\bigg|
\phi^c\big({z^{1/d}}\big)+ \sum_{j=0}^{c-1} h_j\big({z^{1/d}}\big)\, \phi^j\big({z^{1/d}}\big)
\bigg|
= 
|\phi|^{c/d}.
\end{equation}
It follows that $U:= \{\rho_0< |x|<1\} \subset B^{\an}(0,1)$
is a $f_\diamond$-invariant open set and
$f_\diamond^n(x) \to \xg$ for any $x\in U$.
We   thus define 
\[
\cK_\nL := \bigcap_{n\ge 0} \Big( B^{\an}(0,1)\setminus f_\diamond^{-n}(U)\Big).\]
It is a compact totally invariant set, and 
$x\in\cK_\nL$ if and only if $f^n_\diamond$ does not converge to $\xg$. 
Since $\pi^{-1} \{\rho_0< |x|<1\} =  \{\rho_0< |x|<1\}$, we also get  that 
$x\in   \pi(\cK_\nL)$ iff $|f_\diamond^n(x)|\le \rho_0$ for all $n$, and
$\cK_\nL = \pi^{-1}(\cK)$.

\smallskip

We now proceed to show that $\diam(x) = 0$ for any  $x\in \cK_\nL$. 
In particular    $x$  must be  of type 1. 
\begin{lem}
For any   $x\le x'\in B^{\an}(0,1)\subset \A^{1,\an}_{\nL}$   of 
 positive diameter, we have   
\begin{equation}\label{eq:dist}
\left|
\log \frac{|\jac_w(f)(x')|}{|\jac_w(f)(x)|} 
\right|
\le 
(c-1)\times 
\log \frac{ \diam(x') }{ \diam(x)} 
~.\end{equation}
\end{lem}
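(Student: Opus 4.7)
The plan is to use the factorization of $\jac_w(f)$ into linear factors over $\nL$ and to bound the contribution of each factor independently. Since both $\diam(\cdot)$ and $|\jac_w(f)(\cdot)|$ are invariant under the absolute Galois group, it is harmless to lift the problem from $\A^{1,\an}_{k\laurent{z}}$ to $\A^{1,\an}_\nL$. Using $\jac_w(f) = c\prod_{j=1}^{c-1}(w-c_j)$ with the $c_j\in \Puis$, and the fact that the integer $c$ viewed as an element of $k$ has norm $1$, the claim reduces to proving the pointwise estimate
\[
\log \frac{|(w-c_j)(x')|}{|(w-c_j)(x)|} \le \log \frac{\diam(x')}{\diam(x)}
\]
for each critical branch $c_j$, and then summing $c-1$ such contributions.

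To establish this estimate, I would choose compatible representations $x = \zeta(\phi, r)$ and $x' = \zeta(\phi, r')$ with $0 < r = \diam(x) \le r' = \diam(x')$; such a choice exists because $x \le x'$, as a consequence of the tree description recalled in \S\ref{sec:berko-rep}. Formula \eqref{eq:def-zeta} applied to the degree-one polynomial $w-c_j$ then gives directly
\[
|(w-c_j)(x)| = \max\{|\phi - c_j|, r\}, \qquad |(w-c_j)(x')| = \max\{|\phi - c_j|, r'\}.
\]
A one-line three-case analysis on the position of $a := |\phi - c_j|$ relative to the interval $[r,r']$ yields the elementary inequality $\max\{a,r'\}/\max\{a,r\} \le r'/r$, which is precisely the desired bound on each linear factor.

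Finally, I would remark that both sides of \eqref{eq:dist} are automatically non-negative under the hypothesis $x \le x'$, since seminorms respect the tree order and the diameter function is non-decreasing along $[\xg, x]$, so the outer absolute value in the statement is decorative and may be ignored in the argument. I do not foresee any serious obstacle here: the whole content is the pointwise estimate for a single linear polynomial, once the decomposition of $\jac_w(f)$ into its critical branches is invoked.
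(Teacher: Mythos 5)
Your proof is correct in substance and follows exactly the same strategy as the paper's: factor $\jac_w(f)$ into the $c-1$ linear factors $w-c_j$ over $\nL$, note that the constant $c$ contributes nothing since $|k^*|=1$, represent $x$ and $x'$ as $\zeta(\phi,r)$ and $\zeta(\phi,r')$ with a common $\phi$, and then apply the elementary inequality $\max\{a,r'\}/\max\{a,r\}\le r'/r$ factor by factor. Your closing remark that the absolute value on the left side is decorative (both sides are nonnegative when $x\le x'$) is accurate and is a nice observation the paper leaves implicit.

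The one small gap concerns type 4 points. You assert that compatible representations $x=\zeta(\phi,r)$, $x'=\zeta(\phi,r')$ with $\phi\in\nL$ always exist when $x\le x'$, but this fails when $x$ is of type 4: such a point has positive diameter yet is not of the form $\zeta(\phi,r)$ with $\phi\in\nL$ and $r>0$ (it is given by a nested sequence of balls with empty intersection). The paper handles this with the phrase ``by continuity, we may suppose \dots'': one establishes the inequality when $x$ is of type 2 or 3 as you do, and then extends it to type 4 points by approximating $x$ from above by type 2 points $x_n\downarrow x$ along the segment $[x,x']$, using the continuity of $\diam$ along decreasing sequences (stated in \S\ref{sec:berkovich}) and of the evaluation seminorms. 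This is a one-line fix, but as written your proof does not cover that case.
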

\begin{proof}
%
%
By continuity, we may suppose that $x= \zeta(\phi,r)$ and $x'=\zeta(\phi,r')$
with $\phi\in\nL$ and $r\le r'<1$. 
Write $\jac_w(f) = c \prod_{j=1}^{c-1} (w-c_j(z))$. Then 
\[
\log \frac{|\jac_w(f)(x')|}{|\jac_w(f)(x')|}
=
\log \frac{\prod_j \max\{|\phi-c_j|,r'\}}{\prod_j\max\{|\phi-c_j|,r\}}
\le (c-1) \log \frac{r'}{r}.
\]
Indeed to  justify the inequality,  we split the products according to the position of  $\abs {\phi-c_j}$ with respect to $r'$: if $\abs{\phi- c_j}\leq r'$ then $\frac{  \max\{|\phi-c_j|,r'\}}{ \max\{|\phi-c_j|,r\}}\leq \frac{r'}{r}$ and otherwise this fraction equals 1. The desired inequality~\eqref{eq:dist} follows.
\end{proof}
%

Now, let $x\in B^{\an}(0,1)$ be of finite diameter, and 
observe  that $[x ,\xg)\cap U$ is non-empty. 
Pick any $x'\in I\cap U$, so that $\diam(f^n_\diamond(x'))\to 1$ as $n\to\infty$.
Then for every $n\geq 1$,  Lemma~\ref{lem:jac} (applied to $f^n$) yields:
\begin{align*}
\left|
\log \frac{\diam(f^n_\diamond(x'))}{\diam(f^n_\diamond(x))}
\right|
&\le 
\frac1{d^n}
\left|
\log \frac{\diam(x')}{\diam(x)}
\right|
+ 
\frac1{d^n}
\left|
\log\frac{|\jac_w(f^n)(x')|}{|\jac_w(f^n)(x)|}
\right|
\\
&\le 
\frac{1+(c^n-1)}{d^n}
\left|
\log \frac{\diam(x')}{\diam(x)}
\right| \longrightarrow 0
\end{align*}
which implies $x' \notin \cK_\nL$, as announced. 
 
Note that $\diam (\pi(x)) = \diam(x)$, hence $\cK \subset \{\diam=0\}$ as well.
We now work in $\A^{1,\an}_{k\laurent{z}}$ and 
prove $\alpha(x) = \infty$ for every $x\in \cK$. We rely on the following lemma. 
\begin{lem}
For any $x\le x'\in B^{\an}(0,1)\subset \A^{1,\an}_{k\laurent{z}}$, such that $\alpha(x)<\infty$ and for all $n\in \N$, we have
\begin{equation}\label{eq:uniform-skew}
\frac{\alpha(x')}{\alpha(x)} 
\le
\frac{\alpha(f^n_\diamond(x'))}{\alpha(f^n_\diamond(x))} 
\le 
\frac{\alpha(x)}{\alpha(x')} 
~.\end{equation}
\end{lem}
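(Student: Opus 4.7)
The right-hand inequality is immediate: by Theorem~\ref{thm:basic_dynamical}(i), $f^n_\diamond$ preserves the partial order on $B^{\an}(0,1)$, so $f^n_\diamond(x) \le f^n_\diamond(x')$, whence $\alpha(f^n_\diamond(x')) \le \alpha(f^n_\diamond(x))$ and therefore $\alpha(f^n_\diamond(x'))/\alpha(f^n_\diamond(x)) \le 1 \le \alpha(x)/\alpha(x')$.

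For the left-hand inequality the plan is to apply the skewness distortion estimate of Lemma~\ref{lem:crucial-skew} to the pulled-back polynomials $Q = P \circ f^n$, where $P$ ranges over monic polynomials in $k\fps{z}[w]$. First I unwind the structure of $f^n$: writing $f^n(z,w) = (z^{d^n}, G_n(z,w))$, the skew-product form~\eqref{eq:cd} and a straightforward induction show that $G_n$ is monic in $w$ of degree $c^n$ with coefficients in $k\fps{z}$. Hence, for any monic $P \in k\fps{z}[w]$ of degree $m$, the composition $Q = P \circ f^n$ is again monic in $k\fps{z}[w]$, of degree $mc^n$. Iterating~\eqref{eq:defi} also yields the key identity
\[
-\log|Q(y)| = d^n \cdot \bigl(-\log|P(f^n_\diamond(y))|\bigr)
\]
for every $y \in \A^{1,\an}_{k\laurent{z}}$.

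Now I apply Lemma~\ref{lem:crucial-skew} to $Q$ and the pair $x \le x'$: after clearing denominators, its right-most inequality reads $\alpha(x')\cdot(-\log|Q(x)|) \le \alpha(x)\cdot(-\log|Q(x')|)$. Dividing by $\deg Q = mc^n$, substituting the identity above, and cancelling the common factor $d^n/c^n$ gives
\[
\alpha(x') \cdot \frac{-\log|P(f^n_\diamond(x))|}{\deg P} \le \alpha(x) \cdot \frac{-\log|P(f^n_\diamond(x'))|}{\deg P}
\]
for every monic $P \in k\fps{z}[w]$. Taking the supremum over such $P$ on both sides, and invoking that the sup defining $\alpha(y)$ may be taken over monic polynomials in $k\fps{z}[w]$ (this is the valuation-theoretic reformulation of the capacity recalled in \S\ref{sec:compare} and Proposition~\ref{rmk:computeskewness}), yields $\alpha(x') \cdot \alpha(f^n_\diamond(x)) \le \alpha(x) \cdot \alpha(f^n_\diamond(x'))$. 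Rearranging gives the desired inequality, using that $\alpha(x)$ and $\alpha(f^n_\diamond(x))$ are positive since $\xg$ lies outside $B^{\an}(0,1)$.

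The main subtlety I expect is in justifying that $\alpha$ is indeed realized as the supremum over \emph{monic} polynomials in $k\fps{z}[w]$, so that Lemma~\ref{lem:crucial-skew} is applicable to each $Q$ appearing in this sup; the degenerate case where $\alpha(f^n_\diamond(x))$ is infinite is then absorbed by the argument itself, since the inequality $\alpha(x')\cdot\alpha(f^n_\diamond(x)) \le \alpha(x)\cdot\alpha(f^n_\diamond(x'))$ forces $\alpha(f^n_\diamond(x'))$ to be infinite too. Thanks to the explicit skew-product form of $f^n$, no induction on $n$ is required.
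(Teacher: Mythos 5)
Your proof is correct and follows essentially the same strategy as the paper: observe that for $P$ monic in $k\fps{z}[w]$, the pullback $P\circ f^n$ is again monic in $k\fps{z}[w]$ (thanks to the skew-product form), relate $-\log|P\circ f^n|$ at $x,x'$ to $-\log|P|$ at $f^n_\diamond(x),f^n_\diamond(x')$ via~\eqref{eq:defi}, and feed this into Lemma~\ref{lem:crucial-skew}. The right-hand inequality is handled identically in both arguments.

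The only substantive difference is how $\alpha$ enters. The paper first reduces by continuity to the case where $x,x'$ are of type 2; then, since $f^n_\diamond(x)\le f^n_\diamond(x')$ are comparable, a \emph{single} polynomial $P$ (the minimal polynomial from Proposition~\ref{rmk:computeskewness}, applied to a Puiseux series $\phi$ with $\zeta(\phi,0)\le f^n_\diamond(x)$) realizes $\alpha$ at both $f^n_\diamond(x)$ and $f^n_\diamond(x')$ simultaneously, and one application of Lemma~\ref{lem:crucial-skew} to $P\circ f^n$ finishes immediately. You instead take a supremum over all monic $P\in k\fps{z}[w]$, which works but shifts the burden to verifying that this supremum exactly computes $\alpha$ (a point you correctly flag as the main subtlety, and which the paper's type-2 reduction sidesteps cleanly). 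A small aside: the paper's displayed text "$f^n_\diamond(x')\le f^n_\diamond(x)$" is a typo for "$f^n_\diamond(x)\le f^n_\diamond(x')$", consistent with Theorem~\ref{thm:basic_dynamical}(i) and with your statement; the rest of the paper's computation assumes the corrected ordering.
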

\begin{proof}
By continuity, we may suppose that $x$ and $x'$ are of type 2. 
Since $f^n_\diamond$ preserves the order, we have 
$f^n_\diamond(x')\le f^n_\diamond(x)$ hence there exists
$P\in k\fps{z}[w]$ monic of degree $\ell$ such that 
\[\alpha(f^n_\diamond(x'))= -\frac1\ell \log |P(f^n_\diamond(x'))| = -\frac1{\ell d^n} \log |(P\circ f^n)(x'))| 
\]
and similarly
$\alpha(f^n_\diamond(x))= -\frac1{\ell d^n} \log |(P\circ f^n)(x))| $.
We conclude using Lemma~\ref{lem:crucial-skew}.
\end{proof}
Suppose now that $x\le \xg$, and $\alpha(x)<\infty$, and recall that $\alpha(\xg) = 0$. Viewing 
the relation~\eqref{eq:uniform-skew} as an equicontinuity property, we infer that 
the set of points in $[x,\xg]$ lying in the basin 
of attraction of $\xg$ is both open and closed. This entails that 
  $f^n_\diamond(x) \to \xg$ as required.

\smallskip

It remains to prove the dichotomy.  
Note that, even if for convenience  we drop the `an' superscript, all the balls we consider in the proof are 
balls in the affine Berkovich line. 
Let us first work over $\A^{1,\an}_{k\laurent{z}}$.
To do this, consider the closed ball $\overline{B}_{0,0}:=\overline{B}(0,\rho_0)$. 
It follows from the previous arguments that $\cK=\bigcap_n f_\diamond^{-n}(\overline{B}_{0,0})$.
Since $f_\diamond$   is finite, type preserving  and order-preserving, 
$f_\diamond^{-n}(\overline{B}_{0,0})$ is a finite union of disjoint closed balls 
$\overline \cB_n = \{\overline{B}_{n,i}\}$ such that  $f^n_\diamond (\overline{B}_{n,i}) = \overline{B}_{0,0}$
for all $n$ and $i$. 

For any $\phi\in\cK$, consider the sequence of balls $\overline{B}_n(\phi) \in \overline \cB_n$
containing $\phi$. It is decreasing hence its intersection $\bigcap_n \overline{B}_n(\phi)$
is included in $\cK$ and
contains $\phi$. It is equal to $\{\phi\}$ because $\cK$ contains only type $1$ points. 

Suppose now that there exists an integer $n_0$ such that $\# \overline \cB_{n_0}\ge2$. 
We shall prove that $\cK$ is a Cantor set. To do so we need to show that 
$\cK$ is a compact, totally disconnected and perfect set. 
The first property is clear. Pick any point $\phi\in\cK$. 
Then for any $n$ the finite collection of balls $\overline \cB_n=\{\overline{B}_{n,i}\}$ covers $\cK$. The function $\chi$ taking value $1$ on  $\overline{B}_n(\phi)\cap \cK$
and $0$ on all other pieces $\overline{B}_{n,i}\cap \cK$ is continuous since the closed balls are disjoint, 
hence the connected component of $\phi$ in $\cK$ is included  in $\overline{B}_n(\phi)$, 
so that $\cK$ is totally disconnected. 
To see that $\cK$ is perfect, choose any open neighborhood $V$ of some $\phi\in\cK$.
For $n$ sufficiently large  $\overline{B}_n(\phi)\subset V$, and $f^n_\diamond \colon\overline{B}_n(\phi)\to \overline{B}_{0,0}$ is surjective. It follows that $\overline{B}_n(\phi)$ contains a preimage by $f^n_\diamond$ of a point in each set $\overline{B}_{m,i} \cap \cK$, and in particular at least $2$ points. 
Thus $\cK$ is perfect. 

Suppose now on the contrary that for every $n$,  $\overline \cB_{n}=\{\overline{B}_{n,0}\}$ consists of a single ball. Then $\cK$ is reduced to a singleton $  \{x\}= \bigcap_n \overline{B}_{n,0}$. To conclude we need to argue that $x$ is represented by a branch of the critical set, and that this branch is smooth and transversal to $(z=0)$.

\medskip
Observe first that we can construct a sequence $\overline \cB_{n,\nL}$ as above over  $\A^{1,\an}_{\nL}$, and the same analysis
proves that  either $\cK_\nL$ is a Cantor set or a single point $\{\phi\}$ with $\phi\in\nL$.

Suppose that $\cK_\nL$ is a singleton. Since $\cK_\nL$ is Galois invariant,  $x$ must be a point of type $1$ of (relative) multiplicity equal to $1$. We deduce that $x$ is represented by a smooth branch $\phi$ transversal to $(z=0)$.
We claim that $w-\phi(z)$ is a branch of the critical set.
In fact, from the fact that $w-\phi(z)$ is totally invariant we deduce that
$$
f^*(w-\phi(z)) = w^c+\sum_{j=0}^{c-1} h_j(z) w^j - \phi(z^d)
$$
must be of the form $(w-\phi(z))^e u(z,w)$ for a suitable $e \in \nN^*$ 
and a unit $u=\sum_{k} u_k(w) z^k$.
By computing at $z=0$ we get $w^e u_0=w^c$, from which we deduce that $e=c$ and $u_0\equiv 1$.
Assume that there exists $k\geq 1$ so that $u_k \not\equiv 0$. Take $k$ to be minimal, and let $h$ be the order of $u_k$ at $0$. Then the expression $(w-\phi(z))^c u(z,w)$ has a non-trivial monomial $z^k w^{h+c}$, while $f^*(w-\phi(z))$ does not, and we get a contradiction.
Thus we have showed that $u \equiv 1$ and that $w \circ f(z,w)=(w-\phi(z))^c + \phi(z^d)$.
Computing  the derivative, we get
$$
\jac_w(f) = c(w-\phi(z))^{c-1}\text{,}
$$
from which we deduce that $w-\phi(z)$ is a critical branch, as desired.  

To conclude the proof of Theorem~\ref{thm:invariantcantor}, it remains to exclude the possibility that 
 $\cK=\{x\}$ is a singleton and $\cK_\nL$ is a Cantor set.  For convenience, we postpone the  
  proof of this fact  until  Theorem~\ref{thm:estim-mult} is established.
%
  \end{proof}
%
%
%
 
%

\subsection{The non-Archimedean Green function}
In this section, we can work indifferently over $\A^{1,\an}_{k\laurent{z}}$
or  $\A^{1,\an}_{\nL}$. 

Recall that a subharmonic function $g\colon U\to [-\infty,\infty)$ 
on an open set $U\subset \A^{1,\an}_{k\laurent{z}}$ (resp.  in $\A^{1,\an}_{\nL}$) is 
upper-semicontinuous, not identically $-\infty$, and satisfies
the submean value inequalities, see, e.g.,~\cite[Definition~8.16]{baker-rumely}. 
Suffice it to say that the set of subharmonic functions 
is stable under taking maxima, sums, multiplication by positive constants
and decreasing limits. It contains all functions of the form
$q\log|P|$ for any $P\in k\laurent{z}[w]$ (resp. in $\nL[w]$),  and $q\in \Q^*_+$, 
and the lattice generated by these functions is dense in the space
of all subharmonic functions, see, e.g.,~\cite{stevenson} (or~\cite[Proposition~8.45]{baker-rumely}). 
Also, if $g$ is subharmonic in $U\subset \A^{1,\an}_{k\laurent{z}}$, then 
$g\circ \pi$ is subharmonic in $\pi\inv (U)\subset \A^{1,\an}_{\nL}$.

\begin{thm}\label{thm:green}
Let $f$ be a polynomial map of the form~\eqref{eq:cd}, with $2 \leq c < d$. 

Then, for every $n\in\N^*$, 
the function $g_n:x\mapsto \frac{d^n}{c^n} \log \abs{f_\diamond ^n(x)}$ is subharmonic on $B^{\an}(0,1)\subset  \A^{1,\an}_{k\laurent{z}}$.
The sequence $(g_n)_{n\geq 1}$  converges 
pointwise   to a subharmonic function 
\[g_{\na}\colon B^{\an}(0,1) \to \R_-
\text{ such that }
g_{\na} \circ f_\diamond = \frac{c}{d} g_{\na}.
\]Furthermore
the convergence is  locally uniform  on compact subsets of $B^{\an}(0,1)\setminus \cK$, in 
particular $g_{\na}$ is continuous there. 

Furthermore    
$g_{\na}(x) = \log|x|$ on an annulus  of the form $\{\rho_0< |x|<1\}$, and 
\[
\cK = \{g_{\na}=-\infty\}
~.\]

The same results hold in $\A^{1,\an}_{\nL}$ and for the corresponding function 
$g_{\na, \nL}$ we have the relation  $g_{\na, \nL} = g_{\na}\circ \pi$.  
\end{thm}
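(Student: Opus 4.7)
Writing $f^n(z,w)=(z^{d^n},F_n(z,w))$, an immediate induction shows that $F_n\in k\fps{z}[w]$ is monic in $w$ of degree $c^n$, and iterating~\eqref{eq:defi} gives $|f_\diamond^n(x)|=|F_n(x)|^{1/d^n}$. Therefore
\[
g_n(x)=\frac{1}{c^n}\log|F_n(x)|,
\]
so each $g_n$ is subharmonic on $B^{\an}(0,1)$ as a positive rational multiple of the logarithm of a polynomial. Moreover $|F_n|\leq 1$ on the open unit ball, whence $g_n\leq 0$.

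Next, using the explicit formula $|f_\diamond(y)|=|y|^{c/d}$ from~\eqref{eq:r0}, valid for $|y|>\rho_0$, I would verify that $g_n\equiv \log|x|$ on the annulus $\{\rho_0<|x|<1\}$ and, more generally, that $g_n(x)=g_N(x)$ for every $n\geq N$ as soon as $|f_\diamond^N(x)|>\rho_0$. For $x\in B^{\an}(0,1)\setminus \cK$ the orbit eventually enters the annulus, so $(g_n)$ is pointwise eventually constant on $B^{\an}(0,1)\setminus \cK$; by continuity of $|f_\diamond^n|$ and compactness, this is locally uniform on compact subsets. The limit $g_{\na}$ is then continuous and subharmonic there, and coincides with $\log|x|$ on the annulus.

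For $x\in \cK$ one has $|f_\diamond^n(x)|\leq \rho_0$, whence $g_n(x)\leq (d/c)^n\log\rho_0\to -\infty$, so $g_{\na}(x)=-\infty$ and $\cK=\{g_{\na}=-\infty\}$. The most delicate step is upper semicontinuity of $g_{\na}$ at $\cK$. The key is that $\cK$ is strictly contained in $\{|x|<\rho_0\}$: any $y$ with $|y|=\rho_0$ satisfies $|f_\diamond(y)|=\rho_0^{c/d}>\rho_0$ and therefore escapes to $\xg$. By continuity of $|f_\diamond^n|$, for each fixed $n$ we have $|f_\diamond^n(y)|<\rho_0$ for $y$ sufficiently close to $x\in \cK$. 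Hence if $N(y)$ denotes the smallest integer with $|f_\diamond^{N(y)}(y)|>\rho_0$ (for $y\notin \cK$), then $N(y)\to \infty$ as $y\to x$. Using the decomposition $f_\diamond=\hat f\circ \tau$ of Lemma~\ref{lem:f_rond_tau} together with the bound $|h_j(z^{1/d})|\leq e^{-1/d}$, an elementary estimate yields $|f_\diamond(y')|\leq \rho_1:=\max\{\rho_0^{c/d},e^{-1/d}\}<1$ whenever $|y'|\leq \rho_0$. Since $|f_\diamond^{N(y)-1}(y)|\leq \rho_0$, this gives $|f_\diamond^{N(y)}(y)|\leq \rho_1$, so that
\[
g_{\na}(y)=g_{N(y)}(y)=(d/c)^{N(y)}\log |f_\diamond^{N(y)}(y)|\leq (d/c)^{N(y)}\log \rho_1\longrightarrow -\infty
\]
as $y\to x$, establishing upper semicontinuity at every point of $\cK$.

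The functional equation $g_{\na}\circ f_\diamond=(c/d)\,g_{\na}$ follows by passing to the pointwise limit in the identity $g_{n+1}(x)=(d/c)\,g_n(f_\diamond(x))$, and subharmonicity of $g_{\na}$ on all of $B^{\an}(0,1)$ from the standard Hartogs principle on the Berkovich line: a pointwise limit of subharmonic functions which is uniformly bounded above, upper semicontinuous, and not identically $-\infty$ is itself subharmonic. Finally, the analogous results over $\A^{1,\an}_{\nL}$ and the identity $g_{\na,\nL}=g_{\na}\circ \pi$ follow at once from $\pi\circ f_\diamond=f_\diamond \circ \pi$ and the fact that $\pi$ preserves norms. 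The main obstacle in the argument is the upper semicontinuity step at~$\cK$; everything else is dictated by the explicit dynamical picture on the annulus.
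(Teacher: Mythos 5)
Your argument largely retraces the paper's, with two departures. First, for subharmonicity of $g_n$ you use the explicit formula $g_n=\tfrac{1}{c^n}\log|F_n|$ with $F_n$ monic of degree $c^n$ in $k\fps{z}[w]$, rather than the paper's Lemma~\ref{lem:sh}; this is a correct and arguably cleaner route. Second, and more substantively, for subharmonicity of the limit $g_{\na}$ the paper truncates to $\max\{g_n,-A\}$, invokes the Hartogs compactness principle for bounded families, and then lets $A\to\infty$ along a decreasing limit, whereas you instead aim to establish upper semicontinuity of $g_{\na}$ at the points of $\cK$ directly and then appeal to a Hartogs-type principle for u.s.c.\ pointwise limits. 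Your route is more elementary in spirit and, if completed, avoids the truncation trick.

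However, your upper semicontinuity argument has a genuine gap. You assert that $\cK$ is strictly contained in $\{|x|<\rho_0\}$, justifying it by the claim that any $y$ with $|y|=\rho_0$ satisfies $|f_\diamond(y)|=\rho_0^{c/d}>\rho_0$. This misapplies~\eqref{eq:r0}: that identity is established only for $\rho_0<|y|<1$ (strict inequality). At $|y|=\rho_0$ the term $w^c$ of $\hat f$ need not dominate; by the definition of $\rho_0=\max_j|h_j|^{1/(c-j)}$ there is an index $j_0$ with $|h_{j_0}(z^{1/d})|\,|y|^{j_0/d}=|y|^{c/d}$, so cancellation is possible and you cannot conclude $|f_\diamond(y)|=|y|^{c/d}$. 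Consequently you have not ruled out a point of $\cK$ on the sphere $\{|x|=\rho_0\}$, and the step ``by continuity $|f_\diamond^n(y)|<\rho_0$ near $x$'' is unsupported: continuity only gives this when $|f_\diamond^n(x)|<\rho_0$ with strict inequality. The fix is simple: pick any $\rho\in(\rho_0,1)$. Since $\cK\subset\overline{B}^{\an}(0,\rho_0)\subset\{|x|<\rho\}$, the strict inequality now holds automatically, the escape time $N(y)$ defined via $\rho$ tends to infinity as $y\to x\in\cK$, and the bound $|f_\diamond(y')|\le\max\{\rho^{c/d},e^{-1/d}\}<1$ for $|y'|\le\rho$ still holds; the rest of your estimate then goes through unchanged. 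With that replacement your proof is complete.
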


The first assertion is a consequence of the following basic lemma. 
\begin{lem}\label{lem:sh}
If $g$ is a subharmonic  function on $B^{\an}(0,1)$ then $g\circ f_\diamond$ is 
subharmonic. 
\end{lem}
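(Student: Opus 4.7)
The plan is to reduce to subharmonic functions of the simplest possible form using the density result in the space of subharmonic functions, and then use the explicit description of $f_\diamond$ via the defining formula~\eqref{eq:defi}.

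First, I observe that $g \circ f_\diamond$ is automatically upper semi-continuous since $f_\diamond$ is continuous (by Theorem~\ref{thm:basic-finite}) and $g$ is upper semi-continuous by assumption. Moreover it is not identically $-\infty$: indeed, $f_\diamond \colon B^{\an}(0,1) \to B^{\an}(0,1)$ is surjective (by properness together with invariance of $B^{\an}(0,1)$, see Theorem~\ref{thm:basic_dynamical}), and $g$ is not identically $-\infty$ by definition of subharmonicity.

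Next I treat the generating functions. Pick any $P \in k\laurent{z}[w]$ and any $q \in \Q_+^*$; by the defining property~\eqref{eq:defi} we have
\[
(q\log|P|)\circ f_\diamond (x) \;=\; q \log |P(f_\diamond(x))| \;=\; \frac{q}{d}\, \log |(P\circ f)(x)|.
\]
Because $f(z,w) = (z^d, w^c + \sum_j h_j(z) w^j)$ with $h_j \in k\fps{z}$, the composition $P\circ f$ is a polynomial in $w$ whose coefficients lie in $k\laurent{z}$; hence $\frac{q}{d}\log|P\circ f|$ is itself a subharmonic function on $B^{\an}(0,1)$. By stability of subharmonicity under pointwise maxima, sums, and multiplication by positive rationals, it follows that $h \circ f_\diamond$ is subharmonic whenever $h$ lies in the lattice $\mathcal{D}$ generated by the functions $q\log|P|$ (with $P\in k\laurent{z}[w]$ and $q\in\Q_+^*$).

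Finally I pass to the general case by invoking the density result (\cite[Proposition~8.45]{baker-rumely}, also used elsewhere in the paper) which asserts that any subharmonic $g$ on an open $U \subset B^{\an}(0,1)$ can be written as the pointwise decreasing limit of a sequence $(h_n) \subset \mathcal{D}$. Then $h_n \circ f_\diamond \downarrow g \circ f_\diamond$ pointwise, each $h_n \circ f_\diamond$ is subharmonic by the previous step, and the limit is not identically $-\infty$ as noted above; hence $g \circ f_\diamond$ is subharmonic by closure under decreasing limits. The same argument applies verbatim on $\A^{1,\an}_\nL$ with polynomials in $\nL[w]$, so the lemma holds in both contexts.

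The one potentially delicate point is verifying that $P \circ f$ genuinely lies in the appropriate polynomial ring so that $\log|P\circ f|$ is subharmonic as a function on the Berkovich unit ball; but this is immediate from the explicit polynomial form~\eqref{eq:cd} of $f$, which makes the whole argument essentially formal once the density of $\mathcal{D}$ is granted.
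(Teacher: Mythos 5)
Your proof is correct and follows essentially the same route as the paper: both reduce to the lattice generators $q\log|P|$ via the density result, and both then compute $(\log|P|)\circ f_\diamond = \frac1d\log|P\circ f|$ from the defining formula~\eqref{eq:defi}, observing that $P\circ f$ remains a polynomial in $w$ over $k\laurent{z}$. Your version spells out the closure properties (maxima, sums, positive rational scalars, decreasing limits, non-triviality via surjectivity of $f_\diamond$) that the paper leaves implicit in the phrase ``it is enough to check,'' but the substance is identical.
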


\begin{proof}
For any polynomial $k\laurent{z}[w]$, 
denote by $\log_P$ the function $x\mapsto \log\abs{P(x)}$ on 
$\A^{1, \an}_{k\laurent{z}}$. 
It is enough to check that $\log_P\circ  f_\diamond$ is subharmonic for every $P$. Writing $f(z,w) = (z^d, f_2(z,w))$, for every 
$\psi\in \nL$ we have   
\begin{equation}\label{eq:sh}
(\log_P\circ  f_\diamond )(\psi) = \unsur{d} \log\abs{P\big(z^d,f_2(z, \psi(z) )\big)},
\end{equation}
which shows that $\log_P\circ  f_\diamond = \frac1d \log_{P\circ f}$ is subharmonic. 
\end{proof}

\begin{proof}[Proof of Theorem~\ref{thm:green}]
From~\eqref{eq:r0}, the density of type 1 points, and the continuity of 
$\abs{\cdot}$,  there  exists $0<\rho_0<1$ such that 
 denoting  $U = \set{\rho_0< \abs{x}<1}$, $\abs{f_\diamond(x)} = \abs{x}^{c/d}$ for any $x\in U$. 
 Hence $U$ is $f_\diamond$-invariant and we infer that for any $x\in U$ and any $n\in \N^*$, 
 $g_n(x) = \log\abs{x}$. 
 
 Pick any compact set $L\subset B^{\an}(0,1)\setminus \cK$. There exists an integer
$N$ such that $f_\diamond^N(L) \subset U$. Then for $x\in L$ and $n\geq N$ we have 
\begin{equation}
g_n(x) = \lrpar{\frac{d}{c}}^{n-N+N} \log\abs{f_\diamond^{n-N} \lrpar{f_\diamond^{N}(x)}} = 
\lrpar{\frac{d}{c}}^{N}\log\abs{ f_\diamond^{N}(x)}, 
\end{equation}
thus the sequence is locally stationary, and the local uniform convergence statement follows.  In addition, for $x\in \cK$ we have that 
$\log\abs{f_\diamond^n(x)}\leq \log \rho_0$ for every $n$, from which we infer that 
$g_n(x)\to -\infty$, hence the function $g_{\na}$ is well defined, continuous outside 
$\cK$ and $\cK = \set{g_{\na}  =-\infty}$. 

It remains to check that $g_{\na}$ is subharmonic. Outside $\cK$ this follows from the local uniform convergence.   Next, fix a large integer  $A\gg1$.  Then $\max\{g_n,-A\}$ forms a sequence of bounded subharmonic functions, hence by Hartog's theorem~\cite[Proposition~2.18]{favre-riveraletelier},
there exists a subsequence $\max\{g_{n_k},-A\}$ converging pointwise 
on the complement of type 1 points to a 
subharmonic function $g_A$. When $A$ increases, then $g_A$ decreases toward $g_{\na}$ which is hence subharmonic too.

Finally, the same proof works in $\A^{1, \an}_\nL$, and the relation $g_{\na, \nL} = g_{\na}\circ \pi$ follows from the fact that $g_n(\pi(x)) = g_{n, \nL}(x)$, as follows from the definitions. 
\end{proof}

\subsection{The invariant measure}\label{subs:invariant_measure}
We now  turn to the invariant measure $\mu_{\na}$. By default we work in the unit ball in 
$\A^{1, \an}_{k\laurent{z}}$, but  
for reasons that will appear necessary in Section~\ref{sec:structure}, we have to deal 
at the same time with $\A^{1, \an}_{\nL}$. 

Working in $B^{\an}(0,1)\subset \A^{1, \an}_{\nL}$ first, 
recall that  to any subharmonic function $g$ on $B^{\an}(0,1)$ one can associate
a positive measure $\Delta g$ such that  the identity 
$$
\Delta \log_P = \sum_{P(\phi)=0}  \delta_\phi
$$
holds for any $P\in \nL[w]$, where as before we denote $\log_P: x\mapsto \log\abs{P}_x$, and the roots are counted with multiplicity. Conversely, any probability measure $\mu$ with compact support in $B^{\an}(0,1)$ is of the form $\Delta g$, where $g$ is subharmonic and $g(x) = \log\abs{x}$ in some neighborhood of 
$\xg$: indeed  we may take $g_\mu(x) = \int \log \langle x, y \rangle \,{\mathrm{d}}\mu(y)$ such that $g_\mu(\xg)=0$. 
Here $\langle x, y \rangle = |x\wedge y|$ where $x \wedge y$ is the unique supremum of the segment $[x,y]$, see
the discussion in~\cite[\S 13.3]{benedetto:dyn1NA}, or~\cite[Prop.~8.66]{baker-rumely}. Observe that if $\mu_n\to \mu$ weakly, then 
$g_{\mu_n} \to g_\mu$ pointwise outside the set of Type 1 points.


Now, descending to $\A^{1, \an}_{k\laurent{z}}$ we can also associate 
a positive measure $\Delta g$ to any subharmonic function $g$ on $B^{\an}(0,1)$, which 
satisfies the  fundamental   identity 
\begin{equation}\label{eq:fundid}
\pi_* \Delta(g\circ \pi)  = \Delta g.
\end{equation}
For instance, if  $P\in k\fps{z}[w]$, we now have 
\begin{equation}\label{eq:fundid2}
\Delta \log_P = \sum_{P(\phi)=0} \pi_*\delta_\phi,
\end{equation}
where the roots $\phi\in\Puis$  of $P$ are counted with multiplicity.

In both situations, for  any such positive measure $\mu= \Delta g$, 
we put $f_\diamond^*\mu = \Delta (g\circ f_\diamond)$, which is a positive measure 
by Lemma~\ref{lem:sh}. The operator $f^*_\diamond$ is continuous for the weak convergence of measures, and 
Formula~\eqref{eq:sh} entails that if $\mu$ is a probability measure having compact support in $B^{\an}(0,1)$, 
then $f_\diamond^*\mu$ is a positive measure with compact support in the open unit ball and has mass $c/d$. 

By duality, we infer the existence of a push-forward operator on the space of 
continuous function with compact support on $B^{\an}(0,1)$
so that 
\begin{equation}\label{eq:biz}
\sup \abs{(f_{\diamond})_*h}\le \frac{c}{d} \sup|h| \text{ , and }
\int h  \, {\mathrm{d}} (f^*_{\diamond}\mu) = \int \lrpar{(f_{\diamond})_*h}  {\mathrm{d}}\mu~,
\end{equation}
for any continuous $h$. Namely, $(f_{\diamond})_*h(x) = \bra{f_\diamond^*\delta_x,h}$.

\begin{rmk}
The push-forward operator can be obtained explicitely as follows.
Over $\nL$ recall that  $f_{\diamond} = \hat{f} \circ \tau$
where $\hat{f}$ is a polynomial map of degree $c$  defined over $\nL$, and 
$\tau(\phi(z)) = \phi(z^{1/d})$.
Then we have
$(f_{\diamond})_*h (x) = \frac1d \sum_{f_\diamond(y)=x} e(y) h(y)$
where $e(y)\in\{1, \cdots, c\}$ is the local degree of $\hat{f}$ at
$\tau(y)$ as defined in~\cite{favre-riveraletelier,jonsson:berkovich,benedetto:dyn1NA}. 
A similar description can be obtained over $k\laurent{z}$. 
\end{rmk}

 \begin{lem}\label{lem:pushpull}
For any compactly supported measure $\mu$ on $\A^{1,\an}_{k\laurent{z}}$ (resp. 
$\A^{1,\an}_{\nL}$), we have 
$\displaystyle(f_\diamond)_*(f_\diamond^*\mu)  = \frac{c}{d} \mu$.
\end{lem}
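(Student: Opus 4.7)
I would prove the identity by testing it against an arbitrary continuous compactly supported function $h$ on $B^{\an}(0,1)$. Interpreting the left-hand $(f_\diamond)_*$ as the standard push-forward of measures, $\int h\,\mathrm{d}((f_\diamond)_*\nu) = \int (h\circ f_\diamond)\,\mathrm{d}\nu$ (well-defined because $f_\diamond$ is proper on the open unit ball by Theorem~\ref{thm:basic_dynamical}), and chaining this with the adjointness \eqref{eq:biz} that defines the push-forward of functions, I obtain
$$
\int h\,\mathrm{d}\bigl((f_\diamond)_*(f_\diamond^*\mu)\bigr) = \int (h\circ f_\diamond)\,\mathrm{d}(f_\diamond^*\mu) = \int (f_\diamond)_*(h\circ f_\diamond)\,\mathrm{d}\mu.
$$
The lemma therefore reduces to the pointwise identity $(f_\diamond)_*(h\circ f_\diamond) = \tfrac{c}{d}\,h$.

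To establish this pointwise identity I would invoke the explicit formula recorded in the remark preceding the lemma, namely $(f_\diamond)_*\varphi(x) = \tfrac{1}{d}\sum_{f_\diamond(y)=x} e(y)\,\varphi(y)$, with $e(y)$ the local degree of $\hat{f}$ at $\tau(y)$. Since $(h\circ f_\diamond)(y) = h(x)$ for every preimage $y$ of $x$, substituting $\varphi = h\circ f_\diamond$ collapses the sum to $\tfrac{h(x)}{d}\sum_{f_\diamond(y)=x} e(y)$. The remaining ingredient is the fibre-degree identity $\sum_{f_\diamond(y)=x} e(y) = c$, valid at every $x$: over $\nL$ this follows from the factorization $f_\diamond = \hat{f}\circ\tau$ of Lemma~\ref{lem:f_rond_tau}, since $\tau$ is a homeomorphism (Lemma~\ref{lem:tau}) and hence preserves fibres with trivial local degrees, while $\hat{f}$ is a polynomial of degree $c$ for which the total local degree along any fibre in $\A^{1,\an}_{\nL}$ classically equals $c$. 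Over $k\laurent{z}$ the analogous sum-over-preimages description alluded to in the remark yields the same total degree.

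As a sanity check, the case $h \equiv 1$ on a neighborhood of the support of $\mu$ recovers exactly the mass relation $\abs{f_\diamond^*\mu} = \tfrac{c}{d}\abs{\mu}$ already noted in \S\ref{subs:invariant_measure}, which is the dual statement of $(f_\diamond)_*1 = \tfrac{c}{d}$. The main (and only mildly annoying) obstacle I foresee is verifying that the explicit push-forward formula and the total-degree identity hold uniformly across fibres containing points of every type, not only rigid ones; this is standard in the Berkovich dynamics of polynomial maps (see the references cited in the remark), so once these facts are assembled the computation above is immediate and the lemma follows.
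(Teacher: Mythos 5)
Your proof is correct, but it follows a genuinely different route from the paper's. The paper argues on the measure side: by density and linearity it reduces to $\mu=\delta_\phi$ a Dirac mass at a type~1 point of $\A^{1,\an}_{\nL}$, observes that $f_\diamond^*\delta_\phi=\Delta\frac1d\log|(w-\phi)\circ f|$ is supported on the finite set $f_\diamond^{-1}\{\phi\}$, so that $(f_\diamond)_*(f_\diamond^*\delta_\phi)=C\delta_\phi$, and then pins down $C=c/d$ by comparing masses. You instead argue on the function side: you reduce to the pointwise identity $(f_\diamond)_*(h\circ f_\diamond)=\tfrac{c}{d}h$ and verify it via the explicit fibre formula $(f_\diamond)_*\varphi(x)=\tfrac1d\sum_{f_\diamond(y)=x}e(y)\varphi(y)$ together with the total-degree identity $\sum e(y)=c$, which follows from $f_\diamond=\hat f\circ\tau$ since $\tau$ is a bijection on fibres and $\hat f$ has degree $c$. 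Your approach avoids the density argument and works for all $\mu$ at once, but it leans on the explicit push-forward formula that the paper only states in a remark with references; the paper's route needs only the support computation and the mass normalization $\|f_\diamond^*\mu\|=\tfrac{c}{d}\|\mu\|$ already established. One small point: over $k\laurent{z}$ your appeal to ``the analogous sum-over-preimages description'' is thin, since fibres there can have up to $cd$ points with different weights. The cleanest repair is to note that only the total mass of $f_\diamond^*\delta_x$ matters because $h\circ f_\diamond$ is constant (equal to $h(x)$) on $\supp(f_\diamond^*\delta_x)\subset f_\diamond^{-1}(x)$, giving $(f_\diamond)_*(h\circ f_\diamond)(x)=h(x)\,\|f_\diamond^*\delta_x\|=\tfrac{c}{d}h(x)$ directly over either field; alternatively, descend from $\nL$ via $\pi_*$ using $f_\diamond\circ\pi=\pi\circ f_\diamond$.
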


\begin{proof}
By density and linearity, we only need to check the formula for Dirac masses at type 1 points, and may work over $\nL$. 
Then $\mu= \Delta \log |w- \phi|$ for some $\phi\in \nL$, and 
$f_\diamond^*\mu = \Delta \frac1d \log |(w-\phi) \circ f|$ whose support equals $f_\diamond^{-1}\{\phi\}$
so that $(f_\diamond)_*(f_\diamond^*\mu)  = C \mu$
for some positive constant $C$. 
Computing the mass on both sides leads to $C= c/d$
which completes the proof. 
\end{proof}

\begin{thm}\label{thm:ergo-meas}
Keeping  the assumptions and notation 
 of Theorem~\ref{thm:green}, define $\mu_{\na} := \Delta g_{\na}$.
This is a $f_\diamond$-invariant  probability measure  
 satisfying  $f_\diamond^* \mu_{\na} = \frac{c}d \mu_{\na}$.
Its support is equal to $\cK$. 

For any point $x\in B^{\an}(0,1)$, we have the weak convergence  of measures
\begin{equation}\label{eq:plbck}
 \frac{d^n}{c^n}( f^{n}_\diamond )^*\delta_x \to \mu_{\na}~.
 \end{equation}
Furthermore,  the measure $\mu_{\na}$ is mixing (hence ergodic).

Likewise, in $\A^{1, \an}_{\nL}$ we define $\mu_{\na, \nL} := \Delta g_{\na, \nL}$, which satisfies 
$\mu_{\na} = \pi_*\mu_{\na, \nL}$, and the same properties hold. In addition,  
the measure $\mu_{\na, \nL}$ is balanced, that is, for  any open ball 
$B$ in which $f_\diamond$ is injective, we have $  \mu(f_\diamond(B)) = c\mu(B)$.  
\end{thm}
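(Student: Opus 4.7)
The plan is to derive the six claims of the theorem successively from the functional equation $g_{\na}\circ f_\diamond=(c/d)g_{\na}$ of Theorem~\ref{thm:green} and the push--pull identity of Lemma~\ref{lem:pushpull}. The arguments over $k\laurent{z}$ and $\nL$ are parallel, so I focus on $\mu_{\na}$; the equality $\mu_{\na}=\pi_*\mu_{\na,\nL}$ follows at once from $g_{\na,\nL}=g_{\na}\circ\pi$ (Theorem~\ref{thm:green}) and the fundamental identity~\eqref{eq:fundid}. The balanced property will only exploit the extra structure $f_\diamond=\hat f\circ\tau$ available over $\nL$.

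\textbf{Invariance, mass, support.} Applying $\Delta$ to the functional equation immediately gives $f_\diamond^*\mu_{\na}=(c/d)\mu_{\na}$, and Lemma~\ref{lem:pushpull} then produces $(f_\diamond)_*\mu_{\na}=\mu_{\na}$. For the mass, I compare $\mu_{\na}$ with the auxiliary measures $\Delta g_n=(d/c)^n(f_\diamond^n)^*\delta_0$, of total mass $1$ (a simple induction from the fact that $f_\diamond^*\delta_0$ has mass $c/d$). The forward invariance of the annulus $U=\{\rho_0<|\cdot|<1\}$ from Theorem~\ref{thm:basic_dynamical} forces $(f_\diamond^n)^{-1}(0)\subset\overline{B}^{\an}(0,\rho_0)$, so all these measures live on a fixed compact set. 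Pointwise convergence $g_n\to g_{\na}$ with the uniform upper bound $g_n\le 0$ yields weak convergence $\Delta g_n\to\mu_{\na}$ via a Hartogs-type argument (\cite[Proposition~2.18]{favre-riveraletelier}), and since the supports lie in a common compact the mass is preserved, so $\mu_{\na}$ is a probability measure. For the support, any compact $K\subset B^{\an}(0,1)\setminus\cK$ is mapped into $U$ by some $f_\diamond^N$; then $g_n|_K=g_N|_K$ for $n\ge N$, and $0\notin U$ gives $K\cap(f_\diamond^N)^{-1}(0)=\emptyset$, hence $\mu_{\na}|_K=\Delta g_N|_K=0$. The reverse inclusion follows because $g_{\na}=-\infty$ on $\cK$ prevents local boundedness, hence harmonicity.

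\textbf{Equidistribution and mixing.} For $x$ rigid with $\phi_x\in\nL^\circ$, using $(\log|P|)\circ f_\diamond=(1/d)\log|P\circ f|$ I write
\[
\tfrac{d^n}{c^n}(f_\diamond^n)^*\delta_x=\tfrac{1}{c^n}\Delta\log|(w-\phi_x)\circ f^n|,
\]
and set $g_n^{(x)}(y):=\tfrac{1}{c^n}\log|(w-\phi_x)\circ f^n|_y$. The ultrametric inequality yields $g_n^{(x)}(y)=g_n(y)$ as soon as $|f_\diamond^n(y)|>|\phi_x|$, which holds eventually on compacts of $B^{\an}(0,1)\setminus\cK$; on $\cK$ both potentials tend to $-\infty$ while remaining bounded above by $0$. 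A Hartogs-type lemma then gives $g_n^{(x)}\to g_{\na}$ in $L^1_{\loc}$, hence weak convergence of the Laplacians. For $x$ of type 2, 3 or 4 the same scheme works with $\delta_x=\Delta G_x$ for a suitable potential $G_x$. Mixing follows by a standard duality: combining $\mu_{\na}=(d/c)^n(f_\diamond^n)^*\mu_{\na}$ with the pushforward identity $(f_\diamond^n)_*(\phi\cdot\psi\circ f_\diamond^n)=\psi\cdot(f_\diamond^n)_*\phi$ and~\eqref{eq:biz} yields
\[
\int \phi\cdot\psi\circ f_\diamond^n\,d\mu_{\na}=(d/c)^n\int \psi\cdot(f_\diamond^n)_*\phi\,d\mu_{\na};
\]
pointwise equidistribution gives $(d/c)^n(f_\diamond^n)_*\phi(y)\to\int\phi\,d\mu_{\na}$ for every $y$, with uniform bound $\sup|\phi|$ from~\eqref{eq:biz}, and dominated convergence concludes.

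\textbf{Balanced property and main obstacle.} Over $\nL$ I use $f_\diamond=\hat f\circ\tau$. The homeomorphism $\tau$ satisfies $\tau^*\nu=(1/d)(\tau^{-1})_*\nu$ for any measure $\nu$ (easily checked on Diracs via $\tau(\phi)=\phi(z^{1/d})$), and on a ball $\tau(B)$ on which $\hat f$ is injective it acts as a local homeomorphism, giving $(\hat f^*\nu)(\tau(B))=\nu(\hat f(\tau(B)))$. Combining these identities produces the Jacobian-type formula $(f_\diamond^*\nu)(B)=(1/d)\nu(f_\diamond(B))$ for every ball $B$ injective under $f_\diamond$; specialized to $\nu=\mu_{\na,\nL}$ and combined with $f_\diamond^*\mu_{\na,\nL}=(c/d)\mu_{\na,\nL}$, this yields $\mu_{\na,\nL}(f_\diamond(B))=c\,\mu_{\na,\nL}(B)$. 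The main obstacle will be this Jacobian-type identity for arbitrary measures (not just Diracs), which requires weak-$\ast$ continuity of $f_\diamond^*$ combined with the local-isomorphism description of polynomial maps on open balls of the Berkovich line.
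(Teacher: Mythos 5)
Most of your proposal follows the paper's route and is correct: the invariance identity, the characterization of the support, the equidistribution $\frac{d^n}{c^n}(f_\diamond^n)^*\delta_x\to\mu_{\na}$ via convergence of potentials, and the duality-plus-dominated-convergence argument for mixing are essentially the arguments in the paper (the paper passes through potentials normalized by $h(x)=\log|x|$ near $\xg$, which coincides with your choice $\log|w-\phi_x|$ near $\xg$ when $|\phi_x|<1$, so this is only a cosmetic difference). The paper obtains that $\mu_{\na}$ is a probability directly from $g_{\na}=\log|\cdot|$ near the Gauss point rather than via a limit of the $\Delta g_n$, but your version is also fine.

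The genuine gap is in the balanced property, and you correctly identify it yourself. The Jacobian identity $(f_\diamond^*\nu)(B)=\tfrac1d\,\nu(f_\diamond(B))$ for an \emph{arbitrary} compactly supported measure $\nu$ and a ball $B$ on which $f_\diamond$ is injective is not established by checking it on Dirac masses and invoking density: one must know that $\hat f^*$ disintegrates along fibers (i.e.\ $\hat f^*\nu=\int\hat f^*\delta_x\,d\nu(x)$, with local degree $1$ on all of $\tau(B)$), and that this is compatible with restriction to the open set $\tau(B)$ — these are the very facts you flag as ``the main obstacle.'' The paper avoids the issue entirely by using atomic measures only. Fix a type~1 point $x$; then $\mu_n:=\frac{d^n}{c^n}(f^n_{\nL\diamond})^*\delta_x=\frac1{c^n}\sum_{f^n_{\nL\diamond}(y)=x}\delta_y$ (this is~\eqref{eq:balanced}), for which the relation $\mu_n(f_\diamond(B))=c\,\mu_{n+1}(B)$ holds by a direct count of preimages (injectivity of $f_\diamond$ on $B$ gives a degree-one bijection between preimages in $B$ and preimages in $f_\diamond(B)$). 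One then passes to the limit in $n$ using that $\mu_{\na,\nL}$ charges neither $\partial B$ nor $\partial f_\diamond(B)$, which is automatic because $\cK_\nL$ consists of type~1 points while boundary points of balls are not of type~1. This substitutes a single elementary counting identity and a null-boundary observation for the general Jacobian formula, and closes the gap in your argument.

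Two smaller points worth tightening: (a) the formula $g_n^{(x)}=\frac1{c^n}\log|(w-\phi_x)\circ f^n|$ with $\delta_x=\Delta\log|w-\phi_x|$ only applies literally over $\nL$; over $k\laurent{z}$ one should work with $\frac1{m(x)}\log|P_x|$, where $P_x$ is the minimal polynomial, and then push forward by $\pi$; (b) the passage from pointwise convergence of potentials to weak convergence of Laplacians requires (as in the paper's proof of Theorem~\ref{thm:green}) a Hartogs-type compactness argument combined with the uniform upper bound $g_n^{(x)}\le 0$, which you invoke but should state explicitly as an appeal to \cite[Proposition~2.18]{favre-riveraletelier}.
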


\begin{proof}
Since $g_{\na}(x) = \log|x|$ near $\xg$, $\mu_{\na}$ is a probability measure. 
Its support contains $\cK$ because $g_{\na}= -\infty$ on $\cK$. 
Conversely, pick any point $x$ outside $\cK$. Then for large  $n$, 
 $f^n_\diamond(x)$
is close to $\xg$ thus $g$ is harmonic in a neighborhood of  $f^n_\diamond(x)$. 
Since $f_\diamond$ preserves harmonic functions, it follows that $g_{\na}$ is harmonic
near $x$, and we conclude that $\mathrm{Supp}(\mu_{\na}) =\cK$. 

By definition $\mu_{\na}$ satisfies $f_\diamond^* \mu_{\na}  = \frac{c}{d} \mu_{\na}$, therefore Lemma~\ref{lem:pushpull} implies that $\mu_{\na}$ is invariant.

Let $\mu$  be  any compactly supported probability measure  in $B^{\an}(0,1)$
and write $\mu =\Delta h$ with $h$ subharmonic and 
$h(x) = \log|x|$ in a $f_\diamond$-invariant neighborhood $U$ of $\xg$. 
Arguing as in the proof of Theorem~\ref{thm:green}, we deduce 
that $h_n:=\frac{d^n}{c^n}h\circ f_\diamond^n = g_n$ on $f_\diamond^{-n}(U)$, hence
$h_n$ converges uniformly on compact sets to $g_{\na}$ on the complement of $\cK$. 
But  since $h<0$ and $d>c$ is also clear that $h_n\to-\infty$ on $\cK$,  
 hence $h_n\to g_{\na}$ pointwise everywhere. 
We conclude that $\Delta h_n\to \mu_{\na}$, i.e., $\frac{d^n}{c^n}f^{n*}_\diamond \mu\to \mu_{\na}$. This implies~\eqref{eq:plbck}.

 To prove the mixing property, 
pick any compactly supported continuous function $\psi$ on $B^{\an}(0,1)$. 
By the convergence~\eqref{eq:plbck} and  the  duality relation in~\eqref{eq:biz}, 
  $\frac{d^n}{c^n} (f^n_{ \diamond})_* \psi(x)$ converges pointwise to 
  $\int \psi \,{\mathrm{d}}\mu_{\na}$
for any $x\in B^{\an}(0,1)$.  Without loss of generality, assume $\int \psi \,{\mathrm{d}}\mu_{\na}=0$.
For any other continuous function $\varphi$, 
we get 
\begin{align*}
\int \lrpar{\varphi\circ f_{ \diamond}^n }\psi \,{\mathrm{d}}\mu_{\na}
&=
\bra{\frac{d^n}{c^n} (f_{ \diamond})^* (\varphi\mu_{\na}), \psi}
= \bra{\varphi\mu_{\na} , \frac{d^n}{c^n} (f_{\diamond})_* \psi}\\
& = 
 \int \lrpar{\frac{d^n}{c^n} (f_{\diamond})_*\psi(x)}  \, \varphi (x) \,{\mathrm{d}}\mu_{\na}(x)
\underset{n\to\infty}\longrightarrow 0,\end{align*}
 where in the last step we use the fact that 
  $\frac{d^n}{c^n} (f^n_{\diamond})_* \psi$ is uniformly bounded, and dominated convergence. Therefore $\mu_{\na}$ is mixing, as announced. 

The previous arguments  can be repeated mutatis mutandis in $\A^{1, \an}_{\nL}$, 
and we get the same properties. Note that the relation $\mu_{\na} = \pi_*\mu_{\na, \nL}$  follows 
from $g_{\na, \nL} = g_{\na}\circ \pi$ and~\eqref{eq:fundid}. It remains to establish   the balancing property of $\mu_{\na, \nL}$.  For this, we fix a type 1 point $x $, represented by a series 
$\phi(z)$, and observe that 
\begin{equation}\label{eq:balanced}
\mu_n := \frac{d^n}{c^n} (f^n_{\nL\diamond}) ^* \delta_x = \frac{d^n}{c^n} \Delta \lrpar{\log_P \circ f^n_{\nL\diamond}} = 
 \unsur{c^n}\sum_{f^n_{\nL\diamond}(y) = x} \delta_y ,
 \end{equation}  where $P(w) = w- \phi(z)$. If $B$ is as in the statement of the theorem, then, $f_{\nL\diamond}$ being open, 
  $f_{\nL\diamond}(B)$ is an open ball as well, and $\mu_{\na, \nL}(\fr B) = \mu_{\na, \nL}(\fr f_{\nL\diamond} (B))=0$ because $\cK_{\nL}$ consists of type 1 points. Hence $\mu_n(B)$ converges to $\mu_{\na, \nL}(B)$  and likewise for   $f_{\nL\diamond}(B)$. Finally, by~\eqref{eq:balanced}, $\mu_n(f_{\nL\diamond}(B)) = c\mu_{n+1}(B)$ for all $n$, and we conclude 
   by letting $n\to\infty$. 
\end{proof}


\section{Upper bounds for the  multiplicity on $\cK$}\label{sec:upper}

We still denote by  $  f(z,w) = \left(z^d, w^c + \sum_{j=0}^{c-1} h_j(z) w^j\right)$ 
   a polynomial map as in~\eqref{eq:cd}, and recall that the Jacobian determinant is equal to
$\jac(f) = d z^{d-1} \jac_w(f)$ where 
\[\jac_w(f) =  cw^{c-1} + \sum_{j=1}^{c-1} j h_j(z) w^{j-1}\in k\fps{z}[w].\]
We factor $\jac_w(f)$ in the field of Puiseux series:
\[
\jac_w(f) = c\prod_{i=1}^{c-1} (w - c_i(z))
\]
where $c_i(z)\in\Puis$. Since $h_j(0)=0$ for all $j$, note that $c_i(0)=0$ as well.

The critical points $c_i \in \CritB$ are not necessarily distinct, and we introduce 
\begin{equation}\label{eq:critslope}
J(c_i):=\ord_{c_i}\jac_w(f),
\end{equation}
which is a positive integer.

It will be convenient to use the following notation:
if $\alpha=p/q$ is a rational number with $p$ and $q$ coprime, then we denote by  
$q(\alpha):=q$   its denominator.

To any $c_i$ we associate the integer
\[
\nu(c_i):=q\lrpar{\frac{d}{1+J(c_i)}} \in \nN_{\geq 1}\text{.}
\]
Note that if $d$ is even and   $c$ is a simple critical point then $\nu(c) = 1$. 
We denote by $\CritB^+\subset\CritB$  
the set of critical points $c_i$ such that  $\nu(c_i) \geq  2$

As usual,  we denote by $\omega(x)$ the  limit set of a point $x$, that is,  
\[\omega(x) = \bigcap_{n\ge0} \overline{\{f_\diamond^m(x), \, m\ge n\}}~.\]
Our main result can be stated as follows. Here we work in $\A^{1, \an}_{k\laurent{z}}$. 

\begin{thm}\label{thm:estim-mult}
Let $f$ be a polynomial map of the form~\eqref{eq:cd} with $2 \leq c < d$, and recall that 
  $\CritB^+=\{c_i\in \CritB, \nu(c_i)\ge2\}$.
\begin{enumerate}
\item
Any point $x\in\cK$ such that $\omega(x)\cap \CritB^+=\emptyset$
has finite multiplicity, that is, it  is a rigid point).
\item
Suppose that 
$\CritB^+\cap\omega(c) =\emptyset$ for every  $c\in\CritB^+$. Then there exists $m_0\in\N^*$
such that $\cK \subset \{m \leq m_0\}$. In particular, any point in $\cK$ is rigid.
\end{enumerate}
\end{thm}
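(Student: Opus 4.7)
Both parts rest on the notion of generic multiplicity $b(y)$ for type 2 points $y \in B^{\an}(0,1)$ developed in \S~\ref{ssec:generic-mult}: this is a non-decreasing function along each segment $[\xg, y]$ that restricts to $m$ on rigid points. By Proposition~\ref{prop:approxseq}, for any $x \in \cK$ with approximating sequence $(x_n) \downarrow x$, the multiplicities $m_n = b(x_n)$ form a non-decreasing sequence with $m(x) = \lim_n m_n$, so $x$ is rigid iff $(m_n)$ stabilizes. Our task thus reduces to bounding $b(x_n)$ uniformly in $n$. The key technical input is Proposition~\ref{prop:123jac}: using the Jacobian formula (Lemma~\ref{lem:jac}) together with the factorization $f_\diamond = \hat f \circ \tau$, it bounds $b(y)$ from above by a product along the forward orbit of $y$, in which an iterate $f_\diamond^k(y)$ contributes a non-trivial factor only when it lies close to some branch $c \in \CritB^+$. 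Branches $c \in \CritB \setminus \CritB^+$ contribute trivially because the ramification introduced by $\hat f$ near such $c$ is exactly absorbed by the denominator rescaling $\beta \mapsto \beta/d$ induced by $\tau$; this is precisely the arithmetic content of the condition $(1 + J(c)) \mid d$.

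\textbf{Part~(1).} Fix $x \in \cK$ with $\omega(x) \cap \CritB^+ = \emptyset$. The finiteness of $\CritB^+$ and closedness of $\omega(x)$ allow us to choose disjoint open sets $U \supset \omega(x)$ and $V \supset \CritB^+$. By definition of the $\omega$-limit, there is $N$ with $f_\diamond^k(x) \in U$ for every $k \geq N$. Since $f_\diamond$ is order-preserving on $B^{\an}(0,1)$ (Theorem~\ref{thm:basic_dynamical}) and $x_n \searrow x$, a continuity/compactness argument shows that, for $n$ large enough, the forward orbit of $x_n$ visits $V$ at most during the finitely many initial times $k < N$. Applying Proposition~\ref{prop:123jac} then yields an upper bound on $b(x_n)$ independent of $n$, forcing $(m_n)$ to stabilize. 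Hence $x$ is rigid.

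\textbf{Part~(2).} We argue by contradiction. Assume $\sup_{\cK} m = +\infty$, pick $y_\ell \in \cK$ with $m(y_\ell) \to \infty$, and extract $y_\ell \to y_\infty \in \cK$ by compactness. A diagonal extraction along the approximating sequences of the $y_\ell$ produces points converging to $y_\infty$ with multiplicity tending to infinity, so Proposition~\ref{prop:approxseq} forces $m(y_\infty) = +\infty$. By part~(1) we may fix $c \in \omega(y_\infty) \cap \CritB^+$, and the forward-invariance of $\omega(y_\infty)$ yields $\omega(c) \subset \omega(y_\infty) \subset \cK$. Combining the finiteness of $\CritB^+$, the non-recurrence of each $c \in \CritB^+$, and the effective contraction away from $\CritB^+$ provided by Lemma~\ref{lem:jac}, a compactness argument produces a neighborhood $V$ of $\CritB^+$ and an integer $M$ such that every orbit in $\cK$ visits $V$ at most $M$ times. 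Proposition~\ref{prop:123jac} then delivers a uniform upper bound $m_0$ on $b|_\cK$, contradicting $m(y_\ell) \to \infty$.

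\textbf{Main obstacle.} The delicate technical step is Proposition~\ref{prop:123jac}: tracking the generic multiplicity through a single application of $f_\diamond = \hat f \circ \tau$ requires a precise computation of local intersection numbers at each critical branch and a careful use of Lemma~\ref{lem:jac} along every segment supplied by Proposition~\ref{prop:approxseq}. The dichotomy between trivial and non-trivial critical contributions, governed by the arithmetic of $d/(1 + J(c))$, is the crux of the estimate and the geometric reason why $\CritB^+$, rather than the full critical set $\CritB$, is the relevant object.
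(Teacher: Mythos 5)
Your strategy paragraph correctly identifies Proposition~\ref{prop:123jac} as the engine and the arithmetic dichotomy between $\CritB^+$ and $\CritB\setminus\CritB^+$ as the reason $\CritB^+$ is the relevant set; part~(1) follows the paper's route in outline. But part~(2) contains two steps that are actually false. First, from $y_\ell\to y_\infty$ and $m(y_\ell)\to\infty$ you conclude $m(y_\infty)=+\infty$: this is upper semicontinuity of the multiplicity, which fails ($m$ is only lower semicontinuous; e.g.\ the rigid points given by $z^{\ell+1/\ell}$ have multiplicity $\ell$ and converge to the point $w=0$, which has multiplicity $1$). The diagonal extraction produces points of large multiplicity converging to $y_\infty$, but these points are not comparable to $y_\infty$ for the tree order, so they say nothing about the approximating sequence of $y_\infty$ itself. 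Second, the claimed neighborhood $V$ of $\CritB^+$ and integer $M$ such that \emph{every} orbit in $\cK$ visits $V$ at most $M$ times do not exist in general: the hypothesis of (2) allows $\CritB^+\cap\cK\neq\emptyset$, and since $\mu_{\na}$ is ergodic with support $\cK$, almost every $x\in\cK$ has dense orbit and visits every neighborhood of such a critical point infinitely often, even though that critical point is itself non-recurrent. The paper's proof of (2) is direct, not by contradiction: the finiteness that replaces your ``at most $M$ visits'' is a statement about the shrinking Markov balls, namely that for fixed $n$ the set of times $j$ at which $f_\diamond^j(\overline{B}_{n+M}(x))=\overline{B}_{n-j+M}(f^j_\diamond(x))$ contains a point of $\CritB^+$ is an interval of length $<N$, because once such a ball captures $c\in\CritB^+$ its images capture $f^k_\diamond(c)$, which leaves a fixed union of balls of $\overline{\cB}_M$ after $N$ steps and thereby forces the (still sufficiently deep) ball out of that union.

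In part~(1), the sentence ``applying Proposition~\ref{prop:123jac} then yields an upper bound on $b(x_n)$ independent of $n$'' hides the two places where the intrinsic approximating sequence of Proposition~\ref{prop:approxseq} is inadequate and the nested family of closed balls from \S~\ref{ssec:invariantCantor} is really needed. (i) To get $\nu=1$ one needs each segment $(f^\ell_\diamond(\hat x_n),x_0)$ to carry constant critical slope $J$ with $(1+J)\mid d$, i.e.\ to be disjoint from the convex hull $\cT^+$ of $\CritB^+\cup\{\xg\}$ and to see at most one branch of $\CritB\setminus\CritB^+$; avoiding a neighborhood $V$ of the finite set $\CritB^+$ is a weaker, pointwise condition that does not control the segments. (ii) The divisibility recursion $b(\hat x_n)\mid\lcm\{Q,\,b(f^\ell_\diamond(\hat x_n))\}$ must terminate at a point with known generic multiplicity; the paper takes $\hat x_n=\partial\overline{B}_n(x)$ so that $f^{n-N}_\diamond(\hat x_n)$ lands exactly on one of finitely many reference points $y_j$ and $Q=\lcm_j\{b(y_j),b(f_\diamond(y_j))\}$ is finite, whereas with your $x_n$'s the point $f^{n-N}_\diamond(x_n)$ is just some type~2 point near $\xg$ whose generic multiplicity $b=\lcm\{m,q(A)\}$ is not a priori bounded. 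Once the Markov family is set up, your part~(1) goes through essentially as the paper's; part~(2) needs to be reworked along the direct lines sketched above.
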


As an immediate consequence we get 
 the following corollary, which holds in particular when $\CritB^+=\emptyset$.

\begin{cor}\label{cor:mult}
If  $c\notin \cK$ for every $c\in \CritB^+$, then there exists an integer $m_0$ such that
$\cK \subset \{m \le m_0\}$. 
\end{cor}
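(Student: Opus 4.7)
The plan is to derive this statement as a direct consequence of part~(2) of Theorem~\ref{thm:estim-mult}. The only thing to check is that, under the weaker hypothesis of the corollary (namely that every element of $\CritB^+$ lies outside $\cK$), the stronger dynamical assumption of the theorem ($\omega(c) \cap \CritB^+ = \emptyset$ for every $c\in\CritB^+$) is automatically satisfied.

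First, I would invoke the total invariance of $\cK$ established in Theorem~\ref{thm:invariantcantor}, together with the characterization of its complement. By the very definition of $\cK$, a point $x \in B^{\an}(0,1)$ satisfies $x\notin \cK$ if and only if $f_\diamond^n(x) \to \xg$. Hence, for every $c \in \CritB^+$ with $c\notin\cK$, one has $\omega(c) = \{\xg\}$.

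Second, I would verify that $\xg \notin \CritB^+$. Each $c_i \in \CritB$ is by construction a Puiseux series with $c_i(0)=0$ (this uses $h_j(0)=0$ for all $j$, which forces $\jac_w(f)(0,0)=0$). Consequently every $c_i$ corresponds to a type~1 point of the open unit ball $B^{\an}(0,1)$, whereas the Gauss point $\xg$ is of type~2 and lies on the boundary of $B^{\an}(0,1)$. Therefore $\omega(c) \cap \CritB^+ = \{\xg\} \cap \CritB^+ = \emptyset$ for every $c \in \CritB^+$. The assumption of Theorem~\ref{thm:estim-mult}(2) is then satisfied, and its conclusion furnishes the desired integer $m_0$.

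There is essentially no obstacle here beyond checking the two elementary facts above; the genuine mathematical content sits entirely in Theorem~\ref{thm:estim-mult}.
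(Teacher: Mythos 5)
Your argument is correct and matches the paper's intended (though unstated) derivation: the only content beyond Theorem~\ref{thm:estim-mult}(2) is the observation that $c\notin\cK$ forces $\omega(c)=\{\xg\}$ by definition of $\cK$, and $\xg$ (a type~2 boundary point) cannot belong to $\CritB^+$ (a set of type~1 points in the open ball), so the hypothesis of the theorem is met.
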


Using Theorem~\ref{thm:estim-mult}, 
we can now complete the proof of a theorem from the previous section. 

\begin{proof}[End of proof of Theorem~\ref{thm:invariantcantor}.]
Suppose that $\cK_\nL$ is a Cantor set, and $\cK=\{x\}$ is a single point.
Since $\cK_\nL$ is invariant by the Galois action, 
we deduce that $\cK_\nL$ is the Galois orbit of any lift $y$ of $x$ to $\A^{1,\an}_{\nL}$. 
By assumption $\cK_\nL$ is infinite, hence $m(x)=+\infty$, in particular $x$ is not a Puiseux series. 
Yet, the critical branches are Puiseux series, so 
 all critical orbits belong to the basin of attraction of $\xg$, and by Corollary~\ref{cor:mult}, the multiplicity is uniformly bounded in $\cK$, a contradiction.
\end{proof}

\subsection{The Jacobian formula (II)}\label{sec:jacII}

It is convenient to set $A(x):= 1-\log\diam(x)$ for any non rigid point $x\in  \A^{1,\an}_{\nL}$, and work systematically over $\nL$.
With this notation, ~\eqref{eq:jac} is equivalent to the   formula: 
\begin{equation}\label{eq:jacval}
A (f_\diamond(x)) = \frac1{d}\left(A(x) + g_{\jac(f)}(x)\right)~,
\end{equation}
where  $g_{\jac(f)} (x):=  -\log \abs{\jac(f)(x)}$. Indeed,  $\jac(f)= d z^{d-1} \jac_w(f)$ so 
$\log\abs{\jac(f)} = -(d-1) + \log \abs{\jac_w(f)}$, and the formula follows from \eqref{eq:jac}.
It can be checked that the 
 function $g_{\jac(f)}$  is locally constant outside the convex hull of $\CritB \cup\{\xg\}$
which we denote by $\CritT$, and call the critical tree.

Pick any rigid point $\phi$ in the unit ball, and, for any positive real number $t>0$,
consider the point $\zeta(\phi,e^{-t})$. 
Then we have 
\[g_{\jac(f)}(\zeta(\phi,e^{-t}))= \sum_i  -\max(\log |\phi-c_i|, -t)+(d-1)~.\]
Observing that $A(\zeta(\phi,e^{-t})) = 1 +t$, we obtain the following result.

\begin{prop}\label{prop:derivJ}
Let $\phi\in B(0,1)\subset \nL$ be any rigid point. For any $\tau\in [1,+\infty]$ denote by $x_\tau$ the unique point
in the segment $[\phi,\xg]$ such that $A(x_\tau)=\tau$ (so that $\phi=x_\infty$ and $\xg=x_1$). 

The function $\tau\mapsto g_{\jac(f)}(x_\tau)$ is a concave piecewise affine function with integer slope. 
More precisely, if $x= x_{\tau_0}$, the left  derivative 
\[J(x) :=\left. \frac{d g_{\jac(f)}(x_\tau)}{d\tau}\right|_{\tau=\tau_0}\] 
is an integer, equal to the number of critical curves $c_i$ such that $c_i < x_{\tau_0}$. 
\end{prop}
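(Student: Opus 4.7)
The plan is to unpack the formula
\[
g_{\jac(f)}(\zeta(\phi,e^{-t})) = (d-1) - \sum_{i=1}^{c-1}\max(\log|\phi-c_i|,-t)
\]
which is already displayed just above the statement, and to analyze the result term by term after reparametrizing by $\tau = 1+t$. Setting $s_i := -\log|\phi - c_i| \in (0,+\infty]$, the identity rewrites as
\[
g_{\jac(f)}(x_\tau) = (d-1) + \sum_{i=1}^{c-1}\min(s_i,\tau-1)\text{,}
\]
and the proposition reduces to analyzing this explicit function.

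Each summand $\tau\mapsto \min(s_i,\tau-1)$ is a concave piecewise affine function on $[1,+\infty)$ with a unique breakpoint at $\tau = 1+s_i$: its left derivative equals $+1$ on $[1,1+s_i]$ and $0$ on $(1+s_i,+\infty)$. Summing preserves concavity and piecewise affinity, so the sum is concave piecewise affine, and its left derivative at any $\tau_0>1$ equals the integer $\#\{i : \tau_0 \leq 1+s_i\}$.

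The remaining step is the geometric translation of this count. The inequality $\tau_0 \leq 1+s_i$ rewrites as $|\phi - c_i| \leq e^{1-\tau_0} = \diam(x_{\tau_0})$, which exactly says that $c_i$ belongs to the closed ball $\overline{B}(\phi,\diam(x_{\tau_0}))$ representing $x_{\tau_0}$. By the standard correspondence between nested balls and the tree order on $\A^{1,\an}_{\nL}$, this is equivalent to $c_i \leq x_{\tau_0}$, and since $c_i$ is of type $1$ while $x_{\tau_0}$ is of type $2$ or $3$ for finite $\tau_0$, the inequality is automatically strict.

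The whole argument is essentially a routine computation once the pre-established formula for $g_{\jac(f)}$ is in hand; I do not anticipate any substantive obstacle. The only point requiring mild care is bookkeeping the tree-order convention (the Gauss point being maximal and type $1$ points minimal in the unit ball) when translating the ultrametric condition $|\phi - c_i| \leq r$ into the Berkovich order relation $c_i \leq \zeta(\phi,r)$.
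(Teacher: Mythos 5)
Your proof is correct and follows precisely the route the paper intends: the paper itself derives the displayed formula for $g_{\jac(f)}(\zeta(\phi,e^{-t}))$, notes $A(\zeta(\phi,e^{-t}))=1+t$, and then asserts the proposition as an immediate consequence, which is exactly the elementary piecewise-affine analysis you carry out. Your explicit reparametrization by $\tau=1+t$, the term-by-term left-derivative count, and the translation of $|\phi-c_i|\le\diam(x_{\tau_0})$ into $c_i<x_{\tau_0}$ via the ball/order correspondence are all accurate and fill in the routine details the paper leaves implicit.
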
  

We call $J(x)$ the \emph{critical slope} at $x$.
Notice that the notation $J(x)$ is consistent with the one used in \eqref{eq:critslope}.
If the critical slope is constant on a segment $(x_0,x_1)$ with $x_0<x_1$, 
then the function 
$g_{\jac(f)}$ is affine on $[x_0,x_1]$ with respect to the parametrization by $A$. 
The following consequence of this discussion, together with~\eqref{eq:jacval}, will be repeatedly used. 

\begin{cor}\label{cor:jacval}
If the critical slope is constant equal to $J$ on an interval  $(x_0,x_1)$ with $x_0<x_1$, then 
\[\frac{A(f_\diamond(x_1)) - A(f_\diamond(x_0))}{A(x_1) - A(x_0)}= \frac{J+1}{d}~.\]
\end{cor}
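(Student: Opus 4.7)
The plan is to derive the corollary as a direct combination of the Jacobian formula~\eqref{eq:jacval} and the structural description of $g_{\jac(f)}$ along segments provided by Proposition~\ref{prop:derivJ}. There is no genuine obstacle here: the corollary is essentially a one-line computation once we unpack what ``critical slope constant equal to $J$'' means in terms of the function $g_{\jac(f)}$.

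First I would parameterize the segment $[x_0,x_1]$ by the function $A$, which is strictly increasing and continuous along $[\xg, \phi]$ for any rigid $\phi$ dominating both $x_0$ and $x_1$ (or, more generally, along any segment in the tree on which $A$ is affine in the canonical parameter). By Proposition~\ref{prop:derivJ}, the hypothesis that the critical slope is constant equal to $J$ on $(x_0,x_1)$ means precisely that the piecewise affine function $\tau \mapsto g_{\jac(f)}(x_\tau)$ has constant left derivative $J$ on the corresponding interval of $\tau$-values, so
\[
g_{\jac(f)}(x_1) - g_{\jac(f)}(x_0) = J \cdot \bigl(A(x_1) - A(x_0)\bigr).
\]

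Then I would apply \eqref{eq:jacval} at $x_0$ and $x_1$ and subtract, obtaining
\[
A(f_\diamond(x_1)) - A(f_\diamond(x_0)) = \frac{1}{d}\Bigl[\bigl(A(x_1)-A(x_0)\bigr) + \bigl(g_{\jac(f)}(x_1) - g_{\jac(f)}(x_0)\bigr)\Bigr].
\]
Substituting the previous identity and dividing by $A(x_1)-A(x_0) > 0$ yields the claimed ratio $(J+1)/d$. The only subtlety worth mentioning is that Proposition~\ref{prop:derivJ} is stated along a segment $[\phi,\xg]$ emanating from a rigid point; if $x_0$ and $x_1$ are not already on such a common segment, I would invoke the $\mathbb{R}$-tree structure of $\A^{1,\an}_{\nL}$ to reduce to this case by choosing any rigid point $\phi > x_1$ (or by continuity, extending the argument to all non-rigid endpoints). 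This is purely bookkeeping and does not affect the computation.
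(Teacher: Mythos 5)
Your argument is correct and is essentially the paper's own: the corollary is stated immediately after the remark that if the critical slope is constant on a segment then $g_{\jac(f)}$ is affine of slope $J$ with respect to the parametrization by $A$, and the paper presents it as following directly from that observation together with~\eqref{eq:jacval}. Applying~\eqref{eq:jacval} at the two endpoints and subtracting, then substituting the affine relation for $g_{\jac(f)}$, is exactly the intended proof.

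One small slip: with the paper's conventions $A$ is \emph{nonincreasing} for the order $\le$ (since $\diam$ is nondecreasing), so $x_0<x_1$ gives $A(x_1)-A(x_0)<0$, not $>0$ as you write. This is immaterial since you are dividing by a nonzero quantity whose sign cancels in both numerator and denominator, but the stated inequality has the wrong sign. Your concern at the end about whether $x_0$ and $x_1$ lie on a common segment through a rigid point is indeed a non-issue: $x_0<x_1$ already means $x_1\in[\xg,x_0]$ in the tree, and any rigid $\phi\le x_0$ provides the ambient segment of Proposition~\ref{prop:derivJ}, so the reduction you describe is automatic.
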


\subsection{The generic multiplicity}\label{ssec:generic-mult}


We now work in $\A^{1,\an}_{k\laurent{z}}$. 
Recall that we defined the multiplicity of a point in \S\ref{sssec:multicapa}. 
Pick any type $2$ point $x\in B^{\an}(0,1)$. We define the \emph{generic multiplicity}
of $x$ by the formula
\[
b(x)= \lcm \{ m(x), q(A(x))\}~,
\]
where $q(s)= b$ if $s = a/b$ with $a\in \Z, b\in \N^*$ and $a$ and $b$ coprime.
Observe that $m(x)$ divides $b(x)$ by definition. A geometric interpretation of this quantity, which also explains the terminology, will be given in \S~\ref{subs:model}.

To understand the behaviour of the generic multiplicity, we first extend the multiplicity 
function defined on $\A^{1,\an}_{k\laurent{z}}$ in~\S\ref{sssec:multicapa} to Puiseux series, and then to open and closed balls. 

Pick any $\phi \in \nL$. If $\phi$ is not a Puiseux series we set $m(\phi)=\infty$; 
otherwise $m(\phi)$ is the minimal integer $q$ such that $\phi \in k\fps{z^{1/q}}$. 
In the latter case, if $C$ is the formal curve associated with $\{\phi=0\}$, then
$m(\phi) = (C\cdot\{z=0\})_0$.

Let $B$ be any (Berkovich) ball in $B^{\an}(0,1)$, which may be closed or open. 
We define $m(B) = \min \{m(\phi), \, \phi \in \nL, \, \pi(\phi) \in B\}$. 
Recall that a closed ball $\overline{B}$ has a unique boundary point $x$, and when this point 
is of type 2, then $\overline{B} \setminus \{x\}$ is a union of disjoint open balls $B_i$
such that $\partial B_i=\{x\}$ (corresponding to the tangent directions at $x$ pointing towards $B$).

\begin{thm}\label{thm:gen-mult}
Pick any type 2 point $x\in  B^{\an}(0,1)\subset \A^{1,\an}_{k\laurent{z}}$, and let $\overline{B}$
be the unique closed ball whose boundary is equal to $x$. 
We have $m(\overline{B}) = m(x)$. 

Moreover, the following holds. 
\begin{itemize}
\item
If $m(x)=b(x)$, then for any open ball $B$ whose boundary equals $x$, 
we have $m(B) = m(x)$. 
\item
If $m(x) < b(x)$, then there exists a unique open ball $B_0$
whose boundary equals $x$   such that $m(B_0)= m(x)$.
For all other open balls $B$ with $\partial B= \{x\}$, we have
$m(B)= b(x)$. 
\end{itemize}
\end{thm}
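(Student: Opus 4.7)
The plan is to choose a canonical Puiseux series representative for $x$ and then analyze how truncation at the exponent $s := -\log \diam(x)$ constrains the multiplicity of points in each tangent direction at $x$. Writing $s = p/q$ in lowest terms, so that $q = q(A(x))$ and $b(x) = \lcm(m,q)$ where $m := m(x)$, I begin by representing $x$ as $\pi(\zeta(\tilde\phi, e^{-s}))$ for some $\tilde\phi \in \nL$ and then truncate $\tilde\phi$ to its part with exponents strictly less than $s$. Since only finitely many monomials survive this truncation, the result is a Puiseux polynomial $\phi$ satisfying $\pi(\zeta(\phi, e^{-s})) = x$; an explicit computation with the Galois action by roots of unity then shows that $m(\phi) = m$ for this specific representative, because any two Galois conjugates of $\phi$ either coincide or differ on a term of exponent $< s$, so the equivalence classes entering the definition of $m(x)$ reduce to Galois orbits.

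For the closed ball $\bar B$ with $\partial \bar B = \{x\}$, realized as $\pi(\bar B(\phi, e^{-s}))$, the Galois invariance of $m$ lets me reduce to $\psi \in \bar B(\phi, e^{-s})$, i.e.\ $\psi = \phi + \eta$ with $\eta$ having all exponents $\ge s$. The truncation $\psi_{<s}$ then equals $\phi$, so $m = m(\phi)$ divides $m(\psi)$ and hence $m(\psi) \ge m$; equality is realized by $\psi = \phi$, giving $m(\bar B) = m$.

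Next I classify the open balls with $\partial B = \{x\}$ as $B_a := B(\phi + a z^s, e^{-s})$ for $a \in k$, using that $|a - a'| = 1$ for $a \neq a' \in k$ to separate them. The distinguished ball $B_0 = B(\phi, e^{-s})$ is handled exactly as the closed-ball case, giving $m(B_0) = m$. For $a \neq 0$, every admissible $\psi$ takes the form $\psi = \phi + a z^s + \eta'$ with $|\eta'| < e^{-s}$; since $\phi$ and $\eta'$ both vanish at exponent $s$, the coefficient of $z^s$ in $\psi$ is forced to be $a \neq 0$, so $q$ divides $m(\psi)$. Combined with $m \mid m(\psi)$ from the truncation argument this forces $b(x) \mid m(\psi)$, and equality is realized by the explicit choice $\psi = \phi + a z^s$, giving $m(B_a) = b(x)$.

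Assembling these computations yields the two cases of the theorem: if $q \mid m$ then $b(x) = m$ and every $B_a$ satisfies $m(B_a) = m = b(x)$; if $q \nmid m$ then $B_0$ is the unique open ball with $m(B_0) = m < b(x)$, while every other $B_a$ satisfies $m(B_a) = b(x)$. The step I expect to require the most care is the verification that the truncated representative $\phi$ achieves $m(\phi) = m(x)$, which depends on the explicit form of the Galois action by roots of unity and on the observation that truncation at exponents $< s$ eliminates all possible spurious collisions between Galois conjugates inside $\bar B(\phi, e^{-s})$.
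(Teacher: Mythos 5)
Your argument is correct and follows essentially the same route as the paper's (sketchy) proof: choose a truncated Puiseux polynomial representative $\phi$ with exponents below $s = -\log\diam(x)$, observe that the Galois orbit of $\phi$ then realizes exactly $m(x)$ so that $m(x)=m(\phi)$, parameterize the open balls with boundary $x$ by tangent directions $a\in k$, and compute multiplicities from the $\lcm$ of denominators, with $q(s)=q(A(x))$ entering only in the directions $a\neq 0$. You supply a few details the paper leaves implicit (the verification that truncation preserves the point, and the careful Galois bookkeeping), but the underlying decomposition and calculation are the same.
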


\begin{cor}{\cite[Proposition~3.44]{favre-jonsson:valtree}}\label{cor:gemult}
Suppose $x\in\overline{B}^{\an}(0,1)\subset\A^{1,\an}_{k\laurent{z}}$, and 
consider the decreasing sequence of  type 2 points
$x_0= \xg$, $x_{n+1}<x_n$, $\lim_n x_n=x$
  defined in Proposition~\ref{prop:approxseq}. 

Then for each $n\ge1$, we have the equality $b(x_n)=m(x_{n-1})$.
\end{cor}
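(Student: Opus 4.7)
The strategy is a direct unfolding of the definition $b(x_n) = \lcm(m(x_n), q(A(x_n)))$ combined with the explicit Puiseux description of the approximating sequence recalled in the proof of Proposition~\ref{prop:approxseq}.

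My first step would be to identify the reduced denominator of $A(x_n)$. From the initial data $A(\xg) = 1$, $\alpha(\xg) = 0$ and the affine relation $A(y) - A(y') = m_k (\alpha(y) - \alpha(y'))$ valid on each $\overline{I_k}$, the values $A(x_n)$ and $\alpha(x_n)$ are both rational. Choosing a Puiseux representative $\phi = \sum_\ell a_\ell z^{\beta_\ell} \in \nL^\circ$ such that all the $x_n$ lie on the segment $[\xg, \zeta(\phi,0)]$, the inductive construction of $(x_n)$ places $x_n$ at the parameter $t = \beta_{n_j}$, where $\beta_{n_j} = p/q_{n_j}$ (in lowest terms) is the Puiseux exponent introducing a new prime factor in the denominator at the $n$-th multiplicity jump. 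Since $A = 1+t$ along this segment and $\gcd(q_{n_j}+p, q_{n_j}) = \gcd(p, q_{n_j}) = 1$, I would obtain $q(A(x_n)) = q_{n_j}$.

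The second step invokes the formula $m(\pi(\zeta(\phi,r))) = \lcm\{q_\ell : \beta_\ell < -\log r\}$ coming from the analysis of the Galois action on closed balls in $\A^{1,\an}_{\nL}$. Applied at $r = e^{-\beta_{n_j}}$, this gives $m(x_n) = \lcm\{q_\ell : \ell < n_j\}$. Combining the two previous steps with the elementary identity
\[
\lcm\bigl(\lcm\{q_\ell : \ell < n_j\},\, q_{n_j}\bigr) = \lcm\{q_\ell : \ell \leq n_j\},
\]
I conclude $b(x_n) = \lcm\{q_\ell : \ell \leq n_j\}$. By the construction of the sequence $(m_n)$ in Proposition~\ref{prop:approxseq}, this last quantity is precisely the constant multiplicity on the interval adjacent to $x_n$ across the jump, and the indexing convention of the proposition identifies it with $m(x_{n-1})$.

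The main obstacle is bookkeeping: carefully reconciling the convention for the multiplicity at a jump point $x_n$ (which lies on the boundary between two intervals of distinct constant multiplicity) with the inductive labelling of the approximating sequence and with the enumeration $(n_j)$ of the Puiseux jumps. Once these indexing conventions are fixed, the proof reduces to the short lcm computation above, and no additional analytic input beyond Proposition~\ref{prop:approxseq} is required.
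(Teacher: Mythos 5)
Your approach is genuinely different from the paper's. The paper gives a short structural argument: it applies Theorem~\ref{thm:gen-mult} to the open ball $B$ with boundary point $x_{n-1}$ that contains $x_n$, observes that $m(B)$ equals the constant multiplicity $m(x_n)$ on the segment between $x_{n-1}$ and $x_n$, notes $m(x_n) > m(x_{n-1})$ so $B$ is not the exceptional ball $B_0$ of that theorem, and concludes $b(x_{n-1}) = m(B) = m(x_n)$. You instead unfold the definition $b = \lcm(m, q(A))$ directly against the Puiseux expansion and assemble the pieces by an lcm identity. Both routes are legitimate; the paper's is cleaner once Theorem~\ref{thm:gen-mult} is in place, while yours makes the combinatorics of the Puiseux denominators transparent and needs no structural input beyond Proposition~\ref{prop:approxseq}.

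There is, however, a real gap at the very end of your argument. Your computation correctly yields $b(x_n) = \lcm\{q_\ell : \ell \le n_j\}$, i.e.\ $b(x_n) = m_n$, the constant multiplicity on the interval $I_n = [x_{n+1}, x_n)$ of Proposition~\ref{prop:approxseq}. But the member of the approximating sequence lying in $I_n$ is $x_{n+1}$, not $x_{n-1}$, so what you have actually shown is $b(x_n) = m(x_{n+1})$. The equality $b(x_n) = m(x_{n-1})$ in fact cannot hold at a genuine multiplicity jump: since the multiplicity is non-decreasing along $[\xg, x]$ one always has $b(x_n) \ge m(x_n) \ge m(x_{n-1})$, and at a jump $q(A(x_n))$ contributes a new factor so $b(x_n) > m(x_n) \ge m(x_{n-1})$ strictly. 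Reading the paper's own proof, it establishes $b(x_{n-1}) = m(x_n)$, which reindexes to $b(x_n) = m(x_{n+1})$, consistent with your computation; the printed statement of the corollary appears to have its indices transposed. So the lcm argument you carried out is sound and does prove the intended result, but the final sentence — where you invoke "the indexing convention of the proposition" to identify $m_n$ with $m(x_{n-1})$ — is exactly the step you flagged as delicate, and as written the identification is reversed rather than verified.
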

\begin{proof}
Indeed, the set of points $y\le x_{n-1}$ is a closed ball
$\overline{B}$.
Consider the open ball $B$ containing $x_n$ and of the same diameter as $\overline{B}$. 
Pick any Puiseux series $\phi$
with $\pi(\phi)\in B$ and $m(\phi)= m(B)$.
Then $m(B)$ is equal to the multiplicity of all $y\in (x_{n-1}, x_n]$
sufficiently close to $x_{n-1}$. Since the multiplicity is constant on this interval
equal to $m(x_n)$, we conclude that $m(B) = m(x_n) >m(x_{n-1})$. 
The previous theorem then implies that $b(x_{n-1})= m(B)= m(x_n)$. 
\end{proof}

\begin{proof}[Sketch of proof of Theorem~\ref{thm:gen-mult}]
We may suppose that $x = \pi(\zeta(\phi,r))$
where $\phi\in \Puis$ is a Puiseux series, and $r = e^{-t}$ with $t\in \Q^*_+$. 
Observe that $\overline{B}(\phi,r) = \overline{B}(\psi,r)$ (resp. $B(\phi,r) = B(\psi,r)$) iff $|\phi-\psi| \le r$
(resp. $|\phi-\psi| <r$) iff $\ord_z(\phi  - \psi)\geq t$ (resp. $\ord_z(\phi  - \psi)> t$). 
We may thus suppose that $\phi(z)= \sum_{n=0}^N a_n z^{\beta_n}$
with $0\le \beta_0 < \cdots < \beta_N< t$.
Any open ball  having $x$ as   boundary points  is then of the form $\pi (B(\phi+ cz^t,r))$
for some $c\in \C$. 

Using~\eqref{eq:def-mult}, we get
\[m(\overline{B}(\phi,r)) = \lcm\{q(\beta_n), 0 \le n \le N\}\]
and
\[m(B(\phi+ cz^t,r)))= \lcm \{q(t), q(\beta_n), 0 \le n \le N\}~.\]
The first identity together with the description of the Galois action on Puiseux series implies that 
$m(\overline{B}(\phi,r)) = m(x)$ (recall that  $m(x)$ is by definition 
the cardinality of $\pi\inv(x)$).
The remaining statements 
follow  from the previous analysis, using the  
 relation  $A(\zeta(\phi,e^{-t}))= 1+t$, which implies  $q(t) = q(A(x))$. 
\end{proof}

\subsection{Estimates on the multiplicity}\label{ssec:mult}
In a dynamical context, the key estimate on multiplicities is given by the following result. 

\begin{prop}\label{prop:123jac}
Pick $x_1\le x_0\in B^{\an}(0,1)$, and suppose that  the critical slope is constant equal to $J$ on the 
interval $(x_1,x_0)$. Set $\nu=q\big(\tfrac{d}{J+1}\big)$.
Then for any type $2$ point $x \in [x_1,x_0]$ we have
\[
b(x)~\big|~\lcm\Big\{ b(x_0), \nu b(f_\diamond(x)), \nu b(f_\diamond(x_0))\Big\}
\]
\end{prop}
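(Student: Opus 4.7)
Since $b(x)=\lcm(m(x),q(A(x)))$, it suffices to check separately that each of $m(x)$ and $q(A(x))$ divides the right-hand side; call this $M$.

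For the divisibility $q(A(x))\mid M$: the constancy of the critical slope on $(x_1,x_0)$ together with Corollary~\ref{cor:jacval} gives the affine identity
\[
A(x)=A(x_0)+\tfrac{d}{J+1}\bigl(A(f_\diamond(x))-A(f_\diamond(x_0))\bigr).
\]
Writing $d/(J+1)=p/\nu$ in lowest terms and reading off denominators, $q(A(x))$ divides $\lcm(q(A(x_0)),\nu\,q(A(f_\diamond(x))),\nu\,q(A(f_\diamond(x_0))))$, which divides $M$ since $q(A(\cdot))\mid b(\cdot)$.

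For the divisibility $m(x)\mid M$: Proposition~\ref{prop:approxseq} applied along the segment from $x_0$ down to $x$ enumerates the jump points $y_1>\cdots>y_k$ of $m$ in $(x,x_0]$ and gives
\[
m(x)=\lcm\bigl(m(x_0),\,q(A(y_1)),\ldots,q(A(y_k))\bigr).
\]
Every $y_i$ lies in $(x_1,x_0)$ and shares the same constant critical slope, so the affine identity above applied at $y_i$ in place of $x$ bounds
\[
q(A(y_i))\ \big|\ \lcm\bigl(q(A(x_0)),\,\nu\,q(A(f_\diamond(y_i))),\,\nu\,q(A(f_\diamond(x_0)))\bigr).
\]
Combining these bounds with the formula for $m(x)$, the divisibility $m(x)\mid M$ reduces to
\[
\lcm_i q(A(f_\diamond(y_i)))\ \big|\ b(f_\diamond(x)).
\]
Since $f_\diamond$ is order-preserving, each $f_\diamond(y_i)$ lies strictly above $f_\diamond(x)$ on the image segment. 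The core claim is then that the points $f_\diamond(y_i)$ are precisely the jump points of $m$ on that image segment, whence Proposition~\ref{prop:approxseq} applied at $f_\diamond(x)$ yields exactly $\lcm_i q(A(f_\diamond(y_i)))\mid m(f_\diamond(x))\mid b(f_\diamond(x))$.

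The main technical obstacle is this last identification of source jump points with image jump points. My plan is to establish it using the decomposition $f_\diamond=\hat f\circ\tau$ together with the identity $|\hat f'\circ\tau|=|\jac_w(f)|^{1/d}$ from the proof of Lemma~\ref{lem:jac}: constancy of the critical slope on $(x_1,x_0)$ is equivalent to the absence of zeros of $\hat f'$ above $\tau((x_1,x_0))$, which makes $f_\diamond$ a local homeomorphism with unramified action on tangent directions along the lifted segment in $\A^{1,\an}_\nL$. The branching structure at a type $2$ point that governs the jumps of $m$ (through the dichotomy $b(y)=m(y)$ versus $b(y)>m(y)$ in Theorem~\ref{thm:gen-mult}) is therefore transported bijectively from source to image, which closes the argument.
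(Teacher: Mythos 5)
Your overall architecture is sound and is essentially a reformulation of the paper's induction: the divisibility $q(A(x))\mid M$ via the affine identity is exactly the paper's observation~\eqref{eq:ZEr}, and your reduction of $m(x)\mid M$ to the statement that every jump point of the multiplicity on $[x,x_0]$ is sent by $f_\diamond$ to a jump point on $[f_\diamond(x),f_\diamond(x_0)]$ is equivalent to the paper's Lemma~\ref{lem:multbackjac}. (Only the inclusion ``source jumps map \emph{into} image jumps'' is true and needed; your word ``precisely'' overstates it --- the paper explicitly notes that $m_n$ may equal $m_{n+1}$, i.e.\ preimages of image jumps need not be source jumps.)

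The gap is in your proof of this core claim. Constancy of the critical slope $J$ on $(x_1,x_0)$ is \emph{not} equivalent to the absence of zeros of $\hat f'$ in the balls below the lifted segment: it only says that no critical branch detaches from the segment inside the open interval, and it is perfectly compatible with $J>0$, i.e.\ with $J$ critical branches lying in the closed ball below $x_1$. In that case $\hat f$ restricted to the ball below $\tau(x)$ is proper of degree $J+1>1$ for every $x$ in the segment, so $f_\diamond$ is not a local homeomorphism there, the induced map on tangent directions at $\tau(x)$ has degree $J+1$, and the ``unramified transport of branching'' breaks down. This is precisely the case the proposition is designed for --- it is why the correction factor $\nu=q\big(\tfrac{d}{J+1}\big)$ appears at all, and it occurs for instance in the paper's example $f(z,w)=(z^5,w^3-3zw^2)$ along the segment above the fixed critical branch $c_0=0$. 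Even when $J=0$, your sketch would still need to relate the tangent-direction picture over $\nL$ to the jumps of $m$, which are governed by the Galois action (the cardinality of $\pi^{-1}$), not by the branching of the $\R$-tree alone. The paper's proof of Lemma~\ref{lem:multbackjac} avoids all of this: it writes $\frac{\alpha(f_\diamond(\cdot))-\alpha(f_\diamond(x_0))}{\alpha(\cdot)-\alpha(x_0)}$ as the product of the constant $\frac{J+1}{d}\cdot\frac{1}{m'}$ with the non-decreasing function $\frac{A(\cdot)-A(x_0)}{\alpha(\cdot)-\alpha(x_0)}$, observes that the left-hand side is non-increasing because $-\log|P\circ f|$ is concave in $\alpha$, and concludes that both factors are constant --- constancy of $dA/d\alpha$ being exactly constancy of the multiplicity. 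You need to substitute an argument of this kind (or some other correct proof of the lemma) for your final paragraph.
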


The proof relies on the following lemma.

\begin{lem}\label{lem:multbackjac}
Let  $x_1\le x_0\in B^{\an}(0,1)$ be such that  the critical slope is constant equal to $J$ on the interval $(x_1,x_0)$, and the multiplicity is constant on  $[f_\diamond(x_1), f_\diamond(x_0))$.

Then the multiplicity is constant on $[x_1,x_0)$.
\end{lem}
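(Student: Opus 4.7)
The plan is to lift the problem to $\A^{1, \an}_\nL$ via the projection $\pi$ and exploit the factorization $f_\diamond = \hat f \circ \tau$ from Lemma~\ref{lem:f_rond_tau}, viewing $\pi$ as a Galois quotient.

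First, I would choose a lift $[\tilde x_1, \tilde x_0] \subset \A^{1, \an}_\nL$ of the segment $[x_1, x_0]$. By Proposition~\ref{prop:derivJ}, constancy of the critical slope on $(x_1, x_0)$ means that no critical branch $c_i$ lies strictly between $\tilde x_0$ and $\tilde x_1$. Since the critical points of $\hat f \in \nL[w]$ are precisely the Puiseux series $c_i(z^{1/d})$, none of them lies in the interior of $\tau([\tilde x_1, \tilde x_0])$. Standard Berkovich polynomial dynamics then yields that $\hat f$ is injective on $\tau([\tilde x_1, \tilde x_0])$, so $f_\diamond$ is a homeomorphism from $[\tilde x_1, \tilde x_0]$ onto its image in $\A^{1, \an}_\nL$.

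Second, I would translate the target hypothesis. The constancy of $m \equiv M$ on $[f_\diamond(x_1), f_\diamond(x_0))$ is equivalent to $f_\diamond([\tilde x_1, \tilde x_0))$ being a fundamental domain for the Galois action on $\pi^{-1}([f_\diamond(x_1), f_\diamond(x_0)))$, with $M$ disjoint Galois translates. Using $\pi \circ f_\diamond = f_\diamond \circ \pi$, the map $f_\diamond$ sends Galois orbits into Galois orbits. If two distinct points $\tilde y \ne \tilde y' \in [\tilde x_1, \tilde x_0)$ were Galois conjugate, then $f_\diamond(\tilde y), f_\diamond(\tilde y')$ would be two Galois-conjugate points in $f_\diamond([\tilde x_1, \tilde x_0))$, distinct by the step-one injectivity, contradicting the target fundamental-domain property. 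Hence $[\tilde x_1, \tilde x_0)$ is itself a fundamental domain for the Galois action on $\pi^{-1}([x_1, x_0))$.

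Finally, I would conclude $m \equiv M$ on $[x_1, x_0)$ by matching Galois translates. Via the bijective restriction of $f_\diamond$ on $[\tilde x_1, \tilde x_0)$ together with the orbit-to-orbit property, the $M$ disjoint Galois translates of $f_\diamond([\tilde x_1, \tilde x_0))$ in the target correspond to exactly $M$ disjoint Galois translates of $[\tilde x_1, \tilde x_0)$ in the source, giving $m(y) = M$ for every $y \in [x_1, x_0)$. The main obstacle is this last correspondence: since $f_\diamond$ is not Galois-equivariant in general (the $\tau$-factor twists by a $d$-th root), source and target stabilizers do not match naively. One resolves this by tracking, for each Galois element $g$ stabilizing a target point, the twisted element $g'$ acting on the corresponding source lift; the injectivity of $f_\diamond$ on each Galois translate of the fundamental domain then forces the source and target orbit sizes to coincide.
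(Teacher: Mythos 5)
Your approach (Galois orbits and fundamental domains) is genuinely different from the paper's, which instead plays the \emph{convexity} of $A$ as a function of $\alpha$ (Corollary~\ref{cor:approxseq}) against the \emph{concavity} of $\alpha\circ f_\diamond$ as a function of $\alpha$, using the affine relation from Corollary~\ref{cor:jacval} to factor one quantity out. Unfortunately your argument has a gap at the crucial step.

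The situation you rule out in step 3 --- two distinct Galois-conjugate points $\tilde y\neq\tilde y'$ both lying in the single lifted segment $[\tilde x_1,\tilde x_0)$ --- never occurs in the first place: $\pi$ is already injective on any segment of $\A^{1,\an}_{\nL}$ (two points of the segment with the same $\pi$-image have the same diameter, hence coincide). So that step is vacuous and does not establish that the number of Galois translates stays equal to $M$. What one must actually exclude is a multiplicity \emph{jump} at some $x_2\in(x_1,x_0)$, which corresponds to a Galois element $g$ fixing the lift $\tilde x_0$ (and the lift of $[x_2,x_0]$) but moving $\tilde y$ for $y<x_2$. Then $\tilde y$ and $g\tilde y$ lie on two distinct branches emanating from the type~$2$ point $\tilde x_2$, and the would-be contradiction with the target hypothesis requires $f_\diamond(\tilde y)\neq f_\diamond(g\tilde y)$. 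This is \emph{not} a consequence of the injectivity of $f_\diamond$ on the single segment $[\tilde x_1,\tilde x_0]$ (that injectivity is automatic from order preservation, Theorem~\ref{thm:basic_dynamical}(i), and has nothing to do with the absence of critical branches); it requires $f_\diamond$ to separate tangent directions at $\tilde x_2$. When $J\geq1$ the segment sits entirely in the ramification locus of $\hat f$, where the local degree exceeds $1$, and $\hat f$ may well collapse Galois-conjugate directions (for instance $f(z,w)=(z^4,w^2-2zw)$ gives $f_\diamond(z+az^\beta)=-z^{1/2}+a^2z^{\beta/2}$, so $f_\diamond$ identifies $z+az^\beta$ with its conjugate $z-az^\beta$). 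This is exactly the non-equivariance you flag at the end, but ``tracking twisted elements'' does not by itself resolve it: showing that the collapse is compatible with constant multiplicity downstream amounts to re-proving, in different language, the quantitative control on $\alpha$ and $A$ that the paper extracts from the Jacobian formula and the concavity estimates.
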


%
%

\begin{proof}
For any point $x'\in [x_1,x_0)$, we have:
\begin{align*}
\frac{\alpha(f_\diamond(x)) - \alpha(f_\diamond(x_0))}{\alpha(x) - \alpha(x_0)}
=
\frac{\alpha(f_\diamond(x)) - \alpha(f_\diamond(x_0))}{A(f_\diamond(x)) - A(f_\diamond(x_0))}
\times
\frac{A(f_\diamond(x)) - A(f_\diamond(x_0))}{A(x) - A(x_0)}
\times
\frac{A(x) - A(x_0)}{\alpha(x) - \alpha(x_0)}
\end{align*}
By assumption,   
\[
\frac{\alpha(f_\diamond(x)) - \alpha(f_\diamond(x_0))}{A(f_\diamond(x)) - A(f_\diamond(x_0))}
\]
is constant equal to the inverse of the multiplicity on  $[f_\diamond(x_1), f_\diamond(x_0))$ 
(see Proposition~\ref{prop:approxseq}).
Since $x_0$ (hence the whole segment $[x_1,x_0]$) is outside the critical tree
by   Corollary~\ref{cor:jacval} we have
\begin{equation}\label{eq:aalpha}
\frac{A(f_\diamond(x)) - A(f_\diamond(x_0))}{A(x) - A(x_0)}= \frac{J+1}{d}~.
\end{equation}
By Corollary~\ref{cor:approxseq},  $A$ is convex in $\alpha$ hence the function 
\[x\mapsto \frac{A(x) - A(x_0)}{\alpha(x) - \alpha(x_0)}\]
is non-decreasing. On the other hand the function 
\[x\mapsto 
\frac{\alpha(f_\diamond(x)) - \alpha(f_\diamond(x_0))}{\alpha(x) - \alpha(x_0)}
\]
is non-increasing, because 
we can write 
\[
\frac{\alpha(f_\diamond(x)) - \alpha(f_\diamond(x_0))}{\alpha(x) - \alpha(x_0)}
= \unsur{\deg_w(P)}
\frac{-\log|P\circ f(x)| +\log|P\circ f(x_0)|}{\alpha(x) - \alpha(x_0)},
\]
where $P\in k\fps{z}[w]$ is such that $\alpha(x')=-\log|P(x')|/\deg_w(P)$
 for every $x'\in  [f_\diamond(x_1), f_\diamond(x_0))$
as in Remark~\ref{rmk:computeskewness},  and $x\mapsto -\log|\Phi\circ f(x)|$ is concave in $\alpha$
by~\cite[Prop 3.25]{favre-jonsson:valtree}. It follows that both functions 
$\frac{\alpha(f_\diamond(x)) - \alpha(f_\diamond(x))}{\alpha(x) - \alpha(x_0)}$ and $\frac{A(x) - A(x_0)}{\alpha(x) - \alpha(x_0)}$
are constant. Since the multiplicity on $[x_1,x_0)$ is equal to the latter, we conclude that it is
constant on this interval.
\end{proof}

\begin{proof}[Proof of Proposition~\ref{prop:123jac}]
Since $f_\diamond$ is strictly order preserving (see Equation~\eqref{eq:aalpha}) 
and continuous,  the segment $[x,x_0]$ is mapped bijectively onto
the segment $[f_\diamond(x),f_\diamond(x_0)]$. 
Decompose the latter segment
$x'_0= f_\diamond(x_0) >x'_1 >\cdots > x'_N= f_\diamond(x)$
so that the multiplicity is constant on $(x'_n,x'_{n+1}]$ equal to $m'_{n+1}$
and $m'_n$ strictly divides $m'_{n+1}$.

Denote by $x_n$ the preimage of $x'_n$ on the segment 
$[x,x_0]$, and for consistence denote $x=x_N$. By the preceding lemma,
the multiplicity is constant on $(x_n,x_{n+1}]$ equal to $m_{n+1}$. 
Note that $m_n$ may be equal to $m_{n+1}$.

Let us prove  by induction on $n\leq N$  that 
\begin{equation}\label{eq:inductionb}
b(x_n) \text{ divides } \lcm\{b(x_0), \nu b(f_\diamond(x_0)), \nu  b(f_\diamond(x_n))\},
\end{equation}
which corresponds to the desired statement for $n=N$.
For this,  we use the observation:
\begin{equation}\label{eq:ZEr}
\text{for every } y\leq x, \  q(A(y))~ |~\lcm \{q(A(x)), \nu q (A(f_\diamond(x)) ), \nu q (A(f_\diamond(y)) ) \} \text{,}
\end{equation}
 which follows from the identity 
 $A(y)= A(x)+ \frac{d}{J+1} \big(A(f_\diamond(y))-A(f_\diamond(x))\big)$ (see~\eqref{eq:aalpha}).

Suppose first that $n=1$. 
Then, we have
\begin{align*}
b(x_1)
&
=\lcm \{m(x_1), q(A(x_1))\}
\\
& 
|\lcm\{m(x_1),  q(A(x_0)), \nu q(A(f_\diamond(x_1)),  \nu q(A(f_\diamond(x_0))\}
\\
& 
|\lcm\{b(x_0),  \nu b(f_\diamond(x_1)),  \nu b(f_\diamond(x_0))\}
\end{align*}
where in the second line we use the fact that by definition 
$q(A(\cdot))\vert b(\cdot)$ and in the third line we use the fact that 
 either $m(x_1)=m(x_0)$, or $m(x_1)=b(x_0)$, see Corollary~\ref{cor:gemult}.
 
Suppose now that~\eqref{eq:inductionb} is true for some $1\leq n<N$.
The same argument gives
\begin{align*}
b(x_{n+1})
&
=\lcm \{m(x_{n+1}), q(A(x_{n+1}))\}
\\
& 
|\lcm\{m(x_{{n+1}}),  q(A(x_n)),  \nu q(A(f_\diamond(x_n))),  \nu q(A(f_\diamond(x_{n+1})))\}
\\
& 
|\lcm\set{b(x_n),  \nu b(f_\diamond(x_n)),  \nu b(f_\diamond(x_{n+1}))}
\end{align*}
since either $m(x_{n+1})=m(x_n)$, or $m(x_{n+1})=b(x_n)$. 
We conclude the proof using the induction hypothesis and the fact that   
$b(f_\diamond(x_n))|b(f_\diamond(x_{n+1}))$: indeed for $n\geq 1$ we have 
\[b(f_\diamond(x_n)) = b(x'_n) = m(x'_{n+1}) | b(x'_{n+1}).\]
The proof is complete.  
\end{proof}

%
%
%

\subsection{Proof of Theorem~\ref{thm:estim-mult}}

Recall from~\S\ref{ssec:invariantCantor} that we constructed a nested family $(\overline \cB_n)_n$  each consisting of finitely many closed (Berkovich) balls $\overline \cB_n=\{\overline{B}_{n,i}\}$, such that $\cK = \bigcap_n \bigcup \overline \cB_n$, and such that for any $B \in \overline \cB_n$,   $f_\diamond(B)\in\overline \cB_{n-1}$  and
$f_\diamond^{-1}(B)$ is an union of balls in $\overline \cB_{n+1}$. Here $\bigcup \overline \cB_n := \bigcup_{i} \overline{B}_{n,i}$ stands for the union of all the elements in $\overline \cB_n$.

\smallskip

Let us  prove Assertion~(1).  
So, assume   that the $\omega$-limit set of the point 
$x$ does not contain any critical point in $\CritB^+$.
Replacing $x$ by one of its iterates, we may suppose that the closure $O^+(x)$ of the orbit of $x$ is compact and avoids $\CritB^+$. 
Denote by $\cT^+$ the convex hull of $\CritB^+$ and $\xg$. 
Any point in $O^+(x)$ is contained in some ball in  $\overline \cB_n$ that is disjoint from $\cT^+$, 
therefore we can  find an integer 
 $N$ and a finite set of balls $\overline{B}_1, \cdots, \overline{B}_m\in \overline \cB_N$ 
disjoint from  $\cT^+$, and whose union contains $O^+(x)$. For the sake of the argument, assume 
further that for every $j$, $f_\diamond (\overline{B}_j)$ is disjoint from $\cT^+$ (that is, decrease possibly $N$ by 1),
and assume that $N$ is so large that each ball in $\overline \cB_{N-1}$ contains at most one critical point of $\CritB\setminus \CritB^+$.
 Let $y_1, \cdots, y_m$ be  
the respective boundary points of the $\overline{B}_j$. 
For any $n\ge N$, let $\hat x_n$ be the boundary point of the unique  ball in $\overline \cB_n$ containing $x$. 
Then for any $\ell \le n-N$, the type $2$ point  $f^\ell_\diamond(\hat x_n)$ is the boundary point of a ball in $\overline \cB_{n-\ell}$, and in particular
$f^{n-N}_\diamond(\hat x_n)\in \{y_1, \ldots, y_m\}$. 

Set $Q= \lcm\{b(y_j),b(f_\diamond(y_j)), \ 1\leq j\leq m\}$.
We inductively apply 
 Proposition~\ref{prop:123jac} to the intervals 
 $(f^\ell_\diamond(\hat x_n), y_{j(\ell)})$, where $y_{j(\ell)}$  is the 
  unique point $y_i\ge f^\ell_\diamond(\hat x_n)$, 
  and observe that 
   $\nu =1$ on these intervals    
 because they are disjoint from $\cT^+$ and contain at most one point of $\CritB\setminus \CritB^+$. 
 We thus  obtain that 
\[
b(\hat x_n)|\lcm\big\{Q,b\big(f^\ell_\diamond(\hat x_n)\big)\big\}
\]
for any $\ell \leq n-N$. For $\ell=n-N$ we obtain 
 $b(\hat x_n)|Q$. 
In particular, the sequence $b(\hat x_n)$ is stationary
for large $n$, which  implies that $x$ has  multiplicity at most $ Q$. 

\medskip

%
%
%
%

Let us now  establish (2).
Since $\CritB^+\cap\omega(c) =\emptyset$ for every  $c\in\CritB^+$, 
 one can find a neighborhood $U$ of $\CritB^+$
and an integer $N$ such that for every $c\in \CritB^+$, 
$f^n_{\diamond}(c)\notin U$ for all $n\ge N$.
We may suppose that $U$ is a finite union of closed balls of $\overline \cB_M$ for some $M$. 
It follows that if $\overline{B}\in \overline \cB_n$ contains a critical point and $n\ge M$, then
$f^k_\diamond(\overline{B})\in\overline \cB_{n-k}$ does not contain any critical point for any $N\le k \le M-n$. 
We shall also assume that $M$ is so large  that the intersection of $\CritT$ with any ball $\overline{B}\in \overline \cB_M$ is either empty or connected.

We introduce some notation. 
 For any point $x\in\cK$ and any $n\in\N$, 
  let $b_n(x)$ be the
generic multiplicity of the unique boundary point of the 
 ball $\overline{B}_n(x)\in\overline \cB_n$ containing $x$ 
 (so that with the above notation, $b_n(x) = b(\hat x_n)$). 
Observe that $f^k_\diamond (\overline B_{n}(x))=\overline B_{n-k}(f^k_\diamond(x))$
for all $k\le n$. 
For any $n\ge 1$, we define
\[Q_n= \lcm_{x = \partial \overline{B}}\{b(x), b( f_\diamond(x))\} 
~,\]
where $\overline{B}$ ranges over all balls in $\overline \cB_n$.

 The theorem will follow if we find an upper bound on 
$b_{n+M}(x)$ independent on $x\in\cK$ and $n$. 
Pick any $x\in\cK$ and any integer $n\ge0$.
Let $n_0$ (resp. $n_1$) be the least (resp. greatest) integer $k\le n$ such that $f^{k}_\diamond (\overline B_{n+M}(x))$ contains a critical point. 
We know that $n_0 \le n_1 \le n_0+N-1$. 
 
For any $j\in \{0,\ldots, n\}$, we apply Proposition~\ref{prop:123jac} to \[x_1=\partial \overline B_{n-j+M}(f^j_\diamond(x))=\partial f^{j}_\diamond (\overline B_{n+M}(x))\]
and to $x_0=\partial \overline{B}_M(f^j_\diamond(x))$ the boundary point of the unique ball in $\overline \cB_M$ containing $x_1$. 
  
When $ j< n_0$ or when $j> n_1$, then the ball $f_\diamond^j(\overline B_{n+M}(x)) =\overline B_{n-j+M}(f^j_\diamond(x))$ does not contain any   point of $\CritB^+$  and at most one critical point of 
$\CritB\setminus \CritB^+$, so
  \[
 b_{n-j+M}(f^j_\diamond(x)) |\lcm\{ b_{n-j-1+M}(f^{j+1}_\diamond(x)), Q_M\}~.
 \]
 
When $j \in \{n_0, \ldots, n_1\}$ the critical slope is  
 constant (by our choice of $M$) on the segment $[x_0,x_1]$ and bounded  from above by $c-1$, so  
\[
b_{n-j+M}(f^j_\diamond(x)) |\lcm\{ L b_{n-j-1+M}(f^{j+1}_\diamond(x)), LQ_M\}~,
\]
where $L$ is the least common multiple of $q\big(\tfrac{d}{1+j}\big)$ when $j$ ranges among all integers $0 \leq j \leq (c-1)$. 
Since $|n_1-n_0|\le N$, we conclude that 
\[
b_n(x) | L^N Q_M
\]
as required.

\section{Points of infinite multiplicity}\label{sec:infinite}

\begin{thm}\label{thm:infinity-mult}
Suppose that $\cK$ is not reduced to a singleton, and that there exists a periodic critical point $c_0$ with $\nu(c_0) \geq 2$.
Then $\cK$ contains both rigid and non-rigid points.
Moreover, the supremum of the multiplicity of rigid points $x\in \cK$ is infinite.
\end{thm}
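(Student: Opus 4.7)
My first observation is that $c_0 \in \cK$, because a periodic point of $f_\diamond$ cannot converge to $\xg$; since $c_0$ is a Puiseux series, this already produces one rigid point in $\cK$. Writing $N$ for the period of $c_0$ and $\tilde f = f_\diamond^N$, the plan is to analyze $\tilde f$ near $c_0$ along $[\xg, c_0]$, use the Cantor structure to extract $\cK$-points in ``lateral'' branches of growing multiplicity, and then produce both rigid points of unbounded multiplicity and at least one non-rigid point.

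For the central quantitative estimate, consider the boundary points $y_n = \partial \overline B_n(c_0)$ of the balls $\overline B_n(c_0) \in \overline \cB_n$ coming from the Markov construction of \S\ref{ssec:invariantCantor}; they lie on $[\xg, c_0]$ and tend to $c_0$. Iterating Corollary~\ref{cor:jacval} along the orbit $c_0, f_\diamond(c_0), \ldots, f_\diamond^{N-1}(c_0)$ gives, for $n$ large enough that the relevant intervals avoid critical branches other than those along this orbit, the affine recurrence $A(y_{n-N})-1 = \lambda^*(A(y_n)-1)$ with $\lambda^* = \prod_{i=0}^{N-1}(J_i+1)/d$, where $J_i$ is the critical slope at $f_\diamond^i(c_0)$. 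A careful arithmetic check using $\nu(c_0)\ge 2$ and $J_i+1\le c<d$ shows that when $\lambda^*$ is reduced to lowest terms the denominator of $A(y_n)-1$ is forced to grow to infinity (essentially because $\nu(c_0)$ is the numerator of $(J(c_0)+1)/d$ in lowest terms, which propagates through the iteration up to a bounded cancellation). Combined with $m(y_n)\le m(c_0)$, which follows from $c_0 \in \overline B_n(c_0)$ and Theorem~\ref{thm:gen-mult}, this forces the generic multiplicity $b(y_n)=\lcm(m(y_n), q(A(y_n)))\to\infty$.

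Next, Theorem~\ref{thm:gen-mult} applied at $y_n$ (where $m(y_n) < b(y_n)$ for $n$ large) decomposes $\overline B_n(c_0)\setminus\{y_n\}$ into a unique ``special'' open ball $B_0^{(n)}$ containing $c_0$ of multiplicity $m(c_0)$, and ``lateral'' open balls $B_i^{(n)}$, $i\ge 1$, of multiplicity $b(y_n)$. Any $\cK$-point lying in a lateral branch therefore has multiplicity at least $b(y_n)$. The crucial geometric input is that for infinitely many $n$ some lateral branch must meet $\cK$: otherwise, for all $n$ large, $\cK \cap \overline B_n(c_0) \subseteq B_0^{(n)}$, and since $\diam(B_0^{(n)})\to 0$ this would isolate $c_0$ in $\cK$, contradicting the Cantor property given by Theorem~\ref{thm:invariantcantor}. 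Picking periodic points of $f_\diamond\rest{\cK}$ inside such $\cK\cap B_i^{(n)}$ (they exist by density of periodic points, a standard consequence of the Markov/SFT structure underlying the construction of $\cK$) yields rigid $\cK$-points of multiplicity at least $b(y_n)\to\infty$, which establishes the supremum statement for rigid points.

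For the existence of a non-rigid point, the natural idea is to construct a nested descending sequence of balls in $\cK$ with multiplicities diverging to infinity; by compactness the intersection is then a single type 1 point of necessarily infinite multiplicity, hence non-rigid. The obstruction is that lateral branches at different $y_n$ sit in disjoint subtrees of $\overline B_n(c_0)$, so direct nesting does not work. The fix is to pull back by well-chosen inverse branches of $\tilde f$: starting from a lateral branch $B_1^{(n_1)}$ meeting $\cK$, select an inverse branch of $\tilde f$ routing a neighborhood of $c_0$ back inside $B_1^{(n_1)}$, and iterate, using Proposition~\ref{prop:123jac} in reverse to track how each passage through the critical tree near $c_0$ multiplies the generic multiplicity by a factor involving $\nu(c_0)$. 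I expect this inverse-branch construction to be the main technical hurdle: it requires verifying that the desired inverse branches exist, land in $\cK$-meeting regions, and that the multiplicities genuinely accumulate so that the limit point is indeed a non-rigid member of $\cK$.
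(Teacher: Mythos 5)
Your overall strategy tracks the paper's: the growth of $q(A(\hat c_n))$ along the boundary points $\hat c_n=\partial \overline B_n(c_0)$ via Corollary~\ref{cor:jacval}, the identification via Theorem~\ref{thm:gen-mult} of the lateral directions at $\hat c_n$ as those carrying multiplicity $b(\hat c_n)$, and a nested pull-back construction for the non-rigid point. But there is a genuine gap at the step you call the ``crucial geometric input''. From ``no lateral branch at $y_n$ meets $\cK$ for all large $n$'' you cannot conclude that $c_0$ is isolated in $\cK$: a point $x\in\cK\cap \overline B_{n}(c_0)$ with $x\neq c_0$ satisfies $x\wedge c_0\in(c_0,\hat c_{n}]$, but nothing forces $x\wedge c_0$ to be one of the $\hat c_m$ --- the balls of $\overline \cB_{m+1}$ sitting inside $\overline B_m(c_0)$ may attach to the segment $[c_0,\hat c_m]$ at intermediate points, so $\cK$ can accumulate on $c_0$ without ever entering a lateral branch at any $\hat c_m$. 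Even if you could show that lateral branches meet $\cK$ for infinitely many $n$, those $n$ need not include the subsequence $(n_j)$ along which $b(\hat c_{n_j})\to\infty$; note that only a subsequence is guaranteed by the recurrence for $A$, since what provably blows up is the denominator of the increments $A(\hat c_{n+1})-A(\hat c_n)$, not of $A(\hat c_n)$ itself, so your claim that the denominator of $A(y_n)-1$ diverges along the full sequence is also unjustified. The paper closes the gap dynamically: choosing $\overline B_0$ to be the smallest closed ball containing $\cK$ makes $\hat c_0$ a branching point of the tree $\cT(\cK)$, and since $f_\diamond(\hat c_n)=\hat c_{n-1}$, $f_\diamond$ is open and $\cK$ is totally invariant, \emph{every} $\hat c_n$ is a branching point of $\cT(\cK)$; hence a lateral branch at each $\hat c_{n_j}$ does meet $\cK$. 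The high-multiplicity rigid points are then obtained as preimages of $c_0$ inside those branches, rather than periodic points: density of periodic points in every piece of $\cK$ is not available here, since the coding map is only a semi-conjugacy in general (the hypothesis of this theorem places a critical point \emph{inside} $\cK$, which is exactly the situation where injectivity of the coding fails).

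Your non-rigid construction is, as you acknowledge, only a sketch. The paper carries it out by producing a decreasing sequence of open balls $B'_\ell$ with $f_\diamond^{k_\ell}(B'_\ell)=B_0$ and $b(B'_\ell)\ge 2\,b(B'_{\ell-1})$; the key point is that the affine relation of Corollary~\ref{cor:jacval} transfers the divergence of the denominators of $A(\hat c_m)$ to the preimages of the $\hat c_m$ on the segment joining $\partial B'_\ell$ to the preimage of $c_0$ it contains (all critical points of $f^{k_\ell}$ near that preimage lie on its orbit, so the comparison constant is fixed), after which branching of $\cT(\cK)$ and equidistribution of preimages of $\hat c_0$ are used again to select $B'_{\ell+1}$.
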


\begin{rmk}
It follows from the equidistribution statement~\eqref{eq:plbck} in Theorem~\ref{thm:ergo-meas}
 that the set of non-rigid points is dense in $\cK$. Since the multiplicity is lower-semicontinuous, and 
 the  preimages of $c_0$ are rigid and dense in $\cK$, we also deduce that for any integer $M$
 the set of rigid points in  $\cK \cap \{ m\ge M\}$ is also dense in $\cK$.

Observe that  $\{m<\infty\}$ is a closed $f_\diamond$-invariant subset, hence $\mu_{\na}(m<\infty)$ is either $0$ or $1$
by the ergodicity of $\mu_{\na}$.
 \end{rmk}

\begin{quest}
Under which dynamical circumstances can we ensure that $\{m<\infty\}$ has zero (resp. full) measure?
\end{quest} 

\begin{quest}
Does Theorem~\ref{thm:infinity-mult} extend to maps admitting a critical point $c\in\CritB^+$
such that $\omega(c) \cap \CritB^+\neq \emptyset$?
\end{quest} 

\begin{proof}
As in the previous section, we consider the nested family of finitely many closed (Berkovich) balls $\overline \cB_n=\{\overline{B}_{n,i}\}$, such that $\cK = \bigcap_n \bigcup \overline \cB_n$. 
Recall that  for any $\overline{B} \in \overline \cB_n$, we have $f_\diamond(\overline{B})\in\overline \cB_{n-1}$, and
$f_\diamond^{-1}(\overline{B})$ is an union of balls in $\overline \cB_{n+1}$.
In order to clarify our arguments below, we insist in choosing $\overline \cB_0=\{\overline{B}_0\}$ so that $\overline{B}_0$ is 
the smallest closed ball containing $\cK$.  
It follows that the convex hull $\cT(\cK)$ of $\cK$ and $\xg$ is a locally finite tree, 
and $ \partial \overline{B}_0$ is a branching point of $\cT(\cK)$ since we assumed $\cK$
is not reduced to a singleton.

We may suppose that $c_0 \in \CritB^+$ is fixed by $f_\diamond$. 
As before, we let $\hat c_n$ 
be the boundary point of the unique closed ball 
$\overline{B}_n(c_0)\in \overline \cB_n$ containing $c_0$. 
The sequence $\hat c_n$  decreases to $c_0$ in the sense that 
$\hat c_n\in (c_0,\hat c_{n-1})$, and furthermore $f_\diamond(\hat c_n)=\hat c_{n-1}$. 

By assumption, the critical slope $J$ (as defined in \S\ref{sec:jacII}) is constant on the segment $[c_0, \hat c_N]$ for some large enough $N$, and $1+J$ does not divide $d$
since $\nu(c_0)\ge 2$. 

By 
Corollary~\ref{cor:jacval}, we have
\[
A(\hat c_{n+1})= A(\hat c_n)+ \frac{d}{J+1} \big(A(\hat c_n)-A(\hat c_{n-1})\big)
\]
for all $n\ge N$. Since $\nu=q\big(\tfrac{d}{J+1}) \geq 2$, we conclude that 
\begin{equation}\label{eq:difference_q}
 q(A(\hat c_{n+1})) - q(A(\hat c_n) )\to \infty, 
 \end{equation} so there is a subsequence 
$(n_j)$ such that  $q(A(\hat c_{n_j}))\to \infty$. In particular,  $b(\hat c_{n_j}) \to \infty$.

Recall that $\hat c_0$ is a branched point of $\cT(\cK)$. Since $\cK$ is backward invariant, and
$f_\diamond$ is open, the relation
$f_\diamond(\hat c_n)=\hat c_{n-1}$ implies by induction that 
 $\hat c_n$ is a branched point of $\cT(\cK)$ for every $n\ge0$.

When $j$ is  large enough, $b(\hat c_{n_j})> m(c_0)$, and since the multiplicity of a type 2 point is the minimal multiplicity of rigid points in the corresponding closed ball, we also have $m(c_0)\geq m(\hat c_{n_j})$. Since $\hat c_{n_j}$ is a branched point of $\cT(\cK)$, there exists 
  an open ball $B'_{n_j}$ whose boundary point is equal to $\hat c_{n_j}$
and which does not contain $c_0$.  By  Theorem~\ref{thm:gen-mult}, there is a unique open ball with $\hat c_{n_j}$ as a boundary point, containing a rigid point with 
multiplicity $<b(\hat c_{n_j})$, which must thus  be the one containing $c_0$.
It follows that 
the multiplicity of any rigid point in $B'_{n_j}$ is no less than  $b(\hat c_{n_j})$. 
Since  $f^{{n_j}+1}_\diamond$ maps $B_{n_j}$ to an open ball containing the unique
closed ball in $\overline \cB_0$, $c_0$
admits a preimage $\phi_{n_j}\in B_{n_j}$. This preimage is rigid of multiplicity 
$\ge b(\hat c_{n_j})$, which
shows  that the    multiplicity of   rigid points in $\cK$ is unbounded.

Let us now argue that $\cK$ also contains non-rigid points. 
Let $B_0$ be the open ball whose boundary point is 
$\hat c_0$ and contains $c_0$. 
We shall construct a decreasing sequence of open balls  $\{B'_\ell\}_{\ell\ge 1}$ whose boundary point $y_\ell$ is a branched point
of $\cT(\cK)$, and such that  
\begin{enumerate}[(i)]
\item 
 $f_\diamond^{k_\ell}(B'_\ell)=B_0$ for some  $k_\ell\ge1$;
\item
$b(B'_\ell)\ge 2 b(B'_{\ell-1})$.
\end{enumerate}
Then the sequence of  the corresponding closed sets    
$B'_\ell\cup\{y_\ell\}$ forms a decreasing sequence of compact sets
intersecting $\cK$ by (i) and its intersection cannot contain any rigid point by the second condition.

Let us explain how to build this sequence of balls. Choose some large $n_0$ 
such that $b(\hat c_{n_0})\ge 2 b(\hat c_0)$. As before, since $\hat c_{n_0}$ is a branched point of $\cT(\cK)$
there exists an open ball $B$ whose boundary is equal to $\hat c_{n_0}$,  which 
intersects $\cK$, and with the property that 
  all rigid points in $B$ have multiplicity at least $b(\hat c_{n_0})$.
This ball contains a preimage of  $\hat c_0$ by the equidistribution~\eqref{eq:plbck}. From this, we infer the existence of 
an open ball $B'_1\subset B$ such that $f_\diamond^{k_1}(B'_1)=B_0$ and 
$b(B'_1)\geq 2 b(B_0)$.  

Now, suppose that $B'_\ell$ has been constructed. Since  $f_\diamond^{k_\ell}(B'_\ell)=B_0$, 
there exists a preimage $c_\ell$ of $c_0$ under  $f_\diamond^{k_\ell}$ in $B'_\ell$.
For $m\geq 1$, let $y_m$ be the unique preimage of  $\hat c_m$ by  $f^{k_\ell}_\diamond$ lying in $[c_\ell,\partial B'_\ell]$. If $r>0$ is sufficiently small, all critical points of 
$f^{k_\ell}$ in $B^{\an}(c_\ell,r)$ are contained in the orbit of $c_\ell$. It then  follows 
from Corollary~\ref{cor:jacval} that 
  for  $m$ large enough, we have
\[
A(\hat c_{m+1})- A(\hat c_m)=  C\, \big(A(y_{m+1})-A(y_{m})\big)
\]
for some fixed constant $C=C(\ell)$, and since  $q(A(\hat c_m)) - q(A(c_m))\to\infty$ by~\eqref{eq:difference_q}, it follows that $q(A(y_{m_j}))\to \infty$ along some subsequence $(m_j)$.
We can thus find an index $m$ such that $b(y_m) \ge 2 b(B'_\ell)$.
 As $y_m$ is a branched point of $\cT(\cK)$, by Theorem~\ref{thm:gen-mult},
 we can choose  an open ball 
$B$,  with $\fr B = y_m$ and 
such that all  rigid points in $B$ have multiplicity at least $b(y_{m})$.
As before $\hat c_0$ admits a preimage $z$ in $B$ under some  $f^{k_{\ell+1}}_\diamond$
with $k_{\ell+1}> k_\ell$, which must satisfy $b(z)\geq b(y_{m})$, 
and we choose  $B'_{\ell+1}$ to be 
the corresponding preimage of $B_0$. 
\end{proof}

\section{Examples}\label{sec:examples}

In this section, we give explicit examples of skew products with uniformly bounded multiplicity on $\cK$, as well as examples where there are both rigid points with unbounded multiplicity, and points with infinite multiplicity. 
\subsection{Conjugacy to a  map of the form~\eqref{eq:cd}}

The class of maps that we have studied so far arises quite naturally, as our next result shows.

\begin{thm}\label{thm:formal}
Let $(k,|\cdot|)$ be any algebraically closed complete metrized field of characteristic $0$, 
and pick two integers $d>c\geq 2$. 
Suppose that  $f\colon (\nA^2_k,0)\to(\nA^2_k,0)$ is a finite analytic germ such that:
\begin{enumerate}[(a)]
\item the curve $\{z=0\}$ is (locally) totally invariant and $f^*[z=0] = d[z=0]$ as divisors;
\item the restriction of $f$ to $\{z=0\}$ has a super-attracting point at $0$ of order $c$.
\end{enumerate}
Then $f$ is formally conjugated to a (formal) map of the form
\begin{equation}\label{eq:normal}
(z,w) \mapsto \Biggl(z^d, w^c + \sum_{j=0}^{c-1} h_j(z) w^j\Biggl) 
\end{equation}
where the  $h_j$ are power series vanishing at $0$. 
\end{thm}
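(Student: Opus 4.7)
The plan is to perform the formal conjugacy in two successive stages: first normalize the first coordinate to $z^d$, then apply a Böttcher-type argument in the second coordinate. By condition (a), $f_1(z,w) = z^d u(z,w)$ for a unit $u \in k[[z,w]]^\times$; after a linear rescaling $z \mapsto \lambda z$ I would assume $u(0,0)=1$. Seeking a formal coordinate change $(z,w) \mapsto (z\, v(z,w), w)$ with $v(0,0)=1$ making the first coordinate of the conjugated map equal to $z^d$ leads to the functional equation $v(z,w)^d = u(z,w) \cdot (v \circ f)(z,w)$. Writing $v = 1+\tilde v$, $u = 1+\tilde u$ with $\tilde v, \tilde u \in \mathfrak m := (z,w)$, this becomes
\[
(d \cdot \mathrm{id} - f^*)\,\tilde v \;=\; \tilde u + Q(\tilde v),
\]
where $f^* \phi := \phi \circ f$ and $Q(\tilde v)$ is quadratic in $\tilde v$. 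Conditions (a)--(b) together with $c, d \ge 2$ force $Df(0,0) = \bigl(\begin{smallmatrix} 0 & 0 \\ * & 0 \end{smallmatrix}\bigr)$ to be nilpotent, so $f^*$ acts nilpotently on each graded piece $\mathfrak m^n / \mathfrak m^{n+1}$ and $d \cdot \mathrm{id} - f^*$ is invertible modulo every power of $\mathfrak m$. A standard induction on the $\mathfrak m$-adic order then produces $\tilde v$.

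After Stage~1 one may assume $f(z,w) = (z^d, f_2(z,w))$ with $f_2(0, w) = w^c + O(w^{c+1})$ (absorbing the leading coefficient into a rescaling of $w$). Stage~2 seeks a change of coordinates $(z,w) \mapsto (z, \psi(z,w))$ with $\psi(0,0)=0$, $\partial_w \psi(0,0) \ne 0$, together with series $h_j \in k[[z]]$ satisfying $h_j(0)=0$, so that the conjugated map is $(z^d, P(z,w))$ with $P(z,w) = w^c + \sum_{j=0}^{c-1} h_j(z) w^j$. The corresponding functional equation reads
\[
f_2(z, \psi(z,w)) \;=\; \psi\bigl(z^d,\, P(z,w)\bigr),
\]
and setting $z = 0$ reduces to the classical Böttcher equation $f_2(0, \chi(w)) = \chi(w^c)$ for $\chi(w) := \psi(0,w)$, which admits a unique formal solution $\chi \in k[[w]]$ tangent to the identity since $\mathrm{char}(k) = 0$.

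To determine the higher $z$-coefficients I would write $\psi(z,w) = \sum_{k \ge 0} \psi_k(w) z^k$ with $\psi_0 = \chi$, and $h_j(z) = \sum_{k \ge 1} h_{j,k} z^k$, and expand both sides of the functional equation in powers of $z$. Extracting the coefficient of $z^k$ for each $k \ge 1$ yields a linear equation of the form
\[
g'(\chi(w))\, \psi_k(w) \;-\; \chi'(w^c) \sum_{j=0}^{c-1} h_{j,k} w^j \;=\; T_k(w),
\]
where $g := f_2(0, \cdot)$ and $T_k \in k[[w]]$ depends only on the data $\psi_\ell, h_{j,\ell}$ for $\ell < k$ (with an extra contribution from $\psi_{k/d}$ when $d \mid k$). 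Since $g'(\chi(w)) = c\,\chi(w)^{c-1} \chi'(w)$ has $w$-valuation exactly $c - 1$ while $\sum_j h_{j,k} w^j$ has $w$-degree at most $c - 1$, formal division by $w^{c-1}$ determines $h_{0,k}, \ldots, h_{c-2,k}$ from the low-degree part of $T_k$; imposing the normalization $\psi_k(0) = 0$ then fixes both $\psi_k$ and $h_{c-1,k}$ uniquely. The main obstacle is the careful bookkeeping of this inductive step, in particular tracking the extra $\psi_{k/d}$ contribution when $d \mid k$ and confirming it does not disrupt solvability, and verifying that the fact $\psi_0$ exactly solves the order-zero equation indeed forces $h_j(0) = 0$ for every $j$. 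The nilpotence argument of Stage~1 and the existence of the formal Böttcher coordinate at $z = 0$ are otherwise standard.
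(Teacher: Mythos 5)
Your two-stage plan matches the skeleton of the paper's proof, but both stages are carried out by genuinely different arguments. For Stage~1, the paper's Lemma~\ref{lem:zd} produces the unique $\varphi$ with $1+\varphi=\prod_{n\geq 1}(1+\eps\circ f^{n-1})^{d^{-n}}$; this explicit analytic formula is what allows the paper to assert (in the remark following the statement) that this particular normalization is in fact holomorphic. Your nilpotence argument --- $Df(0,0)$ is strictly lower triangular because $c,d\geq 2$, hence $f^*$ acts nilpotently on each $\mathfrak m^n/\mathfrak m^{n+1}$ and $d\cdot\mathrm{id}-f^*$ is invertible at each order --- is correct but only yields the formal conjugacy, which is all the theorem requires. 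For Stage~2, the paper first Böttcher-normalizes $f|_{\{z=0\}}$ to exactly $w\mapsto w^c$, writes $f_2=w^c+\sum_{n\geq 1}z^ng_n(w)$ with $g_n=g_n^-+g_n^+$ (degree $<c$ part and order $\geq c$ part), and kills $g_m^+$ one $z$-order at a time by conjugating with $(z,w)\mapsto(z,w+z^m\varphi_m(w))$ where $\varphi_m=g_m^+/(cw^{c-1})$; the formal limit of the composed conjugacies then exists. You instead set up the full conjugacy equation $f_2(z,\psi(z,w))=\psi(z^d,P(z,w))$, solve it at $z$-order $0$ by Böttcher, and at each order $k\geq1$ solve the linear equation $g'(\chi(w))\psi_k(w)-\chi'(w^c)\sum_jh_{j,k}w^j=T_k(w)$ for $\psi_k$ and the $h_{j,k}$ simultaneously. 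This also works: the obstacles you flag are not real, since the $\psi_{k/d}(w^c)$ contribution when $d\mid k$ is already determined (as $k/d<k$), and $h_j(0)=0$ is consistent by construction. One small slip: $g'(\chi(w))$ is not $c\,\chi(w)^{c-1}\chi'(w)$; differentiating $g(\chi(w))=\chi(w^c)$ gives $g'(\chi(w))=cw^{c-1}\chi'(w^c)/\chi'(w)$. The conclusion that its $w$-valuation is exactly $c-1$ is nevertheless correct, and that is all the argument uses. Overall, the paper's version of Stage~2 is more economical because one never needs to invert $g'(\chi)$ explicitly: the conjugating maps $(z,w)\mapsto(z,w+z^m\varphi_m(w))$ automatically fix all lower-order data, while your version requires the bookkeeping you correctly anticipate.
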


\begin{rmk}
The previous result applies to any polynomial map $f$ of the complex affine plane $\A^2_k$
that extends to a regular endomorphism to $\P^2_k$ of degree $d\ge2$, 
and has a super-attracting fixed point on the line at infinity that  is not totally invariant. 
\end{rmk}

\begin{rmk}
When the formal normal form is analytic, then one can argue that the conjugacy is indeed analytic. 
Denote by $\varphi$ the conjugacy between $f$ and its normal form. 
Up to a polynomial change of coordinates, we may assume that $f$ and its normal form given by \eqref{eq:normal} coincide up to high order
so that $\varphi$ is tangent to the identity up to any given order, say $\gg c$. 
Up to a blow-up of the form $\pi(z,w)=(zw^n,w)$ with $n \geq c+1$, the map $f$ lifts to a rigid germ of class $6$ in the sense of \cite{favre:rigidgerms}.
Then $\varphi$ lifts through $\pi$ and conjugates $(zw^n,w)$ to the lift of the normal form (which is analytic). It implies the lift of $\varphi$ to be analytic
which forces $\varphi$ to be analytic. 
\end{rmk}
The latter  remark raises the following problem. 
\begin{quest}
Is it possible to find a holomorphic map 
\begin{equation*}
f(z,w) =
\left(z^d, w^c + z h(z,w) \right)
\end{equation*}
with  $d>c \ge2$ and $h(0)=0$
which is not analytically conjugated to a map of the form~\eqref{eq:normal}?
\end{quest}

\begin{rmk}
Lemma~\ref{lem:zd} below shows that, under the assumptions of Theorem~\ref{thm:formal},
 we can at least make sure that the first component of $f$ is $z^d$ after a holomorphic change of coordinates.
\end{rmk}

The proof of the theorem proceeds  in several steps. 
First observe that by the B\"ottcher theorem, we can assume that the restriction of $f$ to $\{z=0\}$
is $w \mapsto w^c$,  so that in suitable coordinates
any map satisfying the assumptions of the theorem  can be written in the   form 
\begin{equation}\label{eqn:thegerm}
f(z,w)=\Big(z^d(1+\eps(z,w)),w^c +zh(z,w)\Big),
\end{equation}
where $\eps$ and $h$  are two germs of analytic functions at $0$ vanishing at that point.

\begin{lem}\label{lem:zd}
There exists a unique analytic germ $\Phi (z,w) = (z (1+ \varphi(z,w)), w)$,
with $\varphi(0)=0$,
such that $\Phi\circ f \circ \Phi^{-1} (z,w) = (z^d, \cdots)$.
\end{lem}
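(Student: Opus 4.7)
The plan is to derive an explicit functional equation for $\varphi$, transform it into a cohomological equation via logarithm, and solve the latter by a geometrically convergent series. The map being super-attracting at $0$ replaces any need for a small-divisor argument.

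First I would write the conjugacy explicitly. Setting $(z_0,w)=\Phi^{-1}(z,w)$ gives $z_0(1+\varphi(z_0,w))=z$, and the first coordinate of $\Phi\circ f(z_0,w)$ equals $z_0^d(1+\eps(z_0,w))(1+\varphi(f(z_0,w)))$. Requiring this to equal $z^d=z_0^d(1+\varphi(z_0,w))^d$ yields, after renaming, the functional equation
\[
(1+\varphi(z,w))^d \;=\; (1+\eps(z,w))\,(1+\varphi(f(z,w))),
\]
which is to be solved for an analytic germ $\varphi$ with $\varphi(0)=0$. Setting $u:=\log(1+\varphi)$ and $v:=\log(1+\eps)$ (both analytic near $0$ and vanishing there) transforms this into the linear cohomological equation
\[
u(z,w) \;=\; \frac{1}{d}\,v(z,w) + \frac{1}{d}\,u(f(z,w)).
\]
Iterating formally forces the candidate solution
\[
u(z,w) \;=\; \sum_{n=0}^{\infty} \frac{v(f^n(z,w))}{d^{n+1}}\,.
\]

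The technical core is proving uniform convergence of this series on a small polydisk $\Delta_r$. On such a polydisk one has $|f_1(z,w)|\le 2|z|^d$ and $|f_2(z,w)|\le |w|^c + C|z|$ for $r$ small, from which an induction yields $\|f^n(z,w)\|\le (2r)^{c^n}$ in sup-norm, a double-exponential decay. Combined with $|v(\zeta)|\le C'\|\zeta\|$ coming from $v(0)=0$, this gives the estimate $|d^{-(n+1)}v(f^n(z,w))|\le C'd^{-n-1}(2r)^{c^n}$, which is summable, and by the Weierstrass theorem the sum $u$ is analytic on $\Delta_r$ with $u(0)=0$. Then $\varphi:=e^u-1$ is analytic with $\varphi(0)=0$ and satisfies the functional equation, and $\Phi(z,w)=(z(1+\varphi(z,w)),w)$ is tangent to the identity at $0$, hence invertible by the inverse function theorem; by construction it conjugates $f$ to a map whose first coordinate equals $z^d$.

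For uniqueness, if $u_1,u_2$ both satisfy the cohomological equation with $u_i(0)=0$, their difference $\delta:=u_1-u_2$ satisfies $\delta=d^{-1}\,\delta\circ f$, hence $\delta=d^{-n}\,\delta\circ f^n$ for every $n$. Since $\delta$ is bounded on $\Delta_r$ and $d\ge 2$, letting $n\to\infty$ forces $\delta\equiv 0$, which translates back into uniqueness of $\varphi$ and hence of $\Phi$. The main (mild) obstacle is the convergence estimate above; it is routine precisely because the map is super-attracting and the multiplier $d\ge 2$ of the cohomological equation is safely away from the unit circle.
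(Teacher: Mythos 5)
Your proof is correct and is essentially the paper's argument: the paper solves the same functional equation $(1+\varphi)^d=(1+\eps)(1+\varphi\circ f)$ by the infinite product $1+\varphi=\prod_{n\ge1}(1+\eps\circ f^{n-1})^{d^{-n}}$, which is exactly the exponential of your series for $u$, with convergence and uniqueness justified by the same super-attraction argument. (Minor quibble: the term $zh(z,w)$ only gives $\|f^n(z,w)\|\lesssim \rho^{2^n}$ rather than $(2r)^{c^n}$ when $c>2$, but any such bound — indeed mere boundedness of $v\circ f^n$ — suffices for summability against $d^{-n}$.)
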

\begin{rmk}\label{rmk:assumptioncd}
Note that the assumption $c<d$ is not used in this lemma. 
\end{rmk}

\begin{proof}
This is a consequence of \cite{favre:rigidgerms}, see also \cite{ruggiero:rigidgerms}.
We include a proof here for convenience.
The conjugacy equation states:
\[
z \circ \Phi \circ f = z^d(1+\eps)(1+\varphi \circ f) =  z^d(1+\varphi)^d.
\]
Since $k$ has characteristic $0$, $(1+z)^{1/\ell}$ is well-defined and analytic in a neighborhood of $0$ for any $\ell\ge1$.
A solution of the above equation is then given by 
$$1+\varphi=\prod_{n\geq 1} (1+\eps\circ f^{n-1})^{d^{-n}}.$$
This expression converges as long as $\sum_{n\geq 1} d^{-n} \abs{\eps \circ f^{n-1}(z,w)} < +\infty$,
a property that holds in a neighborhood of the origin, thanks to the superattracting behavior of $f^n$.
More explicitly, for $\norm{(z,w)} \ll 1$, we have $\norm{f^n(z,w)}\leq\lambda^n$ for some $\lambda < 1$, while $\abs{\eps(z,w)} \leq M\abs{(z,w)}$ for some $M \gg 0$, 
so $\norm{\eps\circ f^n} = O(\lambda^n)$ and we are done. 

The uniqueness follows from the fact that since $f$ is super-attracting, 
$(1+\varphi \circ f) =  (1+\varphi)^d$ implies  $\varphi \equiv 0$. 
\end{proof}

\begin{proof}[Proof of Theorem~\ref{thm:formal}]
By the previous lemma we may assume that $$f(z,w)=\Big(z^d, w^c+\sum_{n \geq 1}z^n g_n(w)\Big),$$
where $g_n(0) = 0$. Write $g_n = g_{n}^- + g_n^+$
where $g_{n}^-$ is a polynomial of degree at most $c-1$ and
$\ord_w(g_n^+) \ge c$.

We will prove by induction on $m$ that   $f$ can be  analytically conjugated 
so that $g_1^+ = \cdots = g_{m-1} ^+ =0$. Indeed, 
suppose that this condition is satisfied, 
and for any analytic power series $\varphi$ vanishing at $0$, let us 
consider the analytic map:
\[
\Phi (z,w)=(z,w+z^{m}\varphi(w))~.
\]
We  define $\tilde{f}$ by $\tilde{f} = \Phi \circ {f}\circ \Phi^{-1}$, or 
 $\Phi \circ {f} = \tilde{f} \circ \Phi$, and observe that  $\tilde{f}$ is expressed as 
$\tilde{f}(z,w)=\Big(z^d, w^c+\sum_{n \geq 1}z^n \tilde{g}_n(w)\Big)$,
where the  $\tilde{g}_n$ are analytic and vanish at $0$. 

By the inductive hypothesis, and using $d\ge2$ we get
\begin{align*}
w \circ \Phi \circ f(z,w) 
&= w^c+\sum_{n\ge 1} z^n g_n(w) + z^{md} \varphi\left( w^c+ \sum_{n\ge 1} z^n g_n(w)\right)
\\
&=
w^c+\sum_{n\le m} z^n g_n^-(w) + z^{m} g_m ^+(w)~~ \mod (z^{m+1})
\end{align*}
On the other hand, we have
\begin{align*}
w \circ \tilde{f} \circ \Phi (z,w) & = (w+z^m\varphi (w))^c + \sum_{n\geq 1}z^n \tilde{g}_n(w+z^m\varphi (w)) \\
&
= 
w^c+cw^{c-1} z^m\varphi(w) + \sum_{n \leq m}z^n \tilde{g}_n(w) ~~ \mod (z^{m+1})
\end{align*}
Identifying term  by term, we infer that  $\tilde{g}_n(w)=g_n^-(w) $ for all $n\le m-1$, and setting
\[
\varphi(w) := \frac{g^+_m(w)}{c w^{c-1}}
\]
we get $\tilde{g}_m = g_m^-$, thereby completing  the induction step.

To conclude we observe that for any sequence of analytic maps $\varphi_m(w)$ vanishing at $0$ 
the sequence of analytic diffeomorphisms
\[\hat{\Phi}_m(z,w) := (z, w + z^m \varphi_m(w)) \circ \cdots \circ (z, w + z^2 \varphi_2(w)) \circ (z, w + z \varphi_1(w))
\]
satisfies $\hat{\Phi}_m(z,w)= \hat{\Phi}_{m-1} (z,w) \mod (z^m)$, hence
formally converges. 
\end{proof}

\subsection{Some explicit examples}

We include here some explicit examples of skew-products $f\colon (\nC^2,0) \to (\nC^2,0)$
in order to illustrate our results. 
To simplify   notation, we   use the following convention:
if $\phi \in \widehat{\nL}$ is a generalized Puiseux series, 
we denote the point $\zeta(\phi,e^{-t})$ by $\phi+\star z^t$.  

\subsubsection{An example with uniformly bounded multiplicity in $\cK$}

Consider the map $f(z,w)=(z^4,w^2-z^4)$. In this case, $\jac_w(f) = 2w$, and we have only one critical branch $c_0=(w=0)$.
In this case, $\cK$ is contained in $\overline{B}(0;e^{-2}))$, and 
$f_\diamond(\phi(z)) = \phi(z^{1/4})^2 - z$ so 
$f_\diamond(c_0) =  f_\diamond (0) = -z \not\in \overline{B}(0;2)$, hence the orbit of $c_0$ converges to $\xg$.
In this case the multiplicity is uniformly bounded in $\cK$. Moreover,
 $\overline \cB_1$ consists of two balls, with boundary points $y_1= z^2 + \star z^3$ and $y_2 = -z^2 + \star z^4$. In particular $\bigcup \mc{B}_1$ is disjoint from the critical tree $[\xg,0]$.
The generic multiplicities of $y_1, y_2$ and of their images under $f_\diamond$ are all equal to $1$, and we deduce that all elements in $\cK$ have multiplicity $1$.

This example was studied extensively by Gignac in \cite{gignac:conjarnold}, where the author studies  the growth of the intersection of curves in $\cK$ under iteration.

\subsubsection{An example with  unbounded multiplicity in $\cK$}

Consider now the map $f(z,w)=\big(z^5, w^3 -3zw^2\big)$.
Its jacobian is $\jac_w(f) = 3 (w^2- 2zw)$, and we get two critical branches $c_0=0$ and $c_1=2z$.
Observe that $c_0$ is fixed, and 
$f_\diamond(c_1) = -4z^{3/5}$, $f^2_\diamond(c_1) = -64z^{9/25} (1+ o(1))$
so that $f^n_\diamond(c_1) \to \xg$. 
Also $c_0$ is not totally invariant, hence $\cK$ is a Cantor set by Theorem~\ref{thm:invariantcantor}.
It contains non rigid points and rigid points of unbounded multiplicity by Theorem~\ref{thm:infinity-mult}.

\medskip

\section{Dynamics in the complex basin of attraction}\label{sec:complex}
In this section, we suppose that  
\begin{equation*}\tag{\ref{eq:f_intro}}\label{eq:fhol}
f(z,w) =  
\left(z^d, w^c + z h(z,w) \right)
\end{equation*}
where $h$ is a holomorphic map defined in $D(0, r)^2$ such that $h(0,0)=0$.
Under this assumption, the point $(0,0)$ is fixed and superattracting and we denote by  $\Omega$   its immediate basin of attraction. 
We write $\abs{p} = |(z,w)|=\max\{|z|,|w|\}$. 
Recall that the contraction rate $c_\infty(p)$ of a point in $\Omega$ is defined by 
  \begin{equation}\label{eq:attraction} 
\log c_\infty(p) = \lim_{n\to \infty} \unsur{n} \log \abs{\log \abs{f^n (p)}},
\end{equation} 
when the limit exists. 
The following is a    more precise  version of 
Theorem~\ref{thm:complex_intro}.

\begin{thm} \label{thm:complex}
Let $f$ be of the form~\eqref{eq:fhol} with $2\leq c<d$. 
Then the sequence of psh functions $(z,w)\mapsto \unsur{c^n} \log \abs{f^n(z,w)}$
converges 
in $L^1_{\mathrm{loc}}$ to a psh function 
$g\colon \Omega \to \R\cup\{-\infty\}$
such that $e^{g}$ is continuous, and
satisfies
\[g \circ f = c g \, . \]

The positive closed $(1,1)$ current ${\mathrm{d}}{\mathrm{d}}^c g$ 
is supported on 
the locus $\cW:=\{g = -\infty\}$ which is a  closed pluripolar set.
It gives no mass to curves  unless $f$ is conjugated to a product map, in which case $T$ is concentrated on a smooth 
curve intersecting  transversally $\set{z=0}$.

Finally, the contraction rate exists  at every $p\in\Omega$, and 
$c_\infty(p)=c$ if $p\notin\cW$, and $c_\infty(p)=d$ if $p\in\cW$. 
 \end{thm}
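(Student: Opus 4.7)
The plan rests on a direct estimate of forward orbits $(z_n,w_n):=f^n(z,w)$ running in parallel with the non-Archimedean picture developed earlier in the paper. First I would define the ``good'' open set $\mathcal G\subset\Omega$ of points whose orbit eventually satisfies
\begin{equation*}
 |w_n|^{c-1}\geq C\,|z_n|
\end{equation*}
for a fixed large constant $C$ depending on local bounds for $h$. A short calculation shows this condition is forward-invariant--this is where $d>c$ enters crucially, as the induction step reduces to verifying $C^{c-1}\geq 2^{c-1}|z_n|^{d-c}$, which holds for small $|z_n|$ and any $C\geq 2$. Whenever the condition holds, $|w_{n+1}|=|w_n|^c(1+O(z_n/w_n^c))$ with doubly exponentially small correction, yielding a telescoping identity
\begin{equation*}
 \frac{\log|w_n|}{c^n}=\frac{\log|w_N|}{c^N}+\sum_{k=N}^{n-1}\frac{1}{c^{k+1}}\log\left|1+\frac{z_k h(z_k,w_k)}{w_k^c}\right|
\end{equation*}
which converges locally uniformly on $\mathcal G$ to a continuous, strictly negative, pluriharmonic function. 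Since $|z_n|^{1/c^n}\to 0$ doubly exponentially, the sequence $g_n:=c^{-n}\log|f^n|$ itself converges locally uniformly on $\mathcal G$ to the same limit $g$.

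By contraposition of forward-invariance, if $p\in\Omega\setminus\mathcal G$ then $|w_n|^{c-1}<C|z_n|$ for every $n$ past a threshold, so $|w_n|\leq C^{1/(c-1)}|z|^{d^n/(c-1)}$ and $g_n(p)\to-\infty$. Since the $g_n$ are psh and uniformly bounded above on compacts of $\Omega$, a standard Hartogs-type compactness argument combined with pointwise convergence yields $L^1_{\mathrm{loc}}$ convergence to a psh limit $g$ with $\cW=\{g=-\infty\}=\Omega\setminus\mathcal G$ closed and pluripolar; the relation $g\circ f=cg$ passes to the limit. Continuity of $e^g$ follows from continuity of $g$ on $\mathcal G$ combined with $e^g\equiv 0$ on $\cW$. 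For the contraction rate: on $\Omega\setminus\cW$, strict negativity of $g$ (if $g(p)=0$ then $g(f^n(p))=0$ would contradict $e^g(0)=0$ together with continuity of $e^g$) gives $\log|f^n(p)|\sim c^n g(p)$, hence $c_\infty(p)=c$; on $\cW$, the estimate $|f^n(p)|\leq C^{1/(c-1)}|z|^{d^n/(c-1)}$ together with the trivial lower bound $|f^n(p)|\geq|z|^{d^n}$ squeezes $|\log|f^n(p)||$ between constants times $d^n$, yielding $c_\infty(p)=d$.

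The main obstacle is the final statement that $T:=dd^c g$ gives no mass to curves unless $f$ is conjugate to a product. The plan is to argue that if $T$ carries positive mass on an irreducible analytic curve $C\subset\Omega$, then $C\subset\cW$, and the identity $f^*T=cT$ applied to the curve-supported parts forces $f^*[C]=c[C]$ as divisors, meaning $C$ is totally invariant and $f|_C$ has degree $c$. By the correspondence in \S\ref{sec:berko-rep} between irreducible formal branches distinct from $\{z=0\}$ and rigid points of $B^{\an}(0,1)\subset\A^{1,\an}_{k\laurent{z}}$, this yields a rigid $f_\diamond$-fixed point in $\cK$. Combined with the dichotomy of Theorem~\ref{thm:invariantcantor}, this forces $\cK$ to reduce to a singleton and $f$ to be conjugate to the product normal form $(z^d,(w+\phi)^c-\phi(z^d))$, with $C=\{w=-\phi(z)\}$; a direct computation in these coordinates then identifies $T$ as a multiple of $[C]$.
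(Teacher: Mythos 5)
Your overall strategy closely mirrors the paper's: identify a forward-invariant ``good'' region (you use $|w_n|^{c-1}\ge C|z_n|$; the paper uses $\Omega_0=\{|z|<|w|^c\}$, but these are interchangeable), telescope to get locally uniform convergence there, pull back by $f^{-n}$ to obtain $\mathcal G$, show $g_n\to-\infty$ on the complement $\cW$, and reduce the curve-mass dichotomy to the non-Archimedean dichotomy for $\cK$. However there are two genuine gaps.

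First, you never show that $g(p)\to-\infty$ as $p\to\cW$ \emph{from within $\mathcal G$}. You only establish that $g_n(p)\to-\infty$ for $p\in\cW$, which is a statement about orbits starting in $\cW$, not about the behavior of the finite limit $g$ near $\partial\mathcal G$. Without this boundary estimate, the claim that ``continuity of $e^g$ follows from continuity of $g$ on $\mathcal G$ combined with $e^g\equiv 0$ on $\cW$'' is circular, and the assertion that Hartogs compactness plus pointwise convergence yields $L^1_{\mathrm{loc}}$ convergence to the \emph{pointwise} limit is unjustified: a subsequential $L^1_{\mathrm{loc}}$ limit is automatically upper semicontinuous, so it could be finite at points of $\cW$ if $g$ does not blow down to $-\infty$ there. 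The paper fills this by deriving the explicit estimate $g_n(p)\le C_1(d/c)^N\log|p|+C_2$ on the boundary of $\Omega_N=\bigcup_{k\le N}f^{-k}(\Omega_0)$ and propagating it into $\Omega_N^\complement$ via the Maximum Principle on vertical slices (polynomial convexity), giving locally uniform convergence of $|w_n|^{1/c^n}$ to $e^g$ on all of $\Omega$. This estimate also underlies your $c_\infty=c$ argument (which relies on continuity of $e^g$) and is indispensable.

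Second, the step ``$f^*T=cT$ applied to the curve-supported parts forces $f^*[C]=c[C]$ as divisors'' is not correct as stated. The Siu decomposition gives $f^*\big(\sum_{W}\alpha_W[W]\big)=c\sum_W\alpha_W[W]$ for the full atomic part, but this only yields the relation $e_V\,\alpha_{f(V)}=c\,\alpha_V$ linking each $V$ to its single image; it does not force any individual curve to be totally invariant, nor does it a priori rule out infinitely many curves in the atomic part. The paper argues that the family $\mathcal A$ of atoms must be finite---if it were infinite one could pick $W_0$ whose forward orbit avoids critical components, so $e_{f^k(W_0)}=1$ and $\alpha_{f^{k}(W_0)}=c^k\alpha_{W_0}\to\infty$, contradicting finiteness of Lelong numbers---and only then concludes that the finite union $\bigcup_{\mathcal A}W$ is totally invariant. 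Passing from that to a singleton $\cK$ uses the equidistribution Theorem~\ref{thm:ergo-meas} (pull-backs of any Dirac mass converge to $\mu_{\na}$), not merely the Cantor/singleton dichotomy of Theorem~\ref{thm:invariantcantor}, since a Cantor $\cK$ can perfectly well contain a fixed rigid point.
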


\begin{rmk} 
Note that the existence of the limit $g$ follows from the general 
result~\cite[Thm~B]{favre-jonsson:eigenval}. 
However, for a general superattracting germ, $e^{g}$  may  not be continuous, and the set 
of possible attraction rates  $\{c_\infty(p), p\in \Omega\}$
may  not necessarily be finite. A global analog of this   phenomenon was exhibited in~\cite{dinh-dujardin-sibony}.
\end{rmk}

\begin{cor}\label{cor:Pk}
If $P$ is of the form $$P(z,w) = P_z(w) = w^k + \sum_{j=0}^{k-1} \alpha_j(z) w^j,$$ with $\alpha_j(0) = 0$ for every 
$0\leq j\leq k-1$, then the sequence $\unsur{kc^n} \log P\circ f^n$ converges in $L^1_{\loc}(\Omega)$ 
to $g$. 
\end{cor}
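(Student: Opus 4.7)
The plan is to apply a standard compactness argument for plurisubharmonic functions. Set $u_n:=\frac{1}{kc^n}\log|P\circ f^n|$. First I would observe that each $u_n$ is plurisubharmonic on $\Omega$, and that the sequence is locally uniformly bounded above by constants tending to $0$, since $f^n(K)\to\{0\}$ uniformly on any compact $K\subset\Omega$ and $P$ is continuous near the origin. Because the function $g$ of Theorem~\ref{thm:complex} is plurisubharmonic, any $L^1_{\mathrm{loc}}$ cluster value of $(u_n)$ produced by Hartogs compactness is itself plurisubharmonic, and the remaining task is to identify all such limits with $g$ by establishing pointwise convergence $u_n \to g$ on $\Omega$.

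To prove the pointwise convergence, I would write $f^n(p)=(z_n,w_n)$ with $z_n=z^{d^n}$ and exploit $\alpha_j(0)=0$ to obtain $|\alpha_j(z_n)|\le C|z_n|$ for small $|z|$, giving the two-sided bound
\[
|w_n|^k - Ck\,|z_n| \ \le\ |P(f^n(p))|\ \le\ |w_n|^k + Ck\,|z_n|.
\]
For $p \in \Omega\setminus\cW$ we have $c_\infty(p)=c$, so $\frac{1}{c^n}\log|w_n| \to g(p)\in(-\infty, 0)$ eventually, while $\frac{1}{c^n}\log|z_n|=(d/c)^n \log|z| \to -\infty$ thanks to $d>c$ and $|z|<1$ (the case $z=0$ being trivial, since then $u_n(p) \equiv \log|w|=g(p)$ using again $\alpha_j(0)=0$). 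It follows that $|z_n|/|w_n|^k \to 0$ double-exponentially, hence $|P(f^n(p))|\sim|w_n|^k$ and $u_n(p)\to g(p)$. For $p\in\cW$, the identity $g(p)=-\infty$ forces both $\frac{1}{c^n}\log|w_n|$ and $\frac{1}{c^n}\log|z_n|$ to tend to $-\infty$, so the upper bound above gives $u_n(p)\to-\infty=g(p)$.

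With pointwise convergence on all of $\Omega$ and the local upper bound in hand, any subsequence of $(u_n)$ admits, by Hartogs compactness, a sub-subsequence converging in $L^1_{\mathrm{loc}}$ to a plurisubharmonic function which must agree with $g$ almost everywhere, hence everywhere. The full sequence therefore converges to $g$ in $L^1_{\mathrm{loc}}(\Omega)$. The main obstacle I anticipate is cleanly tracking the competing decay rates of $|z_n|$, $|w_n|$, and $|w_n|^k$: it is the asymmetry $d>c$ that renders the lower-order terms of $P$ negligible along orbits outside $\cW$ and makes the argument go through.
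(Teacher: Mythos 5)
Your proof is correct and follows essentially the same route as the paper's: both compare $\log|P\circ f^n|$ to $k\log|w_n|$ using $\alpha_j(0)=0$ and the disparity $d>c$ to show the lower-order terms $\alpha_j(z_n)w_n^{j-k}$ become negligible outside $\cW$, and both conclude via the standard $L^1_{\loc}$-compactness principle for locally uniformly upper-bounded psh sequences. The only cosmetic difference is that the paper writes $h_n-g_n=\tfrac{1}{kc^n}\log\bigl(1+\sum_j\alpha_j(z_n)w_n^{j-k}\bigr)$ and appeals to the pluripolarity of $\cW$, whereas you use a two-sided absolute-value bound and also check pointwise convergence on $\cW$ directly; the substance is identical.
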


\begin{proof}[Proof of Theorem~\ref{thm:complex}]
Writing $f^n(z,w) = (z_n, w_n)$, we have 
$$\unsur{c^n}\log\abs{f^n(z,w)} = \unsur{c^n}\log \max(\abs{z_n}, \abs{w_n}) = \max\lrpar{\frac{d^n}{c ^n}\log\abs{z}, \unsur{c^n}\log \abs{w_n}}.$$ Since $\frac{d^n}{c^n}\log\abs{z}$ converges uniformly to $-\infty$, it is enough to focus on the convergence of 
$g_n\colon (z,w)\mapsto \unsur{c^n}\log \abs{w_n}$. 

After appropriate scaling of the coordinates, we may assume that $r<1/2$ and  $\norm{h}<1/2$. 
Let   $\Omega_0 := \set{(z,w), \ \abs{z} <  \abs{w}^c}$. We claim that  
$f(\Omega_0)\subset \Omega_0$. Indeed, for $p = (z,w)\in \Omega_0$ we have 
\[\abs{w_1} = \abs{w^c + zh(z,w)} \geq \abs{w}^c -  \norm{h}\cdot \abs{z} \geq \unsur{2} \abs{w}^c
\]
 hence 
\[\abs{z_1}  = \abs{z}^d <   \abs{w}^{cd} \leq \abs{w}^{c(d- c)}2^c \abs{w_1}^c 
<  \abs{w_1}^c.
\]
Now, for $n\geq 1$, since $f^n(p)\in \Omega_0$, we have  that  
$\abs{f^n(p)} \sim \abs{w_n}$ and 
\[w_{n+1} = w_n^c + z_n h(z_n, w_n)   = w_n^c\lrpar{1+ \frac{z_n}{w_n^c} h(z_n, w_n)}.\]
By definition of $\Omega_0$  we have that $  \abs{ {z_n}{w_n^{-c}} h(z_n, w_n)}\leq 1/2$,
hence  
\begin{equation}\label{eq:wn}
\unsur{c^{n+1}}\log \abs{w_{n+1}} - \unsur{c^n} \log\abs{w_n}  = \unsur{c^n} \log \abs{1+  \frac{z_n}{w_n^c} h(z_n, w_n)}\leq \frac{\log 2}{c^n}.
\end{equation}
Recall that $g_n(z,w)  =  \unsur{c^n}\log\abs{w_n}$. 
It follows from the previous inequality that the sequence 
$(g_n)$ converges uniformly in $\Omega_0$ to a pluriharmonic function $g$  
 such that $g(p) = \log\abs{p} +  O(1)$ at the origin.
Furthermore, since in $\Omega_0$  $f^n(p)\sim w_n$, $\unsur{c^n}\log\abs{f^n}$ converges to 
$g$ as well, so  $g \circ f = c g$. 

Let now  $\Omega' = \bigcup_{n\geq 0} f^{-n}(\Omega_0)\subset \Omega$ and $\cW = \Omega\setminus \Omega'$.  
Then $g$ extends by iteration  to a pluriharmonic function in $\Omega'$.
If $p  =(z,w)\in \cW$ then for every $n\geq 0$ we have $\abs{z_n}\geq  \abs{w_n}^c$, and since $z_n = z^{d^n}$ we infer that $c_\infty(p) = d$, and  $\lim_{n\to\infty}  \unsur{c^n}\log\abs{f^n(p)}= -\infty$. 
Put $g(p)= -\infty$ for $p\in \cW$. 
  
Let us show that $g(p) \to -\infty$  when $p\in \Omega$ tends to $ \cW$.
To establish this, for $N\geq 1$, let $\Omega_N  = \bigcup_{n=0}^N f^{-n}(\Omega_0)$, which is an increasing sequence of open sets, and fix $p\in \Omega_{N}\setminus  \Omega_{N-1}$.  For $0\leq j\leq N-1$ we have $f^j(p)\notin \Omega_0$ hence  
$$
\abs{f^{N-1}(p)}\leq \max({\abs{z_{N-1}}, \abs{w_{N-1}}})\leq   \abs{z_{N-1}}^{1/c} \leq \abs{p}^{d^{N-1}/c}.
$$ 
Being $f$ superattracting, it follows that $\abs{f^{N}(p)}\leq \abs{p}^{d^{N-1}/c}$ as well. Now, since $f^N(p)\in \Omega_0$, 
when $n\to \infty$ we have $\abs{f^n(p)}\sim |w_n|$, and  
\begin{align}\label{eq:ineqcn}
\begin{split}
\unsur{c^n}\log \abs{w_n} &= \unsur{c^N}\log \abs{w_N}  + \sum_{q=N}^{n -1}\lrpar{
\unsur{c^{q+1}}\log \abs{w_{q+1}} -  \unsur{c^{q }}\log \abs{w_{q }}} \\
& \leq \unsur{c^N} \log \abs{f^N(p)} + \sum_{q=N}^{n-1} \frac{\log 2}{c^q}
 \leq \unsur{dc} \frac{d^N}{c^N} \log\abs{p} + \frac{c}{c-1} \log 2\text{,}
\end{split}
\end{align}
where in the inequalities we use the estimate on $\abs{f^n(p)}$ and \eqref{eq:wn}. Thus we conclude that 
$$
g(p) \leq C_1 \frac{d^N}{c^N} \log\abs{p} + C_2
$$
for positive constants $C_1$ and $C_2$, and the result follows by letting $N\to\infty$.

We claim  that $(g_n)$ converges  to $g$ pointwise, and even locally 
uniformly in $\Omega$, in the sense that $\abs{w_n}^{1/c^n}$ 
converges locally uniformly to $e^{g}$. Therefore,  the convergence 
holds in $L^1_{\loc}(\Omega)$, which implies 
 that $g$ is psh, $g\circ f = c g$,  and the remaining  properties. 
 We already know that the sequence converges 
locally uniformly outside $\cW$ to a continuous pluriharmonic function $g$ and pointwise to $-\infty$ on $\cW$. 
Fix a small $s\neq 0$ and a vertical  line $\set{z=s}\subset D(0, r)^2$. In this line,  for every $n$, $w_n$ depends only on $w$, 
and $  \Omega_N ^\complement$ is a    subset of the form 
$w_N\inv\lrpar{\set{\abs{w}<s_N }} $, where $s_N=\abs{s}^{d^N/c}  $, so it is polynomially convex. 
Therefore, by the Maximum Principle, 
since the  inequality~\eqref{eq:ineqcn} holds holds at the boundary of $\overline{\Omega_N}$, it holds 
everywhere in $  \Omega_N ^\complement$. Thus,  the announced local uniform convergence follows. 

Suppose that $\cW$ is an analytic curve $C$. Then 
$T= {\mathrm{d}}{\mathrm{d}}^c g$  is a current of integration, and  
since  $f^*T = cT$, it follows that $C$ is totally invariant. 
Let $\set{C_i}$ be the set of branches of $C$, and denote 
   the associated points in $B(0,1)^{\an}$ by  $x_{C_i}$. 
Then the finite set $\{ x_{C_i}\}$ is totally invariant by $f_\diamond$, which implies
$\cK$ to be reduced to a singleton by the convergence~\eqref{eq:plbck} in 
Theorem~\ref{thm:ergo-meas}. This shows that 
$C$ is irreducible and totally invariant by $f$. 
By Theorem~\ref{thm:invariantcantor}, $C$ is smooth and transversal to $(z=0)$. 
Changing coordinates, we may suppose that $C=(w=0)$
so that 
$f(z,w) = (z^d, w^c\times \mathrm{unit})$. It then follows from Lemma~\ref{lem:zd} (with coordinates reversed, see Remark~\ref{rmk:assumptioncd})
that $f$ is analytically conjugated to a product map.

Finally, suppose that $T$ carries positive mass on an analytic curve. In this case we claim that $\cW$ is a subvariety, in which case the previous paragraph applies. Indeed write the Siu decomposition $T = \sum_{W\in \mathcal{A}}\alpha_W[W]+ T_{\mathrm{diffuse}}$, where  $\mathcal{A}$ is an at most countable collection of analytic subvarieties and $T_{\mathrm{diffuse}}$ gives no mass to curves. 
Since $  f^*$ preserves both the analytic and diffuse part, the current $\sum_{W\in \mathcal{A}}\alpha_W[W]$
is invariant under $c\inv f^*$.

Suppose by contradiction that $\mathcal{A}$ is infinite. One can then find $W_0\in \mathcal{A}$ 
such that $f^k(W_0)$
is not a critical component for any $k\ge0$, thus  $f^*[f^{k+1}(W_0)] - [f^{k}(W_0)]$ 
gives no mass to  $[f^{k}(W_0)]$. 
The invariance relation then yields 
\[
\alpha_{f^k(W_0)}= \ord_{f^k(W_0)}\left(c\inv f^*\sum_{W\in \mathcal{A}}\alpha_W[W]\right)=c\inv \alpha_{f^{k+1}(W_0)}
\]
which implies $\alpha_{f^{k+1}(W_0)}\to\infty$, a contradiction.
%
%
So we are back to the setting of the previous paragraph and the argument is complete. 
\end{proof}
%
%
%

\begin{proof}[Proof of Corollary~\ref{cor:Pk}]
Put 
\begin{align*}
h_n(z,w)   &:= \unsur{kc^n} \log P\circ f^n(z,w) = \unsur{kc^n}\log P(z_n, w_n)  
= \unsur{kc^n}\log P_{z_n}(w_n)\\
&=  \unsur{kc^n} \log\biggl({ w_n^k+ \sum_{j=0}^{k-1} \alpha_j(z_n) w_n^j}\biggl). 
\end{align*}
With $g_n = c^{-n}\log\abs{w_n}$ as above, we get 
$$ h_n(z,w) - g_n(z,w)= \unsur{kc^n} \log\biggl({ 1+ \sum_{j=0}^{k-1} \alpha_j(z_n) w_n^{j-k}}\biggl).$$
For $(z,w) \notin \cW$ we have $z_n = z^{d^n}$ and $\abs{w_n}\gtrsim \beta^{c^n}$ where $\beta<1$ is locally uniform, so 
$ h_n - g_n$ converges locally uniformly to 0 outside $\cW$. Thus, 
 the sequence of psh functions 
$(h_n)$ is bounded from above and converges to $g$ 
outside the pluripolar set $\cW$, hence it 
converges to $g$  in $L^1_{\loc}(\Omega)$, and we are done. 
\end{proof}

\section{Structure of the invariant current}\label{sec:structure}

This section is devoted to the proof of Theorem~\ref{thm:structure_intro} (see also Theorem~\ref{thm:structure} below) 
which describes the structure of the invariant current as an average
of integration currents over curves in $\cK$. 
We first  need to 
explain  how a map of the form~\eqref{eq:fhol} acts on $B^{\an}(0,1)\subset \A^{1,\an}_{\C\laurent{z}}$. 

\subsection{Action of a formal conjugacy on the Berkovich open unit ball}\label{subs:update}
If $K$ is any complete non-Archime\-dean field,
any  power series $\phi= \sum_{n\ge 0} a_n T^n$ such that 
$|a_0|<1$ and $|a_i|\le 1$ for all $i\ge1$ induces a self map of the (classical) 
unit ball $B(0,1)\subset K$. 
If $K$ is algebraically closed, it follows from Hensel's lemma that $\phi$ is invertible 
 iff $|a_1|= 1$ and $|a_i| <1$ for all $i\neq 1$ (see e.g. \cite[\S 3.4]{benedetto:dyn1NA}). 
This  action extends to the Berkovich unit ball $B^{\an}(0,1)$ (see \cite{baker-rumely} for a discussion  of  $\overline B^{\an}(0,1)$ in these terms, which is readily adapted to this case 
by using 
the fact that $B^{\an}(0,1) = \bigcup_{r<1}  \overline B^{\an}(0,r)$). In particular if $|a_1|= 1$ and $|a_i| <1$ for   $i> 1$, then $\phi$ induces an analytic isomorphism of 
  $B^{\an}(0,1)$ (whenever $K$ is algebraically closed or not).

Back to $K = k\laurent{z}$, recall   that a germ of irreducible formal curve $C$ containing $0$
and not included in $\{z=0\}$ defines a 
a type 1 point $x_C\in B^{\an}(0,1)$ by setting 
$\displaystyle-\log|P(x_C)|= \frac{C \cdot (P=0)}{m(C)}$
for all $P\in \C\laurent{z}[w]$, where we recall that $m(C)=C \cdot (z=0)$. 
The map $C\mapsto x_C$ is a bijection between germs of curves as above and
type 1 points in $B^{\an}(0,1)$.

From now on we assume  that $f$ is a holomorphic map in $(\C^2, 0)$ 
of the form~\eqref{eq:fhol} with $2\leq c<d$.
It follows from Theorem~\ref{thm:formal} that $f$ is conjugated by a \emph{formal} map 
of the form $\Phi(z,w) = \displaystyle \Big(z, w + \sum_{n\ge1} z^n \varphi_n(w)\Big)$ with $\varphi_n\in\C\fps{w}$
to 
\[
\tilde{f}(z,w) := \biggl(z^d, w^c + \sum_{j=0}^{c-1} h_j(z) w^j\biggl)
\]
so that $ f= \Phi\circ \tilde{f} \circ \Phi^{-1} $. 
From the above discussion, 
the map $\Phi$ induces  an analytic isomorphism  of the Berkovich open unit ball 
$B^{\an}(0,1)\subset \A^{1, \an}_{\C\laurent{z}}$ (resp. $\A^{1, \an}_{\nL}$), 
and for the corresponding actions on  $B^{\an}(0,1)$, we have 
  $f_\diamond = \Phi \circ \tilde{f}_\diamond \circ \Phi^{-1}$. Thanks to this conjugacy, all the definitions 
  and results from 
  Sections~\ref{sec:dynamics_psp} to~\ref{sec:infinite} apply to $f_\diamond$.   
 In particular we can define 
 $\cK:=\cK(f)$ and $\cK_\nL:=\cK_\nL(f)$, and the ergodic  measure $\mu_{\na}$ supported on $\cK$
 (resp. $\mu_{\na, \nL}$ on $\cK_{\nL}$), 
 which are the images of the corresponding objects associated to $\tilde f$.

\subsection{The main statement}

Let us first present  a basic correspondence between the dynamics of $f$ in $(\C^2, 0)$ and that 
of $f_\diamond$ in $B^{\an}(0,1)$, which may be viewed as an introduction to Theorem~\ref{thm:structure_intro}. 

\begin{prop}\label{prop:anal-na}
Let $C$ be any germ of analytic irreducible curve containing $0$
and not included in $\{z=0\}$. 

Then $x_C\in \cK$ if and only if 
$C\subset \cW$.
\end{prop}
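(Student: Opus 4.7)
The plan is to translate the condition $C\subset\cW$ into Puiseux-series data for $f^n(C)$ and then read off the Berkovich behaviour. The first step is to verify the equivariance $f_\diamond(x_C) = x_{f(C)}$ for $C$ any irreducible analytic germ containing $0$ and distinct from $\{z=0\}$. Starting from $|P(f_\diamond(x_C))|=|(P\circ f)(x_C)|^{1/d}$ and expanding via intersection numbers, the projection formula $C\cdot f^*(P=0) = \deg(f|_C)\cdot (f(C)\cdot (P=0))$ combined with $\deg(f|_C)\cdot m(f(C)) = d\cdot m(C)$ (itself a consequence of $f^*(z=0)=d(z=0)$) makes both sides match. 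Iterating, $f_\diamond^n(x_C) = x_{f^n(C)}$, and $|f_\diamond^n(x_C)| = \exp(-I_n/m_n)$ with $I_n := f^n(C)\cdot (w=0)$ and $m_n := m(f^n(C))$. By Theorem~\ref{thm:green} the condition $x_C\in\cK$ is equivalent to $g_{\na}(x_C)=-\infty$, i.e.\ $(d/c)^n I_n/m_n\to\infty$.

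For the easy implication $C\subset\cW\Rightarrow x_C\in\cK$, I use that $\cW$ is totally $f$-invariant (since $g\circ f = cg$), so $f^n(C)\subset\cW$ for all $n\geq 0$. Any analytic irreducible germ $D\subset\cW$ has a Puiseux parametrization $\gamma_D(t)=(t^{m(D)},\eta_D(t))$ which, for small $t$, must satisfy $|t|^{m(D)}\geq |\eta_D(t)|^c$ because $\gamma_D(t)\notin\Omega_0=\{|z|<|w|^c\}$; this forces $\ord_0\eta_D\geq m(D)/c$, hence the Puiseux series $\phi_D(z)=\eta_D(z^{1/m(D)})$ of $D$ satisfies $\ord_z\phi_D\geq 1/c$, giving $|x_D|\leq e^{-1/c}$. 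Applied to each $D=f^n(C)$ this yields $|f_\diamond^n(x_C)|\leq e^{-1/c}<1$ for every $n$, so the orbit never approaches $\xg$ and $x_C\in\cK$.

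For the converse $x_C\in\cK\Rightarrow C\subset\cW$, I rewrite $(d/c)^n I_n/m_n\to\infty$ as $c^{-n}e_n I_n\to\infty$ using $e_n m_n = m\, d^n$, where $m:=m(C)$ and $e_n:=\deg(f^n|_C)$. Fix an analytic parametrization $\gamma(t)=(t^m,\eta(t))$ of $C$ on a disk $D(0,\eps)$ small enough that $\bigcup_{n\geq0} f^n(\gamma(D(0,\eps)))$ lies in a fixed relatively compact subset of the basin; this uses the superattracting nature of $f$. Then $w_n(t):=w\circ f^n\circ\gamma(t)$ is holomorphic on $D(0,\eps)$, uniformly bounded by some $M<1$ in $n$, and vanishes at $t=0$ to order exactly $e_n I_n$. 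The Schwarz lemma gives $|w_n(t)|\leq M(|t|/\eps)^{e_n I_n}$, so $c^{-n}\log|w_n(t)|\to -\infty$ for every $t\in D(0,\eps)\setminus\{0\}$. Combined with $c^{-n}\log|z\circ f^n\circ\gamma(t)|=m(d/c)^n\log|t|\to-\infty$, this gives $c^{-n}\log|f^n(\gamma(t))|\to-\infty$, which equals $g(\gamma(t))$ by Theorem~\ref{thm:complex}. Hence $\gamma(t)\in\cW$ for all small $t$, i.e.\ $C\subset\cW$.

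The main obstacle is the uniform control of $|w_n|$ on a fixed disk required by the Schwarz estimate; it hinges on choosing $\eps$ so that every iterate $f^n\circ\gamma$ remains in a fixed relatively compact piece of the basin, which is granted by the superattracting behaviour at $0$. Conceptually, $c^{-n}e_n I_n$ is precisely the Lelong number at $t=0$ of the psh function $c^{-n}\log|w_n|$, and its divergence forces the pointwise limit $g\circ\gamma$ to be identically $-\infty$; this is the precise mechanism by which the Berkovich Green function $g_{\na}$ detects inclusion in $\cW$.
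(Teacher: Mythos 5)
Your proof is correct, and for the harder implication it follows a genuinely different route from the paper's. For $C\subset\cW\Rightarrow x_C\in\cK$ you argue directly that $f^n(C)\subset\cW$ forces $\gamma_{f^n(C)}(t)\notin\Omega_0$, hence $|x_{f^n(C)}|\le e^{-1/c}$; the paper proves the contrapositive by showing that $|f_\diamond^n(x_C)|>e^{-c'}$ with $c'<1/c$ forces $f^n(C)\subset\Omega_0$. These are two renderings of the same computation. For $x_C\in\cK\Rightarrow C\subset\cW$ the paper argues by contradiction: Hartogs' compactness for the sequence $g_{n,C}=c^{-n}\log|w_n\circ\gamma|$ yields a subsequential limit $\tilde g$, and upper semicontinuity of Lelong numbers then contradicts the divergence $\nu(g_{n,C},0)\to\infty$ that $g_{\na}(x_C)=-\infty$ provides. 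You instead use that the vanishing order of $w_n\circ\gamma$ at $0$ equals $e_nI_n$ (with $e_nm_n=md^n$) and apply a Schwarz-type estimate $|w_n(t)|\le M(|t|/\eps)^{e_nI_n}$ on a fixed disk to conclude that $c^{-n}\log|w_n(t)|\to-\infty$ pointwise. Both arguments hinge on the same quantity, namely the Lelong number $c^{-n}\ord_t(w_n\circ\gamma)=c^{-n}e_nI_n$: the paper feeds it into the semicontinuity theorem, while you extract the pointwise divergence directly. Your version is more elementary (it avoids the $L^1_{\mathrm{loc}}$ compactness and Siu-type semicontinuity machinery) at the cost of needing to set up a fixed disk on which all $w_n\circ\gamma$ are uniformly bounded; the paper's is shorter for readers fluent in the potential-theoretic toolkit. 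One small imprecision: your final step quotes Theorem~\ref{thm:complex} for the pointwise identity $g=\lim_n c^{-n}\log|f^n|$, but the theorem's statement only asserts $L^1_{\mathrm{loc}}$ convergence; the pointwise convergence is established only inside its proof. It would be cleaner to argue that if $\gamma(t)\notin\cW$ then $\gamma(t)\in\Omega'=\bigcup f^{-n}(\Omega_0)$, where the uniform convergence to the finite pluriharmonic $g$ holds, contradicting the divergence you established; this closes the implication without appealing to a claim buried in another proof.
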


\begin{proof}
Fix any parametrization $t \mapsto (t^q,\phi(t))$ where  $q = m(C)$ and $\phi$ is analytic. 
Note that $\phi(z^{1/q})$ is then a Puiseux series determining $C$.

Suppose first that  $x_C\notin \cK$. Then  for any given $\rho<1$, there exists an integer $n$ 
 such that 
$x_{f^n(C)}$ belongs to the annulus $\{\rho< |x| <1\}\subset B^{\an}(0,1)$.
Choose any $c'< 1/c$, and write $\rho= e^{-c'}$. Recall from the proof of Theorem~\ref{thm:complex} that since $h(0,0)=0$, we may assume $\abs{h}<1/2$, and 
for any point $p=(z,w)$ in $\Omega_0= \{|z| < |w|^c\}$, the inequality
\[
\abs{w_1} = \abs{w^c + zh(z,w)} \geq \abs{w}^c -  \norm{h}\cdot \abs{z} > \frac{1}{2} \abs{w}^c
\]
holds, so that in particular $\Omega_0\subset \{g>-\infty\}$. 
Replacing $C$ by its image under $f^n$, we may suppose that $\rho< |x_C| <1$
which means that $  \ord_t(\phi) < qc'$. 
Now one can find a constant $A>0$ such that for any   small enough $t$,  
\[
 |\phi(t)|^c
\ge A 
|t|^{c \times \ord_t(\phi)}
\ge A (|t|^q)^{c \times \ord_t(\phi)/q} >  (|t|^q)^{c c'} >|t|^q~ .
 \]
Possibly reducing $C$, this implies that  $C \subset \Omega_0\subset \{g>-\infty\}$, hence 
$C$ is not included in $\cW$. 

Conversely suppose that
$x_C\in \cK$. 
Define  a sequence of negative subharmonic functions on a neighborhood of the unit disk by 
 $g_{n,C} \colon t\mapsto  \frac1{c^n} \log|w_n(t^q, \phi(t))|$.
Suppose by contradiction that $(g_{n,C})$ does not converge to $-\infty$. 
Then by  the Hartogs lemma there exists a subsequence $(g_{n_j,C})$ converging to a subharmonic
function $\tilde{g}$. 
Consider the Lelong number $\nu\lrpar{g_{n,C}, 0}$ of the subharmonic function $g_{n,C}$ at 0: 
it is equal to
\[\frac1{c^n}  \ord_t (w_n(t^q, \phi(t))) = - \frac1{c^n} \log|w_n(x_C)| = - \frac{d^n}{c^n} \log \abs{f_\diamond^n(x_C)}\] which tends to infinity by Theorem~\ref{thm:green}. On the other hand, the 
upper semicontinuity of the 
  Lelong number implies that  
\[\limsup_j- \frac1{c^{n_j}} \log|w_{n_j}(x_C)|  \leq \nu \lrpar{\tilde g, 0} <\infty . \]
This contradiction shows that  $g|_C \equiv -\infty$, hence $C\subset \cW$.
\end{proof}

When no critical point  of $f_\diamond$ is contained in $\cK$,   Corollary~\ref{cor:mult} implies that 
points in $\cK$ are associated with formal curves of uniformly bounded multiplicity. 
By the previous proposition, 
 this condition 
is equivalent to the statement that no irreducible component of the critical locus
 of $f$ (besides $\set{z=0}$) 
 is contained in $\cW$. 
In this situation, we have a description of the current of Theorem~\ref{thm:complex} in terms of non-Archimedean data. The following is a precise  version of  Theorem~\ref{thm:structure_intro}:

\begin{thm}\label{thm:structure} 
 Assume that $f$ is a holomorphic map of the form~\eqref{eq:fhol} with $2\leq c<d$, and that
no critical branch is contained in $\cW$.  
Then, $\cW$ is a union of analytic curves through the origin. Furthermore, 
outside the origin,  $\cW$ 
 is the support of  a lamination by Riemann surfaces 
  whose transversals are Cantor sets.  

More precisely, there exists $r_0>0$ such that all Puiseux series 
$\phi\in\cK_\nL$ converge in the disk $\D(0,r_0)$. 
For any $x\in\cK$,  if we denote by $C(x)$ the analytic curve in $\D(0,r_0)\times \C$
parameterized by $t\mapsto (t^{m(x)},\phi(t^{m(x)}))$,  where $\phi\in\C\puiseux{z}$ is such that $x=\pi(\phi)$, we have the integral representation  
\[
T =  {\mathrm{d}}{\mathrm{d}}^c g = \int_\cK \frac{[C(x)]}{m(x)} \, {\mathrm{d}}\mu_{\na}(x)~. 
\]
\end{thm}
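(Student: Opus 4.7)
The plan is to follow the roadmap sketched in the introduction, proceeding in four main stages. First, I would reduce to the case where every curve in $\cK$ is transverse to $\{z=0\}$. Under the hypothesis that no critical branch lies in $\cW$, Proposition~\ref{prop:anal-na} together with Corollary~\ref{cor:mult} guarantees that the multiplicity $m(x)$ is uniformly bounded on $\cK$ by some integer $m_0$. Setting $\beta(z,w)=(z^{m_0!},w)$ and working with the lifted map $\tilde f$ on the cover, all relevant Puiseux series become ordinary power series. Since $\beta$ is a finite branched cover ramified along $\{z=0\}$, Theorem~\ref{thm:complex} pulls back nicely (the Green function $g$ lifts to $m_0!\cdot g$ up to a pluriharmonic term) and it suffices to establish the theorem in this reduced setting, then push down. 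The reduction to $m=1$ is essentially bookkeeping once the main estimates are in place.

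Second, I would build the geometric Markov model. Starting from the nested family $\overline{\cB}_n$ of closed balls constructed in \S\ref{ssec:invariantCantor}, I fix an integer $N$ large enough so that the finite collection $\overline{\cB}_N=\{\overline B_i\}$ realizes a Markov partition for $f_\diamond$, in the sense that $f_\diamond(\overline B_i)$ is a union of balls of $\overline{\cB}_{N-1}$ containing the appropriate $\overline B_j$'s. Each boundary point $\partial\overline B_i$ is a divisorial valuation and can be realized as the generic point of an exceptional divisor $E_i$ produced by a finite sequence of point blow-ups $\kappa\colon X\to (\C^2,0)$; I would then pick a point $p_i\in E_i$ on each such divisor (away from intersections with other components) so that a germ of curve $C$ gives a type~1 point in the open ball associated with $\overline B_i$ exactly when the strict transform $\widehat C$ passes through $p_i$. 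The lift $\widehat F\colon X\dashrightarrow X$ has the $p_i$'s as indeterminacy points, and the edges of the oriented graph $\Gamma$ (with $i\to j$ iff $f_\diamond(\overline B_i)\supset\overline B_j$) correspond exactly to $\widehat F(p_i)\ni p_j$. The standard symbolic coding then identifies $f_\diamond|_\cK$ with the one-sided subshift of finite type on admissible sequences in $\Gamma$.

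Third, and this is where the hard analytic work lies, I would realize each point $x\in\cK$ as an analytic curve via a contracting graph transform. In local coordinates $(u_i,v_i)$ centered at $p_i$ in which $E_i=\{u_i=0\}$, the lift $\widehat F$ sends a neighborhood of $p_i$ to a neighborhood of some $p_j$ and, because $f$ is superattracting along $\{z=0\}$, it contracts the $v$-direction much faster than it expands the $u$-direction. Concretely, for any edge $e=(i,j)$, pulling back an analytic graph $\{v_j=\psi(u_j)\}$ of uniform size $r$ by $\widehat F$ produces an analytic graph $\{v_i=\cL_e\psi(u_i)\}$ of the \emph{same} size $r$, and $\cL_e$ is a strict contraction on a suitable Banach space of holomorphic functions bounded on $\D(0,r_0)$ (with $r_0$ depending only on the finitely many local models at the $p_i$'s). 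Given any admissible sequence $(e_j)_{j\geq 0}$ coding a point $x\in\cK$, the infinite composition $\cL_{e_0}\circ\cL_{e_1}\circ\cdots\circ\cL_{e_n}(\psi_0)$ converges uniformly on $\D(0,r_0)$, and its image under $\kappa$ is the desired analytic curve $C(x)$. The hardest point will be pinning down the right Banach space so that the contraction constant is bounded away from~$1$ uniformly over all edges of $\Gamma$; this is where one needs to exploit quantitatively that the local degree of $\widehat F$ at $p_i$ along $E_i$ equals $d$ while the degree in the transverse direction is at most $c<d$.

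Finally, I would deduce the integral representation. By construction $x\mapsto C(x)$ is continuous from $\cK$ to the space of analytic curves through~$0$, so the right-hand side $\int_\cK [C(x)]\,{\mathrm{d}}\mu_{\na}(x)$ defines a well-defined positive closed $(1,1)$-current supported on $\cW$. To identify it with $T={\mathrm{d}}{\mathrm{d}}^c g$, I would test against the sequence of currents $\frac{1}{kc^n}[f^{-n}\{P=0\}]$ for a suitable polynomial $P$: by Corollary~\ref{cor:Pk} these converge to $T$ in the Archimedean picture, while by Theorem~\ref{thm:ergo-meas} the corresponding non-Archimedean pullbacks $\frac{d^n}{c^n}(f_\diamond^n)^*\delta_{x_P}$ equidistribute towards $\mu_{\na}$. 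Matching the two equidistributions along the correspondence $x\leftrightarrow C(x)$ (which is continuous on $\cK$ by the uniform convergence from the third step) yields the formula in the reduced $m=1$ setting. Pushing down by $\beta$ reintroduces the factor $1/m(x)$ in front of $[C(x)]$, because the preimage of a curve of multiplicity~$m(x)$ consists of $m_0!/m(x)$ branches in the cover that are identified under $\beta_*$. Uniform laminarity outside the origin follows from the fact that over any compact set $K\Subset\D(0,r_0)^*$, the family $\{C(x)\}_{x\in\cK}$ restricts to a family of disjoint plaques parameterized by the Cantor set~$\cK$, again by the uniform convergence of the graph transforms.
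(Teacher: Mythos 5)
Your roadmap reproduces the paper's own plan faithfully in outline: reduce to multiplicity one by a base change, realize $\cK$ as a subshift via a blow-up model, use contracting graph transforms to produce analytic curves $C(x)$ of uniform size, and match Archimedean and non-Archimedean equidistributions. However two of the steps are not quite right as stated, and one of the final claims glosses over a genuine difficulty.

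First, the choice $k=m_0!$ for the base change is insufficient. What the argument actually needs is that, after pulling back the two ball covers $\cB$ and $\cB'$ by $\beta_k$, all \emph{boundary points} of the resulting open balls have generic multiplicity $b(x)=\lcm\{m(x),q(A(x))\}=1$. This condition involves the denominators of the $A$-values (hence the diameters) of the boundary points, not merely the multiplicity $m$ of the curves in $\cK$; forcing $m(x)=1$ for all $x\in\cK$ does not control $q(A(\partial B))$. The paper therefore fixes the covers first and then chooses $k$ accordingly, so that $\pi\colon\cK_\nL\to\cK$ and the coding map become homeomorphisms (Corollary~\ref{cor:semicon}). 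Also, the Green function of the lifted map $f_k$ satisfies $g(f_k)=g(f)\circ\beta_k$, with no multiplicative factor $m_0!$. A related small point: the coding uses a nested family of \emph{open} balls $\cB_n$ with $\cB_0=\{B_0\}$ a single open ball containing $\cK$, not the closed balls $\overline\cB_n$ of Theorem~\ref{thm:invariantcantor}, precisely because the open ball $U(p_i)$ is the set of curve germs whose strict transform passes through the free point $p_i$, which is what makes the blow-up model mesh with the symbolic dynamics.

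Second, and more seriously, \emph{uniform} laminarity of $T$ in the original coordinates does not follow merely from the disjointness of the plaques upstairs. The curves $C(x)$ downstairs are images under $\beta_k$ of disjoint curves in the cover, and since $\beta_k$ is $k$-to-one away from $\{z=0\}$, two distinct $C(x),C(x')$ could a priori intersect in $(\D(0,r_0)\times\C)\setminus\{0\}$; disjointness upstairs does not descend automatically. The paper handles this by observing that over a small bidisc $U$ away from $\{z=0\}$, the pushed-down current is a finite sum of uniformly laminar currents $S_i$, that any nontrivial intersection of plaques of distinct $S_i$'s would contain a transverse intersection which persists under perturbation (via \cite{bls}), and that this contradicts the fact (Corollary~\ref{cor:Pk}) that $T$ is a limit of normalized pull-backs $\frac1{c^n}(f^n)^*[\Gamma]$ of a fixed smooth curve $\Gamma$ whose restriction to $U$ is a disjoint union of plaques of bounded geometry. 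You would need this (or an equivalent) argument to complete the last stage.
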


%
%

In particular $T$ is a uniformly laminar current outside the origin. 
See~\cite{bls} or~\cite{dujardin:ICM} for this notion and some basic facts     that will be needed in \S~\ref{subs:laminarity}.

\begin{quest}
Consider  any holomorphic map   of the form~\eqref{eq:fhol} with $2\leq c<d$. 
Does there exist $r>0$ such that for any series 
$\sum_n a_n z^{\beta_n} \in \cK_\nL$, then $\sum_n |a_n|r^{\beta_n} <\infty$?
\end{quest}
A positive answer to this question could possibly lead to an improvement of the previous result,  
without any assumption on the critical of $f$.  
When $x\in \cK_{\nL}$,  the curve  $C(x)$ would be replaced 
by the positive closed $(1,1)$ current defined in~\cite[Proposition~6.9]{favre-jonsson:valanalplanarplurisubharfunct}.

The remainder of this section will be  devoted to the proof of the theorem.

\subsection{The open ball partition and the coding map}\label{subs:coding}
The  first step is to construct a sequence of coverings $\cB_n$   of $\cK$ by open balls. 
This sequence differs from the one introduced in the proof of Theorem~\ref{thm:invariantcantor}
which involves closed balls. 

We start with any open ball $B_0$ containing $\cK$ whose boundary $x_0$ is a type 2 point.  
Its image under  $f_\diamond$ is an open ball which also contains $\cK$ hence either 
$f_\diamond(B_0) \subseteq B_0$ or $f_\diamond(B_0) \supsetneq B_0$. Since 
$(f^n)_\diamond(x_0) \to \xg$, the former case
does not happen, and we infer that $f_\diamond^{-1}(B_0)$ is a finite union of open balls
whose closures are contained in  $B_0$. 

Set $\cB_0=\{B_0\}$, and let $\cB_n$ be the collection of connected components of $f_\diamond^{-n}(B_0)$. 
Each $\cB_n$ is a finite collection of disjoint open balls, such that $\cK\subset \bigcup\cB_n$, and $\bigcup \cB_n\subset \bigcup \cB_{n-1}$. 
More precisely, for any $x\in \cK$, denote by $B_n(x)$ the unique open ball belonging to $\cB_n$ and containing $x$. Then  for any $x\in \cK$, $\bigcap_{n\in\N} B_n(x) = \{x\}$. 
This follows for instance 
from the fact that  $(d/c)^n (f^{n})^*_\diamond\delta_{x_0}$ 
converges weakly to a probability measure whose support is $\cK$: indeed, if  
$\bigcap_{n\in\N} B_n(x)$ were strictly bigger than $\set{x}$, it would contain an open set 
intersecting $\cK$, but disjoint from $(d/c)^n (f^{n})^*_\diamond\delta_{x_0}$  for every $n$. 

By our assumption together with compactness, we can find  $N\in \N$ such that 
$\bigcup \cB_{N-1}$ does not contain any critical branch. This integer $N$ will be 
 fixed from now on, and we will use the two covering  of balls  $\cB_{N-1}$ and $\cB_N$  
 to encode the dynamics. 
From now on we denote $\cB = \cB_N$ and $\cB' = \cB_{N-1}$.

Let us  introduce  an oriented graph $\Gamma$ whose vertex set 
$V(\Gamma)$ is the collection of open balls $\cB$ by declaring that 
we draw an oriented edge
from $B_1$ to $B_2$ when $B_2\subset f_\diamond(B_1)$. 
We  then consider the set $\Sigma$ of all infinite sequences
$(B_i)_{i\geq 0} = B_0B_1B_2\cdots$, with $B_i\in \cB$ such that $(B_i, B_{i+1})$ is an 
edge of $\Gamma$ for all $i$.  Then  $(\Sigma, \sigma)$ is a \emph{subshift of finite type}, where 
 $\sigma\colon  (B_i)_{i\geq 0}\mapsto (B_{i+1})_{i\geq 0}$ is the left shift.
 
Denote by $A = (A_{ v v'})_{v,v'\in V(\Gamma)}$ the transition matrix of $\Gamma$ (that is, $A_{vv'} = 1$ if $vv'$ is an edge, and 0 otherwise).
The construction of the sequence of ball coverings $(\cB_n)_n$ guarantees that $A$ is primitive, that is $A^k$ has positive entries for some $k$. Let $M$ be the unique positive (right) eigenvector of $A$, normalized so that $\sum_{v\in V(\Gamma)}M_v=1$.
The Parry measure $\mu_\Sigma$ on  $(\Sigma, \sigma)$ is its unique measure of maximal entropy. 
When the topological degree is constant and equal to some $c$, the matrix $A$ has exactly $c$ entries equal to $1$ in each column, and the left Perron Frobenius eigenvector is $(1, \ldots, 1)$, associated to the dominating eigenvalue $c$. Then the structure of $\mu_\Sigma$ is easy to describe:
  for any cylinder  $[B_{m}\cdots B_{n}]\subset \Sigma$ we have 
\begin{equation}\label{eq:subshift}
\mu_\Sigma ([B_{m}\cdots B_{n}]) = c^{-(n-m) } A_{B_{m}B_{m+1}} \cdots A_{B_{n-1}B_n} \mu_\Sigma ([B_n])
\end{equation} and furthermore $\mu_\Sigma([B]) = M_B$ is the entry of $M$ corresponding to $B$. In particular 
$\mu$ is balanced, that is, $\sigma^*\mu_\Sigma = c\mu_\Sigma$.

There is  a canonical coding map ${\mathsf c} \colon \cK \to \Sigma$
sending a point $x$ to the sequence $\big(B(f^i_\diamond(x))\big)_{i\geq 0} \in \Sigma$, 
which semi-conjugates $f_\diamond$ to the left shift $\sigma$.

Recall that we have a continuous surjective map 
$\pi\colon B(0,1)^{\an}_\nL\to B(0,1)^{\an}$. 
We also denote by $\cB_{n,\nL}$ the collection of open balls $\pi^{-1}(B)$ with $B\in \cB_n$. 
For the same integer $N$ as above, we get an oriented graph
$\Gamma_\nL$, a set of itineraries $\Sigma_\nL$,   and 
 a coding map  ${\mathsf c}_\nL \colon \cK_\nL \to \Sigma_\nL$
which semi-conjugates $f_\diamond$ to the left shift. 

The absolute Galois group $G$ of $\C[[z]]$ also acts on $\Gamma_\nL$, hence on $\Sigma_\nL$, and its action  on $\Sigma_\nL$ is compatible with the one on $\cK_\nL$.  
To summarize the discussion, we have a commutative diagram which is  $G$-equivariant and respects the dynamics: 
\begin{equation}\label{eq:commutative_diagram}
\begin{tikzcd}[ampersand replacement=\&]
\cK_\nL \& \cK \\
\Sigma_\nL \& \Sigma
	\arrow["{\pi}", from=1-1, to=1-2]
	\arrow["{\mathsf{c}_\nL}", from=1-1, to=2-1]
	\arrow["{\mathsf{c}}"', from=1-2, to=2-2]
	\arrow["{}", from=2-1, to=2-2]
\end{tikzcd}
\end{equation}

Under our assumption that $\cB$ does not contain any critical branch, the map 
$\mathsf{c}_\nL$ becomes a homeomorphism. More precisely, we have
\begin{prop}~\label{prop:semicon}  
\begin{enumerate}
\item
The coding map ${\mathsf c} \colon \cK \to \Sigma$ is a surjective continuous map, semi-conjugating $f_\diamond\colon \cK\to \cK$ to the left shift 
$\sigma\colon \Sigma\to \Sigma$. 
\item
The coding map ${\mathsf c}_\nL \colon \cK_\nL \to \Sigma_\nL$ is a homeomorphism conjugating $f_\diamond\colon \cK_\nL\to \cK_\nL$ to the left shift 
$\sigma\colon \Sigma_\nL\to \Sigma_\nL$.  
\item
The space $\cK_\nL/G$ (resp. $\Sigma_\nL/G$) is homeomorphic to 
$\cK$ (resp. to $\Sigma$).  
\end{enumerate}
\end{prop}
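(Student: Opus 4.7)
The plan is to establish (1), (2), (3) in turn, with the main work being the injectivity of ${\mathsf c}_\nL$ in (2).

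For (1), each $B\in\cB$ is clopen in $\cK$: open as a Berkovich open ball, and closed as the complement in $\cK$ of the finitely many other balls of $\cB$ covering $\cK$. Hence the function $\cK\to \cB$ sending a point to the unique ball of $\cB$ containing it is locally constant, from which continuity of ${\mathsf c}$ follows. The semi-conjugation ${\mathsf c}\circ f_\diamond = \sigma\circ{\mathsf c}$ is immediate from the definitions. For surjectivity, given $(B_i)_{i\geq 0}\in\Sigma$, the edge condition $B_{i+1}\subset f_\diamond(B_i)$ together with the properness of $f_\diamond$ on each ball allows one to build inductively a decreasing sequence of open balls $D_n\subset B_0$ satisfying $f_\diamond^n(D_n) = B_n$. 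Since $\cK$ is totally invariant and $B_n\cap\cK\neq\emptyset$, the preimage $D_n\cap\cK$ is non-empty; it is moreover clopen in $\cK$ (the boundary of $D_n$ is a type 2 point lying outside $\cK$). The finite intersection property on the compact set $\cK$ then yields $x\in\bigcap_n D_n\cap\cK$ with ${\mathsf c}(x) = (B_i)$.

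For (2), the same arguments show that ${\mathsf c}_\nL$ is continuous and surjective. For injectivity, it suffices to prove that the nested balls $D_{n,\nL}$ associated with a given itinerary have diameters converging to $0$, so that $\bigcap_n D_{n,\nL}$ reduces to a single type 1 point of $\cK_\nL$. The Jacobian identity of Lemma~\ref{lem:jac}, rewritten as $\diam(x) = \diam(f_\diamond(x))^d/|\jac_w(f)(x)|$, combined with a uniform lower bound $|\jac_w(f)|\ge j_0>0$ on $\bigcup\cB'_\nL$ (which exists since this set avoids the critical branches by the choice of $N$: each factor $|w-c_i|$ in $\jac_w(f) = c\prod_i (w-c_i)$ is bounded below by the Berkovich distance from the balls in $\cB'_\nL$ to $c_i$), yields the recursion $\delta_i\le\delta_{i+1}^d/j_0$ for the diameters $\delta_i$ of the boundary points of $f^i_\diamond(D_{n,\nL})$. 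Iterating this from $\delta_n\le\delta^*:=\max\{\diam B : B\in\cB\}$ gives $\delta_0 \le (\delta^*)^{d^n}/j_0^{(d^n-1)/(d-1)}$, which tends to $0$ doubly exponentially once $\delta^*<j_0^{1/(d-1)}$. This inequality can always be arranged by replacing $N$ with a sufficiently large integer: as $N\to\infty$ the balls of $\cB_N$ shrink onto $\cK$, which has positive Berkovich distance from the critical branches, so that $\delta^*\to 0$ while $j_0$ stays bounded below. A continuous bijection between compact Hausdorff spaces being a homeomorphism, this completes (2).

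Part (3) is essentially tautological: $\cK_\nL/G=\cK$ follows from $\cK_\nL=\pi^{-1}(\cK)$ together with the identification of $\pi\colon\A^{1,\an}_\nL\to\A^{1,\an}_{\C\laurent{z}}$ as the Galois quotient. Similarly $\Sigma_\nL/G\cong\Sigma$, since by definition $\cB_\nL=\pi^{-1}(\cB)$ and $G$ acts coordinate-wise on itineraries. The $G$-equivariance of the diagram~\eqref{eq:commutative_diagram} then exhibits ${\mathsf c}$ as the quotient of the homeomorphism ${\mathsf c}_\nL$ by the Galois action. The main obstacle will be the uniform diameter estimate in (2): one has to ensure that the Jacobian lower bound propagates along arbitrary iterates, which is precisely where the hypothesis that no critical branch belongs to $\cK$ enters essentially through the choice of $N$.
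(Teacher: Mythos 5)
Your parts (1) and (3) are sound, if phrased a bit differently from the paper; the issue is in part (2), where there is a genuine gap.

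Your injectivity argument runs as follows: given an itinerary $(B_i)\in\Sigma_\nL$, ``the'' nested balls $D_{n,\nL}$ compatible with it have diameter going to zero by the Jacobian recursion $\delta_i \le \delta_{i+1}^d/j_0$, so $\bigcap_n D_{n,\nL}$ is a single point. The problem is the definite article. The set of points with itinerary $(B_i)$ is $Z=\bigcap_{i\ge 0} f_\diamond^{-i}(B_i)$, and at each finite stage $\bigcap_{i\le n} f_\diamond^{-i}(B_i)$ is a \emph{union} of balls of $\cB_{N+n,\nL}$, not a priori a single one. Your recursion shows that \emph{each} such ball shrinks to a point, but if $f_\diamond^{-1}(B_{i+1})\cap D$ had two components inside some ball $D$ consistent with the prefix, then $Z$ would split into two (or more) points, one in each maximal nested chain, each of which your diameter estimate correctly collapses to a singleton --- yet ${\mathsf c}_\nL$ would still fail to be injective. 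Equivalently: for two points $x\ne y$ with the same itinerary, set $\zeta_n = f^n_\diamond(x)\wedge f^n_\diamond(y)$; one only has $\zeta_{n+1}\le f_\diamond(\zeta_n)$ in general, and without equality the bound $\diam(\zeta_n)\le\delta^*$ gives no contradiction with $\diam(\zeta_0)>0$.

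What is missing is the Markov property that $f_\diamond^{-1}(B)\cap B'$ is connected (a single ball) for $B,B'$ in the cover. This is what the paper supplies, by reducing (via $\tau$) to the polynomial $\hat f$ and invoking Trucco's Proposition~2.6: a polynomial over a non-Archimedean field is injective on any ball that contains none of its critical points. Your lower bound $|\jac_w(f)|\ge j_0>0$ on $\bigcup\cB'_\nL$ does certify that these balls avoid the critical branches, but it does not by itself imply injectivity --- that step is a genuinely non-Archimedean fact about ball-mappings (essentially Riemann--Hurwitz on the Berkovich line) and not a mean-value-type consequence of the non-vanishing derivative. Once you import that injectivity statement, your diameter argument becomes an alternative (and somewhat more quantitative) way of finishing, in place of the paper's observation that $\cK_\nL$ consists of type~1 points so the nested intersection must be a singleton; but as written, the proof is not complete.

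Two minor remarks. Your additional maneuver of increasing $N$ so that $\delta^* < j_0^{1/(d-1)}$ is legitimate (as $N$ grows, $\bigcup\cB'_\nL$ shrinks onto $\cK_\nL$, on which $|\jac_w(f)|$ is bounded below since $\cK_\nL$ is compact and disjoint from the critical branches, while $\delta^*\to 0$), but it is not needed in the paper's route. And in part (1), the clopenness of $D_n\cap\cK$ used for the finite intersection property should be justified by observing that $\partial D_n$ is a type~2 point, hence lies outside $\cK$, so $D_n\cap\cK=\overline{D_n}\cap\cK$; you state this, and it is correct.
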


Recall from \S~\ref{ssec:generic-mult} 
that if the boundary point of an open ball $B\subset B(0,1)^{\an}$ has generic multiplicity $1$, 
then $m(B) = 1$ as well, hence  
  $\pi^{-1}(B)$ is a single  open ball in $B(0,1)^{\an}_\nL$. 
  
\begin{cor}\label{cor:semicon}
Suppose that the boundary point of any open ball from the open covers $\cB$ and $\cB'$ has generic multiplicity $1$. Then $\pi\colon \cK_\nL\to \cK$ and 
${\mathsf c} \colon \cK \to \Sigma$ are homeomorphisms conjugating the dynamics. 
Furthermore,  ${\mathsf c}_*\mu_{\na} = \mu_\Sigma$.  
\end{cor}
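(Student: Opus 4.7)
Proof proposal. The strategy is to leverage the commutative diagram \eqref{eq:commutative_diagram}: once the hypothesis is shown to imply that the restriction $\cB_\nL\to\cB$ (and $\cB'_\nL\to\cB'$) is a bijection, the first two statements follow from Proposition~\ref{prop:semicon}(2), and the measure identity is obtained by verifying it over $\nL$ and descending. By Theorem~\ref{thm:gen-mult}, the hypothesis $b(\partial B)=1$ forces $m(B)=1$ for every $B\in\cB\cup\cB'$, so $\pi^{-1}(B)\subset \A^{1,\an}_\nL$ is a single open ball; therefore $\cB_\nL\to\cB$ and $\cB'_\nL\to\cB'$ are bijections, and since an edge $\tilde B_1\to\tilde B_2$ in $\Gamma_\nL$ descends to the edge $\pi(\tilde B_1)\to\pi(\tilde B_2)$ in $\Gamma$, they induce an isomorphism of oriented graphs $\Gamma_\nL\cong \Gamma$ and a homeomorphism $\Sigma_\nL\to\Sigma$ intertwining the shifts. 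The composition $\mathsf{c}\circ\pi=(\Sigma_\nL\to\Sigma)\circ\mathsf{c}_\nL$ is then a homeomorphism, forcing $\pi$ and $\mathsf{c}$ to be bijective; being continuous surjections between compact Hausdorff spaces, both are homeomorphisms.

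For the measure identity I first work over $\nL$. Since no critical branch of $f$ lies in $\bigcup\cB'$, no critical point of $f_\diamond$ lies in any $\tilde B\in\cB_\nL$, so $f_\diamond|_{\tilde B}$ is injective onto a ball of $\cB'_\nL$; as $f_\diamond$ has degree $c$ on $\A^{1,\an}_\nL$ by Theorem~\ref{thm:basic-finite}, each vertex of $\Gamma_\nL$ has in-degree exactly $c$. The setup of \eqref{eq:subshift} thus applies and yields $\mu_{\Sigma_\nL}([\tilde B_0\cdots \tilde B_n])=c^{-n}M^\nL_{\tilde B_n}$ for any allowed cylinder, where $M^\nL$ denotes the normalized positive right Perron eigenvector of the transition matrix of $\Gamma_\nL$. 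The balance property of Theorem~\ref{thm:ergo-meas} reads
\[
c\,\mu_{\na,\nL}(\tilde B)=\mu_{\na,\nL}(f_\diamond(\tilde B))=\sum_{\tilde B'\in\cB_\nL,\ \tilde B'\subset f_\diamond(\tilde B)}\mu_{\na,\nL}(\tilde B'),
\]
showing that $(\mu_{\na,\nL}(\tilde B))_{\tilde B\in\cB_\nL}$ is a positive right Perron eigenvector of eigenvalue $c$, which by Perron--Frobenius uniqueness must coincide with $M^\nL$.

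For an allowed cylinder, a straightforward induction using the injectivity of $f_\diamond$ on each $\tilde B_j$ shows that $D:=\tilde B_0\cap f_\diamond^{-1}(\tilde B_1)\cap\cdots\cap f_\diamond^{-n}(\tilde B_n)$ is a single open ball on which $f_\diamond^n$ restricts to a homeomorphism onto $\tilde B_n$; iterating the balance yields $\mu_{\na,\nL}(D)=c^{-n}M^\nL_{\tilde B_n}$, matching the Parry formula and proving $(\mathsf{c}_\nL)_*\mu_{\na,\nL}=\mu_{\Sigma_\nL}$. Pushing this equality through the commutative diagram \eqref{eq:commutative_diagram} via $\mu_{\na}=\pi_*\mu_{\na,\nL}$ and the identification of $\mu_{\Sigma_\nL}$ with $\mu_\Sigma$ under $\Sigma_\nL\cong\Sigma$ gives $\mathsf{c}_*\mu_{\na}=\mu_\Sigma$, completing the proof. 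The subtlest point is identifying $(\mu_{\na,\nL}(\tilde B))_{\tilde B}$ with the Perron eigenvector: the application of the balance property requires both the injectivity of $f_\diamond$ on each $\tilde B\in\cB_\nL$ and the fact that $\cK_\nL\cap f_\diamond(\tilde B)$ is partitioned, up to a $\mu_{\na,\nL}$-null set, by the balls of $\cB_\nL$ contained in $f_\diamond(\tilde B)$, both of which rest on the absence of critical branches in $\bigcup\cB'$ built into the choice of $N$.
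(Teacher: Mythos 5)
Your proof is correct and follows essentially the same route as the paper's: the hypothesis yields a graph isomorphism $\Gamma_\nL\cong\Gamma$, which forces the bottom arrow of~\eqref{eq:commutative_diagram} and hence $\pi$ and $\mathsf{c}$ to be homeomorphisms, and the measure identity comes from the balance property together with Perron--Frobenius uniqueness. The only difference is that you carry out the measure argument over $\nL$ and descend, spelling out the eigenvector identification and the cylinder-measure computation that the paper's terse final sentence leaves implicit, which is a welcome clarification rather than a genuinely different approach.
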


\begin{proof}
Our assumption implies that the oriented graphs $\Gamma$ and $\Gamma_\nL$ are
 isomorphic under $\pi$, so that 
the left shifts on $\Sigma$ and $\Sigma_\nL$ are conjugated. Therefore 
the bottom arrow $\Sigma_\nL\to \Sigma$
in  the commutative diagram~\eqref{eq:commutative_diagram} is a homeomorphism and 
so are $\pi\colon \cK_\nL\to \cK$ and 
${\mathsf c} \colon \cK \to \Sigma$.

 The last assertion follows from the fact that if $\cK$ is homeomorphic to $\cK_\nL$, then 
 $f_\diamond:\cK \to\cK $ is $c$-to-1 and 
by  Theorem~\ref{thm:ergo-meas}
 the measure $\mu_{\na}$ is balanced, thus  ${\mathsf c}_*\mu_{\na}$ must coincide with  $\mu_\Sigma$.  
\end{proof}

\begin{proof}[Proof of Proposition~\ref{prop:semicon}]
The continuity of $\mathsf{c}$ follows directly from the description of $\cK$ as 
$\cK  = \bigcap_{n\geq 0} \bigcup \cB_n$ and the continuity of $f_\diamond$. Its surjectivity follows from the fact that given any allowed sequence $(B_i)\in \Sigma$,  we can define a nested sequence of balls 
$\bigcap_{i\geq 0} f_{-i}(B_i)$ whose intersection is non-empty, connected  and  contained in $\cK$,  hence   reduced to a point, whose image under $\mathsf{c}$ is the given itinerary.

The second assertion follows from the claim that in $\A^{1, \an}_{\nL}$,
for any open ball $B$
which does not contain any critical branch, then $f_\diamond \colon B\to f_\diamond(B)$ is injective.  
Indeed, this   implies that $f_\diamond^{-1}(B) \cap B'$ is an open ball,  for any pair of 
balls $B$ and $B'$ belonging to $\cB_n$
and any  $n\ge N$, which in turn forces ${\mathsf c}_{\nL}$ to be injective, hence a homeomorphism. 

To prove our claim, with notation as in Lemma~\ref{lem:f_rond_tau}, since 
  $\tau$   is a homeomorphism, 
we can work with $\tilde{f}$ instead of $f$,  and 
we are reduced to the statement that a polynomial 
$P(w)= w^c + \sum_{j=0}^{c-1} h_j(z) w^j$ with $h_j\in z\C[[z]]$ 
is injective on any open ball $B$ containing no critical point. This is established 
   e.g. in~\cite[Prop.~2.6]{trucco:algJuliaBerko}. 

Finally, the third assertion follows from Theorem~\ref{thm:invariantcantor},  the commutative diagram~\eqref{eq:commutative_diagram} and the discussion preceding it. 
\end{proof}

\subsection{Base change and reduction to the multiplicity $1$ case}\label{subs:base_change}
The base change of order $k\in \N^*$ is the map 
$\beta_k(z,w) = (z^k,w)$. 
Thanks to the special form of $f$, we 
 have the following commutative diagram
\begin{equation}\label{eq:base_change}
\begin{tikzcd}[ampersand replacement=\&]
	(\C^2,0) \&  \& \& (\C^2,0)\\
	(\C^2,0) \& \& \& (\C^2,0)
	\arrow["{(z^d, w^c+z^kh(z^k,w))}", from=1-1, to=1-4]
	\arrow["{\beta_k}", from=1-4, to=2-4]
	\arrow["{\beta_k}"', from=1-1, to=2-1]
	\arrow["{(z^d, w^c+zh(z,w))}", from=2-1, to=2-4]
\end{tikzcd}
\end{equation}
Observe that the top map $f_k\colon  (z,w) \mapsto  (z^d, w^c+z^kh(z^k,w))$ is also of the form~\eqref{eq:fhol}.   
By Lemma~\ref{lem:tau}, 
$\beta_{k,\diamond}\colon \phi \mapsto \phi(z^{1/k})$ is a homeomorphism on $B(0,1)^{\an}_\nL$,
such that $\beta_{k,\diamond} \circ f_{k,\diamond} = f_\diamond \circ \beta_{k,\diamond}$.
It is also well defined over $\C\laurent{z}$ but there it is not necessarily a homeomorphism 
(see Remark~\ref{rmk:tau}).  Note also that  $\CritB(f_k) = (\beta_{k, \diamond})\inv(\CritB(f))$. 

\begin{lem}
Given any type 2 point $x\in B(0,1)^{\an}\subset \A^{1, \an}_{\C\laurent{z}}$,
  there exists an integer $k\geq1$
such that for any multiple $\ell$ of $k$, every point in $\beta_{\ell,\diamond}^{-1}(x)$ has  generic multiplicity $1$. 
\end{lem}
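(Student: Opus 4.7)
The plan is to take $k=b(x)$, the generic multiplicity of $x$. First I choose a suitable representative: write $x=\pi(\zeta(\phi,r))$ with $\phi\in\nL$ and $r=e^{-t}$, $t\in\Q_{\geq0}$. Since the ball $\zeta(\phi,r)$ is unchanged under adding to $\phi$ any element of order $\geq t$, I may assume $\phi=\sum_{\beta_n<t}a_nz^{\beta_n}$ with $a_n\neq 0$. A direct computation of the stabilizer of $\zeta(\phi,r)$ under the Galois action of $G=\widehat{\nU}$ yields
$$
m(x)=\lcm\{q(\beta_n): a_n\neq 0\},
$$
where $q(\beta_n)$ denotes the denominator of $\beta_n$ in lowest terms (note that $p_n/q_n$ being in lowest terms implies $\sigma_{q_n}^{p_n}=1$ iff $\sigma_{q_n}=1$). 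Combined with $A(x)=1+t$, this gives
$$
b(x)=\lcm\bigl\{q(t),\,q(\beta_n): a_n\neq 0,\,\beta_n<t\bigr\}.
$$

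Next I identify the preimages of $x$ under $\beta_{\ell,\diamond}$ for $\ell$ a multiple of $k$. Recall that, by Lemma~\ref{lem:tau}, $\beta_{\ell,\diamond}$ acts on $\nL$ by $\psi\mapsto\psi(z^{1/\ell})$ and is a homeomorphism on $\A^{1,\an}_{\nL}$. Chasing the diagram $\pi\circ\beta_{\ell,\diamond,\nL}=\beta_{\ell,\diamond,\C\laurent{z}}\circ\pi$, every $y\in\beta_{\ell,\diamond}^{-1}(x)\subset\A^{1,\an}_{\C\laurent{z}}$ admits a representative of the form $\zeta(\psi,r^\ell)$ with $\psi=(\sigma\phi)(z^\ell)$ for some $\sigma\in G$. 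In particular the relevant exponents of $\psi$ (those $<\ell t$) are precisely $\ell\beta_n$ for $\beta_n<t$, and $A(y)=1+\ell t$.

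Finally I verify that every such $y$ has $b(y)=1$. Since $q(t)\mid k\mid\ell$, we have $\ell t\in\Z$, hence $q(A(y))=1$. Likewise, for each $\beta_n<t$ in the support, $q(\beta_n)\mid m(x)\mid k\mid\ell$, so $\ell\beta_n\in\Z$. Thus $\psi$ is, modulo terms of order $\geq\ell t$, a formal power series in $\C\fps{z}$, so the ball $\zeta(\psi,r^\ell)$ is Galois-invariant and $m(y)=1$. Hence $b(y)=\lcm\{m(y),q(A(y))\}=1$, as required.

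The only real subtlety is the careful bookkeeping of the two Galois actions (on the top and bottom copies of $\A^{1,\an}_{\C\laurent{z}}$ in diagram~\eqref{eq:base_change}) when identifying preimages, together with the reduction to the normalized representative $\phi=\phi_{<t}$. The combinatorial identity $m(x)=\lcm\{q(\beta_n)\}$ on the truncated support is essentially a restatement of~\eqref{eq:def-mult} and poses no real obstacle once the normalization is fixed.
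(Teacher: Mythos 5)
Your proof is correct and follows essentially the same route as the paper's. The only difference is presentational: the paper simply asserts ``choose $k$ such that $\phi(z^k)$ is a power series and $r^k\in e^{\Z}$'' and points to the definition of generic multiplicity, whereas you make the minimal such $k$ explicit as $b(x)$ (indeed $b(x)=\lcm\{q(t),\,q(\beta_n)\}$ is precisely the smallest $k$ satisfying both divisibility requirements) and spell out the Galois bookkeeping needed to identify the preimages under $\beta_{\ell,\diamond}$ as $\pi(\zeta((\sigma\phi)(z^\ell),r^\ell))$. Both arguments hinge on the same two facts: truncating $\phi$ below order $t$ does not change the ball, and once the truncated exponents and $t$ become integers after multiplying by $\ell$, the Galois action is trivial on the representative, forcing $m(y)=q(A(y))=1$.
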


\begin{proof}
We can suppose that $x = \pi(\zeta(\phi,r))$ with $\phi \in\C\puiseux{z}$, and $r\in e^{\Q_+}$ (see \S~\ref{sec:berko-rep}). 
Then any point $y\in \beta_{\ell,\diamond}^{-1}(x)$ is equal to $\pi (\zeta (u\cdot\phi(z^\ell),r^\ell))$ for some $u$ in the absolute Galois group
of $\C[[z]]$. To  conclude the proof, by definition of the generic multiplicity it is enough to 
choose $k$ such that $\phi(z^k)$ is a power series and $r^k\in e^\Z$. 
\end{proof}

We now return to the proof of the theorem. Recall that we  fixed an integer $N$, and 
two finite  covers $\cB$ and $\cB'$ of $\cK(f)$ by open balls. 
For any given $k$, then we let $\cB^{(k)}$ (resp. $\cB'^{(k)}$ be the set of all 
connected components of $\beta_{k,\diamond}^{-1}(B)$  
with $B\in \cB$ (resp. $B\in \cB'$). This  forms a finite open cover of $\cK(f_k)$, and since 
$\CritB(f) = \beta_k(\CritB(f_k))$, we get   $\cB'^{(k)} \cap \CritB(f_k) = \emptyset$.

By the previous lemma, we can find a sufficiently divisible  integer $k$ such that 
for all points $x= \partial B$ with $B\in \cB\cup\cB'$, 
and for all $y\in \beta_{k,\diamond}^{-1}(x)$, then
we have $b(y) =1$. In other words, all boundary points of the open balls in
 $\cB^{(k)}\cup\cB'^{(k)}$ have generic multiplicity $1$.

%

\subsection{Construction of the geometric model}\label{subs:model}
From this point and up to \S~\ref{subs:conclusion_mult1}, 
 we do the base change of the previous paragraph, and for notational ease, we
 replace $f$ by $f_k$. In particular   the boundary points of
all balls in $\cB\cup \cB'$ have generic multiplicity $1$ and  the conclusions of 
 Corollary~\ref{cor:semicon} hold for $f$.

A \emph{model} $\kappa\colon X \to (\C^2,0)$ 
is a smooth complex surface $X$ endowed with a proper bimeromorphic 
map $\kappa$ to a neighborhood of $0$ in $\C^2$ which is an isomorphism
away from $0$. Any such map can be decomposed into a finite sequence of point blow-ups.  

A point $p\in\kappa^{-1}(0)$ is said to be  \emph{free}  if it is a smooth point of the curve
$\pi^{-1}(z=0)$.  Observe that $0\in (\C^2,0)$ is free. 
When $X$ is obtained by successively    blowing-up  free points only, we say that $X$
is a \emph{free model}.

Any irreducible component $E$ of $\kappa^{-1}(0)$ determines a type 2 point
$x_E$ such that for any $P\in \C\laurent{z}[w]$, 
\begin{equation}\label{eq:mubE}
|P(x_E)| = \exp(-b_E^{-1}\ord_E(P))
\text{ where } b_E = \ord_E(z\circ \kappa)
\end{equation} 
(note that here we abuse notation and write $\ord_E(P)$ for $\ord_E(P\circ\kappa)$).
One proves by induction on the number of blow-ups that $b_E = b(x_E)$, where $b$ is the generic multiplicity from \S~\ref{ssec:generic-mult} (see~\cite[\S 6]{favre-jonsson:valtree}). 
If $C$ is any germ of curve transverse to $E$ at a free point (a \emph{curvette}, in the terminology of \cite{favre-jonsson:valtree}), then $\kappa(C)$ has multiplicity $b_E$ at the origin, hence the terminology (see~\cite[Rmk 3.41]{favre-jonsson:valtree}). 

Conversely, pick a type 2 point $x\in B(0,1)^{\an}$. Then there exists a 
unique sequence of blow-ups $\mathrm{bl}_{p_i} \colon X_i \to X_{i-1}$, $i=0, \cdots, n$
such that $p_i$ belongs to the exceptional divisor $E_i$ of  $\mathrm{bl}_{p_{i-1}}$
for all $i$, $X_0=(\C^2,0)$ and $x = x_{E_n}$.
When $b(x)=1$,   $X_n$ is a free model, i.e, all $p_i$ are free. Furthermore, $\diam(x_{E_n}) = e^{-n}$.

More generally,  a model $\kappa\colon X\to(\C^2,0)$ is free iff $b(x_E)=1$ for every
component $E$ of $\kappa^{-1}(0)$.

Pick any free point $p\in \kappa^{-1}(0)$ lying on some  irreducible component $E$. 
Choose any smooth curve $C'$ passing through $p$ and intersecting  $E$ transversally. 
Denote by $\C\{z\}$ the ring of convergent power series, and 
let $P_C\in \C\{z\}[w]$ be any equation for $C= \kappa(C')$.
Then the set 
\[U(p) := \{x\in B(0,1)^{\an}, |P_C(x)| < \exp(- \ord_E(P_C))/b_E)= |P_C(x_E)|\}\]
is the  open ball  whose boundary point is equal to $x_E$ and containing  $x_C$. 
This ball can be characterized geometrically as follows: a germ of irreducible curve $D$ belongs to $U(p)$
if and only if its strict transform in $X$ contains $p$.

Pick any type 2 point $x_0$ of generic multiplicity $1$. Then
$m(x_0)=1$, and the set of type 2 points $x\in B(0,1)^{\an}$ such that $x\ge x_0$ and $b(x)=1$
is finite. Indeed on $[x_0,\xg]$ the function $A$ is strictly decreasing, and the multiplicity is
equal to $1$ so that
$b(x)= q(A(x))$; since $A$ is bounded from above, the claim follows. 
Geometrically, these type 2 points  correspond to the 
generic points of divisors located under $x_0$ in the corresponding sequence of blow-ups. 

The next proposition is a variation on~\cite[Proposition~4.6]{zbMATH06618544}. 

\begin{prop}
Let $\{B_i\}$ be any finite collection of disjoint open balls in $B(0,1)^{\an}$ such that 
$b(\partial B_i) = 1$ for all $i$. Write $x_i= \partial B_i$.  

Then there exists a unique (up to isomorphism)  free model $\kappa\colon X\to (\C^2,0)$
such that the following holds:
\begin{itemize}
\item 
for each $i$, there exists a free point $p_i$ such that $B_i = B(p_i)$;
\item 
the set of type 2 points $\mathcal{D}(X) := \{x_E\}$ where $E$ ranges over all irreducible components of $\kappa^{-1}(0)$
coincides with the set of all type 2 points $x$ such that $b(x)=1$ and $x\ge x_i$ for some $i$.  
\end{itemize}
\end{prop}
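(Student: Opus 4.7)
The plan is to identify a canonical finite subtree of $B(0,1)^{\an}$ and realize it as the dual graph of a free model. Define $\mc{D}$ to be the set of type 2 points $x$ with $b(x)=1$ and $x \geq x_i$ for some $i$; I would first check that this set is finite. Indeed, on each segment $[x_i, \xg]$ the multiplicity $m$ is constantly equal to $m(x_i) = 1$ (since $b(x_i) = 1$ forces $m(x_i) = 1$), so $b(x) = q(A(x))$ along this segment by the definition of generic multiplicity. The function $A$ being bounded on the finite union $\bigcup_i [x_i, \xg]$ forces $\mc{D}$ to be finite, consistent with the remark preceding the proposition.

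The set $\mc{D} \cup \{\xg\}$ carries a rooted tree structure inherited from the order on $B(0,1)^{\an}$: each $x \in \mc{D}$ has a unique immediate successor $p(x) \in \mc{D} \cup \{\xg\}$, defined as the smallest element of $\mc{D} \cup \{\xg\}$ strictly greater than $x$. I would construct $\kappa \colon X \to (\C^2,0)$ inductively along this tree. Starting with $X_0 = (\C^2,0)$ and the divisor $\{z=0\}$ associated to $\xg$, at each stage, assuming the predecessor $p(x)$ has already been realized as $x_E$ for some exceptional divisor $E$ on the current model, I use the explicit correspondence recalled just before the proposition: free points on $E$ correspond bijectively to open balls in $B(0,1)^{\an}$ whose boundary is $x_E$. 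The unique open ball $U$ with $\partial U = x_{p(x)}$ containing $x$ then determines a unique free point $q \in E$; blowing up $q$ produces a new free divisor $E'$. It remains to verify that $x_{E'} = x$, which follows from the formula~\eqref{eq:mubE} together with standard computations of orders of vanishing along a blow-up of a free point (producing $b_{E'} = 1$ and the correct value of $A$).

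After completing the induction over all $x \in \mc{D}$, the resulting model $X$ is free by construction, and its divisorial type 2 points are exactly $\mc{D}$. For each $i$, the ball $B_i$ with $\partial B_i = x_i$ corresponds, via the same bijection between free points and open balls, to a unique free point $p_i$ on the exceptional component $E_i$ with $x_{E_i} = x_i$, and by construction $B(p_i) = B_i$. For uniqueness, given any other free model $\kappa' \colon X' \to (\C^2,0)$ satisfying the conclusions, its set of divisorial type 2 points is again $\mc{D}$, so the combinatorial sequence of blow-ups is the same as for $X$; at each stage the free point to blow up is forced by its associated open ball, which is an invariant of the Berkovich datum. Hence $X' \cong X$ over $(\C^2,0)$.

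The main obstacle is the inductive step: one must verify that blowing up the specifically-chosen free point on $E$ yields a new divisor $E'$ whose associated type 2 point is \emph{exactly} $x$, rather than some intermediate point on the segment $(x_{p(x)}, x]$. This rests on the compatibility between the tree structure on type 2 points of generic multiplicity $1$ and the geometry of successive free blow-ups, and ultimately reduces to the formula $|P(x_E)| = \exp(-b_E^{-1} \ord_E(P))$ together with the behaviour of $\ord_E$ under a single point blow-up, both of which are classical and can be cited from~\cite[\S 6]{favre-jonsson:valtree}.
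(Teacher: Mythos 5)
Your proof is correct and follows essentially the same route as the paper's: both construct the model by successive free blow-ups, using the correspondence between free points of an exceptional component $E$ and open balls with boundary $x_E$, together with the fact that a free blow-up preserves generic multiplicity $1$ and increases $A$ by exactly $1$. The only difference is organizational: the paper adds one ball $B_i$ at a time and descends to $\partial B_i$ by a chain of blow-ups (terminating because the diameter drops by a factor $e^{-1}$ at each step, and excluding annular components via the inequality $b(y)\ge b(x_E)+b(x_{E'})$), whereas you pre-compute the finite set $\mathcal{D}$ and perform one blow-up per element of it; the step you flag as the ``main obstacle'' is indeed harmless for the reason you indicate, since consecutive elements of $\mathcal{D}$ differ by exactly $1$ in $A$ and the new divisorial point dominates $x$ once annular components are excluded by the same inequality.
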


\begin{proof}
Let us fix an arbitrary model $\kappa\colon X\to (\C^2,0)$, and first review the contents of~\cite[Corollary~6.34]{favre-jonsson:valtree}. 
The open set  $B(0,1)^{\an} \setminus \mathcal{D}(X)$ is a union of finitely many annuli (i.e., a connected open set having two boundary points), 
and infinitely many disjoint open balls. More precisely, if $U$ is a connected component of  $B(0,1)^{\an} \setminus \mathcal{D}(X)$
whose boundary point is $x_E$, then there exists a (unique) free point $p\in E$ such that $U= U(p)$. 
When $U$ is a connected component of  $B(0,1)^{\an} \setminus \mathcal{D}(X)$ having two boundary points $x_E \leq x_{E'}$, 
then $E$ intersects $E'$ at a point $p$, and a curve $C$ belongs to $U$ iff its strict transform in $X$ contains $p$.

\smallskip

We now construct $X$ by induction on the number of balls. 
When the collection of balls is empty, then we set $\kappa=\id$. 
Assume we have built the model $X$ for a collection $\{B_i\}$ 
and pick  any open ball $B$ whose boundary point $x$
is a type 2 point of generic multiplicity $1$, and  such that $B\cap B_i=\emptyset$ for all $i$. 

Let $U$ be the connected component of $B(0,1)^{\an} \setminus \mathcal{D}(X)$ containing $B$. 
Since the generic multiplicity of $x$ equals $1$, then $U$ cannot be an annulus. Indeed
if $\partial U=\{x_E,x_{E'}\}$, then $b(x) \ge b(x_E)+ b(x_{E'})$ for every type 2 point $x\in U$, as 
every type 2 point $x\in U$ is of the form 
$x_F$, where $F$ is a divisor above the non-free point $E\cap E'$ so $b(x)\geq 2$.

Therefore, $U$ is an  open ball $U= U(p)$ where $p$ is a free point lying on some 
component $E$ of the exceptional divisor of $\kappa: X \to (\C^2, 0)$. 
Let $X_1\to X$ be the blow-up at $p$ with exceptional divisor $E_1$. 
Then $X_1$ is a free model, and $x_{E_1}\le x_E$. We then repeat
the same argument. We obtain a sequence of blow-ups at free points $p_\ell \in E_{\ell-1}$
and type 2 points $x_{E_\ell} \le x_{E_{\ell-1}}$ such that $x$ belongs to the closed ball 
$B(\ell)= \{y\le x_{E_\ell}\}$. 
The process stops because $\diam(x)\le \diam(B(\ell)) =  e^{-1}\diam(B(\ell-1))$, and we conclude that 
 $x=x_{E_\ell}$ for some $\ell$.

The model $X_\ell$ then satisfies all our requirements. The details are left to the reader, see~\cite[Chapter~6]{favre-jonsson:valtree}. 
\end{proof}

We say that a model $\kappa \colon X \to (\C^2,0)$ dominates 
$\kappa' \colon X' \to (\C^2,0)$ when $(\kappa')^{-1} \circ \kappa$ is regular, i.e., 
$X$ is obtained by blowing-up points above $X'$. 
Suppose that $X$ dominates $X'$. Then the set 
of type 2 points $\mathcal{D}(X')$ is included in $\mathcal{D}(X)$. 
Conversely, when $\mathcal{D}(X')\subset \mathcal{D}(X)$ then 
$X$ dominates $X'$, as follows from~\cite[Proposition~3.2]{favre-jonsson:eigenval} applied to the identity map.

We denote by $\kappa\colon X\to (\C^2,0)$ (resp. $\kappa'\colon X'\to (\C^2,0)$) the  model
obtained by applying the previous proposition to $\cB$ (resp. to $\cB'$). 
Denote by $\{p_j\}$ (resp. $\{p'_i\}$)   the collection of free points in $X$ (resp. in $X'$)
such that $\{U(p_j)\} = \cB$ (resp. $\{U(p'_i)\} = \cB'$). 
By construction, $\mathcal{D}(X)$ contains all type 2 points $x$ of generic multiplicity 1  such that  $x\geq \partial B$
for some  $B\in \cB$, hence in particular the boundary points of all open balls in $\cB'$.  
Thus from the   above discussion we see   that $X$ dominates $X'$.

We  thus have two commutative diagrams: 
\[\begin{tikzcd}[ampersand replacement=\&]
	X \&  X'\&\&X \& X'\\
	(\C^2,0) \&\&\&(\C^2,0)\& (\C^2,0)
	\arrow["{\theta}", from=1-1, to=1-2]
	\arrow["{\kappa}"', from=1-1, to=2-1]
	\arrow["{\kappa'}", from=1-2, to=2-1]
	\arrow["{\kappa}", from=1-4, to=2-4]
	\arrow["{\kappa'}", from=1-5, to=2-5]
	\arrow["{f}", from=2-4, to=2-5]
	\arrow["{F}", dashed, from=1-4, to=1-5]
\end{tikzcd}\]
where $F$ is a meromorphic map.  

\begin{prop}\label{prop:gen1} With notation as above:
\begin{itemize}
\item
The map $\theta$ is a finite sequence of free blow-ups. Moreover, we have 
$\theta(p_j)=p'_i$ where $U(p'_i)$ is the unique ball in $\cB'$ containing
$U(p_j)$.
\item
The map $F$ is holomorphic at all points $p_j$. 
Moreover, we have $F(p_j) = p'_\ell$, where $p'_\ell$ is such that $f_\diamond(U(p_j)) = U(p'_\ell)$. 
\end{itemize}
\end{prop}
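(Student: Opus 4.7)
The plan is to establish (1) and (2) by combining a divisor-bookkeeping argument above the origin (for the domination of free models) with a curve-tracking argument (for the dynamical lift).

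For assertion (1), I would start from the fact that any birational morphism between smooth complex surfaces factors as a finite sequence of point blow-ups. Since $X$ is a free model, every irreducible component of $\kappa^{-1}(0)$ has generic multiplicity $1$. Any non-free blow-up in the factorization of $\theta$ (i.e.\ at a point lying on two components of the preimage of $\{z=0\}$) would create an exceptional divisor with $b\geq 2$, contradicting the free-model property of $X$. Therefore $\theta$ consists only of free blow-ups. To identify $\theta(p_j)$, I would use the geometric characterization of $U(p_j)$ as the set of germs of irreducible curves whose strict transform in $X$ passes through $p_j$. Since $\cB$ refines $\cB'$, the inclusion $U(p_j)\subset U(p'_i)$ forces every such curve to have its $X'$-strict transform through $p'_i$ as well. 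Choosing a smooth curvette $D$ at $p_j$, its $X'$-strict transform is the image under $\theta$ of its $X$-strict transform, and for a sufficiently generic choice meets $(\kappa')^{-1}(0)$ at a single point; this yields $\theta(p_j)=p'_i$.

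For assertion (2), I would first observe that the Markov structure of the sequence $(\cB_n)$ gives a unique ball $U(p'_\ell)\in\cB'$ with $f_\diamond(U(p_j))=U(p'_\ell)$. Then, for any irreducible analytic germ $D$ with $x_D\in U(p_j)$, the push-forward curve $f(D)$ satisfies $x_{f(D)}=f_\diamond(x_D)\in U(p'_\ell)$, so its $X'$-strict transform passes through $p'_\ell$. To upgrade this into holomorphy of the a priori meromorphic map $F$ at $p_j$, I would argue that the total transform of $F$ at $p_j$, obtained after resolving any potential indeterminacy, reduces to the single point $p'_\ell$: since every irreducible analytic curve through $p_j$ in $X$ is mapped by $F$ to a curve through the single point $p'_\ell$ in $X'$, and since a neighborhood of $p_j$ in $X$ is swept out by such curvettes, no extra exceptional component can appear in the total transform.

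The step I expect to be the main obstacle is a fully rigorous justification of this last claim. I would handle it by a direct local-coordinate computation at the free point $p_j$: since $b_{E_j}=1$ and $p_j$ is obtained by a sequence of free blow-ups, $\kappa$ admits local coordinates $(u,v)$ on $X$ in which it takes the form $(u,v)\mapsto \bigl(u,\, u^N(v+\alpha(u))\bigr)$ with $N\geq 1$ and $\alpha\in\C\{u\}$, $\alpha(0)=0$, and an analogous form is available near $p'_\ell$. Substituting into $\kappa'\circ F = f\circ \kappa$ with $f(z,w)=(z^d, w^c+zh(z,w))$ gives an explicit formula for the components of $F$; the hypothesis that no critical branch sits in $\cB'$ guarantees the required divisibility of the output $v'$-component by the appropriate power of $u$, so that $F$ extends holomorphically across $(u,v)=(0,0)$ and sends this point to the representative of $p'_\ell$.
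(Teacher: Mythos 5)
Your argument is correct in substance, but it is genuinely different from the paper's, which disposes of the proposition in two sentences: the first item ``follows from the definitions'' (i.e.\ from $\mathcal D(X')\subset\mathcal D(X)$ and the geometric characterization of $U(p)$), and the second is a direct citation of \cite[Proposition~3.2]{favre-jonsson:eigenval}, where the equivalence between $f_\diamond(U(p_j))\subset U(p'_\ell)$ and holomorphy of the lift at $p_j$ with $F(p_j)=p'_\ell$ is proved once and for all. What you propose instead is a self-contained replacement for that citation: the observation that $f_\diamond(x_D)=x_{f(D)}$ for rigid points, so every curvette through $p_j$ maps to a germ whose $X'$-strict transform meets $(\kappa')^{-1}(0)$ \emph{only} at $p'_\ell$; resolving a putative indeterminacy at $p_j$, the exceptional fibre over $p_j$ maps into $(\kappa')^{-1}(0)$ and is swept by limit points of such curvettes, hence is contracted to $p'_\ell$ --- which rules out indeterminacy. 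This is exactly the mechanism behind the Favre--Jonsson result, so you gain self-containedness at the cost of reproving a known lemma; your blow-up bookkeeping for the first item (a non-free blow-up would create a divisor with $b\ge 2$, contradicting freeness of $X$) is a correct and slightly more explicit version of ``follows from the definitions''.

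Two small corrections to your backup computation. First, the local normal form of $\kappa$ at a free point is $(u,v)\mapsto(u,\,u^N v+Q(u))$ with $Q$ a \emph{polynomial} of degree at most $N$ vanishing at $0$ (compare \eqref{eq:free_blowup}); it is not in general of the form $u^N(v+\alpha(u))$ with $\alpha$ holomorphic, since $Q(u)/u^N$ need not be regular. Second, the hypothesis that no critical branch lies in $\cB'$ is not what makes $F$ holomorphic at $p_j$ --- holomorphy follows from the ball-mapping property alone, as your curve-sweeping argument shows; that hypothesis is only needed afterwards, in \S~\ref{subs:graph_transform}, to put $F$ in the unramified normal form \eqref{eq:free_blowup2}. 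Neither point affects the validity of your main line of argument.
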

\begin{proof}
The first assertion follows from the definitions. The second one 
 is a consequence of~\cite[Proposition~3.2]{favre-jonsson:eigenval} applied to $f$ (where  divisorial valuations correspond to type~2 points and $U(p)$ has the same meaning). 
\end{proof}

\subsection{The graph transform}\label{subs:graph_transform} 
We work with the free models $X$ and $X'$ constructed in the previous subsection. 
For each $p'_i$ we fix coordinates  $(z'_i,w'_i)$ centered at $p'_i$ such that 
$(z'_i=0)$ is the exceptional divisor. 

Pick any $p_j$, and let $p'_i = \theta(p_j)$.
Since $\theta$ is a finite sequence of free blow-ups, then there exists a unique choice
of coordinates $(z_j,w_j)$ centered at $p_j$ and such that  $(z_j=0)$ is the exceptional divisor at $p_j$, and
\begin{equation}\label{eq:free_blowup}
(z'_i,w'_i)= \theta(z_j,w_j) = (z_j, z_j^{n_j} w_j + P_j(z_j))
\end{equation}
where $n_j\in \N^*$ and $P_j$ is a polynomial of degree at most $n_j$ vanishing at $0$. 
Indeed, each free blow-up can be expressed in coordinates as 
$(z,w)\mapsto (z, zw+w_0)$, by  choosing the exceptional divisor to be $\set{z=0}$, from which~\eqref{eq:free_blowup} readily follows by composition.

On the other hand, if $p'_\ell = F(p_j)$, then there exists
other coordinates $(\hz_j,\hw_j)$ centered at $p_j$ such that $(\hz_j=0)$
is the exceptional divisor and
\begin{equation}\label{eq:free_blowup2}
(z'_\ell,w'_\ell)= F(\hz_j,\hw_j) = (\hz_j^{m_j}, \hw_j)
\end{equation}
with $m_j\in \N^*$.
Indeed, since $\CritB(f)\cap \cB = \emptyset$, close to $p_j$, the critical set of $F$ is reduced to $(z_j=0)$ and $F^*(z'_i=0)$
is a multiple of $(z_j=0)$.  Using these assumptions we get that 
 in the coordinates $(z_j, w_j)$, 
$F$ is of the form $(z_j^{m_j}(c_1+g(z_j, w_j)), c_2 w_j + h(z_j, w_j))$, with $c_1c_2\neq0$, 
 $g(0,0) = h(0,0) = 0$ and $\frac{\partial h}{\partial w} (0,0)  =0$ 
and we choose  
$\hat z_j = z_j(c_1+ g(z_j, w_j))^{1/m_j}$ and $ \hat w_j = c_2 w_j + h(z_j, w_j)$ to arrive at the form~\eqref{eq:free_blowup2}.

\smallskip

The graph $\Gamma$ can be re-interpreted as follows: the vertex set is in bijection with $\cB$
hence with the set of free points $p_j$ in $X$,  and an oriented vertex joins $p_j$ to $p_\ell$
iff $F(p_j) = \theta(p_\ell)$.

Fix some sufficiently small $r>0$, and 
consider the space $\mathcal H_r$ of holomorphic functions $z\mapsto h(z)$
defined in the open disk of radius $r$, vanishing at 0, and 
whose sup norm is bounded by $r$. In other words, ${\mathcal H}_r$ is the closed ball 
of radius $r$ in the Hardy space $H^\infty(\D(0,r))$.

For each vertex $p_j$ and every $h\in \mathcal H_r$, we define the 
 smooth curve $C_j(h)$ parameterized by $(z_j(t),w_j(t))= (t, h(t))$ in the 
bidisk of radius $r$ centered at $p_j$, in the coordinates $(z_j,w_j)$, 
which thus intersects  $(z_j=0)$ transversally at $p_j$. 

For any edge $e= (p_j \to p_\ell)$,  
we  can  define    
 a   graph transform map  $\cL_e$  on $\mathcal{H}_r$ 
by the property 
 that  $C_j(\cL_e(h))$ is the pull-back by $F$ of the image by $\theta$ of $C_\ell(h)$. 
For this, let $p'_i:= \theta(p_j)$. 
Then $\theta(C_j(h))$ is parametrized by $(z'_i(t),w'_i(t))= (t, t^{n_j} h(t) + P_j(t))$, and
we obtain an equation for $C_\ell(\cL_e(h))$ of the form
\[
\hw_j = (\hz_j^{m_j})^{n_j} h(\hz_j^{m_j})+ P_j(\hz_j^{m_j})
\]
To obtain the explicit form of $\cL_e(h)$ in terms of $h$,  it is then necessary 
to invert $(z_j, w_j)$ in terms of $(\hat{z}_j,\hat{w}_j)$. 
The formulas are not particularly convenient, but
we still can prove the following. 
\begin{prop}\label{prop:12}
For any $r>0$ small enough,   $\cL_e$ maps 
  $\mathcal{H}_r $ into itself, and it is  $1/2$-Lipshitz for the sup norm. 
Moreover, for any $h_1, h_2\in \mathcal{H}_r $ we have
\begin{equation}\label{eq:contNA}
\ord_t(\cL_e(h_1)- \cL_e(h_2)) \ge 1+ \ord_t(h_1- h_2)
\end{equation}
\end{prop}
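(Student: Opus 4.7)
The plan is to unwind the definition of $\cL_e$ explicitly in coordinates near $p_j$, obtain a formula suitable for both analytic and valuative estimates, and then treat each claim in turn.

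First, applying~\eqref{eq:free_blowup} at $p_\ell$, the curve $\theta(C_\ell(h))$ lives at $\theta(p_\ell) = F(p_j)$ and is parametrized in adapted local coordinates by $t \mapsto (t,\, t^{n_\ell} h(t) + P_\ell(t))$. By~\eqref{eq:free_blowup2}, $F$ reads $(\hz_j, \hw_j) \mapsto (\hz_j^{m_j}, \hw_j)$ near $p_j$, so the $F$-preimage of the preceding curve, parametrized by $s = \hz_j$, is $\hz_j = s$ and $\hw_j = \Phi_h(s) := s^{m_j n_\ell}\, h(s^{m_j}) + P_\ell(s^{m_j})$. The change of coordinates $\Psi \colon (\hz_j, \hw_j) \mapsto (z_j, w_j)$ is a local biholomorphism at the origin preserving the exceptional divisor $\{\hz_j = 0\} = \{z_j = 0\}$, hence of the form $\Psi(\hz, \hw) = (\hz\, A(\hz, \hw),\, B(\hz, \hw))$ with $A(0, 0) \ne 0$ and $B(0, 0) = 0$. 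In $(z_j, w_j)$-coordinates the curve becomes $z_j(s) = s\, A(s, \Phi_h(s))$ and $w_j(s) = B(s, \Phi_h(s))$. Since $z_j'(0) = A(0, 0) \ne 0$, the analytic implicit function theorem yields a holomorphic inverse $s = s_h(z_j)$, depending jointly holomorphically on $z_j$ and on $h$ viewed as a point of the Banach space $H^\infty(\D(0, r))$, leading to the explicit formula $\cL_e(h)(z_j) = B\bigl(s_h(z_j),\, \Phi_h(s_h(z_j))\bigr)$.

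Second, the invariance $\cL_e(\mathcal{H}_r) \subset \mathcal{H}_r$ for $r$ small and the $1/2$-Lipschitz property both follow from Cauchy estimates. Indeed $A$, $B$ and $P_\ell$ are analytic on a fixed bidisk, and $B(0, 0) = \Phi_h(0) = 0$, so $(h, z) \mapsto \cL_e(h)(z)$ is analytic with $\cL_e(h)(0) = 0$ uniformly in $h$ on a neighborhood of $0 \in H^\infty$. The sup-norm of $\cL_e(h)$ on $\D(0, r)$ and the Lipschitz constant of $\cL_e$ on $\mathcal{H}_r$ consequently both tend to $0$ as $r \to 0^+$; one fixes $r$ small enough that the former is at most $r$ and the latter at most $1/2$.

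Third, for the non-Archimedean estimate I would exploit the biholomorphism-invariance of the local intersection multiplicity. Both smooth curves $C_j(\cL_e(h_i))$ pass through $p_j$ transverse to the exceptional divisor, so their intersection multiplicity there equals the order of contact of their graphs, namely $\ord_{z_j}\bigl(\cL_e(h_1) - \cL_e(h_2)\bigr)$. Computed in the $(\hz_j, \hw_j)$-coordinates the same intersection multiplicity equals $\ord_s\bigl(\Phi_{h_1}(s) - \Phi_{h_2}(s)\bigr)$. Writing $k := \ord_t(h_1 - h_2)$, we have
\[
\Phi_{h_1}(s) - \Phi_{h_2}(s) = s^{m_j n_\ell}\bigl(h_1(s^{m_j}) - h_2(s^{m_j})\bigr),
\]
whose order at $s = 0$ is at least $m_j n_\ell + m_j k$. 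Since $m_j, n_\ell \in \N^*$, this is at least $1 + k$, giving the claim.

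The main obstacle lies in the first two steps: one must verify that all analytic constructions, especially the implicit inversion $s_h$ and the composition defining $\cL_e(h)$, depend uniformly holomorphically on $h \in \mathcal{H}_r$, so that the shrinking argument applies cleanly. This is essentially bookkeeping but requires care to pin down the common domain on which all the estimates are controlled.
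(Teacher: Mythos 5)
Your proposal is correct, and for the non-Archimedean estimate~\eqref{eq:contNA} it takes a genuinely different and arguably cleaner route than the paper. The paper factors $\cL_e = T_2 \circ T_1$, with $T_1(h)(t)=t^{m_jn_\ell}h(t^{m_j})+P_\ell(t^{m_j})$ (you correctly use $n_\ell, P_\ell$; the paper writes $n_j, P_j$, which looks like an index slip) and $T_2$ the coordinate change $(\hz_j,\hw_j)\to(z_j,w_j)$, then checks directly that $\ord_t(T_1(h_1)-T_1(h_2))=m_jn_\ell+m_j\ord_t(h_1-h_2)$ and asserts, somewhat tersely, that $T_2$ does not decrease $\ord_t$. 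You bypass the second step entirely by observing that $\ord_{z_j}(\cL_e(h_1)-\cL_e(h_2))$ is the local intersection multiplicity of the two transverse graphs $C_j(\cL_e(h_i))$, which is a biholomorphism invariant and in $(\hz_j,\hw_j)$-coordinates equals $\ord_s(\Phi_{h_1}-\Phi_{h_2})=m_jn_\ell+m_j\ord_t(h_1-h_2)$. This eliminates any bookkeeping for $\ord_t$ through the implicit inversion $s_h$ and through $B$.

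For the Archimedean half, your explicit parametrization via the implicit function theorem and the paper's $T_2\circ T_1$ factorization are different packagings of the same Cauchy-estimate argument, and both are compressed at the same place: the $1/2$-Lipschitz bound for small $r$ does not by itself yield $\cL_e(\mathcal{H}_r)\subset\mathcal{H}_r$. Since $\|\cL_e(h)\|_{\D(0,r)}\le\|\cL_e(0)\|_{\D(0,r)}+\tfrac12\|h\|_{\D(0,r)}$, one also needs $\|\cL_e(0)\|_{\D(0,r)}\le r/2$, i.e.\ that the linear coefficient of $\cL_e(0)$ at the origin is small. Your phrase ``the sup-norm of $\cL_e(h)$ on $\D(0,r)$ tends to $0$ as $r\to0^+$'' is true but insufficient: it would need to be $o(r)$ (or the coefficient controlled) for the claimed invariance to follow. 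The paper leaves the same point implicit, and the downstream use in Lemma~\ref{lem:convergence} only truly requires uniform contraction plus a uniformly bounded invariant subset of $H^\infty(\D(0,r))$, so nothing essential is at stake; but the statement as written deserves either a normalization of the coordinate $w_j$ killing this linear term, or a definition of $\mathcal{H}_r$ with a fixed (rather than $r$-dependent) bound.
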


\begin{proof}
The map $\cL_e$ is the composition of two maps  $\cL_e = T_2 \circ T_1$
where 
\[T_1(h) = t^{m_j n_j} h(t^{m_j})+ P_j(t^{m_j}),\] 
and $T_2 $ is defined by  the following condition: 
the curve parameterized by $(t, T_2(\gamma)(t))$ in the coordinates
$(z_j,w_j)$ is identical to the one defined by $(\tau, \gamma(\tau))$ in the
 coordinates
$(\hz_j,\hw_j)$.
Write $\hz_j = z_j A_j(z_j,w_j)$ with $A_j(0) \neq0$, 
and $w_j = B_j(\hz_j,\hw_j)= b_j \hw_j + O(\hz_j, \hw_j^2)$ with $b_j\neq0$. Then we have
$\tau = t A_j(t, h(t))$ and  $T_2(h)(t)= B_j(\tau, h(\tau))$,
so that 
\[T_2(h)(t) = B_j( t A_j(t, h(t)), h( t A_j(t, h(t))))~.\]

The map $T_1$ is easy to analyze, and it is strictly contracting both for the complex and the non-Archimedean
norm because $m_j n_j \ge 1$.  More precisely, on the disk of radius $r>0$,   we have that 
$|T_1(h_1) - T_1(h_2)|_\infty \le r |h_1 - h_2|_\infty$, and
$$\ord_t(T_1(h_1)- T_1(h_2)) \ge m_j n_j + m_j \ord_t(h_1- h_2).$$

On the other hand,  $T_2$ is $C$-Lipschitz with $C$ depending on the sup norm of the partial derivative 
of $A_j$ and $B_j$ (details are left to the reader) so that the first statement follows
when $Cr <1/2$. 
Observe also that  $\ord_t(A_j(t, h_1)- A_j(t, h_2))\ge  \ord_t(h_1-h_2)$
which implies  $\ord_t(T_2(h_1)- T_2(h_2))\ge  \ord_t(h_1-h_2)$. 
Combining this with $\ord_t(T_1(h_1)- T_1(h_2)) \ge 1 +  \ord_t(h_1- h_2)$, we get the estimate~\eqref{eq:contNA}, and 
the proof is complete. 
\end{proof}

\subsection{Conclusion in the multiplicity 1 case}\label{subs:conclusion_mult1}
As in the previous step, we consider the two models $X$ and $X'$ and the directed graph $\Gamma$
whose vertices are in bijection with the set of balls $\cB $. In order to clarify notation, 
we shall write $B_v$ for the ball in $\cB$ associated with a vertex $v$. Similarly, we write $p_v$
for the point in $X$ corresponding to $B_v$. 
Recall that $f_\diamond(B_v)$ is a ball in $\cB'$, and an edge $e$ joins $v$ to $w$
iff $B_w\subset f_\diamond(B_v)$.

Write $\mu_v := \mu_{\na}(B_v)$. Then $\sum_v \mu_v = 1$ since $\mu_{\na}$ is a probability measure,
and, as explained in \S~\ref{subs:coding},
$\mu_v$ is the $v$-entry   of the  Perron-Frobenius eigenvector of the transition matrix, associated to the eigenvalue $c$.  From this we get that 
$$\unsur{c}\,  \sum_{v'\colon  v\to v' \in E(\Gamma)} \mu_{v'} = \mu_v.$$

Pick any ${\underline{v}} =(v_0v_1\cdots)  \in \Sigma$, and set $e_i= (v_iv_{i+1})$ for every $i$.
 We then define the map $\cL_{\underline{v}}^{(n)}\colon \mathcal{H}_r  \to \mathcal{H}_r$ by  
\[
\cL_{\underline{v}}^{(n)} (h) =  
\cL_{e_0} \circ  \cL_{e_1}\circ \cdots  \circ \cL_{e_{n-1}}(h)~.
\]
Given any vertex $v\in \Gamma$, and any analytic map $h\in \mathcal{H}_r $,  recall that 
$C_v(h)$  is the  analytic curve defined  as the graph of $h$ in the coordinates 
$(z_v, w_v)$ at $p_v$, as  introduced in the previous subsection.  

\begin{lem}\label{lem:convergence}
For any ${\underline{v}} = (v_n)_{n\geq 0} \in \Sigma$, the 
 sequence $(\cL_{\underline{v}}^{(n)} (h))_{n\geq 0}$ converges in 
$\mathcal H_r$
to a limit function $h_{\underline{v}}$ independent of $h$, hence  the corresponding graphs 
$C_{v_0}(\cL_{\underline{v}}^{(n)} (h))$ converge in the sense of currents. 
Likewise, the sequence of germs $C_{v_0}(\cL_{\underline{v}}^{(n)} (h))$ converges in 
$B^{\an}(0,1)$ to $C_{v_0} (h_{\underline{v}})$
\end{lem}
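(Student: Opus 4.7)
The plan is to deduce both convergence statements directly from the two estimates in Proposition~\ref{prop:12}. Set $h_n := \cL_{\underline{v}}^{(n)}(h)$. Since each $\cL_{e_i}$ is a self-map of the closed ball $\mathcal{H}_r \subset H^\infty(\D(0,r))$ that is $1/2$-Lipschitz for the sup norm, the composition $\cL_{e_0} \circ \cdots \circ \cL_{e_{n-1}}$ is $2^{-n}$-Lipschitz. Applying this to the two points $h$ and $\cL_{e_n}(h)$ in $\mathcal{H}_r$ gives
\[
\|h_{n+1} - h_n\|_\infty \leq 2^{-n}\,\|\cL_{e_n}(h) - h\|_\infty \leq 2^{1-n} r,
\]
so $(h_n)$ is a Cauchy sequence in $\mathcal{H}_r$ and, this space being complete, converges in sup norm to some limit $h_{\underline{v}} \in \mathcal{H}_r$. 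Applying the $2^{-n}$-Lipschitz bound  to any two initial conditions $h, h' \in \mathcal{H}_r$ yields $\|\cL_{\underline{v}}^{(n)}(h) - \cL_{\underline{v}}^{(n)}(h')\|_\infty \leq 2^{1-n}r \to 0$, so the limit $h_{\underline{v}}$ is independent of $h$.

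For the convergence of the graph currents $[C_{v_0}(h_n)]$ on the bidisk of radius $r$ around $p_{v_0}$: the uniform convergence of $h_n$ to $h_{\underline{v}}$ on $\D(0,r)$ implies the uniform convergence of the graphs in the Hausdorff sense, hence the convergence of their currents of integration by a standard argument (see e.g.\ the discussion of graph currents in \cite{bls}).

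For the convergence of the corresponding germs as points of $B^{\an}(0,1) \subset \A^{1,\an}_{\C\laurent{z}}$, I would use the second statement of Proposition~\ref{prop:12}: by iterating the inequality $\ord_t(\cL_e(h_1)-\cL_e(h_2))\geq 1+\ord_t(h_1-h_2)$, one obtains $\ord_t(h_{n+m}-h_n)\geq n$ for every $m\geq 0$, and passing to the limit in $m$, $\ord_t(h_{\underline{v}} - h_n)\geq n$. Through the identification of the germ $\kappa(C_{v_0}(h))$ with a type~$1$ point in $B^{\an}(0,1)$ described in \S\ref{subs:model} (using that $(z_{v_0},w_{v_0})$ are toric coordinates above $0$, so that $z$ and $w$ are monomials in $z_{v_0}, w_{v_0}$ possibly composed with a free blow-up structure), this non-Archimedean estimate translates into $|P(x_{C_{v_0}(h_n)}) - P(x_{C_{v_0}(h_{\underline{v}})})| \to 0$ for every $P\in \C\laurent{z}[w]$, i.e.\ convergence in the weak topology of $B^{\an}(0,1)$.

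The main (minor) obstacle is bookkeeping: verifying the passage between the Hardy-space estimate at the level of charts on $X$ and the non-Archimedean estimate at the level of germs in $(\C^2,0)$. The rest is a direct application of the Banach contraction principle in the complete metric space $(\mathcal{H}_r, \|\cdot\|_\infty)$ combined with the non-Archimedean contraction~\eqref{eq:contNA}.
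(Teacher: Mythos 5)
Your proof is correct and takes essentially the same route as the paper's (which is just as terse): Cauchy in sup norm via the $1/2$-Lipschitz bound of Proposition~\ref{prop:12}, independence of the initial data from that same contraction, and Berkovich convergence of the germs by iterating the $t$-adic estimate~\eqref{eq:contNA}. One small imprecision: the coordinates $(z_{v_0},w_{v_0})$ at $p_{v_0}$ are not toric --- by~\eqref{eq:free_blowup} the map $\kappa$ takes the shear form $(z,w)\mapsto(z,z^N w+Q(z))$ with $Q$ a polynomial --- but this is harmless, since the shift $Q$ cancels in the difference and the factor $z^N$ only improves the order estimate, so your translation of $\ord_t(h_{\underline v}-h_n)\geq n$ into convergence in $B^{\an}(0,1)$ goes through.
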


\begin{proof}
This is a simple consequence of the contraction statement in Proposition~\ref{prop:12}. Indeed, since for any $m \geq n$
$$
\norm{\cL_{\underline{v}}^{(n)} (h) - \cL_{\underline{v}}^{(m)} (h)} \leq 2^{-n} \norm{h-\cL_{e_{n}} \circ \cdots  \circ \cL_{e_{m-1}}(h)} \leq 2^{1-n} r\text{,}
$$ we see that  
$(\cL_{\underline{v}}^{(n)} (h))_{n\geq 0}$ is a Cauchy sequence in $\mathcal H_r$. The independence 
of the limit with respect to  $h$ follows from the contraction as well. 

For the second statement, we argue similarly, this time by using the contraction property of the graph transform operator in the $t$-adic norm. 
\end{proof}

Consider now the positive closed $(1,1)$-current  
\[T_0 =  \sum_v \mu_v[\kappa(C_v(0))]
\] 
and for $n\geq0$,  set $T_n = \frac1{c^n} (f^{n})^*(T_0)$. 
By Corollary~\ref{cor:Pk}, the sequence of currents $(T_n)$ converges to ${\mathrm{d}}{\mathrm{d}}^c g$.  
For any vertex $v$, the structure of $f$ as explained in \S~\ref{subs:model} and~\ref{subs:graph_transform} shows that 
\[
(f^{n})^*[\kappa\lrpar{C_v(0)} ]
=
\sum_{{\underline{v}}\in \Sigma,  \, {\underline{v}}_n  =v }  \left[\kappa \left({C_{v_0}\left(\cL_{\underline{v}}^{(n)}(0) \right)} \right) \right]~,
\]
so that 
\[
T_n
=
\frac1{c^n} 
\sum_v \mu_v \sum_{{\underline{v}}\in \Sigma, \,  {\underline{v}}_n =v } 
 \left[\kappa \left({C_{v_0}\left(\cL_{\underline{v}}^{(n)}(0) \right)} \right) \right]~.
\]
If we denote by $\Sigma(n,{\underline{v}})$ the cylinder of sequences in $\Sigma$ coinciding with ${\underline{v}}$ on their first $n$ entries, by the formula~\eqref{eq:subshift}, this can be rewritten as 
\begin{align*}
T_n&=\frac1{c^n} 
\sum_v \mu_v \sum_{{\underline{v}}\in \Sigma,  \, {\underline{v}}_n  =v }  \mu\lrpar{\Sigma(n,{\underline{v}})} 
 \left[\kappa \left({C_{v_0}\left(\cL_{\underline{v}}^{(n)}(0) \right)} \right) \right]
 \\&= \int_\Sigma 
  \left[\kappa \left({C_{v_0}\left(\cL_{\underline{v}}^{(n)}(0) \right)} \right) \right]
 \,{\mathrm{d}}\mu_{\Sigma}\lrpar{\underline{v}}.
 \end{align*}
By Lemma~\ref{lem:convergence} above, $[C_{v_0}(\cL_{\underline{v}}^{(n)}(0))]$ converges to 
$\left[C_{v_0}(h_{\underline{v}})\right]$, so by the dominated convergence theorem we 
conclude that 
\begin{equation}\label{eq:convergence1}
{\mathrm{d}}{\mathrm{d}}^cg  = \lim_{n\to\infty }T_n = 
\int_\Sigma \left[\kappa\lrpar{C_{v_0}\left(h_{\underline{v}}\right)}\right] \,{\mathrm{d}}\mu_{\Sigma}\lrpar{\underline{v}}.
\end{equation}

On the other hand, again by Lemma~\ref{lem:convergence}, 
$C_{v_0}(\cL_{\underline{v}}^{(n)}(0))$ converges to $C_{v_0}\lrpar{h_{\underline{v}}}$ in 
$B^{\an}(0, 1)$.
Denote by $\widehat F: X\dasharrow X$   the lift of $f$ to $X$. 
 By construction, for every $n\geq 0$,  the curve 
$\widehat F^n\lrpar{C_{v_0}(h_{\underline{v}})}$  
intersects the exceptional divisor
$\kappa^{-1}(0)$ in $X$ at the point $p_{v_n}$,
therefore  $\kappa(C_{v_0}(h_{\underline{v}}))$ belongs to $\cK$
and its itinerary is   given by $\underline{v}$. Therefore, $\kappa(C_{v_0}(h_{\underline{v}}) )= C\lrpar{\mathsf{c}\inv\lrpar{\underline{v}}}$, where $C(\cdot)$ is as in Theorem~\ref{thm:structure}
 and 
$\mathsf{c} \colon \cK \to \Sigma$ is the conjugacy from \S~\ref{subs:coding}. 
In particular,  $C(x)$ is a closed analytic curve in a fixed neighborhood of the origin, and a graph over the $z$ coordinate. Thus,  from~\eqref{eq:convergence1}, we get 
\begin{equation}\label{eq:integral_rep}
{\mathrm{d}}{\mathrm{d}}^cg = 
\int_\Sigma \left[C\lrpar{\mathsf{c}\inv\lrpar{\underline{v}}}\right] \,{\mathrm{d}}\mu_{\Sigma}\lrpar{\underline{v}}
 = \int_{\cK} [C(x)] \,{\mathrm{d}}\mu_{\na}(x). 
 \end{equation}
 To complete the proof, it remains to verify the assertions on the laminar structure of $\cW$ and $T$. 
 For this, we observe that 
  if $\underline{v}$ and $\underline{v'}$ are distinct itineraries, the corresponding 
  curves $C_{v_0}\left(h_{\underline{v}}\right)$  and $C_{v'_0}\left(h_{\underline{v'}}\right)$, 
  constructed above,  get separated after finitely many iterations, so they are disjoint in $X$. Pushing forward under $\kappa$, we see that their images intersect only at the origin so the curves $C(x)$ for $x\in \cK$ form a lamination outside the origin.    
 From the integral representation~\eqref{eq:integral_rep}, the support of this lamination is 
   $\supp(T) = \cW$. Since $\cW$ is a closed pluripolar set, this lamination 
  is transversally polar, hence transversally 
 totally disconnected, and it has no isolated leaf for otherwise $T$ would give positive mass to 
 a curve.

\subsection{Uniform laminarity of the current $T(f)$}\label{subs:laminarity}
To complete the proof of the theorem, we push forward the previous picture by 
the base change $\beta_k$ from \S~\ref{subs:base_change}, and need to 
verify the assertions of the theorem. We resume the notation from \S~\ref{subs:base_change}.

Since $g(z,w) = \lim_{n\to \infty} \unsur{c^n}\log\abs{w_n}$ we see that 
 $g(f)\circ \beta_k = g(f_k)$, so  
 $\beta_k^* {\mathrm{d}}{\mathrm{d}}^c g(f) = {\mathrm{d}}{\mathrm{d}}^c g(f_k)$, hence 
 $(\beta_k)_*T(f_k) = (\beta_k)_*\beta_k^* T(f) = k T(f)$ because $\beta_k$ has topological degree $k$. 

From this we see that  the current $T(f)$ is the image 
of a uniformly laminar current outside the origin under $\beta_k$. While this still implies that  $\mathrm{d}\mathrm{d}^cg$ is laminar (because $\cW$ is pluripolar), 
to get uniform laminarity we need to argue that the images of the   curves $C(x)$ do not intersect each other outside the origin also in this case.
Fix an open ball $U$ such that $\set{z=0}\cap   \overline U = \emptyset$. Since $\beta_k$ is a covering outside $\set{z=0}$, the preimage of $U$ is a union of $k$ disjoint open sets, so that $T(f)$ is actually the sum of $k$ uniformly laminar currents $S_i$ in $U$, and $\cW(f)$ is the union of the corresponding laminations. 
 
Assume that two of these currents (say $S_1$ and $S_2$)   have non-trivially intersecting plaques, then by \cite[Lemma 6.4]{bls} some of these intersections are transverse, and by reducing $U$ further we may assume that in $U$, the plaques are graphs of definite size over two different directions, and every plaque of $S_1$ intersects every plaque of $S_2$.

We will argue that the fact (from Corollary~\ref{cor:Pk})
 that $T(f)$ is the limit of pull-backs of smooth curves makes such a local picture impossible. 
For this, we first work with $f_k$. By construction of the model, there is a neighborhood of the points $p_j$ in $X$ that is disjoint from the (proper transform of) the critical set. By pushing under $\kappa$,  
there is a 
$f_k\inv$-invariant ``neighborhood'' (not open at the origin) $\mathcal N(f_k)$ 
of $\cW(f_k)$ disjoint from $\CritB(f_k)$, and, since $\CritB(f_k) = (\beta_{k, \diamond})\inv(\CritB(f))$
we get a corresponding neighborhood $\mathcal N(f) = \beta_k(\mathcal N(f_k)) $ for $f$. 

From our choice of $r$ in \S~\ref{subs:graph_transform}, 
 a curve of the form $C_j = C_j(h)$ passing by some $p_j$ is disjoint from the critical set. Let 
 $\tilde \Gamma = \kappa (C_j(h))$ and $\Gamma   = \beta_k(\tilde \Gamma)$, so that $\Gamma$ is contained in $\mathcal N(f)$. If necessary, we work 
 in a smaller neighborhood of 0 so that $\Gamma$ is smooth outside 0, and we conclude that all 
 curves $f^{-n}(\Gamma)$ are smooth outside the origin. Furthermore $f^{-n}(\Gamma)\cap U$ is made up of plaques of uniformly bounded geometry (since they descend from $X$ via $\beta_k$), so any transverse intersection between $S_1$ and $S_2$ 
 at the limit must originate  from a self-intersection of $f^{-n}(\Gamma)$. This contradiction 
 completes the proof that $\cW(f)$ is a lamination, which must have Cantor transverse structure as before,  and that $T(f)$ is a uniformly laminar current.  
 \qed
 
 \subsection{Integral representation of the current $T(f)$}
 It remains to justify the integral representation of $T(f)$. 
 To simplify notation we write $\beta=\beta_k$.  For a probability measure $\mu$ on 
 $\A^{1, \an}_{\C\laurent{z}}$, we put $\beta^* \mu = k \beta_\diamond^* \mu$, which is a probability measure (see \S~\ref{subs:invariant_measure}). 
 
 \begin{lem}\label{lem:resp}
 Let $\mu$ be a positive measure with finite support on $B^{\an}(0,1)$ on a set of points corresponding to 
 germs of irreducible analytic curves. 
 Then we have
 \begin{equation}\label{eq:resp}
 \beta_*\left(
 \int_{u\in B^{\an}(0,1)} \frac{[C(u)]}{m(u)} d  \beta^*\mu(u)
\right)= k\,
 \int_{v\in B^{\an}(0,1)} \frac{[C(v)]}{m(v)} d\mu(v)
\end{equation}
\end{lem}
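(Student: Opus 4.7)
By linearity in $\mu$, the statement reduces to the case $\mu=\delta_v$ for a single rigid point $v$ corresponding to an irreducible analytic germ $C(v)$ of multiplicity $m=m(v)$, parameterized by $t\mapsto(t^m,\psi(t))$ for some convergent power series $\psi$. Setting $g=\gcd(m,k)$, $m=gm'$, $k=gk'$ with $\gcd(m',k')=1$, the plan is to compute $\beta^*\delta_v$ on the non-Archimedean side and each $\beta_*[C(u_j)]$ on the Archimedean side, then combine them.

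For the first step, I would use $\delta_v = \frac{1}{m}\Delta\log|P_{\min}|$ (from~\eqref{eq:fundid2}), where $P_{\min}\in\C\fps{z}[w]$ is the minimal polynomial of the Puiseux series $\phi(z)=\psi(z^{1/m})$ representing $v$. Since the second component of $\beta$ is $w$, definition~\eqref{eq:defi} yields $\log|P_{\min}|\circ\beta_\diamond = \frac{1}{k}\log|P_{\min}(z^k,w)|$; combined with $\beta^*=k\beta_\diamond^*$ this gives
\[
\beta^*\delta_v = \frac{1}{m}\Delta\log\bigl|P_{\min}(z^k,w)\bigr|.
\]
The roots of $P_{\min}(z^k,w)$ are $\psi(\zeta_m^j z^{k'/m'})$ for $j=0,\ldots,m-1$; they lie in $\C\fps{z^{1/m'}}$ and split under the action of $\Gal(\C\laurent{z^{1/m'}}/\C\laurent{z})$ into exactly $g$ orbits of size $m'$ (the cosets of $g\Z$ in $\Z/m\Z$). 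This produces an irreducible factorization $P_{\min}(z^k,w)=\prod_{j=0}^{g-1}Q_j$ in which each $Q_j$ has $w$-degree $m'$ and is attached to a rigid point $u_j\in B^{\an}(0,1)$. A short arithmetic argument using~\eqref{eq:def-mult} (if a prime $p | m'$ divided every index $i$ with $a_i\ne 0$ in $\psi(t)=\sum a_i t^i$, then $p$ would divide $\gcd(i,m)$ for all such $i$, contradicting $m(v)=m$) shows $m(u_j)=m'$, so that $\Delta\log|Q_j|=m'\delta_{u_j}$ and
\[
\beta^*\delta_v = \frac{1}{g}\sum_{j=0}^{g-1}\delta_{u_j}.
\]

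For the second step, $C(u_j)$ is parameterized by $s\mapsto(s^{m'},\psi(\zeta_m^j s^{k'}))$; applying $\beta$ produces $(s^{m'k},\psi(\zeta_m^j s^{k'}))=(t^m,\psi(t))$ with $t=\zeta_m^j s^{k'}$. Since $s\mapsto t$ has degree $k'$, one obtains $\beta_*[C(u_j)]=k'[C(v)]$. Combining the two steps,
\[
\beta_*\!\!\int_{B^{\an}(0,1)}\!\frac{[C(u)]}{m(u)}\,{\mathrm{d}}\beta^*\delta_v(u) = \frac{1}{g}\sum_{j=0}^{g-1}\frac{k'[C(v)]}{m'} = \frac{k}{m}[C(v)],
\]
which matches the right-hand side $k\cdot[C(v)]/m(v)$.

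The main obstacle is the middle of Step~1: correctly enumerating the $g$ Galois orbits of roots of $P_{\min}(z^k,w)$ and, more delicately, verifying that the multiplicity $m(u_j)$ equals $m'$ exactly rather than a proper divisor, since otherwise the integer factors would fail to cancel in the final formula. The remaining ingredients---the normalization $\beta^*=k\beta_\diamond^*$, the functoriality of the Laplacian, and the degree of $\beta|_{C(u_j)}$---are routine consequences of the definitions.
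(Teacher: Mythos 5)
Your proof is correct, and the delicate point you flag — that each Galois orbit of roots of $P_{\min}(z^k,w)$ has size exactly $m'=m/\gcd(m,k)$, i.e., that $m(u_j)=m'$ on the nose — is handled correctly by your coprimality argument via~\eqref{eq:def-mult}. The skeleton is the same as the paper's (reduce to a Dirac mass, compute $\beta^*\delta_v$ through the factorization of $P_{\min}\circ\beta$ and~\eqref{eq:fundid2}, then push the component curves forward), but the execution differs in a meaningful way. You explicitly enumerate the preimage branches: there are $g=\gcd(m,k)$ of them, each of multiplicity $m'$, each mapping onto $C(v)$ with degree $k'=k/g$, and you check the arithmetic $\frac{1}{g}\cdot g\cdot\frac{k'}{m'}=\frac{k}{m}$ by hand. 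The paper instead keeps everything abstract: it writes $\beta^*[C]=\sum_i k_i[C_i]$ as a divisor, shows $\beta^*\delta_{x_C}=\frac{1}{m(C)}\sum_i k_i m(C_i)\delta_{x_{C_i}}$ from the factorization $P\circ\beta=\prod_i P_i^{k_i}$, observes that dividing by $m(C_i)$ makes the weights collapse to $\frac{k_i}{m(C)}$, and then concludes from $\beta_*\beta^*[C]=k[C]$ (the topological degree of $\beta$), never needing to identify the $C_i$, their multiplicities, or the local degrees $e_i$ individually. The paper's route is shorter and sidesteps the one genuinely delicate verification you had to make; yours buys an explicit description of the fiber of $\beta_\diamond$ over a rigid point (number of branches, their multiplicities, and mapping degrees), which is not needed for the lemma but is not without interest.
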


Admitting this result for the moment, we complete the proof of the theorem. 
 Since  $\beta_{\diamond}$ 
 semi-conjugates   $f_{k,\diamond}$ to $f_\diamond$,  
from  the equidistribution Theorem~\ref{thm:ergo-meas}, 
and the definition of $\beta^*$
 we deduce that 
$\beta^*\mu_{\na}(f) =   \mu_{\na}(f_k)$. 
In addition, 
$\mu_{\na}$ is a limit of atomic measures as in the Lemma, and by the continuity of  the operators
$\beta^*$ and $\beta_*$ for the respective weak topologies on $\A^{1, \an}_{\C\laurent{z}}$ and the space of 
(1,1) currents, we infer that~\eqref{eq:resp}
  also holds for the measure $\mu=\mu_{\na}$. 
Recall that by construction we have that $m(C(v))=1$ for all curves $C(v)$ in the support of $\cK(f_k)$, and using the relation  
 $\beta_* T(f_k) = kT(f)$ we obtain 
\begin{align*}
T(f)
=
\frac1k \, \beta_* T(f_k)
&=
\frac1k \, \beta_*\left(
 \int_{u\in B^{\an}(0,1)} [C(u)] d \beta^*(\mu_{\na}(f))(u)
\right)
\\
&=
 \int_{v\in B^{\an}(0,1)} \frac{[C(v)]}{m(v)} d\mu_{\na}(f)(v)
\end{align*}
as was to be shown. \qed

\begin{proof}[Proof of Lemma~\eqref{lem:resp}]
We can assume that $\mu$ is a Dirac mass at the point $x_{C}$ where $C$ is an irreducible analytic germ. 
In terms of currents, we may write
$\beta^*[C] = \sum_i k_i [C_i]$ where $C_i$ are the irreducible components of $\beta^{-1}(C)$,
and $k_i\in \N^*$.
Observe that if $e_i\in \N^*$ denotes the local topological degree of the restriction $\beta\colon C_i \to C$, then we have
\[
\beta_* \beta^* [C] = \sum_i  k_i \beta_*[C_i]  =  \sum_i  k_i e_i [C]  = k[C]
\]
since the topological degree of $\beta$ is equal to $k$. 

Pick any equation of $C$  of the form $C = \{P(w) =0\}$, where $P$ is a monic polynomial of degree $m(C)$ in $w$
with coefficients in the ring of holomorphic functions in $z$. 
Then, with notation as in \S~\ref{subs:invariant_measure}, by~\eqref{eq:fundid2}, we get 
 $\Delta \log_P  = m(C)\delta_{x_C}$. 
We can write $P\circ \beta = \prod_i P_i^{k_i}$ where $P_i$ are equations of $C_i$, so that 
\begin{align*}
\beta^*(m(C)\delta_{x_C}) &= k \beta_\diamond^* (m(C)\delta_{x_C})  
 = k \Delta \lrpar{\log_P\circ \beta_\diamond} \\&= \Delta\lrpar{x\mapsto \log\abs{P(\beta(x))}} = 
\sum_i k_i\,\Delta \log_{P_i}
=
\sum_i k_i m(C_i) \delta_{x_{C_i}} , 
\end{align*}
therefore 
$$
\int_{u\in B^{\an}(0,1)}\frac{[C(u)]}{m(u)}  d \beta^*\delta_{x_C} (u) 
 = \unsur{m(C)} \lrpar{\sum_i k_i[C_i]},
$$
and finally 
\begin{align*}
\beta_*\left(
 \int_{u\in B^{\an}(0,1)}\frac{[C(u)]}{m(u)}  d \beta^*\delta_{x_C} (u) 
\right)=
\beta_*\left(
  \frac1{m(C)} \sum_i k_i [C_i]
\right)
= \frac{k}{m(C)}[C]
\end{align*}
which concludes the proof. 
\end{proof}

\medskip


\bibliographystyle{alpha}

\bibliography{biblio}

\end{document}